\documentclass[a4paper, DIV=12, 11pt]{amsart}
\usepackage[latin1]{inputenc}
\usepackage{amssymb, amsthm, amsmath, thmtools, mathtools}
\usepackage{amsfonts}
\usepackage[abbrev]{amsrefs} 

\usepackage{graphicx}
\usepackage{thmtools}
\usepackage{hyperref}
\usepackage[bottom, marginal]{footmisc}
\usepackage{todonotes}

\usepackage{tikz}
\usetikzlibrary{matrix}

\declaretheoremstyle[headfont=\normalfont]{normalhead}
\newtheorem{lemma}{Lemma}[section]
\newtheorem{theorem}[lemma]{Theorem}
\newtheorem{proposition}[lemma]{Proposition}
\newtheorem{corollary}[lemma]{Corollary}
\newtheorem{definition}[lemma]{Definition}
\newtheorem{remark}[lemma]{Remark}

\newcounter{mt}

\newtheorem{maintheorem}[mt]{Theorem}
\newtheorem{maincorollary}[mt]{Corollary}

\newtheorem*{acknowledgement}{Acknowledgement}

\newcommand{\R}{\mathbb{R}}
\newcommand{\C}{\mathbb{C}}

\DeclareMathOperator{\Val}{Val}
\DeclareMathOperator{\VConv}{VConv}
\DeclareMathOperator{\Conv}{Conv}
\DeclareMathOperator{\vol}{vol}

\DeclareMathOperator{\supp}{supp}

\DeclareMathOperator{\diam}{diam}

\DeclareMathOperator{\GL}{GL}

\DeclareMathOperator{\GW}{\mathrm{GW}}

\DeclareMathOperator{\Gr}{\mathrm{Gr}}

\DeclareMathOperator{\Sym}{\mathrm{Sym}}
\DeclareMathOperator{\Poly}{\mathcal{P}}

\DeclareMathOperator{\MAVal}{\mathrm{MAVal}}

\DeclareMathOperator{\F}{\mathbb{F}}
\DeclareMathOperator{\MA}{\mathrm{MA}}
\DeclareMathOperator{\Mat}{\mathrm{Mat}}
\DeclareMathOperator{\LV}{\mathrm{LV}}

\renewcommand{\Re}{\operatorname{Re}}
\renewcommand{\Im}{\operatorname{Im}}
\renewcommand{\O}{\mathcal{O}}
\renewcommand{\P}{\mathrm{P}}
\newcommand{\M}{\mathcal{M}}

\newcommand{\A}{\mathrm{A}}
\newcommand{\Aff}{\mathrm{Aff}}

\newcommand{\locsupp}{\mathrm{supp}_{\mathrm{loc}}}

\numberwithin{equation}{section}

\author{Jonas Knoerr}
\title[Integral representation of polynomial local functionals]{Integral representation of polynomial local functionals on convex functions}
\date{}

\newcommand{\Addresses}{{
		\bigskip
		\footnotesize
		
		Jonas Knoerr, \textsc{Institute of Discrete Mathematics and Geometry, TU Wien, Wiedner Hauptstrasse 8-10, 1040 Wien, Austria}\par\nopagebreak
		\textit{E-mail address}: \texttt{jonas.knoerr@tuwien.ac.at}
		
		\medskip
	}}
	
\makeatletter
\def\blfootnote{\xdef\@thefnmark{}\@footnotetext}
\makeatother

\makeindex
\begin{document}
\maketitle
\begin{abstract}
	Integral representations for continuous polynomial local functionals on convex functions are established in terms of a finite family of polynomials. This result is obtained by approximation from a classification of the dense subspace of smooth polynomial local functionals, which is based on a Paley--Wiener--Schwartz-type classification of the Goodey--Weil distributions associated to these functionals under support restrictions. As an application, density results for various families of Monge--Amp\`ere-type operators are established. 
\end{abstract}
\blfootnote{2020 \emph{Mathematics Subject Classification}. 52B45, 26B25, 47H60.\\
	\emph{Key words and phrases}. Convex function, local functional, valuation on functions, Fourier--Laplace transform.\\}
\tableofcontents

\section{Introduction}

Consider the space $\M(\R^n)$ of complex Radon measures on $\R^n$. Given a family $\mathcal{F}$ of functions on $\R^n$, we call a map $\Psi:\mathcal{F}\rightarrow\M(\R^n)$ a \emph{local functional} or \emph{locally determined} if for all $f,h\in \mathcal{F}$ such that $f|_U=h|_U$ for some open set $U\subset\R^n$, the associated measures satisfy
\begin{align*}
	\Psi(f)|_U=\Psi(h)|_U.
\end{align*}
The class of local functionals includes many well-known geometric operators, including the Dirichlet energy, a variety of Monge--Amp\`ere-type operators, as well as other classical functionals from the calculus of variations. In applications, these functionals are usually given by an integral representation for sufficiently regular functions but need to be extended by some procedure to a more general function space (with better completeness or compactness properties). Usually, this procedure involves some extension by (semi-) continuity or by approximation by more regular expressions. In both cases, the properties of the extension are only available indirectly, and it is a non-trivial problem to obtain a useful representation of this functional.\\
This naturally leads to the question under which general conditions a local functional on a given function space admits a suitable integral representation, with the goal to establish properties of these representations in terms of properties of the underlying functional. For local functionals on Sobolev spaces, these questions have been investigated extensively over the last 40 years, compare \cite{AlbertiIntegralrepresentationlocal1993,BottaroOppezziMultipleintegralrepresentations1985,ButtazzoDalMasoIntegralrepresentationrelaxation1985,DalMasoEtAlIntegralrepresentationclass1994,EssebeiEtAlIntegralrepresentationlocal2023,MaioneEtAlconvergencefunctionals2020}.\\

This is the second of two articles in which we consider this problem for a certain class of local functionals on convex functions (see \cite{KnoerrPolynomiallocalfunctionals2025} for the first part). Let $\Conv(\R^n,\R)$ denote the space of all finite convex functions on $\R^n$. This space is naturally equipped with the topology induced by epi-convergence, which in this setting coincides with the topology induced by locally uniform or pointwise convergence (compare the references in \autoref{sectionPreliminaries}). We consider $\M(\R^n)$ as the topological dual of the space $C_c(\R^n)$ of all compactly supported continuous functions on $\R^n$ (with respect to its usual inductive topology), and equip $\M(\R^n)$ with the weak* topology. The primary example of a continuous local functional on $\Conv(\R ^n,\R)$ is the real Monge--Amp\`ere operator $\MA:\Conv(\R^n,\R)\rightarrow\M(\R^n)$, which is given by
\begin{align*}
	\MA(f;B)=\int_{B}\det(D^2f(x))dx
\end{align*}
for $f\in\Conv(\R^n,\R)\cap C^2(\R^n)$ and bounded Borel sets $B\subset\R^n$, where $D^2f$ denotes the Hessian of $f$, but extends by continuity to arbitrary convex functions. More generally, one can consider the mixed Monge--Amp\`ere operators
\begin{align*}
	\MA(f_1,\dots,f_n):=\frac{1}{n!}\frac{\partial^n}{\partial\lambda_1\dots\partial\lambda_n}\Big|_0\MA\left(\sum_{j=1}^n\lambda_jf_j\right)
\end{align*} 
for $f_1,\dots,f_n\in\Conv(\R^n,\R)$. Operators of this type have recently become an important tool in geometric valuation theory as a key ingredient in the construction and classification of invariant valuations on convex bodies and functions, compare \cite{AleskerPlurisubharmonicfunctionsoctonionic2008,AleskerValuationsconvexfunctions2019,ColesantiEtAlHadwigertheoremconvex2022,ColesantiEtAlHadwigertheoremconvex2023,ColesantiEtAlHadwigertheoremconvex2024,ColesantiEtAlHadwigertheoremconvex2025,KnoerrSingularValuationsHadwiger2025,KnoerrUnitarilyinvariantvaluations2026,Knoerrgeometricdecompositionunitarily2024,MouamineMussnigVectorialHadwigerTheorem2025}. Here, a map $\mu:\Conv(\R^n,\R)\rightarrow\mathcal{A}$ into an Abelian semi-group is called a valuation if
\begin{align*}
	\mu(f)+\mu(h)=\mu(f\vee h)+\mu(f\wedge h)
\end{align*}
for all $f,h\in\Conv(\R^n,\R)$ such that the pointwise minimum $f\wedge h$ is convex (note that the pointwise maximum $f\vee h$ is always convex). In particular, these classification results rely on the observation that various Monge--Amp\`ere-type operators can be considered as measure-valued valuations on $\Conv(\R^n,\R)$. As pointed out by Alesker in \cite{AleskerValuationsconvexfunctions2019}, this property can be obtained from a result by B{\l}ocki \cite{BlockiEquilibriummeasureproduct2000} for the complex Monge--Amp\`ere operator, which relies on the fact that this operator is locally determined. Recently, the author showed that this applies in broader generality.
\begin{theorem}[\cite{KnoerrPolynomiallocalfunctionals2025}*{Theorem~A}]\label{theorem:LocalFuncValuations}
	Let $\Psi:\Conv(\R^n,\R)\rightarrow\M(\R^n)$ be a continuous local functional. Then $\Psi$ is a valuation.
\end{theorem}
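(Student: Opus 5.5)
We must show that $\Psi(f)+\Psi(h)=\Psi(f\vee h)+\Psi(f\wedge h)$ whenever $f,h\in\Conv(\R^n,\R)$ and $f\wedge h$ is convex. Since this is an identity of Radon measures, it suffices to show that both sides agree on the open sets $U_1=\{f<h\}$ and $U_2=\{f>h\}$, on the interior of $\{f=h\}$, and finally on the remaining set $\{f=h\}$.

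\emph{Step 1 (the open sets).} On $U_1$ we have $f\wedge h=f$ and $f\vee h=h$. Locality then gives $\Psi(f\wedge h)|_{U_1}=\Psi(f)|_{U_1}$ and $\Psi(f\vee h)|_{U_1}=\Psi(h)|_{U_1}$, so the identity holds on $U_1$. The case of $U_2$ is symmetric. On the interior of $\{f=h\}$ all four functions coincide, so the identity holds there as well. It therefore remains to control the measures on the closed set $Z$ consisting of the boundary of $\{f=h\}$ inside $\{f=h\}$.

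\emph{Step 2 (perturbation).} Removing $Z$ uses continuity. Choose $\epsilon>0$ and set $h_\epsilon=h+\epsilon$, so that $f\wedge h_\epsilon$ and $f\vee h_\epsilon$ converge to $f\wedge h$ and $f\vee h$ locally uniformly as $\epsilon\to 0$. The difficulty is that $f\wedge h_\epsilon$ need not be convex. A better choice is a family for which the coincidence set is negligible for all four measures. Convexity of $f\wedge h$ with $f\ne h$ near $Z$ forces a strong structure: near points of $Z$ one of the functions lies below the other on one side. Concretely, I would use the following standard fact. If $f\wedge h$ is convex, then $\{f<h\}$ and $\{h<f\}$ are separated by the convex set where they agree. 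I would then approximate via $h_t=(1-t)h+t(f\wedge h)$, $t\in[0,1]$. Each $h_t$ is convex, $h_t\ge f\wedge h$, and $f\wedge h_t=f\wedge h$ is convex. Alternatively, and more robustly, I would reduce to the strict case by a polynomial parameter argument, described next.

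\emph{Step 3 (parameter argument).} Consider the measure-valued map $\lambda\mapsto\Psi(f+\lambda)$, and more importantly use the fact that $Z$ is a closed set of Lebesgue measure zero only generically. The key trick I would attempt is translation in the value direction: for $c\in\R$, the set $\{f=h+c\}$ are disjoint for different $c$. Given a fixed compactly supported test function $\phi$, the finite measure $|\Psi(f)|+|\Psi(h)|+\dots$ can charge at most countably many of these disjoint sets. Hence for all but countably many small $c$, none of the relevant measures charges $\{f=h+c\}$. For such $c$, provided $f\wedge(h+c)$ remains convex, Step 1 yields the valuation identity. Letting $c\to0$ along admissible values and using continuity of $\Psi$ together with locally uniform convergence of the max and min then gives the result.

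\emph{Main obstacle.} The main obstacle is that $f\wedge(h+c)$ need not be convex for $c\neq0$, and that the measures to control depend on $c$ themselves. I expect this to be the hardest step. My first attempt would be to reduce via the known fact that $f\wedge h$ convex implies $f\wedge h=\max$-type gluing along a hyperplane-like region. I would then replace $(f,h)$ by $(f\vee(f\wedge h),\,h\vee(f\wedge h))$ and use the decomposition $f=(f\wedge h)\vee$ piece to express everything through pairs whose coincidence sets are controllable. If this fails, I would fall back on Blocki's approach: smooth approximation by convolution, for which the identity reduces to locality plus a measure-zero boundary.
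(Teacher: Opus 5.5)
Your Step 1 is correct, but it only reduces the claim to the boundary $Z$ of $\{f=h\}$. None of the mechanisms you propose for $Z$ works, so the essential step is missing; you flag this yourself as the main obstacle.

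\emph{Why the shift fails.} Take $f=\max(x,0)$, $h=\max(-x,0)$ on $\R$. For every $c\neq 0$ the function $f\wedge(h+c)$ has a concave kink at $x=c$, so $\Psi(f\wedge(h+c))$ is not even defined. More fundamentally, the countability argument needs a fixed finite measure. But $\Psi(h+c)$, $\Psi(f\wedge(h+c))$ and $\Psi(f\vee(h+c))$ all vary with $c$, and they do charge the moving coincidence set. In the same example with $\Psi=\MA$, for every $c\neq0$ the measure $\MA(f\vee(h+c))$ has an atom exactly at $\{f=h+c\}=\{c\}$. So the valuation identity cannot come from the exceptional set being negligible; the contributions on it must cancel, and your argument has no way to see this.

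\emph{Why the other suggestions fail.}
\begin{itemize}
\item $h_t=(1-t)h+t(f\wedge h)$ satisfies $\{f=h_t\}=\{f=h\}$ for $t<1$, so the coincidence set does not move.
\item The pair $(f\vee(f\wedge h),h\vee(f\wedge h))$ is just $(f,h)$.
\item The ``standard fact'' is false: $f=x^2$, $h=\max(x^2,1)$ has $f\wedge h$ convex but non-convex coincidence set $\{|x|\ge 1\}$.
\item Mollification destroys the hypothesis: for an even mollifier $\rho$, $(f*\rho)\wedge(h*\rho)$ has a concave kink at $0$ in the example above. Moreover, $\Psi(f)$ need not be absolutely continuous, so a Lebesgue-null boundary would not suffice anyway.
\end{itemize}

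\emph{The missing idea.} You need a perturbation that keeps all four functions convex and removes the exceptional set entirely, so that locality alone gives the identity and continuity is used only in the limit. Put $g=f\wedge h$ and, for $\epsilon>0$,
\[
f_\epsilon=f\vee(g+\epsilon),\qquad h_\epsilon=h\vee(g+\epsilon).
\]
These are convex because $g$ is. Also $f_\epsilon\wedge h_\epsilon=g+\epsilon$ and $f_\epsilon\vee h_\epsilon=(f\vee h)\vee(g+\epsilon)$ are convex, so the pair satisfies the hypothesis. Now check three open sets.
\begin{itemize}
\item On $\{|f-h|<\epsilon\}$, all four functions equal $g+\epsilon$.
\item On $\{f>h\}$, one has $h_\epsilon=h+\epsilon=f_\epsilon\wedge h_\epsilon$ and $f_\epsilon=f_\epsilon\vee h_\epsilon$.
\item On $\{f<h\}$, the symmetric statement holds.
\end{itemize}
These three sets cover $\R^n$. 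Locality and a partition of unity therefore give
\[
\Psi(f_\epsilon)+\Psi(h_\epsilon)=\Psi(f_\epsilon\wedge h_\epsilon)+\Psi(f_\epsilon\vee h_\epsilon)
\]
exactly, with no exceptional set. Each perturbed function lies within $\epsilon$, uniformly, of $f$, $h$, $f\wedge h$, $f\vee h$ respectively. Continuity of $\Psi$ then gives the valuation identity as $\epsilon\to 0$. This uses only locality and continuity, consistent with the paper's remark that B{\l}ocki's argument for the Monge--Amp\`ere operator rests only on locality.
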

Let us remark that this is in general not the case if the functional is not continuous, compare \cite{KnoerrPolynomiallocalfunctionals2025}*{Example~4.1}. For continuous local functionals, this allows us to apply tools from valuation theory to local functionals. We will be interested in the following class of functionals: Let $\A(n,\R)$ denote the space of all affine maps $\ell:\R^n\rightarrow\R$. We call a local functional $\Psi$ on $\Conv(\R^n,\R)$ \emph{polynomial of degree at most $d\in\mathbb{N}$} if the map
\begin{align*}
	\A(n,\R)&\rightarrow \M(\R^n)\\
	\ell&\mapsto \Psi(f+\ell)
\end{align*}
is a polynomial of degree at most $d$ in $\ell\in\A(n,\R)$ for every $f\in\Conv(\R^n,\R)$. In this case, we also call $\Psi$ a polynomial local functional for brevity without specifying the degree. This notion is closely related to translation invariance for valuations on polytopes and convex bodies, compare \cite{Aleskermultiplicativestructurecontinuous2004,PukhlikovKhovanskiiFinitelyadditivemeasures1992}. Related invariance properties for local functionals on Sobolev spaces have been considered in \cite{AlbertiIntegralrepresentationlocal1993,ButtazzoDalMasocharacterizationnonlinearfunctionals1985,ButtazzoDalMasoIntegralrepresentationrelaxation1985}.\\
Let $\P_d\LV(\R^n)$ denote the space of all continuous local functionals on $\Conv(\R^n,\R)$ that are polynomial of degree at most $d\in\mathbb{N}$. In \cite{KnoerrPolynomiallocalfunctionals2025}*{Theorem~B}, the relation to valuations on convex functions was used to establish a homogeneous decomposition for this space:
\begin{align*}
	\P_d\LV(\R^n)=\bigoplus_{k=0}^{n+d}\P_d\LV_k(\R^n),
\end{align*}
where $\Psi\in \P_d\LV_k(\R^n)$ if and only if $\Psi$ is $k$-homogeneous, i.e. if $\Psi(tf)=t^k \Psi(f)$ for $t\ge 0$ and $f\in\Conv(\R^n,\R)$. It is then not difficult to see that elements of $\P_d\LV_0(\R^n)$ are constant local functionals, and thus this space is isomorphic to $\M(\R^n)$. For the elements belonging to the top degree component $\P_d\LV_{d+n}(\R^n)$, unique integral representations in terms of the real Monge--Amp\`ere operator were established in \cite{KnoerrPolynomiallocalfunctionals2025}*{Theorem~C}, which provides a complete classification of these functionals. In this article, we establish integral representations for the intermediate degrees in terms of a finite family of polynomials depending on the partial derivatives of a given function up to order $2$ related to the minors of the Hessian. Let $\Poly(V)$ denote the space of complex-valued polynomials on a finite dimensional real vector space $V$ and $\Poly_d(V)$ the subspace of all polynomials of degree at most $d$. Consider the space $\mathrm{M}_n\subset \Poly(\Sym^2(\R^n))$ spanned by all $k$-minors, $0\le k\le n$, of a symmetric $(n\times n)$-matrix, where the $0$-minor of a matrix is $1$ by definition.\\
Note that the group $\Aff(n,\R)$ of all invertible affine transformations of $\R^n$ acts on $\P_d\LV(\R ^n)$ by
\begin{align}
	\label{eq:DefAction}
	[\pi(g)\Psi](f;B)=\Psi(f\circ g;g^{-1}(B))
\end{align}
for $g\in\Aff(n,\R)$ and $\Psi\in\P_d\LV(\R^n)$, $f\in\Conv(\R^n,\R)$, $B\subset \R^n$ bounded Borel set. We denote the subspace of all translation invariant local functionals with respect to this action by $\P_d\LV(\R^n)^{tr}$. These admit the following characterization in terms of polynomials. Let us identify $\R\times(\R^n)^*\cong\A(n,\R)$ using the map $(c,y)\mapsto [x\mapsto \langle y,x\rangle+c]$.
\begin{theorem}[\cite{KnoerrPolynomiallocalfunctionals2025}*{Theorem~D}]\label{theorem:ClassificationPolyTranslationInv}
	For every $\Psi\in\P_d\LV(\R^n)^{tr}$ there exists a unique polynomial $P_\Psi\in \Poly_d(\A(n,\R))\otimes \mathrm{M}_n$ such that
	\begin{align*}
		\Psi(f;B)=\int_{B}P_\Psi(f(x),df(x),D^2f(x))dx
	\end{align*}
	for all $f\in \Conv(\R^n,\R)\cap C^2(\R^n)$, $B\subset\R^n$ bounded Borel set. Conversely, the right hand side of this equation extends by continuity to a unique element in $\P_d\LV(\R^n)^{tr}$ for every polynomial in $\Poly_d(\A(n,\R))\otimes \mathrm{M}_n$.
\end{theorem}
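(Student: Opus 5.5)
The plan is to reduce to the top-degree components via the homogeneous decomposition and the polynomial structure in $\ell$, and then to use the known classification of translation invariant continuous valuations on $\Conv(\R^n,\R)$ with values in measures.

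\textbf{Step 1: expansion in $\ell$.} For $\Psi\in\P_d\LV(\R^n)^{tr}$, the map $\ell\mapsto\Psi(f+\ell)$ is a polynomial of degree at most $d$ on $\A(n,\R)\cong\R\times(\R^n)^*$. We expand it around $\ell=0$, writing $\Psi(f+\ell)=\sum_{\alpha}\ell^\alpha\Psi_\alpha(f)$ in coordinates $(c,y)$.

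\textbf{Step 2: the coefficients.} Each $\Psi_\alpha$ is obtained from finitely many values $\Psi(f+\ell_i)$ by Lagrange interpolation. Hence it is again a continuous, local and translation invariant functional. Translation invariance holds because $(f+\ell)\circ\tau_v=f\circ\tau_v+\ell'$, where $\ell'$ differs from $\ell$ only by a constant depending on $y$.

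\textbf{Step 3: lowering the degree.} The functional $f\mapsto\Psi(f+\ell)$ has the same degree. The leading coefficients of $\ell\mapsto\Psi(f+\ell)$ are invariant under $f\mapsto f+\ell'$, so they lie in $\P_0\LV(\R^n)^{tr}$. By induction on $d$, it therefore suffices to treat the case $d=0$ together with a compatibility argument. The compatibility argument reassembles $\Psi$ from its coefficients as
\[
\Psi(f)(x)=\sum_\alpha \ell^\alpha\text{-dependence evaluated at }(f(x),df(x)).
\]
This uses locality: at a point $x$, the function $f$ agrees to first order with its tangent affine map. This reassembly is where the polynomial factor $\Poly_d(\A(n,\R))$ arises.

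\textbf{Step 4: the case $d=0$.} A dually epi-translation invariant continuous local functional is, by \autoref{theorem:LocalFuncValuations}, a measure-valued valuation.
\begin{itemize}
\item Translation invariance forces $\Psi(f)$ to be a multiple of Lebesgue measure for $f$ quadratic. Fixing $f=\tfrac12\langle Ax,x\rangle$ therefore defines a function $F(A)$ on positive semidefinite matrices.
\item Continuity and the homogeneous decomposition make $F$ a sum of homogeneous continuous functions.
\item The valuation property applied to $\max$ of quadratics which agree on a hyperplane, combined with the mixed Monge--Amp\`ere structure, should force $F$ to be polynomial and to lie in the span of the minors.
\end{itemize}
A more robust alternative is to restrict the $k$-homogeneous component to $f$ of the form $u(\pi_E x)$ for $k$-dimensional subspaces $E$. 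This reduces the problem to the top-degree case on $E$, where \cite{KnoerrPolynomiallocalfunctionals2025}*{Theorem~C} gives the density in terms of $\det D^2$. Evaluating on $C^2$ functions by locality and a second-order Taylor approximation then yields a representation by principal minors in every orthonormal frame, hence one in $\mathrm{M}_n$.

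\textbf{Step 5: uniqueness and the converse.} Uniqueness holds because polynomials in $\Poly_d(\A(n,\R))\otimes\mathrm{M}_n$ are determined by their values on $(f(x),df(x),D^2f(x))$ for quadratic convex $f$, and these values range over an open set. For the converse, we would write each minor times a polynomial in $(f,df)$ as a mixed Monge--Amp\`ere-type expression. Continuity then follows from the weak continuity of mixed Monge--Amp\`ere measures with continuous coefficient functions $Q(f,df)$, since $df$ converges a.e.\ and boundedly for convex functions.

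\textbf{Main obstacle.} The hardest step is showing that the $d=0$ density $F$ lies in $\mathrm{M}_n$, that is, that it is polynomial and made of minors. I expect to have to use the top-degree classification on subspaces together with the $\mathrm{GL}(n)$-equivariance of the action~\eqref{eq:DefAction}.
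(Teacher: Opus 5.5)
Your proposal has genuine gaps. The decisive one is Step~4, which is where the content of the theorem lies. (The paper gives no proof of this statement. It imports it from \cite{KnoerrPolynomiallocalfunctionals2025}, and its case $d=0$ is the description of $\MAVal_k(\R^n)=\P_0\LV_k(\R^n)^{tr}$ from \cite{KnoerrMongeAmpereoperators2024}, used here as \autoref{theorem:FourierMA} and \autoref{lemma:QBijectiveMA}.) Your first route ends with ``should force''. The final inference of your second route, ``principal minors in every orthonormal frame, hence in $\mathrm{M}_n$'', is false. For $n=2$, $k=1$, write a positive semi-definite $A$ as $\lambda_1uu^T+\lambda_2u^\perp(u^\perp)^T$ with $u=(\cos\theta,\sin\theta)$, and put $F(A)=(\lambda_1-\lambda_2)\cos 6\theta$. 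This $F$ is well defined, continuous and $1$-homogeneous. In every eigenframe it equals $c(E_1)\lambda_1+c(E_2)\lambda_2$, where $c(E)=\cos6\theta$ depends only on the line $E$, and $F(t\,ee^T)=c(\R e)\,t$ is consistent with the top-degree case on every line. Yet $F(e_1e_1^T)=1$, $F(uu^T)=0$ for $\theta=\pi/4$, and $F(e_1e_1^T+uu^T)=-1$, so $F$ is not linear. You need two further ingredients:
(a) Polynomiality of $F$. This follows because $\Psi(\sum_i\lambda_iq_i)$ is a polynomial in $\lambda_i\ge0$ for convex quadratics $q_i$.
(b) Rank-one affinity. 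For $L=\R v$, the map $u\mapsto\Psi(q+\pi_L^*u)$ is a continuous valuation on $\Conv(L,\R)$ invariant under adding affine functions. By \eqref{eq:HomDecompValuations} on the line $L$ it has only components of degree $0$ and $1$, so $t\mapsto F(A+tvv^T)$ is affine.
A polynomial with property (b) lies in the span of the minors. This is the classical null-Lagrangian characterization; a highest-weight argument as in \autoref{proposition:CharacterizationModuleSquareMinors} also works. Finally, going from quadratics to arbitrary $f$ is not a matter of ``locality and a second-order Taylor approximation'', because locality only compares functions that coincide on open sets. You need an injectivity argument, e.g.\ the converse statement together with your restriction to $k$-dimensional subspaces and \autoref{proposition:RestrictionHomValuationLowerDimensionalSupportFunctions}.

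Steps~2--3 are also incorrect as stated. The coefficients of $\ell\mapsto\Psi(f+\ell)$, even the top-degree ones, are not translation invariant. Your own identity $(f+\ell)\circ\tau_v=f\circ\tau_v+\ell'$ with $\ell'=(c+\langle y,v\rangle,y)$ shows that they transform under the linear map $(c,y)\mapsto(c+\langle y,v\rangle,y)$, so they are only equivariant. This is exactly how the paper treats the top-order term in the proof of \autoref{maintheorem:IntegralRepSmooth}. For example, $\Psi(f;B)=\int_Bf\,dx$ lies in $\P_1\LV(\R^n)^{tr}$. Its top-degree coefficient is $\ell\mapsto(c+\langle y,x\rangle)\,dx$, and the coefficient $x_i\,dx$ of $y_i$ is not translation invariant. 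The correct procedure is: first untwist, replacing $(c,y)$ by $(c-\langle y,x\rangle,y)$ pointwise in the integration variable; then apply the case $d=0$; then subtract $\int_B\tilde P(f(x),df(x),D^2f(x))\,dx$ with the same top-order part. That subtraction already requires the converse half of the theorem. Your argument for the converse does not work either. Weak convergence of Monge--Amp\`ere-type measures plus a.e.\ convergence of $df$ gives no control on their singular parts, where $df$ is undefined. For mollifications $f_j$ of $|x|$ on $\R$ one finds $\int\phi\,p(f_j')f_j''\,dx\to\phi(0)\int_{-1}^1p(t)\,dt$, which depends on the whole subdifferential at $0$. Continuity has to be proved with Hessian measures on the graph of $\partial f$ (differential cycles) and their weak continuity under locally uniform convergence.
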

In particular, $\P_d\LV(\R^n)^{tr}$ is a finite dimensional space, and we set $N(n,d):=\dim\P_d\LV(\R^n)^{tr}$. Using these polynomials, we establish the following unique integral representations for these functionals for sufficiently regular functions.
\begin{maintheorem}
\label{maintheorem:IntegralRepContinuousCase}
	Let $\Psi_j$, $1\le j\le N(n,d)$, be a basis for $\P_d\LV(\R^n)^{tr}$. For every $\Psi\in \P_d\LV(\R^n)$ there exist unique measures $\mu_j\in \M(\R^n)$ such that for every $f\in\Conv(\R^n,\R)\cap C^2(\R^n)$, $B\subset\R^n$ bounded Borel set,
	\begin{align}
		\label{eq:IntegralRepresentationContCase}
		\Psi(f;B)=\sum_{j=1}^{N(n,d)}\int_{B} P_{\Psi_j}(f(x),df(x),D^2f(x))d\mu_j(x).
	\end{align}
\end{maintheorem}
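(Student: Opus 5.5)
The plan is to prove the representation first on a dense subspace of smooth polynomial local functionals and then pass to general $\Psi$ by continuity. This is the strategy announced in the abstract.

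\emph{Step 1: reduction via localization.} For $\phi\in C_c(\R^n)$, set $\Psi_\phi(f):=\int\phi\,d\Psi(f)$. This is a continuous, $\C$-valued valuation, polynomial of degree at most $d$ in the affine part. Locality implies that, for $f\in C^2$, $\Psi(f)$ near $x$ depends only on the germ of $f$ at $x$. The heuristic is that in the $C^2$-regime only the $2$-jet $(f(x),df(x),D^2f(x))$ should matter, with the $x$-dependence carried by measures. To make this rigorous I would work with the homogeneous components $\Psi\in\P_d\LV_k(\R^n)$ from \cite{KnoerrPolynomiallocalfunctionals2025}*{Theorem~B}, together with the Goodey--Weil distribution $\GW(\Psi)$. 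This distribution is defined on $k$-tuples of functions and is supported on the diagonal in a suitable sense, because of locality.

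\emph{Step 2: smooth functionals.} Convolve $\Psi$ with the action \eqref{eq:DefAction} of the translation subgroup, i.e.\ set $\Psi^\rho=\int_{\R^n}\rho(v)\pi(\tau_v)\Psi\,dv$ with $\rho\in C^\infty_c$. This produces functionals that are smooth in the translation parameter and converge to $\Psi$ as $\rho$ approaches the Dirac measure. For such smooth $\Psi$, I would compute the Fourier--Laplace transform of the Goodey--Weil distribution. A Paley--Wiener--Schwartz-type argument, exploiting the support restriction to the diagonal and the polynomial growth of order at most $2$ (convexity forces at most second derivatives), should show that the transform is a polynomial in the frequency variables with coefficients depending smoothly on $x$. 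Combining this with the finite-dimensional classification of translation-invariant functionals in \autoref{theorem:ClassificationPolyTranslationInv}, "freezing" the $x$-variable gives, for each $x$, a translation-invariant functional. Expanding it in the basis $\Psi_j$ yields the representation \eqref{eq:IntegralRepresentationContCase} with $d\mu_j=c_j(x)dx$ for smooth densities $c_j$.

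\emph{Step 3: approximation and uniqueness.} The key point is that the map $\Psi\mapsto(\mu_j)_j$ should be continuous. I would recover $\mu_j$ by testing $\Psi$ against specific quadratic polynomials $f$, for example $f(x)=\sum_i\lambda_i x_i^2+\langle y,x\rangle+c$, after translating to centres. The polynomials $P_{\Psi_j}$ are linearly independent as functions of the $2$-jet. A polynomial identity in the parameters $(c,y,\lambda)$ and in the placement of the quadratic then lets one solve for each $\mu_j$ as a finite linear combination of $\Psi(f_\alpha)$ for finitely many fixed convex quadratic (or affine-plus-quadratic) $f_\alpha$, possibly multiplied by polynomial weights in $x$. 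A subtlety is that evaluating $P_{\Psi_j}$ on a quadratic gives $x$-dependent polynomials; one inverts this using a Vandermonde-type argument locally. This formula simultaneously proves uniqueness and shows that $\mu_j(\Psi^\rho)\to\mu_j(\Psi)$ weak*. Passing to the limit in the identity for $C^2$ functions $f$ then requires $\Psi^\rho(f)\to\Psi(f)$ and $\int P_{\Psi_j}(f,df,D^2f)\,d\mu_j^\rho\to\int P_{\Psi_j}(f,df,D^2f)\,d\mu_j$. The latter holds because the integrands are continuous and the measures converge weak*.

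The main obstacle is Step 2: the Paley--Wiener--Schwartz classification of Goodey--Weil distributions under support restrictions, which is what forces the dependence on derivatives of order at most $2$ and polynomiality. I would attack it by reducing to homogeneous components and using the known top-degree case (\cite{KnoerrPolynomiallocalfunctionals2025}*{Theorem~C}) as a model.
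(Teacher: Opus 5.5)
Your overall architecture is the paper's: prove the representation for smooth functionals, recover the $\mu_j$ from finitely many evaluations at convex quadratic polynomials, and pass to general $\Psi$ by compact-to-weak* density of smooth functionals. Step~3 is essentially the proof of \autoref{theorem:IntegralRepCont}. There, \autoref{corollary:evaluationMapsDualBasis} uses Vandermonde inversions along the filtration $W_{d,l}$ to produce maps $E_j(\Psi)=\sum_i c_{ij}\Psi(q_{ij})$ with \emph{constant} coefficients and $E_j(\Psi_i)=\delta_{ij}\vol$. Then $\mu_j=E_j(\Psi)$, which gives uniqueness and the needed continuity at once. Your variant, with polynomial weights or local inversions glued by a partition of unity, works equally well. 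One correction: the diagonal quadratics $\sum_i\lambda_ix_i^2+\langle y,x\rangle+c$ do not separate the $P_{\Psi_j}$. Non-principal minors of the Hessian vanish on them, so for instance $f\mapsto\partial_1\partial_2 f\,dx$ is killed, and translating does not change the Hessian. You need arbitrary positive semi-definite Hessians.

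The genuine gap is Step~2, i.e.\ \autoref{maintheorem:IntegralRepSmooth}, which is where essentially all the difficulty lies.

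Part of what you describe can be made rigorous. Locality forces $\widehat{\GW}(\Psi)$ to be supported on the diagonal, because the polarization vanishes when $\supp\phi_j\cap\supp\phi_{k+1}=\emptyset$. Hence $\F(\Psi)$ is a polynomial in $w_1,\dots,w_k$ with coefficients entire in $w_{k+1}$. For smooth, compactly supported $\Psi$, the decay in \autoref{lemma:PWS_SatisfiedBySmoothVLV}, Cauchy estimates and the classical Paley--Wiener--Schwartz theorem then show these coefficients are Fourier transforms of $C^\infty_c$-functions.

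However, the a priori bound in \autoref{lemma:continuityFourier_F} only limits the degree in each $w_j$ to $3$. More importantly, no growth estimate can tell you \emph{which} polynomials occur. The decisive fact is that at every point the polynomial lies in the finite-dimensional space $\M^2_k=Q(\MAVal_k(\R^n))$ spanned by products of two $k$-minors. Your ``freezing'' step simply asserts this. A frozen polynomial is a constant-coefficient multilinear differential expression in $f_1,\dots,f_k$, and in general it does not extend to a continuous local functional on $\Conv(\R^n,\R)$. So \autoref{theorem:ClassificationPolyTranslationInv} cannot be applied to it.

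The missing idea is restriction to functions $\pi_E^*f$ with $E\in\Gr_k(\R^n)$:
\begin{itemize}
\item Locality, Hadwiger's characterization of volume (\autoref{theorem:VolumeCharacterization}, \autoref{lemma:supportRestrictionLocalFunctional}) and smoothness give $d\Psi(\pi_E^*f)=\Phi_E\,d(\MA_E(f)\otimes\vol_{E^\perp})$ (\autoref{proposition:RestrictionSubspaceSmoothCase}).
\item Consequently, on $E$-tuples the Taylor terms of $\F(\Psi)$ are the squared determinant on $E$ times polynomials in $w_{k+1}$.
\item A highest-weight argument (\autoref{proposition:CharacterizationModuleSquareMinors}, \autoref{lemma:FbelongsToModule}) then places every Taylor term in $\widetilde{\M^2_k}$.
\item Only after this do division with estimates and the classical theorem yield $\Psi=\sum_j\phi_j\bullet\Psi_j$.
\end{itemize}
The top-degree case you propose as a model does not exhibit this difficulty, since only $\det D^2f$ occurs there.

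Separately, the paper carries out this Fourier analysis only for $d=0$, where the relevant polynomials are built from minors of the Hessian alone. You do not say how to handle $d\ge1$, where translation-invariant functionals also involve $f(x)$ and $df(x)$. The paper argues by induction on $d$:
\begin{itemize}
\item The top coefficient $\Psi_d$ of $\ell\mapsto\Psi(f+\ell)$ is a smooth element of $\P_0\LV(\R^n)\otimes\Sym^d(\A(n,\R)^*)_\C$, by translation equivariance and \autoref{lemma:smoothVectorsTensorProduct}.
\item The case $d=0$, together with \autoref{theorem:ClassificationPolyTranslationInv}, gives a smooth $\tilde\Psi=\sum_j\phi_j\bullet\Psi_j$ with the same top coefficient.
\item Then $\Psi-\tilde\Psi$ is smooth of degree at most $d-1$, and the induction hypothesis applies.
\end{itemize}
Your proposal has no counterpart to this reduction.
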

Note that \autoref{maintheorem:IntegralRepContinuousCase} provides a representation of these functionals but not a complete characterization since the left hand side of Eq.~\eqref{eq:IntegralRepresentationContCase} does not extend to an element of $\P_d\LV(\R^n)$ for arbitrary measures $\mu_j\in\M(\R^n)$. In general, these measures are not absolutely continuous with respect to the Lebesgue measure for a given local functional, and determining their properties seems to be a highly non-trivial problem.\\

We will obtain \autoref{maintheorem:IntegralRepContinuousCase} by approximation from a characterization of a certain subspace of $\P_d\LV(\R^n)$. In \cite{KnoerrPolynomiallocalfunctionals2025} three different topologies on $\P_d\LV(\R^n)$ were considered, which correspond to the compact-open topology with respect to three different topologies on $\M(\R^n)$. We review these results in \autoref{section:topologies} but will focus on one of them for this introduction. We consider $\P_d\LV(\R^n)$ with the compact-open topology with respect to the weak* topology on $\M(\R^n)$, i.e., we equip $\P_d\LV(\R^n)$ with the topology induced by the semi-norms defined for $\phi\in C_c(\R^n)$, $K\subset\Conv(\R^n,\R)$ compact by
\begin{align*}
	\Psi\mapsto\sup_{f\in K}|\Psi(f;\phi)|,
\end{align*}
where $\phi\mapsto \Psi(f;\phi)$ denotes the integration functional on $C_c(\R^n)$ induced by the measure $\Psi(f)$. This turns $\P_d\LV(\R^n)$ into a locally convex vector space and we call this topology the \emph{compact-to-weak* topology}. The action of $\Aff(n,\R)$ on $\P_d\LV(\R^n)$ given by Eq.~\eqref{eq:DefAction} defines a representation such that the map
\begin{align*}
	\Aff(n,\R)&\rightarrow\P_d\LV(\R^n)\\
	g&\mapsto \pi(g)\Psi
\end{align*}
is continuous for every $\Psi\in \P_d\LV(\R^n)$. We will call $\Psi\in \P_d\LV(\R^n)$ \emph{smooth} if the restriction of this map to the subgroup of translations is smooth with respect to the compact-to-weak* topology. Let $\P_d\LV^\infty(\R^n)$ denote the subspace of all smooth local functionals. Then $\P_d\LV^\infty(\R^n)$ is dense in $\P_d\LV(\R^n)$ with respect to the compact-to-weak* topology, compare the discussion in \autoref{section:topologies}. We establish the following characterization of smooth polynomial local functionals.
\begin{maintheorem}\label{maintheorem:IntegralRepSmooth}
	Let $\Psi_j$, $1\le j\le N(n,d)$, be a basis for $\P_d\LV(\R^n)^{tr}$. A local functional $\Psi\in \P_d\LV(\R^n)$ is smooth if and only if there exist smooth functions $\phi_j\in C^\infty(\R^n)$ such that
	\begin{align*}
		\Psi(f;B)=\sum_{j=1}^{N(n,d)} \int_{B}\phi_j(x)d\Psi_j(f;x)
	\end{align*} 
	for all $f\in\Conv(\R^n,\R)$, $B\subset\R^n$ bounded Borel set. In this case, the functions $\phi_j$ are uniquely determined by $\Psi$ and the chosen basis.
\end{maintheorem}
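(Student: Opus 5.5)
The "if" direction is a direct computation, which I will do first. Suppose $\Psi(f;B)=\sum_j\int_B\phi_j\,d\Psi_j(f)$ with $\phi_j\in C^\infty(\R^n)$. Since each $\Psi_j$ is translation invariant, for a translation $\tau_v(x)=x+v$ we get
\begin{align*}
	[\pi(\tau_v)\Psi](f;\phi)=\sum_j\Psi_j\big(f;\phi\cdot\phi_j(\cdot+v)\big).
\end{align*}
The map $v\mapsto \phi\cdot\phi_j(\cdot+v)$ is smooth into $C_c(\R^n)$: all supports lie in $\supp\phi$, and the difference quotients converge uniformly. The set $\{\Psi_j(f):f\in K\}$ is weak* compact, hence uniformly bounded on a fixed compact set by Banach--Steinhaus. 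Together these show that $v\mapsto\pi(\tau_v)\Psi$ is smooth in the compact-to-weak* topology, so $\Psi$ is smooth.

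For the "only if" direction I would pass to the Goodey--Weil distributions, as the abstract indicates. The plan is:
\begin{enumerate}
	\item Using the homogeneous decomposition, reduce to $\Psi\in\P_d\LV_k(\R^n)$.
	\item Fix $\phi\in C_c(\R^n)$. Then $f\mapsto\Psi(f;\phi)$ is a continuous valuation by \autoref{theorem:LocalFuncValuations}, and it is polynomial in affine shifts. Polarize it and take its Goodey--Weil distribution $\GW(\Psi)(\phi)$, a distribution on $(\R^n)^k$. Because $\Psi$ is local, this distribution is supported on the diagonal of $(\R^n)^k$ intersected with $(\supp\phi)^k$.
	\item Take the Fourier--Laplace transform in the $k$ variables. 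Smoothness under translations makes the dependence on $\phi$ compatible with convolution. A Paley--Wiener--Schwartz argument, using the diagonal support, should show that the transform is a polynomial in the frequencies, of bounded degree, multiplied by a factor depending only on the "sum" frequency. That factor is the Fourier transform of a distribution in $x$.
	\item Smoothness in translations forces these coefficient distributions to be smooth functions. Decomposing the polynomial part along the finite-dimensional space $\P_d\LV(\R^n)^{tr}$, whose elements are characterized by the polynomials $P_{\Psi_j}$ of \autoref{theorem:ClassificationPolyTranslationInv}, then yields the coefficients $\phi_j$.
\end{enumerate}

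A cleaner variant, which I would try first, avoids the explicit Fourier analysis. For $\Psi$ smooth, consider the translation-equivariant object
\begin{align*}
	T:=\Psi(f;\cdot\,)\quad\text{viewed through the maps}\quad x\mapsto[\pi(\tau_{x})\Psi]\ \text{localized at }0.
\end{align*}
The idea is to freeze coefficients: for each $x_0$ one would construct a translation-invariant functional $\Psi^{x_0}$, the "tangent" at $x_0$, and then set $\phi_j(x_0)$ to be its coordinates in the basis $\Psi_j$. The obstacle is that such a tangent cannot be defined by a naive limit; producing it requires exactly the structure result from the Goodey--Weil analysis. I would therefore still rely on step 3.

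\textbf{Main obstacle.} I expect step 3 to be the hardest. Two points must be established:
\begin{itemize}
	\item The support restriction to the diagonal, combined with polynomiality of degree $\le d$ in $\ell$, bounds the order of the Goodey--Weil distributions transversal to the diagonal. This bound is what makes the transverse part finite dimensional and matches it with $\Poly_d(\A(n,\R))\otimes\mathrm{M}_n$.
	\item The estimates are uniform in $\phi$.
\end{itemize}

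Uniqueness is then straightforward. Suppose $\sum_j\int\phi_j\,d\Psi_j(f)=0$ for all $f$. Evaluate on $C^2$ functions using the integral representations $P_{\Psi_j}$. Localizing at a point $x_0$ and using quadratic functions $f$ with arbitrary value, gradient and Hessian at $x_0$, the linear independence of the $P_{\Psi_j}$ forces $\phi_j(x_0)=0$ for every $j$.
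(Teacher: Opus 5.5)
Your ``if'' direction (up to a harmless sign: translation invariance gives $\pi(\tau_v)\Psi=\sum_j\phi_j(\cdot-v)\bullet\Psi_j$) and your uniqueness argument via quadratic polynomials are fine. The gap is at the core of the ``only if'' direction. Nothing in your plan forces the transverse part of the Goodey--Weil distribution into the span of the symbols of translation invariant functionals, so step 4 assumes what has to be proved.

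Compact support together with support on the diagonal already gives, by Schwartz's structure theorem for distributions supported on a submanifold, $\mathcal{F}(\widehat{\GW}(\Psi))[w]=\sum_\beta P_\beta(w_1,\dots,w_k)\,\hat u_\beta(\mathrm{d}(w))$ with finitely many polynomials $P_\beta$. No Paley--Wiener--Schwartz argument is needed for this. The admissible $P_\beta$, however, form a space far larger than $\M^2_k$. Consider $w_{11}^3$, the symbol of $f\mapsto\partial_1^3f\,dx$ for $k=1$. Or consider $w_{11}^2w_{12}^2$, the polarized symbol of $f\mapsto(\partial_1^2f)^2\,dx$ for $k=2$; it does not vanish at $w_1=w_2=e_1$, whereas every element of $\M^2_2$ does. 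Both have order at least $2$ in each $w_j$, which is all that polynomiality of degree $0$ demands. So, contrary to your ``main obstacle'' paragraph, polynomiality in $\ell$ only controls low-order behaviour and cannot ``match'' the transverse part with $\Poly_d(\A(n,\R))\otimes\mathrm{M}_n$. What excludes such symbols is that $\Psi$ is continuous with values in measures on all of $\Conv(\R^n,\R)$, including non-smooth functions. No step of your plan exploits this.

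In the paper this missing input is supplied for $d=0$ in four steps.
\begin{enumerate}
\item Locality, \autoref{lemma:supportFunctionLowerDimensionalFace} and \autoref{theorem:VolumeCharacterization} give $d\Psi(\pi_E^*f)=\Phi_E\,d(\MA_E(f)\otimes\vol_{E^\perp})$ for every $E\in\Gr_k(\R^n)$, with $\Phi_E$ continuous because $\Psi$ is smooth (\autoref{proposition:RestrictionSubspaceSmoothCase}). Hence $\F(\Psi)[w]=\det(\langle w_i,w_j\rangle)_{i,j=1}^k\,\mathcal{F}(\Phi_E)[w_{k+1}]$ for $w_1,\dots,w_k\in E\otimes\C$.
\item A highest weight argument (\autoref{proposition:CharacterizationModuleSquareMinors}) converts this restriction property into membership of the lowest order term of $\F(\Psi)$ in $\widetilde{\M^2_k}$.
\item Subtracting suitable $\sum_j\phi_j\bullet\Psi_j$ bootstraps this to every homogeneous term (\autoref{lemma:FbelongsToModule}).
\item Only then do division with estimates and the classical Paley--Wiener--Schwartz theorem produce $\phi_j\in C^\infty_c(\R^n)$, and a partition of unity removes the compact support assumption.
\end{enumerate}
For $d\ge 1$ the paper does not classify symbols at all; it inducts on $d$. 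It applies the $d=0$ case to the components of the top coefficient $\Psi_d\in\P_0\LV(\R^n)\otimes\Sym^d(\A(n,\R)^*)_\C$, which are smooth by \autoref{lemma:smoothVectorsTensorProduct}. It then lifts via \autoref{theorem:ClassificationPolyTranslationInv} and subtracts to land in $\P_{d-1}\LV(\R^n)$. Treating all $d$ at once, as you propose, would require a classification of admissible symbols for polynomial functionals, and your plan gives no route to it.
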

\autoref{maintheorem:IntegralRepSmooth} implies that the subspace of smooth local functionals is a free $C^\infty(\R^n)$-module of rank $N(n,d)$ with respect to the $C(\R^n)$-module structure on $\P_d\LV(\R^n)$ defined by associating to $\psi\in C(\R^n)$ and $\Psi\in \P_d\LV(\R^n)$ the local functional $\psi\bullet \Psi$ given by
\begin{align*}
	[\psi\bullet \Psi](f;B)=\int_B \psi(x)d\Psi(f;x)
\end{align*}
for $f\in\Conv(\R^n,\R)$, $B\subset \R^n$ bounded Borel set. In \autoref{section:affineInvSubModule}, we use \autoref{maintheorem:IntegralRepSmooth} to establish a correspondence between the closed $C(\R^n)$-submodules of $\P_d\LV(\R^n)$ and the subspace of $\P_d\LV(\R^n)^{tr}$. For $d=0$, this has the following interesting consequence.
\begin{maintheorem}
	\label{maintheorem:affineInvariantSubmoduleDense}
	Let $W\subset \P_0\LV_k(\R^n)$ be an affine invariant $C(\R^n)$-submodule. If $W\ne 0$, then $W$ is dense in $\P_0\LV_k(\R^n)$ with respect to the compact-to-weak* topology. 
\end{maintheorem}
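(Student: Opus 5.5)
The plan is to reduce the statement to the translation invariant functionals via the correspondence between closed $C(\R^n)$-submodules and subspaces of $\P_d\LV(\R^n)^{tr}$, and then use affine invariance to show that the relevant subspace is everything.

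\emph{Step 1: reduction to the closure.} Let $\overline{W}$ be the closure of $W$ in $\P_0\LV_k(\R^n)$ with respect to the compact-to-weak* topology. Since $\bullet$ and $\pi(g)$ act continuously, $\overline{W}$ is again an affine invariant closed $C(\R^n)$-submodule.

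\emph{Step 2: the correspondence.} By the correspondence of \autoref{section:affineInvSubModule}, obtained from \autoref{maintheorem:IntegralRepSmooth} and the density of smooth elements, $\overline{W}$ is determined by the subspace
\begin{align*}
	V:=\overline{W}\cap \P_0\LV_k(\R^n)^{tr}.
\end{align*}
Concretely, $\overline{W}$ is the closure of $C(\R^n)\bullet V$. To see that $V$ is nonzero I would argue as follows. Regularize $\Psi\in W\setminus\{0\}$ by convolving the translation action with a mollifier. This keeps us inside $\overline{W}$ and yields a smooth nonzero element $\sum_j\phi_j\bullet\Psi_j$. The translation invariant $\Psi_j$ appearing in it should lie in $\overline{W}$. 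To extract them, multiply by cutoffs and take averages of translates $\pi(\tau_{x})$ over large regions, which tends to a translation invariant limit, and use that the $\phi_j$ are uniquely determined. I expect this extraction to be the main obstacle: one must show that localizing with $C(\R^n)$-multipliers and then translating can isolate a nonzero combination $\sum_j c_j\Psi_j$ in $\overline{W}$. I would first try to choose $x_0$ with $\phi(x_0)\neq 0$, multiply by $\psi_\epsilon$ concentrated near $x_0$, translate to $0$, and dilate by $g_\epsilon(x)=x_0+\epsilon x$, normalizing appropriately. In the limit this should converge to the ``frozen coefficient'' functional $\sum_j\phi_j(x_0)\Psi_j$, which uses $k$-homogeneity and the polynomial-degree-$0$ structure.

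\emph{Step 3: $V$ is everything.} The subspace $V\subset\P_0\LV_k(\R^n)^{tr}$ is nonzero and invariant under $\GL(n,\R)$. By \autoref{theorem:ClassificationPolyTranslationInv} with $d=0$, the space $\P_0\LV_k(\R^n)^{tr}$ is identified with the span of the $k$-minors of $D^2f$, that is, with $\Lambda^k$-type polynomials on $\Sym^2(\R^n)$. The action $A\mapsto g^TAg$ of $\GL(n,\R)$ on the span of the $k$-minors is the representation $\Lambda^k\R^n\otimes\Lambda^k\R^n$ restricted to its symmetric part, namely the image of $\Sym^2(\Lambda^k)$ modulo Plücker relations. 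This representation is irreducible, being a highest weight module with weight corresponding to the $2\times k$ rectangle. I would verify irreducibility directly: any nonzero invariant subspace contains, after diagonal torus weight decomposition, a single minor, and the permutation and unipotent actions then generate all minors. Hence $V=\P_0\LV_k(\R^n)^{tr}$, so $\overline{W}$ contains $C(\R^n)\bullet\Psi_j$ for all $j$. By \autoref{maintheorem:IntegralRepSmooth} this contains all smooth elements, which are dense. Therefore $\overline{W}=\P_0\LV_k(\R^n)$.
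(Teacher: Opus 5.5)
Your argument is correct, but at the decisive step it takes a different route from the paper.

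\textbf{The paper's route.} The paper never extracts a translation invariant element by hand. It applies the correspondence of \autoref{theorem:CorrespondenceSubmodulesSubspaces} to $\overline{W}$. The injectivity half of that correspondence is proved by averaging translates of a smooth, compactly supported $\sum_j\phi_j\bullet\Psi_j\in\overline{W}$ through Riemann sums, which yields $\sum_j\bigl(\int\phi_j\bigr)\Psi_j\in\overline{W}$. Then \autoref{theorem:MAVal_Irreducible} and the correspondence give $\overline{W}=\P_0\LV_k(\R^n)$.

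\textbf{Your route.} Your blow-up at a point works and is shorter. Take $g_\epsilon(y)=x_0+\epsilon y$. By Eq.~\eqref{eq:moduleEquivariant} and $\pi(\epsilon\,\mathrm{id})\Psi_j=\epsilon^{2k-n}\Psi_j$ (which follows from the $k$-homogeneous polynomial density in $D^2f$), you get
\[
\epsilon^{2k-n}\pi(g_\epsilon^{-1})\Bigl(\sum_j\phi_j\bullet\Psi_j\Bigr)=\sum_j(\phi_j\circ g_\epsilon)\bullet\Psi_j\longrightarrow\sum_j\phi_j(x_0)\Psi_j .
\]
This uses only continuity of $\phi\mapsto\phi\bullet\Psi_j$ for locally uniform convergence. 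No cutoff is needed. In fact, a cutoff concentrated at the same scale $\epsilon$ would leave you with $\psi\bullet\sum_j\phi_j(x_0)\Psi_j$ in the limit, which is not translation invariant.

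Your final step also bypasses the correspondence: all $\Psi_j$ lie in $\overline{W}$, hence so does every smooth element by \autoref{maintheorem:IntegralRepSmooth}, and these are dense by \autoref{proposition:densitySmoothVectors}.

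\textbf{What each approach buys.} Your extraction uses dilations, so it needs full affine invariance. The paper's averaging needs only translations, and therefore gives the correspondence for arbitrary closed translation invariant submodules and all $d$.

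\textbf{Two small corrections.}
\begin{enumerate}
\item The individual $\Psi_j$ need not lie in $\overline{W}$; only the combination you extract does.
\item Your proposed direct proof of irreducibility does not work as stated. The torus weight spaces in the span of $k$-minors are not one-dimensional. For $k=2$, $n\ge 4$, the minors with rows/columns $\{1,2\},\{3,4\}$ and $\{1,3\},\{2,4\}$ have the same weight, so a weight vector need not be a single minor. Your identification with the irreducible module of highest weight $(2,\dots,2,0,\dots,0)$, up to the character $|\det g|^{-1}$, is right. Either cite \autoref{theorem:MAVal_Irreducible} or run a genuine highest-weight-vector argument.
\end{enumerate}
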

This result implies a variety of density results for different families of local functionals. We include the following as an example.
\begin{maincorollary}\label{maincorollary:ExampleDenseSubmodule}
	Let $\mathcal{F}\subset \Conv(\R^n,\R)$ be affine invariant. Then the $C(\R^n)$-submodule of $\P_0\LV(\R^n)$ generated by the mixed Monge-Amp\`ere operators
	\begin{align*}
		\MA(\cdot[k],f_1,\dots,f_{n-k}),\quad f_1,\dots,f_{n-k}\in\mathcal{F}
	\end{align*}
	is either trivial or dense in $\P_0\LV_k(\R^n)$ with respect to the compact-to-weak* topology.
\end{maincorollary}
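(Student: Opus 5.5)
We derive Corollary~\ref{maincorollary:ExampleDenseSubmodule} from Theorem~\ref{maintheorem:affineInvariantSubmoduleDense}. Let $W\subset\P_0\LV(\R^n)$ be the $C(\R^n)$-submodule generated by the functionals $\Phi_{f_1,\dots,f_{n-k}}:=\MA(\cdot[k],f_1,\dots,f_{n-k})$ with $f_i\in\mathcal{F}$. It consists of finite sums $\sum_i\psi_i\bullet\Phi_{f^{(i)}}$ with $\psi_i\in C(\R^n)$. The plan has three steps.

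\textbf{Step 1: each generator lies in $\P_0\LV_k(\R^n)$.} I would argue as follows.
\begin{itemize}
\item \emph{Continuity} follows from the weak* continuity of mixed Monge--Amp\`ere measures, obtained by polarization from the continuity of $\MA$ on $\Conv(\R^n,\R)$.
\item \emph{Locality} holds because $f|_U=h|_U$ implies $\MA(\lambda_0f+\sum_i\lambda_if_i)|_U=\MA(\lambda_0h+\sum_i\lambda_if_i)|_U$, since $\MA$ is local. Taking derivatives in the $\lambda$'s preserves this.
\item \emph{Polynomiality of degree $0$}, i.e.\ invariance $\Phi(f+\ell)=\Phi(f)$ for affine $\ell$, holds because $\MA(g+\ell)=\MA(g)$. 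This is clear for $C^2$ functions and extends by continuity.
\item \emph{$k$-homogeneity} in the first argument follows from multilinearity.
\end{itemize}
The submodule operations $\psi\bullet$ preserve locality, continuity, degree and homogeneity, so $W\subset\P_0\LV_k(\R^n)$.

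\textbf{Step 2: $W$ is affine invariant.} For $g\in\Aff(n,\R)$ and $f_i\in\mathcal{F}$, I would compute $\pi(g)\Phi_{f_1,\dots,f_{n-k}}$. Write $g(x)=Ax+b$. The change of variables formula for the Monge--Amp\`ere measure gives
\[
\MA(f\circ g;g^{-1}(B))=|\det A|\,\MA(f;B),
\]
first for $C^2$ functions, where $\det D^2(f\circ g)(x)=\det(A)^2\det D^2f(gx)$ and the substitution contributes $|\det A|^{-1}$, and then for all $f$ by continuity. Polarizing yields
\[
\pi(g)\MA(\cdot[k],f_1,\dots,f_{n-k})=|\det A|\,\MA(\cdot[k],f_1\circ g^{-1},\dots,f_{n-k}\circ g^{-1})
\]
up to matching the precise convention. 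Since $\mathcal{F}$ is affine invariant, $f_i\circ g^{-1}\in\mathcal{F}$, so this is a constant multiple of a generator. For module elements we also need
\[
\pi(g)(\psi\bullet\Phi)=(\psi\circ g)\bullet\pi(g)\Phi,
\]
which follows directly from Eq.~\eqref{eq:DefAction}. Therefore $\pi(g)W\subset W$.

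\textbf{Step 3: conclusion.} Apply Theorem~\ref{maintheorem:affineInvariantSubmoduleDense} to $W$. If $W\ne0$, then $W$ is dense in $\P_0\LV_k(\R^n)$ with respect to the compact-to-weak* topology. Otherwise $W$ is trivial, which is the dichotomy claimed.

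The main obstacle is the verification in Step 2, specifically the transformation rule for mixed Monge--Amp\`ere measures under affine maps with the exact conventions of Eq.~\eqref{eq:DefAction}. The cleanest route is to check it for $C^2$ functions via the chain rule for the Hessian and then extend by density and continuity. The remaining steps are routine.
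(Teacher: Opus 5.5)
Your proposal is correct and follows the paper's route: the paper also deduces the corollary directly from \autoref{maintheorem:affineInvariantSubmoduleDense} by observing that the generated $C(\R^n)$-submodule is affine invariant, and you supply the routine verifications, namely that the generators lie in $\P_0\LV_k(\R^n)$ and the rule $\pi(g)\MA(\cdot[k],f_1,\dots,f_{n-k})=|\det A|\,\MA(\cdot[k],f_1\circ g^{-1},\dots,f_{n-k}\circ g^{-1})$, both of which check out. One harmless slip: with the convention in Eq.~\eqref{eq:DefAction}, the module equivariance is $\pi(g)(\psi\bullet\Phi)=(\psi\circ g^{-1})\bullet\pi(g)\Phi$ as in Eq.~\eqref{eq:moduleEquivariant}, not $(\psi\circ g)\bullet\pi(g)\Phi$, but the result still lies in $W$ either way.
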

Some interesting families are listed at the end of \autoref{section:affineInvSubModule}.

\subsection{Plan of the article}
	The central result of this article is the classification of smooth polynomial local functionals in \autoref{maintheorem:IntegralRepSmooth}. The key step is the case $d=0$, from which the case $d\ge 1$ can be obtained by induction. Our approach is based on an interpretation of $k$-homogeneous polynomial local functionals in terms of certain distributions introduced in \cite{KnoerrPolynomiallocalfunctionals2025}. By using a partition of unity, we reduce the proof of \autoref{maintheorem:IntegralRepSmooth} to a characterization of certain compactly supported distributions. More precisely, we follow an approach introduced in \cite{KnoerrPaleyWienerSchwartz2025} that characterizes such distributions by showing that their Fourier--Laplace transforms belong to certain modules of holomorphic functions generated by a finite family of polynomials. In our case, it turns out that these polynomials are minors of certain matrices, and we use an adaptation of the method in \cite{KnoerrPaleyWienerSchwartz2025} to obtain the representation in \autoref{maintheorem:IntegralRepSmooth} from the decomposition of the Fourier--Laplace transform in terms of these generators. In particular, we obtain a version of the Paley--Wiener--Schwartz Theorem for homogeneous and compactly supported local functionals that are polynomial of degree $0$, see \autoref{section:PWS}.\\
	
	The article is structured as follows:\\
	In \autoref{sectionPreliminaries}, we investigate certain modules of polynomials generated by $k$-minors. The results are simple adaptations of \cite{KnoerrPaleyWienerSchwartz2025}*{Section 2.3}, however, we include the necessary modifications for the convenience of the reader. We also review the required results for valuations and local functionals, including the construction of the Goodey--Weil distributions introduced in \cite{KnoerrPolynomiallocalfunctionals2025}. \autoref{section:PWS} contains a characterization of these distributions for $d=0$ in terms of the Fourier--Laplace transform and a version of \autoref{maintheorem:IntegralRepSmooth} for compactly supported local functionals. The general case is obtained from this description in \autoref{section:IntegralRepresentations}, which also includes the proof of \autoref{maintheorem:IntegralRepContinuousCase}. Finally, we prove \autoref{maintheorem:affineInvariantSubmoduleDense} in \autoref{section:affineInvSubModule}.
	
	\begin{acknowledgement}
		This research was funded in whole or in part by the Austrian Science Fund (FWF), \href{https://www.doi.org/10.55776/PAT4205224}{10.55776/PAT4205224}.
	\end{acknowledgement} 

\section{Preliminaries}
	\label{sectionPreliminaries}
	Throughout the article, we will use the following constants:
	\begin{align*}
		N(n,d)=&\dim \P_d\LV(\R^n)^{tr},\\
		N_{n,k}=&\dim \P_0\LV_k(\R^n).
	\end{align*}
	For simplicity, all vector spaces are assumed to be complex vector spaces unless stated otherwise. In this case, for a real vector space $V$, we denote by $V_\C:=V\otimes \C$ its complexification. Abusing notation, we denote by $\Poly(W)$ the space of complex polynomials on a complex vector space $W$. With the notation in the introduction, this implies that for a real vector space $V$, $\Poly(V)=\Poly(V_\C)$. 

	\subsection{$k$-minors and modules of holomorphic functions}
		\label{section:minors}
		As discussed in the introduction, the proof of \autoref{maintheorem:IntegralRepSmooth} is based on a characterization of a certain distribution associated to a given local functional in terms of the Fourier--Laplace transform. In the cases we will be interested in, the relevant distributions are compactly supported, so the Fourier--Laplace transforms define entire functions on suitable products of $\C^n$. In this section we establish the results needed for a description of these functions in Section \ref{section:PWS}.\\
		
		For $0\le k\le n$, we identify $(\C^n)^{k+1}\cong \Mat_{n,k+1}(\C)$ with the space of complex $(n\times(k+1))$-matrices. In particular, for $w\in \Mat_{n,k+1}(\C)$ we will write $w=(w_1,\dots,w_{k+1})$ for its $k+1$ columns $w_1,\dots,w_{k+1}\in\C^n$. In order to be consistent with the usual notation for the entries of matrices, the components of the column vector $w_j\in \C^n$ will be denoted by $w_j=(w_{1j},\dots,w_{nj})^T$.\\
		
		Let $\Poly(\Mat_{n,k+1}(\C))$ denote the space of complex polynomials on $\Mat_{n,k+1}(\C)$. We consider this space as a module over the space $\Poly(\C^n)$ of complex polynomials on $\C^n$ by identifying $\Poly(\C^n)$ with polynomials in $\Poly(\Mat_{n,k+1}(\C))$ depending on the last columns $w_{k+1}$ only. Similarly, we identify the space $\Poly(\Mat_{n,k}(\C))$ of polynomials on the space of $(n\times k)$-matrices with the subset of $\Poly(\Mat_{n,k+1}(\C))$ of polynomials depending on the first $k$ columns only. Let $\M^2_k\subset \Poly(\Mat_{n,k}(\C))$ denote the subspace spanned by quadratic products of $k$-minors and $\M_k\subset\Poly(\Mat_{n,k}(\C))$ the subspace spanned by the $k$-minors.\\
		\begin{remark}
			\label{remark:dimensionM2k}
			It was shown in \cite{KnoerrMongeAmpereoperators2024}*{Corollary 6.19} (see also \autoref{lemma:QBijectiveMA} below) that $\M^2_k\cong \P_0\LV_k(\R^n)^{tr}$. In particular, $\dim \M^2_k=N_{n,k}$.
		\end{remark}
		
		The next two results characterize the submodule $\widetilde{\M^2_k}$ of $\Poly(\Mat_{n,k+1}(\C))$ generated by $\M^2_k$ in terms of the behavior under restrictions. Note that for any $k$-dimensional complex subspace $F\subset \C^n$, we may consider $F^k\subset (\C^n)^k$ as a subset of $\Mat_{n,k}(\C)$. If we choose a basis of $F$, then the restriction of any $k$-minor to $F^k$ is a multiple of the determinant of the coordinate matrix of $w\in F^k$ with respect to the given basis. In particular, the restriction of elements in $\M_k$ to $F^k$ defines a $1$-dimensional subspace in $\Poly(F^k)$.\\
		Let $\prescript{\C}{}{\Gr_k(\C^n)}$ denote the space of $k$-dimensional complex subspaces of $\C^n$ and consider for $\Delta\in \M_k$ the set
		\begin{align*}
			U_\Delta:=\{E\in\ \prescript{\C}{}{\Gr_k(\C^n)}: \Delta|_{E^k}\ne 0\}.
		\end{align*}
		Note that this is a Zariski dense subset of $\prescript{\C}{}{\Gr_k(\C^n)}$ unless $\Delta$ vanishes identically. In the rest of this section, we consider polynomials that restrict to certain multiples of $\Delta^2$ on subspaces belonging to $U_\Delta$. The arguments are very similar to the discussion in \cite{KnoerrPaleyWienerSchwartz2025}*{Section~2.2}, but we include the necessary modifications for completeness. The first lemma shows that it is sufficient to consider restrictions to complexifications of real subspaces.
		\begin{lemma}
			\label{lemma:PolynomialFromComplexifiedSpaces}
			Let $P\in\Poly(\Mat_{n,k+1}(\C))$ be a polynomial with the following property: For every $\Delta\in 	\M_k$ and every $E\in U_\Delta\cap \{E_0\otimes \C: E_0\in\Gr_k(\R^n)\}$ there exists a polynomial $P_{\Delta,E}\in \Poly(E)$ such that for all $w_1,\dots,w_k\in E$, $w_{k+1}\in\C^n$,
			\begin{align}
				\label{eq:restrictionRealSubspaces}
				P(w_1,\dots,w_{k+1})=\Delta^2(w_1,\dots,w_k) P_{\Delta,E}(w_{k+1}).
			\end{align}
			Then there exist polynomials $Q_{\Delta,E}\in \mathcal{P}(E)$ for all $E\in U_\Delta$ such that \begin{align*}
				P(w_1,\dots,w_k)=\Delta^2(w_1,\dots,w_k) Q_{\Delta,E}(w_{k+1}).
			\end{align*}
			for all $w_1,\dots,w_{k}\in E$, $w_{k+1}\in\C^n$.
		\end{lemma}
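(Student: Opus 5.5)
The plan is to reformulate both hypothesis and conclusion as a single polynomial identity on $\Mat_{n,k}(\C)\times \Mat_{k,k}(\C)\times\C^n$. Then I use that a polynomial vanishing on a nonempty open subset of the real points is identically zero.

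Fix $\Delta\in\M_k$, $\Delta\ne0$; if $\Delta=0$ then $U_\Delta=\emptyset$ and there is nothing to prove. Consider the polynomial
\begin{align*}
	F(w,A,v):=P(wA,v)-\det(A)^2P(w,v),\qquad w\in\Mat_{n,k}(\C),\ A\in\Mat_{k,k}(\C),\ v\in\C^n,
\end{align*}
where $wA$ denotes the matrix product, whose columns lie in the span of the columns of $w$.

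First, I show that $F(w,A,v)=0$ whenever $w\in\Mat_{n,k}(\R)$ satisfies $\Delta(w)\ne0$. For such $w$ the columns are linearly independent, since $\Delta(w)\neq 0$ forces rank $k$. Hence $E:=\mathrm{span}_\C(w_1,\dots,w_k)=E_0\otimes\C$ with $E_0=\mathrm{span}_\R(w_1,\dots,w_k)\in\Gr_k(\R^n)$, and $E\in U_\Delta$. The columns of $wA$ lie in $E$ for every complex $A$, so Eq.~\eqref{eq:restrictionRealSubspaces} applies to both $w$ and $wA$. Since $\Delta$ is a linear combination of $k$-minors, $\Delta(wA)=\det(A)\Delta(w)$. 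This gives
\begin{align*}
	P(wA,v)=\Delta(wA)^2P_{\Delta,E}(v)=\det(A)^2\Delta(w)^2P_{\Delta,E}(v)=\det(A)^2P(w,v).
\end{align*}
The set $\{w\in\Mat_{n,k}(\R):\Delta(w)\ne0\}$ is a nonempty open subset of $\Mat_{n,k}(\R)$. It is nonempty because $\Re\Delta$ or $\Im\Delta$ restricts to a nonzero real polynomial on the real points, as $\Delta\ne0$.

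Second, for fixed $(A,v)$ the polynomial $w\mapsto F(w,A,v)$ vanishes on a nonempty open subset of $\Mat_{n,k}(\R)$. All its derivatives at a real point of that set therefore vanish, so it is the zero polynomial on $\Mat_{n,k}(\C)$. Consequently $F\equiv0$, i.e. $P(wA,v)=\det(A)^2P(w,v)$ for all complex $w,A,v$.

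Third, I define $Q_{\Delta,E}$ for an arbitrary $E\in U_\Delta$. Choose $w^0=(w^0_1,\dots,w^0_k)\in E^k$ with $\Delta(w^0)\ne0$, which exists by the definition of $U_\Delta$, and set $Q_{\Delta,E}(v):=P(w^0,v)/\Delta(w^0)^2$. This is clearly a polynomial in $v$. Any $w\in E^k$ can be written as $w=w^0A$ for some (possibly singular) $A$, because the columns of $w^0$ form a basis of $E$. The identity $F\equiv0$ then yields
\begin{align*}
	P(w,v)=\det(A)^2P(w^0,v)=\det(A)^2\Delta(w^0)^2Q_{\Delta,E}(v)=\Delta(w)^2Q_{\Delta,E}(v).
\end{align*}
This is the claim, and it also shows that $Q_{\Delta,E}$ does not depend on the choice of $w^0$.

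The only genuinely non-formal step is the second one: passing from real $w$ to complex $w$, i.e. the Zariski density of real points, including the check that $\Delta$ does not vanish identically on real matrices. Both follow from the identity theorem for polynomials. The remaining steps are bookkeeping with the multiplicativity $\Delta(wA)=\det(A)\Delta(w)$.
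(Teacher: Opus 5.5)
Your proof is correct and follows essentially the same route as the paper. The paper's auxiliary polynomial $\tilde{Q}(c,w)$ equals $-\Delta^2(w_1,\dots,w_k)\,F(w,c^T,w_{k+1})$ by the multiplicativity $\Delta(wA)=\det(A)\Delta(w)$; it is shown to vanish for real $w_1,\dots,w_k$ via the hypothesis and then everywhere by density of the real points, and $Q_{\Delta,E}$ is then defined through a basis of $E$. Your use of singular $A$ in the last step makes explicit the case of linearly dependent $w_1,\dots,w_k\in E$, which the paper leaves implicit.
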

		\begin{proof}
			For $\Delta=0$, the claim is vacuously true, so we may assume that $\Delta\ne 0$. We may further assume that $P$ is a homogeneous polynomial. Set $W_\Delta:=\{w\in \Mat_{n,k}(\C):\Delta(w)\ne 0\}$ and consider the regular function on $W_\Delta\times\C^n$ given by
			\begin{align*}
				Q(w_1,\dots,w_{k+1})=\frac{P(w_1,\dots,w_{k+1})}{\Delta^2(w_1,\dots,w_k)}.
			\end{align*}
			We claim that the value of $Q(w_1,\dots,w_{k+1})$ only depends on $w_{k+1}$ and $\mathrm{span}(w_1,\dots,w_k)$. In order to see this, it is sufficient to show that
			\begin{align*}
				P(w_1,\dots,w_{k+1})\Delta^2(w_1',\dots,w_k')-
				P(w'_1,\dots,w'_k,w_{k+1})\Delta^2(w_1,\dots,w_k)
			\end{align*}
			vanishes if $\mathrm{span}(w_1,\dots,w_k)=\mathrm{span}(w'_1,\dots,w'_k)$. For $c=(c_{jl})_{jl}\in\C^{k\times k}$, consider the polynomial
			\begin{align*}
				\tilde{Q}(c,w_1,\dots,w_{k+1}):=&P(w_1,\dots,w_{k+1})\Delta^2\left(\sum_{l=1}^kc_{1l}w_l,\dots,\sum_{l=1}^kc_{kl}w_l\right)\\
				&-
				P\left(\sum_{l=1}^kc_{1l}w_l,\dots,\sum_{l=1}^kc_{kl}w_l,w_{k+1}\right)\Delta^2(w_1,\dots,w_k).
			\end{align*}
			If $w_1,\dots,w_{k}\in\R^n$, then $c\mapsto \tilde{Q}(c,w_1,\dots,w_{k+1})$ vanishes for $c\in \R^{k\times k}$ due to Eq.~\eqref{eq:restrictionRealSubspaces}, and thus for every $c\in \C^{k\times k}$. Therefore, the map $(w_1,\dots,w_{k+1})\mapsto \tilde{Q}(c,w_1,\dots,w_{k+1})$ vanishes for $w_1,\dots,w_{k}\in\R^n$ for every $c\in \C^{k\times k}$, $w_{k+1}\in\C^n$, so it vanishes identically, which shows the claim.\\
			In order to construct the polynomial $Q_{\Delta,E}\in\Poly(\C^n)$ for any $E\in U_\Delta$, fix a basis $w^E_1,\dots,w^E_k$ of $E$. By the previous discussion, the value of
			\begin{align*}
				Q_{\Delta,E}(w_{k+1}):=Q(w^E_1,\dots,w^E_k,w_{k+1})=\frac{P(w^E_1,\dots,w^E_k,w_{k+1})}{\Delta^2(w^E_1,\dots,w^E_k)}
			\end{align*}
			only depends on the subspace $E$ and $w_{k+1}\in\C^n$ and not the choice of the basis $w^E_1,\dots,w^E_k$. Since this is a polynomial in $w_{k+1}$, this concludes the proof.
		\end{proof}
		The following completes the characterization of $\widetilde{\M^2_k}$ in terms of its restrictions. The proof uses some basic result on regular representations of $\GL(n,\C)$, and we refer to \cite{GoodmanWallachSymmetryrepresentationsinvariants2009} for a general background.
		\begin{proposition}
			\label{proposition:CharacterizationModuleSquareMinors}
			Let $P\in \Poly(\Mat_{n,k+1}(\C))$ be a polynomial with the following property:
			For every $\Delta\in \M_k$ and every subspace $E\in U_\Delta$ there exists a polynomial $P_{\Delta,E}\in \Poly(\C^n)$ such that
			\begin{align}
				\label{eq:PropositionRestrictionMinors} 
				P(w_1,\dots,w_{k+1})=\Delta^2(w_1,\dots,w_k) P_{\Delta,E}(w_{k+1})
			\end{align}
			for all $w_1,\dots,w_k\in E$, $w_{k+1}\in\C^n$.
			Then $P$ is contained in the $\Poly(\C^n)$-submodule $\widetilde{\M^2_k}$.
		\end{proposition}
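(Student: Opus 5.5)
Since the hypotheses are stable under the action of $\GL(n,\C)$ (acting diagonally on all $k+1$ columns) and under scaling of each column, the plan is to reduce $P$ to polynomials that are multihomogeneous in $(w_1,\dots,w_k)$ and in $w_{k+1}$, and to work at fixed degree in $w_{k+1}$. Concretely, decompose $P=\sum_\alpha p_\alpha(w_1,\dots,w_k)\, w_{k+1}^\alpha$ with $p_\alpha\in\Poly(\Mat_{n,k}(\C))$. Since $P_{\Delta,E}$ is a polynomial in $w_{k+1}$, comparing coefficients of $w_{k+1}^\alpha$ in \eqref{eq:PropositionRestrictionMinors} shows that each $p_\alpha$ satisfies the same restriction property with $k+1$ replaced by $k$. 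That is, for every $\Delta\in\M_k$ and $E\in U_\Delta$, the restriction $p_\alpha|_{E^k}$ is a constant multiple of $\Delta^2|_{E^k}$. It therefore suffices to show that any $p\in\Poly(\Mat_{n,k}(\C))$ with this property lies in $\M^2_k$.

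To prove this, I will first show that $p$ is invariant under $\SL(k,\C)$ acting on the right, i.e. $p(wc)=\det(c)^2p(w)$ for $c\in\GL(k,\C)$. Choosing any $\Delta\ne0$, the set of $w$ whose columns span some $E\in U_\Delta$ and are linearly independent is Zariski dense. For such $w$, both $w$ and $wc$ have columns in $E^k$, so
\[
p(wc)=\lambda_E\,\Delta^2(wc)=\det(c)^2\lambda_E\,\Delta^2(w)=\det(c)^2p(w).
\]
By density this identity extends to all $w$. The First Fundamental Theorem of invariant theory for $\SL(k)$ acting on $k$-tuples of vectors in $\C^n$ (see \cite{GoodmanWallachSymmetryrepresentationsinvariants2009}) then identifies the right $\SL(k)$-invariants in $\Poly(\Mat_{n,k}(\C))$ with the algebra generated by the $k$-minors. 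The relative invariants of weight $\det^2$ are exactly the homogeneous elements of degree two in the minors, which is $\M^2_k$. Hence $p_\alpha\in\M^2_k$ for every $\alpha$, and $P=\sum_\alpha p_\alpha w_{k+1}^\alpha\in\widetilde{\M^2_k}$.

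The main obstacle is invoking the invariant theory correctly. In particular, I need to verify the weight bookkeeping: that $\det^m$-semi-invariants are spanned by products of exactly $m$ minors. This can be seen from the Cauchy decomposition $\Poly(\Mat_{n,k})=\bigoplus_\lambda S_\lambda(\C^n)^*\otimes S_\lambda(\C^k)^*$, where the $\det^2$-isotypic part corresponds to $\lambda=(2^k)$, which is spanned by products of two minors. A cleaner alternative, avoiding the First Fundamental Theorem, is to use Plücker coordinates: $p$ descends to a degree-two section on the Grassmannian, and $H^0(\Gr_k,\mathcal{O}(2))$ is spanned by quadratic monomials in Plücker coordinates, which again gives $\M^2_k$. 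I would present the Cauchy/Plücker argument and cite \cite{GoodmanWallachSymmetryrepresentationsinvariants2009} for the representation-theoretic facts.
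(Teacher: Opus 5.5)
Your proof is correct, and it takes a genuinely different route from the paper.

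\textbf{The paper's route.} The paper keeps all $k+1$ columns together and uses the left action of $\GL(n,\C)$. Both the set of polynomials satisfying the hypothesis and the module $\widetilde{\M^2_k}$ are $\GL(n,\C)$-stable, so it suffices to treat highest weight vectors $P_\lambda$. For such a vector, the paper uses the Gauss decomposition $w=\nu h u$ together with the hypothesis for the principal minor $\Delta_k$ on $E=\C^k\times\{0\}$. This shows that $P_\lambda=\Delta_k^2(w)\,Q(u_{k+1})$. Since $\Poly(\C^{k+1})$ is multiplicity free with highest weights $(d,0,\dots,0)$, it concludes that $P_\lambda=c\,\Delta_k^2\,w_{1,k+1}^{d}$.

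\textbf{Your route.} You first split off $w_{k+1}$ by comparing coefficients of $w_{k+1}^\alpha$. This is legitimate: the module structure is just multiplication by polynomials in $w_{k+1}$, and the hypothesis is linear in $P_{\Delta,E}$. You then use the right action of $\GL(k,\C)$ by column operations. Under this action the restriction hypothesis becomes the semi-invariance $p(wc)=\det(c)^2p(w)$ on the dense open set $\{\Delta\neq 0\}$, hence everywhere.

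\textbf{Comparison.} Your argument is shorter and avoids highest weight theory and the Gauss decomposition. It also shows that the hypothesis for a single nonzero $\Delta$ already suffices. The price is an appeal to the First Fundamental Theorem for $\mathrm{SL}(k,\C)$. The correct framing there is $\mathrm{SL}(k,\C)$ acting on the $n$ rows of $w$ as vectors in $\C^k$; the $k\times k$ brackets of rows are exactly the $k$-minors. The paper's argument is a direct adaptation of the author's earlier proof and needs only standard facts about $\GL(n,\C)$-highest weight vectors.

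\textbf{The obstacle you flag is not one.} Taking $c=tI_k$ gives $p(tw)=t^{2k}p(w)$, so $p$ is homogeneous of degree $2k$. Every $k$-minor has degree $k$, so writing $p$ as a polynomial in the minors leaves only the part that is quadratic in them, i.e.\ $p\in\M^2_k$. Neither the Cauchy decomposition nor the Pl\"ucker/section argument is needed. The latter would also require projective normality of the Grassmannian plus a descent step, so it is not cleaner.
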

		\begin{proof}
			The proof of this result is essentially identical to the proof of \cite[Proposition 2.4]{KnoerrPaleyWienerSchwartz2025}. For completeness, we include the proof with the necessary modifications.\\
			
			Note that the polynomials with these properties form a $\GL(n,\C)$-invariant subspace of $\Poly(\Mat_{n,k+1}(\C))$, where $g\in\GL(n,\C)$ operates on $\Poly(\Mat_{n,k+1}(\C))$ via left multiplication by
			\begin{align*}
				[g\cdot P](w)=P(g^Tw),\quad w\in\Mat_{n,k+1}(\C).
			\end{align*}
			Similarly, the submodule $\widetilde{\M^2_k}$ is a $\GL(n,\C)$-invariant subspace. As the space of complex polynomials on $\Mat_{n,k+1}(\C)$ is a regular representation of $\GL(n,\C)$, it decomposes into a direct sum of irreducible representations. Any such representation is generated by a unique highest weight vector, so it is sufficient to show that any highest weight vector satisfying \eqref{eq:PropositionRestrictionMinors} is contained in the module generated by $\M^2_k$. We will show that any such vector is a product of the square $\Delta_k^2$ of the $k$th principal minor $\Delta_k$ with a polynomial in $\Poly(\C^n)$.\\
			
			First note the claim is trivial for $k=n$, since $\M^2_n$ is spanned by the square of the determinant. Thus assume that $0\le k\le n-1$.\\
			
			Let $P_\lambda$ be a highest weight vector with weight $\lambda\in\mathbb{Z}^n$ and the properties above. Then $P_\lambda$ is invariant under the group $N_n^+$ of upper triangular matrices with $1$ on the diagonal, and for any diagonal matrix $h=\mathrm{diag}(h_1,\dots,h_n)$ we have
			\begin{align*}
				h\cdot P_\lambda=h^\lambda P_\lambda.
			\end{align*}
			Let us consider the restriction of $P_\lambda$ to the dense open set of all elements $w\in \Mat_{n,k+1}(\C)$ with $\Delta_i(w)\ne 0$ for $1\le i\le k+1$, where $\Delta_i$ denotes the $i$th principal minor. Using the Gauss decomposition (compare e.g. \cite[Chapter~11.6]{GoodmanWallachSymmetryrepresentationsinvariants2009}), we can write any such matrix as $w=\nu  h u$, where $\nu\in N^-_n$ is a lower triangular $(n\times n)$-matrix with $1$ on the diagonal, $h\in D_{n,k+1}$ is a diagonal $(n\times (k+1))$-matrix, and $u\in N^+_{k+1}$. Thus
			\begin{align*}
				P_\lambda(w)= P_\lambda((\nu^T)^Thu)=P_\lambda(hu),
			\end{align*}
			since $\nu^T\in N^+_n$.	In particular, $P_\lambda$ is uniquely determined by its restriction to upper triangular matrices. If $w=\nu U$ for an upper triangular matrix $U=(u_1,\dots,u_{k+1})\in \Mat_{n,k+1}(\C)$ and $\nu\in N^-_n$, then $u_1,\dots,u_k\in \C^{k}\times\{0\}$ belong to a $k$-dimensional subspace, and we obtain
			\begin{align*}
				P_\lambda(w)=P_\lambda((\nu^T)^T\cdot U)=P_\lambda(U)=\Delta^2_k(U)Q(u_{k+1})
			\end{align*} 
			for the polynomial $Q:=P_{\Delta_k,\C^k\times\{0\}}|_{\C^{k+1}\times \{0\}}$ by assumption. Note that we are assuming that $u_{k+1}\in \C^{k+1}\times \{0\}\cong \C^{k+1}$. We will thus consider $Q$ as a polynomial on $\C^{k+1}$. In particular, since $\Delta_k$ is invariant under the operation of $N_n^+$,
			\begin{align*}
				P_\lambda(w)=\Delta_k^2(w)Q(u_{k+1}),
			\end{align*}
			and so $P_\lambda$ is uniquely determined by the polynomial $Q$. Since we assume that $P_\lambda$ is non-trivial, $Q$ does not vanish identically. Let us examine the polynomial $Q$ on $\C^{k+1}\times\{0\}$ as well as the weight $\lambda$. First note that for any $u_{k+1}\in \C^{k+1}\times\{0\}$ with $(u_{k+1})_{k+1}\ne 0$ there exist an upper triangular matrix $w_0\in \Mat_{n,k+1}(\C)$ with $\Delta_i(w_0)\ne0$ for $1\le i\le k+1$ such that $u_{k+1}$ is the last column of $w_0$. Given such a matrix $w_0\in\Mat_{n,k+1}(\C)$ and a diagonal matrix $h\in D_{n,n}$, we thus have
			\begin{align*}
				h^{\lambda} P_\lambda(w_0)=P_\lambda(h^T\cdot w_0)=h_1^2\dots h_k^2 \Delta^2_k(w_0)Q(\mathrm{diag}(h_1,\dots,h_{k+1})u_{k+1}),
			\end{align*}
			where we consider $u_{k+1}$ as an element of $\C^{k+1}\cong \C^{k+1}\times \{0\}$. If $u_{k+1}\in \C^{k+1}\times\{0\}\cong\C^{k+1}$ belongs to the complement of the zero set of $Q$, then the right hand side of this equation does not depend on $h_{k+2},\dots,h_n$, so  $\lambda_{k+2}=\dots=\lambda_n=0$ since this set is dense in $\C^{k+1}$ and $\Delta_k(w_0)\ne 0$. Moreover, this implies that $Q$ is a weight vector with weight $(\lambda_1,\dots,\lambda_{k+1})-(2,\dots,2,0)\in \mathbb{Z}^{k+1}$ of the $\GL(k+1,\C)$-representation $\Poly(\C^{k+1})$ with action 
			\begin{align*}
				g\cdot P(z)=P(g^Tz)\quad \text{for}~g\in\GL(k+1,\C),~P\in \Poly(\C^{k+1}).
			\end{align*} If $\nu\in N^+_{k+1}\subset \GL(k+1,\R)$ is an upper triangular unidiagonal matrix, then, since $P_\lambda$ and $\Delta_k$ are invariant under $N^+_n$,
			\begin{align*}
				\Delta_k^2(w_0)Q(u_{k+1})=P_\lambda(w_0)=P_\lambda\left(\begin{pmatrix}
					\nu &0\\
					0 & Id_{n-k-1}
				\end{pmatrix}^Tw_0\right)=\Delta^2_k(w_0)Q(\nu^T u_{k+1}).
			\end{align*}
			In other words, $Q$ is invariant under $N^+_{k+1}$, so $Q$ is a sum of highest weight vectors with weight $(\lambda_1-2,\dots,\lambda_k-2,\lambda_{k+1})$. As $\mathcal{P}(\C^{k+1})$ is a multiplicity free representation of $\GL(k+1,\C)$ and all highest weights are of the form $(d,0,\dots,0)$ for $d\ge0$, we thus obtain $c\in \C\setminus\{0\}$ such that  $Q(z)=cz_1^{d}$ for $z\in \C^{k+1}$. Comparing the degrees of homogeneity, $d'=d-2$. In particular, $(\lambda_1,\dots,\lambda_{k+1})=(d+2,2,\dots,2,0)$, and 
			\begin{align*}
				P_\lambda(w)=c\cdot\Delta_k^2(w) w_{1,k+1}^{d}
			\end{align*}
			which belongs to the $\Poly(\C^n)$-submodule generated by $\M^2_k$. 
		\end{proof}
		Let us consider the space $\mathcal{O}_{\C^m\times\C^n}$ of all entire function on $\C^m\times\C^n$ as a $\O_{\C^n}$-module using the obvious inclusion $\O_{\C^n}\subset \O_{\C^m\times\C^n}$. The next result provides a decomposition of entire functions in $\mathcal{O}_{\C^m\times\C^n}$ that belong to certain $\O_{\C^n}$-submodules generated by homogeneous polynomials. It exploits the power series expansion of these function in $0$, which we will call its power series expansion for brevity. \\		
		We similarly consider $\Poly(\C^m\times\C^n)$ as a module over $\Poly(\C^n)$ using the obvious inclusion, which turns $\Poly(\C^m\times\C^n)$ into a $\Poly(\C^n)$-module. If $M\subset \Poly(\C^m\times\C^n)$ is a finitely generated submodule, then $M$ belongs to the finitely generated free module $F$ generated by the monomials of a set of generators of $M$. We fix an order on the monomials of this free module by restricting the lexicographic order on $\Poly(\C^m\times\C^n)$ induced from the variable order
		\begin{align*}
			w_{1,1}>\dots>w_{m,1}>w_{1,2}>\dots >w_{n,2}
		\end{align*}
		to $F$, where $w_1=(w_{1,1},\dots,w_{m,1})\in \C^m$ and $w_2=(w_{1,2},\dots,w_{n,2})\in \C^n$ denote the coordinates of $w(w_1,w_2)\in\C^m\times\C^n$, and denote the initial term of an element $P\in M$ (i.e. the multiple of the largest monomial in $P$ with non-trivial coefficient) by $\mathrm{in}(P)$. The following result was established in \cite{KnoerrPaleyWienerSchwartz2025} using some elementary properties of Gröbner basis. We refer to \cite{EisenbudCommutativealgebra1995}*{Chapter 15} for a general background. Let us call a $\Poly(\C^n)$-submodule of $\Poly(\C^m\times\C^n)$ a homogeneous module if it is generated by homogeneous polynomials. 
		\begin{theorem}[\cite{KnoerrPaleyWienerSchwartz2025} Theorem 2.9 and Corollary 2.10]
			\label{theorem:DivisionAlg}
			Let $M\subset \mathcal{P}(\C^m\times\C^n)$ be a finitely generated homogeneous module over $\Poly(\C^n)$ and $P_1,\dots,P_N\in M$ an ordered Gröbner basis consisting of homogeneous elements. Assume that this basis has the property that no multiple of $P_i-\mathrm{in}(P_i)$ contains a monomial that is divisible by the initial term of $P_i$ for every $1\le i\le N$. Then the following holds:
			\begin{enumerate}
				\item If $F\in\mathcal{O}_{\C^m\times\C^n}$ is a function such that every homogeneous term in its power series expansion belongs to $M$, then there exist functions $g_j\in \mathcal{O}_{\C^n}$ such that $F=\sum_{i=1}^Ng_iP_i$.
				\item The functions in (1) can be chosen such that for every $\delta>0$ there exists a constant $C_\delta>0$ depending on $P_1,\dots,P_N$ and $\delta$ only, such that 
				\begin{align*}
					|g_j(z)|\le& C_\delta (1+|z|)^{N(h_2+n)}\sup_{\zeta\in 	D_{\delta}(0,z)}|F(\zeta)|,
				\end{align*}
				where $(h_1,h_2)\in\mathbb{N}^2$ is a bound on the degree of $P_j$ for $1\le j\le N$. 
			\end{enumerate} 
		\end{theorem}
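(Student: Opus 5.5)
The statement is a division theorem with estimates. I would prove it in two stages: an algebraic division of each homogeneous piece of $F$, followed by an analytic bound obtained from Cauchy estimates in the $\C^m$-variables.

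\textbf{Algebraic stage.} Expand $F=\sum_{a} F_a(w_1,w_2)$, where $F_a$ is the part of the power series that is homogeneous of degree $a$ in $w_1\in\C^m$. Each $F_a$ is an entire function of $w_2$ with values in the finite-dimensional space of degree-$a$ polynomials in $w_1$. By hypothesis every bihomogeneous component of $F_a$ lies in $M$.

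Since $P_1,\dots,P_N$ is a Gröbner basis, the division algorithm applies to polynomials in $M$ with respect to the chosen lex order. It gives $F_a=\sum_i g_{i,a}P_i$ with $g_{i,a}\in\Poly(\C^n)$, and this representation is standard. The extra hypothesis, that no multiple of $P_i-\mathrm{in}(P_i)$ contains a monomial divisible by $\mathrm{in}(P_i)$, makes the quotients depend linearly and canonically on the coefficients of $F$. Concretely, $g_{i,a}$ is read off from the coefficients of monomials $w_2^\beta\,\mathrm{in}(P_i)$ after a triangular elimination whose length is bounded by $N$.

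Because $P_i$ has $w_1$-degree at most $h_1$, only the components $F_a$ with $a$ in a bounded window around $\deg_{w_1}P_i$ contribute to $g_i$. Since $F_a$ is an entire function of $w_2$, the division can be performed coefficientwise in $w_2$, and the resulting series converge. Set $g_i:=\sum_{a} g_{i,a}$, which is then a finite sum of entire functions of $w_2$. This gives (1).

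\textbf{Analytic stage.} The goal is to bound $g_i(z)$ by values of $F$ on the polydisc $D_\delta(0,z)$ of radius $\delta$ in $w_1$ around $0$ and in $w_2$ around $z$. The argument has three parts.

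First, each $F_a(\cdot,z)$ is extracted by Cauchy's formula in $w_1$ on a polydisc of radius $\delta$. This gives $|\text{coefficient}|\le C_\delta\sup_{D_\delta(0,z)}|F|$ for every coefficient of $F_a(\cdot,z)$ with $a\le h_1$ or so.

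Second, the elimination step expresses $g_i(z)$ as a linear combination of these coefficients and of $w_2$-derivatives of lower-order quotients. The coefficients in this combination come from the $P_j$ and are polynomials in $z$ of degree at most $h_2$. The $w_2$-derivatives are again controlled by Cauchy estimates at radius $\delta$, at the cost of shrinking $\delta$ by a fixed factor at each step.

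Third, the recursion has depth at most $N$, and each step multiplies the bound by at most $(1+|z|)^{h_2+n}$. This yields the exponent $N(h_2+n)$.

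\textbf{Main obstacle.} The hardest step is making the division canonical and controlling it uniformly in $z$. The quotients must not depend on arbitrary choices, and no unbounded reduction chains may occur. I would first establish, using the hypothesis on the initial terms, that reducing by $P_i$ never reintroduces the leading monomial of $P_i$. With this, the reduction becomes a finite triangular system of size at most $N$. Solving that system by back substitution then gives the polynomial growth explicitly.
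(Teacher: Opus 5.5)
The paper does not prove this statement; it imports it from the earlier Paley--Wiener--Schwartz paper. Judged on its own, your outline has two genuine gaps.

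First, splitting $F$ by $w_1$-degree in your algebraic stage is not compatible with $M$. The generators are homogeneous only in total degree, so the fixed-$w_1$-degree pieces of an element of $M$ need not lie in $M$. Take $m=n=1$ with coordinates $s>y$ and $M=\Poly(\C)\cdot(s+y)$. The basis $\{s+y\}$ meets all hypotheses: $\mathrm{in}(s+y)=s$, and no multiple of $y$ contains a monomial divisible by $s$. Now $F=s+y\in M$, yet $F_1=s\notin M$ and $F_0=y\notin M$, and dividing $F_1$ by $s+y$ leaves the remainder $-y$. So the step $F_a=\sum_i g_{i,a}P_i$ fails. What works instead is to write $F=\sum_{\alpha}F_\alpha(w_2)w_1^\alpha$ over the finitely many $w_1$-monomials occurring in $M$, and to clear these positions from the lex-largest downward. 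The hypothesis on $\mathrm{in}(P_i)$ says exactly that the $w_1^{\alpha_i}$-coefficient of $P_i$ is the single monomial $c_iw_2^{\beta_i}$. Hence clearing a position is a monomial division in $w_2$, which keeps the quotients entire coefficient by coefficient. Subtracting $\sum_i q_iP_i$ then changes only strictly smaller positions and keeps every homogeneous term in $M$, and there are at most $N$ such steps.

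Second, your analytic stage aims at a bound by $\sup|F|$ over a small polydisc about $(0,z)$, and for canonical quotients this bound is false. When several $P_i$ share an initial position with different $w_2^{\beta_i}$, the quotients split the Taylor series of $F_\alpha$ along monomial sets. That splitting involves restrictions of $F_\alpha$ to coordinate subspaces, not derivatives of $F$ near $z$.

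A concrete failure: take $m=1$, $n=2$, variables $t>x>y$, $P_1=tx$, $P_2=ty$ (all hypotheses hold), and $F=t(ye^x+xe^y)$. Canonical division gives $g_2\equiv 1$ and $g_1=(ye^x+xe^y-y)/x$. For $z=(-R,-R)$, however, the supremum of $|F|$ over a $\delta$-polydisc about $(0,z)$ is $O(Re^{-R})$, so no factor $(1+|z|)^K$ rescues the estimate. Thus your step ``$g_i(z)$ is a combination of coefficients and $w_2$-derivatives near $z$, controlled by Cauchy estimates at radius $\delta$'' breaks down.

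Note that the paper's region $D_\delta(w)$, given by $|\Re\zeta_j|_1\le|\Re w_j|_1+\delta$ and $|\Im\zeta_j|_1\le|\Im w_j|_1+\delta$, is not a neighbourhood of $(0,z)$. It is a box containing all coordinate projections of $z$. The estimate has to come from bounding the monomial-splitting and monomial-division operators on such regions. This means Cauchy estimates in $w_1$ around $0$, and the maximum principle on circles $|\zeta_l|=\delta$ centred at $0$ in those $w_2$-coordinates where $|z_l|<\delta$. Each elimination step then picks up a factor $(1+|z|)^{h_2}$ from the lower-position coefficients of the $P_i$.

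In the case the paper actually uses, the Gröbner basis is a basis of $\M^2_k$, so the $P_i$ do not involve $w_2$, and both problems disappear. There $M$ is bigraded, and $g_i(z)$ is a fixed linear combination of the $\partial_{w_1}^\alpha F(0,z)$.
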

		\begin{remark}\label{remark:PropertyGroebnerSatisfied}
			If $M\subset \mathcal{P}(\C^m\times\C^n)$ is a $\Poly(\C^n)$-submodule generated by a finite set of homogeneous polynomials in $\Poly(\C^m)$, then any minimal Gröbner basis of $M$ satisfies the condition on the Gröbner basis in the previous theorem. This applies in particular to the submodule $\widetilde{\M}^2_k$ of $\Poly(\Mat_{n,k+1}(\C))$.
		\end{remark}
		For $\delta>0$ and $w\in \Mat_{n,k+1}(\C)$, set
		\begin{align*}
			D_\delta(w)=\{\zeta\in \Mat_{n,k+1}(\C):&|\Re(\zeta_j)|_1\le |\Re(w_j)|_1+\delta,\\
			&|\Im(\zeta_j)|_1\le |\Im(w_j)|_1+\delta,~1\le j\le k+1\}.
		\end{align*}
		
		\begin{theorem}
			\label{theorem:GroebnerAlgModuleMinor}
			Let $F\in \mathcal{O}_{\Mat_{n,k+1}(\C)}$ be a function such that every homogeneous term in the power series expansion of $F$ belongs to the $\Poly(\C^n)$-module $\widetilde{\M}^2_k$. For every basis $Q_j$, $1\le j\le N_{n,k}$, of $\M^2_k$ there exist functions $g_{j}\in\mathcal{O}_{\C^n}$ such that 
			\begin{align}
				\label{eq:presentationTildeM2k}
				F(w)=\sum_{j=1}^{N_{n,k}}g_{j}(w_{k+1})Q_j(w_1,\dots,w_k).
			\end{align}
			Moreover, these functions can be chosen such that for every $\delta>0$, 
			\begin{align*}
				|g_{j}(z)|\le C_{\delta}\left(1+|z|\right)^{nN_{n,k}}\sup_{\zeta\in D_\delta(0,\dots,0,z)}|F(\zeta)|,
			\end{align*}
			for $N_{n,k}=\dim \M^2_k$ and suitable constants $C_\delta>0$ depending on the chosen basis and $\delta>0$, $n$, and $k$ only.
		\end{theorem}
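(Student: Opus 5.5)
The plan is to apply \autoref{theorem:DivisionAlg} with $\C^m=\Mat_{n,k}(\C)$ (the first $k$ columns, $m=nk$) and $\C^n$ the last column $w_{k+1}$, and $M=\widetilde{\M}^2_k$. This module is generated over $\Poly(\C^n)$ by the homogeneous polynomials $Q_1,\dots,Q_{N_{n,k}}$, which depend only on $w_1,\dots,w_k$. By \autoref{remark:PropertyGroebnerSatisfied}, any minimal Gröbner basis $P_1,\dots,P_N$ of $M$ satisfies the hypothesis of \autoref{theorem:DivisionAlg}. Part (1) of that theorem then gives $g'_i\in\O_{\C^n}$ with $F=\sum_i g'_i(w_{k+1})P_i$. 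Part (2) gives the bound
\begin{align*}
|g'_i(z)|\le C_\delta(1+|z|)^{N(h_2+n)}\sup_{\zeta\in D_\delta(0,z)}|F(\zeta)|,
\end{align*}
where $D_\delta(0,z)$ denotes the set from \autoref{theorem:DivisionAlg}. I still need to compare that set with the set $D_\delta(0,\dots,0,z)$ defined before the present statement, possibly after shrinking $\delta$. Norms on finite dimensional spaces are equivalent, so this should cost only a change of constants.

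The next step is to choose the Gröbner basis so that the exponent becomes $nN_{n,k}$ and the result is expressed in the given basis $Q_j$. The generators lie in $\Poly(\C^m)$, and the module they generate over $\Poly(\C^n)$ is free on the $\C$-span $\M^2_k$. I would argue that a minimal Gröbner basis can be taken inside $\M^2_k$ itself, namely a basis of $\M^2_k$ in echelon form with respect to the monomial order. The reason is that the $w_{k+1}$-variables never interact with the $w_1,\dots,w_k$-variables in S-polynomial reductions: the module is $\Poly(\C^n)\otimes\M^2_k$. Then $N=N_{n,k}$ and each $P_i$ has degree $0$ in $w_{k+1}$, so $h_2=0$. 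This yields exactly the exponent $N_{n,k}\cdot n$.

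Finally, each echelon basis element is $P_i=\sum_j a_{ij}Q_j$ for a constant invertible matrix $(a_{ij})$. Setting $g_j:=\sum_i a_{ij}g'_i$ gives \eqref{eq:presentationTildeM2k}, and the estimate is preserved up to a constant depending only on the basis. The main point to check carefully is the claim that a basis of $\M^2_k$, reduced to echelon form, is already a Gröbner basis of $\widetilde{\M}^2_k$. If that fails, I would fall back on a general Gröbner basis with $h_2$ bounded in terms of $n,k$. I would also verify, if needed, that the exponent in \autoref{theorem:DivisionAlg} really depends only on the $w_{k+1}$-degrees of the generators.
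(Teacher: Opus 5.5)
Your proposal is correct and follows the paper's proof: choose a basis of $\M^2_k$ that is itself a Gröbner basis of $\widetilde{\M}^2_k$, apply \autoref{theorem:DivisionAlg} with $N=N_{n,k}$ and $h_2=0$ to get the exponent $nN_{n,k}$, and pass to an arbitrary basis of $\M^2_k$ by a constant invertible change of basis. Your echelon-form argument (the leading terms sit in distinct positions of the free module, so no S-pairs need to be checked) supplies the justification for the existence of such a basis, which the paper uses without comment.
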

		\begin{proof}
			Obviously the claim holds for any basis as soon as it holds for some basis. If we choose a basis of $\M^2_k$ that is a Gröbner basis for the module $\widetilde{\M^2_k}$, then we can directly apply \autoref{theorem:DivisionAlg} due to \autoref{remark:PropertyGroebnerSatisfied}. Note that $\dim\M^2_k=N_{n,k}$, compare \autoref{remark:dimensionM2k}.
		\end{proof}
		This implies the following estimate.
		\begin{corollary}
			\label{corollary:EstimateFourierDiagonalCoordinates}
			For every $\delta>0$ there exists a constant $C_\delta>0$ such that every $F\in \O_{\C^n}\widetilde{\M}^2_k$ satisfies
			\begin{align*}
				|F(w)|\le C_\delta \left(\prod_{j=1}^{k}|w_j|^2\right)\left(1+|w_{k+1}|\right)^{nN_{n,k}}\sup_{\zeta\in D_\delta(0,\dots,0,w_{k+1})}|F(\zeta)|.
			\end{align*}
		\end{corollary}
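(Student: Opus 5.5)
The estimate should follow directly from the decomposition in \autoref{theorem:GroebnerAlgModuleMinor}. I read $\O_{\C^n}\widetilde{\M}^2_k$ as the set of entire functions on $\Mat_{n,k+1}(\C)$ whose homogeneous power series terms all lie in $\widetilde{\M}^2_k$. If it is meant as the $\O_{\C^n}$-module generated by $\M^2_k$, every such $F$ has that property anyway, since the coefficients $g(w_{k+1})$ expand into polynomials in $w_{k+1}$ times elements of $\M^2_k$.

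Fix a basis $Q_1,\dots,Q_{N_{n,k}}$ of $\M^2_k$. Apply \autoref{theorem:GroebnerAlgModuleMinor} to $F$ to get entire functions $g_j\in\O_{\C^n}$ with
\begin{align*}
F(w)=\sum_{j=1}^{N_{n,k}}g_j(w_{k+1})Q_j(w_1,\dots,w_k),
\end{align*}
together with the bound
\begin{align*}
|g_j(z)|\le C_\delta(1+|z|)^{nN_{n,k}}\sup_{\zeta\in D_\delta(0,\dots,0,z)}|F(\zeta)|,
\end{align*}
whose constant $C_\delta$ depends only on the basis, $\delta$, $n$ and $k$.

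Next we bound each $Q_j$. Every $Q_j$ is a linear combination of products $\Delta\Delta'$ of two $k$-minors in the columns $w_1,\dots,w_k$. A $k$-minor is multilinear in $w_1,\dots,w_k$ (it is linear in each column), so $|\Delta(w_1,\dots,w_k)|\le c\prod_{j=1}^k|w_j|$ for a constant $c$ depending only on $n$ and $k$; Hadamard's inequality, or simply expanding the determinant, gives this. Hence
\begin{align*}
|Q_j(w_1,\dots,w_k)|\le c'\prod_{j=1}^k|w_j|^2
\end{align*}
with $c'$ depending only on the chosen basis.

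Inserting both bounds into the decomposition gives the claim with constant $N_{n,k}c'C_\delta$, which does not depend on $F$. The real content is in \autoref{theorem:GroebnerAlgModuleMinor} and the division theorem behind it; given those, the only step that needs care is the uniform multilinear bound on the minors, and that is elementary.
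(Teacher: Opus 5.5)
Your proposal is correct and follows the paper's proof essentially verbatim: you take the presentation $F(w)=\sum_j g_j(w_{k+1})Q_j(w_1,\dots,w_k)$ from \autoref{theorem:GroebnerAlgModuleMinor} with the stated bound on $g_j$, and then bound each $Q_j$ by a constant times $\prod_{j=1}^k|w_j|^2$ using that it is a combination of products of two $k$-minors, each linear in every column. Your extra remark, that elements of the $\O_{\C^n}$-module generated by $\M^2_k$ have all homogeneous terms in $\widetilde{\M}^2_k$, correctly justifies applying that theorem.
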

		\begin{proof}
			Choose a presentation
			\begin{align*}
				F(w)=\sum_{j=1}^{N_{n,k}}g_{j}(w_{k+1})Q_j(w_1,\dots,w_k)
			\end{align*}
			such that $g_j\in\mathcal{O}_{\C^n}$ satisfies the estimate in \autoref{theorem:GroebnerAlgModuleMinor}. Then 
			\begin{align*}
				|F(w)|\le \sum_{j=1}^{N_{n,k}}|g_{j}(w_{k+1})||Q_j(w_1,\dots,w_k)|,
			\end{align*}
			and the claim follows by bounding $|Q_j(w_1,\dots,w_k)|$, which is homogeneous of degree $2$ in each argument, by a multiple of $\prod_{j=1}^{k}|w_j|^2$.
		\end{proof}
	\subsection{Polynomial valuations and local functionals on convex functions}
		In this section, we recall the necessary background for polynomial valuations on convex functions and their relation to local functionals.\\
		Recall that we call a map $\mu:\Conv(\R^n,\R)\rightarrow F$ into a locally convex vector space  $F$ a valuation if
		\begin{align*}
			\mu(f\vee h)+\mu(f\wedge h)=\mu(f)+\mu(h)
		\end{align*}
		for all $f,h\in\Conv(\R^n,\R)$ such that their pointwise minimum $f\wedge h$ belongs to $\Conv(\R^n,\R)$. As for local functionals, we call a valuation $\mu:\Conv(\R^n,\R)\rightarrow F$ a polynomial valuation of degree at most $d\in \mathbb{N}$ if the map
		\begin{align*}
			\A(n,\R)&\rightarrow F\\
			\ell&\mapsto \mu(f+\ell)
		\end{align*} 
		is a polynomial of degree at most $d$ for every $f\in\Conv(\R^n,\R)$. Equivalently, there exist unique functions $Y_j:\Conv(\R^n,\R)\rightarrow F\otimes \Sym^j(\A(n,\R)^*)_\C$, $0\le j\le d$, such that for $f\in\Conv(\R^n,\R)$, $\ell\in\A(n,\R)$,
		\begin{align}\label{eq:translativePolynomialExpansion}
			\mu(f+\ell)=\sum_{j=0}^dY_i(f)[\ell],
		\end{align}
		where we identify the symmetric product $\Sym^j(\A(n,\R)^*)_\C$ with the space of $j$-homogeneous complex-valued polynomials on $\A(n,\R)\cong \R\times(\R^n)^*$.\\
		 Let $\P_d\VConv(\R^n,F)$ denote the space of all continuous valuations $\mu:\Conv(\R^n,\R)\rightarrow F$ that are polynomial of degree at most $d$.		By \cite{KnoerrUlivelliPolynomialvaluationsconvex2026}*{Theorem~1.3}, we have a homogeneous decomposition
		\begin{align}\label{eq:HomDecompValuations}
			\P_d\VConv(\R^n,F)=\bigoplus_{k=0}^{n+d}
			\P_d\VConv_k(\R^n,F),
		\end{align}
		where $\P_d\VConv_k(\R^n,F)$ denotes the subspace of $k$-homogeneous valuations, i.e., all valuations $\mu\in\P_d\VConv(\R^n,F)$ such that $\mu(tf)=t^k\mu(f)$ for all $f\in\Conv(\R^n,\R)$, $t\ge 0$.
		\begin{remark}
			Due to \autoref{theorem:LocalFuncValuations}, every continuous local functional on $\Conv(\R^n,\R)$ is a valuation. In particular, we may consider $\P_d\LV(\R^n)$ as a subspace of $\P_d\VConv(\R^n,\M(\R^n))$. Then the decomposition in Eq.~\eqref{eq:HomDecompValuations} corresponds precisely to the decomposition $\P_d\LV(\R^n)=\bigoplus_{k=0}^{n+d}\P_d\LV_k(\R^n)$.
		\end{remark}
		The homogeneous decomposition in Eq.~\eqref{eq:HomDecompValuations} behaves well with respect to the decomposition in Eq.~\eqref{eq:translativePolynomialExpansion}. More precisely, we have the following result.
		\begin{proposition}[\cite{KnoerrUlivelliPolynomialvaluationsconvex2026}*{Proposition 3.2}]\label{proposition:translativeDecompCompatibleHomDecomp}
			For $\mu\in \P_d\VConv_k(\R^n,F)$ let $Y_j:\Conv(\R^n,\R)\rightarrow F\otimes\Sym^j(\A(n,\R)^*)_\C$ be the unique functions satisfying Eq.~\eqref{eq:translativePolynomialExpansion}. Then $Y_j\in \P_{d-j}\VConv_{k-j}(\R^n,F\otimes \Sym^j(\A(n,\R)^*)_\C)$.
		\end{proposition}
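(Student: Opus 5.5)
We need to prove Proposition (cited from KnoerrUlivelli): for $\mu\in\P_d\VConv_k(\R^n,F)$, each $Y_j$ lies in $\P_{d-j}\VConv_{k-j}(\R^n,F\otimes\Sym^j)$. The plan is to extract $Y_j$ from $\mu$ by finite differences in $\ell$, and then check each property in turn: valuation, continuity, homogeneity, polynomiality.

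\emph{Extracting $Y_j$.} Fix $\ell$ and consider $t\mapsto\mu(f+t\ell)=\sum_{i=0}^d t^iY_i(f)[\ell]$. Evaluating at distinct points $t_0,\dots,t_d$ and inverting the Vandermonde matrix gives
\begin{align*}
Y_j(f)[\ell]=\sum_{m=0}^d c_{jm}\,\mu(f+t_m\ell)
\end{align*}
with universal constants $c_{jm}$. A $j$-homogeneous polynomial is determined by its values, and its coefficients are recovered by polarization, i.e. by finite linear combinations of values at finitely many $\ell$.

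\emph{Valuation property and continuity.} These follow directly from the formula. The maps $f\mapsto\mu(f+t\ell)$ are valuations, since $(f+\ell)\vee(h+\ell)=(f\vee h)+\ell$ and similarly for $\wedge$, with convexity of $f\wedge h$ preserved. They are also continuous, because $f\mapsto f+t\ell$ is continuous on $\Conv(\R^n,\R)$. Finite linear combinations keep both properties.

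\emph{Homogeneity.} For $s>0$,
\begin{align*}
\mu(s(f+\ell))=s^k\mu(f+\ell)=s^k\sum_j Y_j(f)[\ell],
\end{align*}
while also
\begin{align*}
\mu(sf+s\ell)=\sum_j Y_j(sf)[s\ell]=\sum_j s^jY_j(sf)[\ell].
\end{align*}
Replacing $\ell$ by $r\ell$ and comparing coefficients of $r^j$ gives $Y_j(sf)=s^{k-j}Y_j(f)$. At $s=0$ the identity holds by continuity when $k\ge j$. When $k<j$, letting $s\to0$ in $Y_j(sf)=s^{k-j}Y_j(f)$ with $Y_j(sf)\to Y_j(0)$ forces $Y_j(f)=0$, which is consistent with $\P_{d-j}\VConv_{k-j}=0$ for negative degree.

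\emph{Polynomiality of degree at most $d-j$.} For $\ell'\in\A(n,\R)$ we have
\begin{align*}
\mu(f+\ell'+\ell)=\sum_i Y_i(f)[\ell+\ell'].
\end{align*}
Expanding each $Y_i(f)[\ell+\ell']$ binomially in $\ell,\ell'$ and taking the part that is $j$-homogeneous in $\ell$ yields $Y_j(f+\ell')[\ell]$ as a polynomial in $\ell'$ of degree at most $d-j$. Identifying a polynomial on $\A(n,\R)$ with values in $F\otimes\Sym^j$ this way is just linear algebra.

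\emph{Main obstacle.} The delicate point is the homogeneity bookkeeping, in particular the boundary case $s=0$, together with ensuring that continuity into $F\otimes\Sym^j(\A(n,\R)^*)_\C$ uses the product topology of this finite-dimensional tensor product. Both are handled by the explicit finite-difference formula for $Y_j$.
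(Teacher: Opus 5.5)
Your proof is correct and complete. The Vandermonde extraction $Y_j(f)[\ell]=\sum_m c_{jm}\,\mu(f+t_m\ell)$ gives the valuation property and continuity. The scaling comparison gives $(k-j)$-homogeneity, with $Y_j=0$ when $j>k$; this uses that $F$ is Hausdorff. Comparing the $j$-homogeneous parts in $\ell$ of the two expansions of $\mu(f+\ell'+\ell)$ gives polynomiality of degree at most $d-j$.

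The paper does not reprove this statement, since it is quoted from \cite{KnoerrUlivelliPolynomialvaluationsconvex2026}. Your route is the standard one, and it uses the same Vandermonde device the paper itself employs in Eq.~\eqref{eq:formulaPsi_i} and in the translation computation that follows it in the proof of \autoref{maintheorem:IntegralRepSmooth}.
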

		Given $\mu\in\P_d\VConv_k(\R^n,F)$, we may define its polarization $\bar{\mu}:\Conv(\R^n,\R)^k\rightarrow F$ by
		\begin{align*}
			\bar{\mu}(f_1,\dots,f_k)=\frac{1}{k!}\frac{\partial^k}{\partial\lambda_1\dots\partial\lambda_k}\Big|_0\mu\left(\sum_{j=1}^k\lambda_jf_j\right)
		\end{align*}
		for $f_1,\dots,f_k\in\Conv(\R^n,\R)$. Then $\bar{\mu}$ is additive in each argument (compare \cite[Corollary 3.5]{KnoerrUlivelliPolynomialvaluationsconvex2026} and \cite[Section 4.2]{Knoerrsupportduallyepi2021}), which can be used to extend $\bar{\mu}$ to a distribution. Let us denote the completion of a locally convex vector space $F$ by $\bar{F}$ and its topological dual by $F'$. 
		\begin{theorem}[\cite{KnoerrUlivelliPolynomialvaluationsconvex2026}*{Theorem 1.4 and Corollary 4.8}]
			\label{theorem:GW}
			Let $F$ be a locally convex vector space. For every $\mu\in\P_d\VConv_k(\R^n,F)$ there exists a unique distribution $\GW(\mu):C^\infty_c((\R^n)^k)\rightarrow \bar{F}$ such that for every $\lambda\in F'$, the distribution $\lambda(\GW(\mu))$ has compact support and satisfies
			\begin{align*}
				\lambda(\GW(\mu))[f_1\otimes\dots\otimes f_k]=\lambda\left(\bar{\mu}(f_1,\dots,f_k)\right)
			\end{align*}
			for all $f_1,\dots,f_k\in\Conv(\R^n,\R)\cap C^\infty(\R^n)$.
		\end{theorem}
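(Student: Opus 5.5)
The statement is cited from \cite{KnoerrUlivelliPolynomialvaluationsconvex2026}, but I would prove it as follows. The plan is to build $\GW(\mu)$ from the polarization $\bar\mu$ using multilinearity. Then I would reduce the continuity and support questions to a scalar Schwartz kernel argument, applied after composing with each $\lambda\in F'$.

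First I would fix $\lambda\in F'$ and set $\nu:=\lambda\circ\mu$, a scalar continuous polynomial valuation of degree at most $d$ that is $k$-homogeneous. By \cite[Corollary 3.5]{KnoerrUlivelliPolynomialvaluationsconvex2026}, $\bar\nu$ is additive in each argument on $\Conv(\R^n,\R)$. Since $C^\infty_c(\R^n)\subset \Conv(\R^n,\R)\cap C^\infty-\Conv(\R^n,\R)\cap C^\infty$, I would extend $\bar\nu$ to a multilinear form on $(C^\infty_c(\R^n))^k$ by writing each $\phi=(\phi+R|x|^2)-R|x|^2$ with $R$ large and expanding. Multilinearity makes this independent of the decomposition.

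The key analytic step is joint continuity of this form with respect to the $C^2$ topology on compactly supported functions. Continuity of $\mu$ gives that $\bar\nu$ is continuous on $\Conv(\R^n,\R)^k$. If $\phi_j\to 0$ in $C^2$ with supports in a fixed compact set, then for a fixed $R$ the functions $\phi_j+R|x|^2$ are convex and converge locally uniformly to $R|x|^2$. Hence all terms in the expansion converge, and the value tends to $0$. Uniformity on bounded sets then gives multilinear continuity into $\C$. The Schwartz kernel theorem then yields a unique distribution $T_\lambda$ on $(\R^n)^k$ with $T_\lambda[\phi_1\otimes\dots\otimes\phi_k]=\bar\nu(\phi_1,\dots,\phi_k)$.

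I expect compact support to be the main obstacle. I would first show that $\bar\nu(\phi_1,\dots,\phi_k)=0$ if some $\phi_i$ is supported outside a large ball $B_R$. The idea is that polynomiality forces behaviour at infinity to be captured by affine functions. Using the valuation property together with the fact that $\mu(f+\ell)$ is polynomial in $\ell$, one shows that $\mu$ restricted to functions agreeing with a fixed function on $B_R$ is controlled. I would try to adapt the support argument of \cite{Knoerrsupportduallyepi2021}: write $\phi=\phi^+-\phi^-$ with convex pieces that coincide outside $B_R$, and use the valuation property with $f\vee h$, $f\wedge h$ to localize. The radius $R$ depends only on $\mu$ through the continuity estimate on a compact set, so the support bound is uniform in $\lambda$ up to equicontinuous families.

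Next I would obtain the vector-valued distribution. For $\Phi\in C^\infty_c((\R^n)^k)$, I would approximate $\Phi$ by finite sums of tensor products $\Phi_m$ in $C^\infty_c$. The values $\sum\bar\mu(\phi_1,\dots,\phi_k)\in F$ form a Cauchy net in $F$, because the estimate above is uniform over equicontinuous sets of $\lambda$. This uniformity comes from continuity of $\mu$, since the seminorms of $F$ are suprema over equicontinuous subsets of $F'$. The limit in $\bar F$ defines $\GW(\mu)$, and $\lambda(\GW(\mu))=T_\lambda$. Uniqueness follows because tensor products of smooth convex functions span a dense subspace, and any distribution is determined by them via the decomposition into convex pieces.
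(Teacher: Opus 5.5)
The paper gives no proof of this statement; it is imported from \cite{KnoerrUlivelliPolynomialvaluationsconvex2026}. Taken on its own, your existence part is sound. The multilinear extension of $\bar\nu$ via $\phi=(\phi+R|x|^2)-R|x|^2$, continuity in the $C^2$ topology, the Schwartz kernel theorem, and the Cauchy argument over equicontinuous subsets of $F'$ all go through.

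The genuine gap is compact support, the step you yourself flag as the main obstacle; there you do not give an argument. Polynomiality in $\ell$ is not what drives it, and the identity with $f\vee h$, $f\wedge h$ is not what localizes $\bar\nu$. The decomposition as written is also inconsistent: if $\phi$ vanishes on $B_R$, then convex pieces with $\phi=\phi^+-\phi^-$ coincide on $B_R$, not outside it.

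The missing idea is homogeneity combined with continuity at the origin. $\bar\nu$ is continuous at $(0,\dots,0)$, and neighbourhoods of $0$ in $\Conv(\R^n,\R)$ contain sets $\{u:\sup_{B_R}|u|<\epsilon\}$. Hence there are $R,\epsilon,C$ with $|\bar\nu(u_1,\dots,u_k)|\le C$ whenever $\sup_{B_R}|u_i|<\epsilon$ for all $i$. Now let $h$ be convex with $h\equiv0$ on $B_R$, and let $f_2,\dots,f_k$ be arbitrary convex functions. Choose $s>0$ with $\sup_{B_R}|sf_i|<\epsilon$. Then $ts^{k-1}|\bar\nu(h,f_2,\dots,f_k)|=|\bar\nu(th,sf_2,\dots,sf_k)|\le C$ for every $t>0$, so $\bar\nu(h,f_2,\dots,f_k)=0$.

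A function $\phi$ with $\supp\phi\cap\overline{B_R}=\emptyset$ can then be written as $(\phi+g)-g$, where $g$ is radial and convex, $g\equiv0$ on $B_R$, and $D^2g\ge -D^2\phi$ on $\supp\phi$. By multilinearity $\bar\nu(\phi,\cdot,\dots,\cdot)=0$. Symmetry and a partition of unity then give $\supp T_\lambda\subset\overline{B_R}^k$. This $R$ is uniform over $\lambda$ bounded by a fixed continuous seminorm, as you wanted.

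This locality lemma is also needed for a point you skip entirely. Your kernel construction only yields $T_\lambda[\phi_1\otimes\dots\otimes\phi_k]=\bar\nu(\phi_1,\dots,\phi_k)$ for $\phi_i\in C^\infty_c(\R^n)$. The theorem asserts the identity for smooth convex $f_i$, which are not compactly supported, and your uniqueness argument relies on exactly that identity. To get it, replace each $f_i$ by $\chi f_i$, with a cutoff $\chi\equiv1$ on a neighbourhood of $\overline{B_R}$. On the distribution side nothing changes, by the support bound. On the valuation side, $(1-\chi)f_i$ is a difference of two convex functions vanishing on $B_R$, again via a fast-growing radial convex $g$. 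So it contributes nothing, by the lemma above.
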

		We call the distribution $\GW(\mu)$ the Goodey--Weil distribution of $\mu$ due to its relation to a similar construction by Goodey and Weil \cite{GoodeyWeilDistributionsvaluations1984}.
		\begin{remark}
			As pointed out in \cite{KnoerrPolynomiallocalfunctionals2025}*{Remark~2.7}, $\GW(\mu)[\phi_1\otimes\dots\otimes \phi_k]\in F$ for all $\phi_1,\dots,\phi_k\in C^\infty_c(\R^n)$. In particular, $\GW(\Psi)[\phi_1\otimes\dots\otimes \phi_k]$ is a measure for any element $\Psi\in\P_d\LV_k(\R^n)$.
		\end{remark}
		For elements in $\P_d\LV_k(\R^n)$, the following version of this construction will be more useful.
		\begin{proposition}[\cite{KnoerrPolynomiallocalfunctionals2025}*{Corollary~4.11}]\label{definition:ExtendedGW}
			Let $1\le k\le n+d$. For every $\Psi\in \P_d\LV_k(\R^n)$ there exists a unique distribution $\widehat{\GW}(\Psi)$ on $(\R^n)^{k+1}$ such that
			\begin{align*}
				\widehat{\GW}(\Psi)[\phi_1\otimes\dots\otimes \phi_{k+1}]=\int_{\R^n}\phi_{k+1}d\left(\GW(\Psi)[\phi_1\otimes\dots\otimes \phi_{k}]\right).
			\end{align*}
		\end{proposition}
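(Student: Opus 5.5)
The plan is to construct $\widehat{\GW}(\Psi)$ by hand. I would define the multilinear map
\begin{align*}
T(\phi_1,\dots,\phi_{k+1}):=\int_{\R^n}\phi_{k+1}\,d\left(\GW(\Psi)[\phi_1\otimes\dots\otimes\phi_k]\right)
\end{align*}
on $C^\infty_c(\R^n)^{k+1}$ and show it is separately continuous. The Schwartz kernel theorem then gives a unique distribution on $(\R^n)^{k+1}=\R^{n(k+1)}$ whose value on elementary tensors is $T$. Uniqueness is immediate once existence is known, because the span of elementary tensors $\phi_1\otimes\dots\otimes\phi_{k+1}$ is dense in $C^\infty_c((\R^n)^{k+1})$. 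By the Remark following \autoref{theorem:GW}, $\GW(\Psi)[\phi_1\otimes\dots\otimes\phi_k]$ is a genuine measure in $\M(\R^n)$, so $T$ is well defined.

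\textbf{Continuity in $\phi_{k+1}$.} For fixed $\phi_1,\dots,\phi_k$ we are integrating against a Radon measure. This is continuous even in the $C_c$ topology, so a fortiori in the $C^\infty_c$ topology.

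\textbf{Continuity in $\phi_1,\dots,\phi_k$.} Here I use \autoref{theorem:GW} with $F=\M(\R^n)$ carrying the weak* topology. For fixed $\phi_{k+1}\in C_c(\R^n)$, the map $\lambda:\nu\mapsto\int\phi_{k+1}\,d\nu$ lies in $F'$. Hence $\lambda(\GW(\Psi))$ is a scalar distribution on $(\R^n)^k$, in fact compactly supported. Its value on $\phi_1\otimes\dots\otimes\phi_k$ is exactly $T(\phi_1,\dots,\phi_{k+1})$, and a distribution is jointly continuous on $C^\infty_c((\R^n)^k)$. Restricting to elementary tensors then gives continuity in the first $k$ arguments, since $(\phi_1,\dots,\phi_k)\mapsto\phi_1\otimes\dots\otimes\phi_k$ is continuous.

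\textbf{Main obstacle.} Separate continuity for multilinear maps into the scalars is enough for the kernel theorem, because $C^\infty_c$ is a barrelled nuclear LF-space. For $k+1$ factors one argues inductively: separately continuous multilinear forms on products of $\mathcal{D}$-type spaces are hypocontinuous, and hence extend to the completed projective tensor product, which is $\mathcal{D}((\R^n)^{k+1})$. Thus the only delicate point is invoking the multilinear Schwartz kernel theorem in the correct form.

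If one wants to avoid that theorem, an alternative route is available. Using that each $\lambda(\GW(\Psi))$ is compactly supported, one bounds $|T|$ by a product of $C^N$ norms of $\phi_1,\dots,\phi_k$ times the sup norm of $\phi_{k+1}$, on compacts. This uniform bound would come from the uniform boundedness principle applied to the family of distributions $\lambda(\GW(\Psi))$ as $\phi_{k+1}$ ranges over a bounded subset of $C_c(K)$. Such an estimate gives a distribution of finite order directly, and obtaining it is where most of the care would go.
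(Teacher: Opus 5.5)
\textbf{Overall.} Your architecture is sound, and the proof goes through once one step is re-justified. The form $T$ is well defined because $\GW(\Psi)[\phi_1\otimes\dots\otimes\phi_k]\in\M(\R^n)$. Both separate-continuity checks are correct, and uniqueness does follow from density of elementary tensors. The paper itself gives no argument for this statement; it imports it from the companion article, so there is no in-paper route to compare with.

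\textbf{The flawed step.} What is wrong is the paragraph you single out as the crux, which rests on two false claims.
\begin{itemize}
\item Extending a multilinear form to the completed projective tensor product is equivalent to \emph{joint} continuity, and hypocontinuity does not provide that.
\item The completed projective tensor product of $C^\infty_c$-spaces is \emph{not} $C^\infty_c$ of the product. The tensor map $C^\infty_c(\R)\times C^\infty_c(\R)\to C^\infty_c(\R^2)$ is only separately continuous. So your remark that $(\phi_1,\dots,\phi_k)\mapsto\phi_1\otimes\dots\otimes\phi_k$ is continuous holds only slot by slot, which is all you actually use.
\end{itemize}
For a counterexample, $K(f)=\sum_{m\ge 0}\partial_x^m f(0,m)$ is a distribution on $\R^2$, but $B(\phi,\psi)=K(\phi\otimes\psi)=\sum_m\phi^{(m)}(0)\psi(m)$ is unbounded on $U\times V$ for all zero neighbourhoods $U,V$. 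Indeed, $U$ controls only derivatives up to some order $p$ of functions supported in $[-1,1]$, so it contains $\phi$ with $|\phi^{(p+1)}(0)|$ arbitrarily large. Meanwhile $V$ contains a small multiple of a bump centred at $p+1$.

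\textbf{The fix.} The kernel theorem you need, for separately continuous multilinear forms on $C^\infty_c$, is nevertheless true; the correct justification is local.
\begin{itemize}
\item For a compact $L\subset\R^n$, restrict $T$ to $C^\infty_L(\R^n)^k\times C_L(\R^n)$, where $C^\infty_L$ denotes smooth functions supported in $L$. This is legitimate: for every $\phi_{k+1}\in C_c(\R^n)$ the functional $\nu\mapsto\int\phi_{k+1}\,d\nu$ lies in $\M(\R^n)'$, so \autoref{theorem:GW} still yields a distribution in the first $k$ slots. In the last slot you integrate against a measure.
\item All factors are Fr\'echet, so separate continuity implies joint continuity by Banach--Steinhaus and induction on the number of slots. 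This gives $|T(\phi_1,\dots,\phi_{k+1})|\le C_L\|\phi_{k+1}\|_\infty\prod_{i\le k}\|\phi_i\|_{C^{N_L}}$.
\item From such a bound, the kernel on $\mathrm{int}(L)^{k+1}$ is built as in the standard proof of the Schwartz kernel theorem. The kernels for an exhaustion by cubes glue by uniqueness.
\end{itemize}

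\textbf{Your alternative route.} This is exactly your alternative route, but the uniform boundedness principle cannot be applied as you phrase it. You apply it to the distributions $\lambda(\GW(\Psi))$ on all of $C^\infty_c((\R^n)^k)$, with $\phi_{k+1}$ ranging over a sup-norm ball. Pointwise boundedness at a non-elementary $\Phi$ would require $\GW(\Psi)[\Phi]$ to be a measure, which is only known for elementary tensors. Applying Banach--Steinhaus slot by slot on the Fr\'echet factors avoids this. With this replacement your proof is complete.
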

		\begin{remark}
			For $\Psi\in \P_d\LV_0(\R^n)$, we set $\widehat{\GW}(\Psi):=\Psi(0)\in\M(\R^n)$.
		\end{remark}
		The relation between $\widehat{\GW}(\Psi)$ and $\GW(\Psi)$ for $\Psi\in \P_d\LV_k(\R^n)$ may also be stated in the following way: Consider the scalar-valued valuation $\Psi[\phi_{k+1}]\in \P_d\VConv_k(\R^n,\C)$ given by
		\begin{align*}
			(\Psi[\phi_{k+1}])(f)=\int_{\R^n} \phi_{k+1}d\Psi(f).
		\end{align*}
		Then
		\begin{align*}
			\widehat{\GW}(\Psi)[\phi_1\otimes\dots\otimes\phi_{k+1}]=\GW(\Psi[\phi_{k+1}])[\phi_1\otimes\dots\otimes\phi_k].
		\end{align*}
		The support of the distributions above induces a notion of support for polynomial valuations and local functionals. Let $\Delta_k:\R^n\rightarrow(\R^n)^k$ be the diagonal embedding. 
		\begin{definition}\label{definition:support}
			For $\mu\in\P_d\VConv(\R^n,F)$, let $\mu=\sum_{k=0}^{n+d}\mu_k$ be its homogeneous decomposition. Then the support of $\mu$ is the set
			\begin{align*}
				\supp\mu:=\bigcup_{k=1}^{n+d}\Delta_k^{-1}(\supp\GW(\mu_k)).
			\end{align*}
			For $\Psi\in\P_d\LV(\R^n)$, let $\Psi=\sum_{k=0}^{n+d}\Psi_k$ be its homogeneous decomposition. Then the local support of $\Psi$ is the set
			\begin{align*}
				\locsupp\Psi:=\bigcup_{k=0}^{n+d}\Delta_{k+1}^{-1}(\supp\widehat{\GW}(\Psi_k)).
			\end{align*}
		\end{definition}
		\begin{remark}
			If $1\le k\le n+d$ and $\Psi\in \P_d\LV_k(\R^n)$, then $\locsupp\Psi=\supp\Psi$, where we consider $\Psi$ as an element of $\P_d\VConv_k(\R^n,\M(\R^n))$, compare \cite{KnoerrPolynomiallocalfunctionals2025}*{Corollary~4.14.}, so these notions only differ for $0$-homogeneous local functionals.
		\end{remark}
		We have the following characterization of these two notions.
		\begin{proposition}[\cite{KnoerrUlivelliPolynomialvaluationsconvex2026}*{Proposition~4.9} and \cite{KnoerrPolynomiallocalfunctionals2025}*{Corollary~4.17}]\label{proposition:characterizationSupport}\noindent
			\begin{enumerate}
				\item The support of $\mu\in \P_d\VConv(\R^n,F)$ is the unique minimum (with respect to inclusion) among all closed sets $A\subset \R^n$ with the following property: If $f,h\in \Conv(\R^n,\R)$ are two functions with $f\equiv h$ on a neighborhood of $A$, then $\mu(f)=\mu(h)$.
				\item For $\Psi\in\P_d\LV(\R^n)$, the set $\locsupp\Psi$ is the unique minimum among all closed sets $A\subset\R^n$ that satisfy the following two properties:
				\begin{enumerate}
					\item For every $f\in\Conv(\R^n,\R)$, $\supp\Psi(f)\subset A$.
					\item If $f,h\in\Conv(\R^n,\R)$ satisfy $f\equiv h$ in a neighborhood of $A$, then $\Psi(f)=\Psi(h)$.
				\end{enumerate}
			\end{enumerate}
		\end{proposition}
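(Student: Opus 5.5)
The plan is to reduce everything to statements about the Goodey--Weil distributions of the homogeneous components, using \autoref{theorem:GW} and \autoref{definition:ExtendedGW}.

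\emph{Reduction to homogeneous components.} Suppose $A$ satisfies the locality property in (1). If $f\equiv h$ near $A$, then $tf\equiv th$ near $A$ for every $t\ge 0$. The homogeneous decomposition gives $\mu(tf)=\sum_k t^k\mu_k(f)$, so comparing coefficients of this polynomial in $t$ yields $\mu_k(f)=\mu_k(h)$ for all $k$. Thus $A$ has the property for $\mu$ if and only if it has it for every $\mu_k$, and similarly for conditions (a) and (b) in (2). Since $\supp\mu$ and $\locsupp\Psi$ are defined as unions over $k$, it suffices to treat a single $k$-homogeneous component. Minimality together with membership in the family then gives the unique minimum automatically.

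\emph{Key structural step (expected main obstacle).} I would first show that $\supp\GW(\mu_k)\subset\Delta_k(\R^n)$, and that $\supp\widehat{\GW}(\Psi)\subset\Delta_{k+1}(\R^n)$ for local functionals. To do this, take test functions $\phi_1,\dots,\phi_k$ whose supports are not all contained in one small ball, for instance $\supp\phi_1\cap\supp\phi_2=\emptyset$. Write $\phi_i$ as a difference of smooth convex functions of the form $\phi_i+C|x|^2$ and $C|x|^2$. Then use the valuation property, which gives additivity in the form $\mu(f+g_1+g_2)-\mu(f+g_1)-\mu(f+g_2)+\mu(f)=0$ for perturbations with disjoint supports. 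Differentiating this identity shows that the mixed polarization $\bar\mu(\phi_1,\phi_2,\dots)$ vanishes. I expect this argument to be the delicate part, since it requires controlling the convexity of $f\wedge h$ for the perturbed functions. For local functionals, I would add the last variable using locality: $\Psi(f)$ restricted to an open set $U$ depends only on $f|_U$.

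\emph{Sufficiency.} Let $A=\Delta_k^{-1}(\supp\GW(\mu_k))$ and suppose $f\equiv h$ near $A$. By continuity of $\mu$ and convolution approximation (which preserves agreement on a slightly smaller neighborhood), we may assume $f,h$ are smooth. Because $\GW(\mu_k)$ is compactly supported, we have $\mu_k(f)=\GW(\mu_k)[f^{\otimes k}]$. Telescoping gives
\[
\mu_k(f)-\mu_k(h)=\sum_{i=1}^k\GW(\mu_k)\bigl[f^{\otimes(i-1)}\otimes(f-h)\otimes h^{\otimes(k-i)}\bigr].
\]
Each term vanishes, since $f-h$ vanishes near $A$ and the support of $\GW(\mu_k)$ lies in $\Delta_k(A)$. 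For (2), the analogous computation with $\widehat{\GW}(\Psi)$ gives (b). Testing the last variable against $\phi_{k+1}$ supported away from $A$ gives (a).

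\emph{Minimality.} Let $A$ be a closed set with the property, and let $x\notin A$. Choose $\phi_1,\dots,\phi_k$ supported in a small ball around $x$ that does not meet a neighborhood of $A$, and set $f_i=\phi_i+C|\cdot|^2$. The function $\lambda\mapsto\mu_k\bigl(\sum_j\lambda_jf_j\bigr)$ is unchanged when the $\phi_i$-parts are removed, because the functions agree near $A$. Hence the mixed derivative $\GW(\mu_k)[\phi_1\otimes\dots\otimes\phi_k]$ coincides with the corresponding quantity for the $\phi$-free function $C|\cdot|^2$ in every slot, which is $0$. Since the support lies on the diagonal, this shows $\Delta_k(x)\notin\supp\GW(\mu_k)$. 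In case (2), the same argument applies, with (a) used to kill test functions in the last variable supported near $x$. For $k=0$, $\locsupp\Psi_0=\supp\Psi(0)$ and both properties are immediate.
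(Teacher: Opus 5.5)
Your strategy is the standard Goodey--Weil argument: reduce to homogeneous components, show that $\GW(\mu_k)$ and $\widehat{\GW}(\Psi_k)$ are supported on the diagonal, then use telescoping for sufficiency and polarization for minimality. The paper gives no proof of this proposition and only refers to the cited works. Your reduction and minimality steps are essentially sound.

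\textbf{The sufficiency step fails as written.} You use that $\GW(\mu_k)$ is compactly supported, but \autoref{theorem:GW} only gives compact support of $\lambda(\GW(\mu_k))$ for each $\lambda\in F'$. For vector-valued valuations this is a real difference, and it is exactly the situation of part (2): for $\MA$ regarded as an $\M(\R^n)$-valued valuation, $\supp\GW(\MA)$ is the whole diagonal $\Delta_n(\R^n)$. The same problem affects your mollification step. For a non-compact closed set $A$, a neighborhood of $A$ need not contain a uniform $\epsilon$-neighborhood of $A$, so $f*\rho_\epsilon$ and $h*\rho_\epsilon$ need not agree near $A$ for any fixed $\epsilon>0$.

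The repair is to scalarize first:
\begin{itemize}
\item For $\lambda\in F'$, the set $A_\lambda:=\Delta_k^{-1}(\supp\lambda(\GW(\mu_k)))\subset A$ is compact, and $\supp\lambda(\GW(\mu_k))=\Delta_k(A_\lambda)$ by diagonal support.
\item The given neighborhood of $A$ contains an $\epsilon$-neighborhood of $A_\lambda$. So mollification and your telescoping identity yield $\lambda(\mu_k(f))=\lambda(\mu_k(h))$, and you conclude because $F'$ separates points.
\item In part (2), argue the same way with the scalar valuations $\Psi_k[\phi]$, $\phi\in C^\infty_c(\R^n)$. Their Goodey--Weil distributions are supported in $\Delta_k(A\cap\supp\phi)$ once you know $\supp\widehat{\GW}(\Psi_k)\subset\Delta_{k+1}(A)$. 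This gives (b) on smooth test functions, hence for all $\phi\in C_c(\R^n)$.
\end{itemize}

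\textbf{A smaller point concerns the diagonal-support step.} The decomposition $\phi_i=(\phi_i+C|x|^2)-C|x|^2$ does not by itself give the additivity identity you want. The equalities $(F+g_1)\vee(F+g_2)=F+g_1+g_2$ and $(F+g_1)\wedge(F+g_2)=F$ require $g_1,g_2\ge 0$ in addition to disjoint supports. Instead:
\begin{itemize}
\item Write $\phi_1,\phi_2$ as differences of nonnegative smooth bumps with disjoint supports.
\item Write the remaining test functions as differences of smooth convex functions $h_3,\dots,h_k$.
\item Put the strongly convex part into the base function $F=\sum_{j\ge 3}t_jh_j+s|x|^2$.
\end{itemize}
For $s$ large compared to $t_1,t_2\ge 0$, the functions $F+t_ig_i$ are convex and their pointwise minimum is $F$ itself, so the convexity issue you anticipated never arises. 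The valuation property then gives $\mu(F+t_1g_1+t_2g_2)-\mu(F+t_1g_1)-\mu(F+t_2g_2)+\mu(F)=0$ on an open set of parameters $(t,s)$. All terms are polynomials in $(t,s)$, and the $t_1t_2\cdots t_k$-coefficient of the left-hand side is $k!\,\bar\mu(g_1,g_2,h_3,\dots,h_k)$, which therefore vanishes.

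With these two corrections the argument goes through.
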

		Recall that $\P_d\LV(\R^n)$ is a $C(\R^n)$-module with respect to the module structure defined for $\Psi\in \P_d\LV(\R^n)$, $\phi\in C(\R^n)$ by
		\begin{align*}
			(\phi\bullet\Psi)(f;B)=\int_{B}\phi d\Psi(f)
		\end{align*}
		for $f\in\Conv(\R^n,\R)$, $B\subset\R^n$ bounded Borel set. In other words, for $\psi\in C_c(\R^n)$, 
		\begin{align*}
			(\phi\bullet\Psi)(f;\psi)=\Psi(f;\phi\cdot\psi).
		\end{align*}
		The characterization of the local support in \autoref{proposition:characterizationSupport} directly implies the following compatibility property.
		\begin{corollary}\label{corollary:supportModuleProduct}
			For $\Psi\in\P_d\LV(\R^n)$ and $\phi\in C(\R^n)$, $\locsupp(\phi\bullet\Psi)\subset \supp\phi\cap \locsupp\Psi$.
		\end{corollary}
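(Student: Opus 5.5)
The plan is to verify that the closed set $A:=\supp\phi\cap\locsupp\Psi$ has the two properties in \autoref{proposition:characterizationSupport}(2) for $\phi\bullet\Psi$. Since $\locsupp(\phi\bullet\Psi)$ is the unique minimum among closed sets with these properties, the inclusion then follows. Before that, I will note that $\phi\bullet\Psi\in\P_d\LV(\R^n)$, so that the characterization applies at all. Continuity in $f$ is inherited from $\Psi$ because $\psi\mapsto\phi\psi$ maps $C_c(\R^n)$ continuously into itself. Locality is inherited directly. Polynomiality holds because $\ell\mapsto(\phi\bullet\Psi)(f+\ell)$ is the composition of the polynomial $\ell\mapsto\Psi(f+\ell)$ with the continuous linear map $\mu\mapsto\phi\mu$ on $\M(\R^n)$.

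\emph{Property (a).} Let $f\in\Conv(\R^n,\R)$. The measure $(\phi\bullet\Psi)(f)=\phi\,\Psi(f)$ vanishes on the open set $\R^n\setminus\supp\phi$. It also vanishes on $\R^n\setminus\locsupp\Psi$, since $\supp\Psi(f)\subset\locsupp\Psi$ by property (a) for $\Psi$. Hence its support lies in $\supp\phi\cap\locsupp\Psi=A$.

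\emph{Property (b).} This is the step that needs care. Let $f,h\in\Conv(\R^n,\R)$ agree on an open neighborhood $U$ of $A$. We must show $\phi\Psi(f)=\phi\Psi(h)$, and it suffices to test against $\psi\in C_c(\R^n)$. Using a partition of unity, I will split $\psi=\psi_1+\psi_2+\psi_3$ with $\supp\psi_1\subset U$, $\supp\psi_2\subset\R^n\setminus\supp\phi$ and $\supp\psi_3\subset\R^n\setminus\locsupp\Psi$. This is possible because $U$, $\R^n\setminus\supp\phi$ and $\R^n\setminus\locsupp\Psi$ form an open cover of $\R^n$: any point outside the last two sets lies in $A\subset U$.
\begin{itemize}
\item[(i)] The $\psi_1$ term agrees for $f$ and $h$ by locality of $\Psi$, since $f|_U=h|_U$ gives $\Psi(f)|_U=\Psi(h)|_U$.
\item[(ii)] The $\psi_2$ term vanishes for both $f$ and $h$, because $\phi\psi_2=0$.
\item[(iii)] The $\psi_3$ term vanishes for both $f$ and $h$, because $\supp\Psi(f),\supp\Psi(h)\subset\locsupp\Psi$ by property (a) for $\Psi$.
\end{itemize}
Summing the three terms gives $(\phi\bullet\Psi)(f;\psi)=(\phi\bullet\Psi)(h;\psi)$, which proves (b) and hence the corollary.

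The only subtle point is that property (b) for $\Psi$ alone would require agreement near all of $\locsupp\Psi$. Here $f$ and $h$ agree only near the smaller set $A$, so in step (i) we rely on the locality of $\Psi$ itself, which holds on any open set, rather than on (b).
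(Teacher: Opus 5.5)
Your proof is correct and follows the paper's route. The paper only remarks that the corollary follows directly from the characterization of the local support in \autoref{proposition:characterizationSupport}(2), and you have written out the verification of properties (a) and (b) for $\supp\phi\cap\locsupp\Psi$ that this remark leaves implicit — including the key point that (b) comes from the locality of $\Psi$ on the open set $U$ combined with (a), not from property (b) for $\Psi$ itself.
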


		The following is proved in \cite[Lemma 5.1]{MussnigVolumepolarvolume2019} for translation invariant valuations, however, the proof only uses the weaker property stated below.
		\begin{lemma}
			\label{lemma:injectivity_characteristic_function}
			Let $(G,+)$ be an Abelian semi-group with cancellation law that carries a Hausdorff topology, and $\mu_1,\mu_2:\Conv(\R^n,\R)\rightarrow G$ two continuous valuations. If $\mu_1(h_P(\cdot-y)+c)=\mu_2(h_P(\cdot-y)+c)$ for all polytopes $P\in\mathcal{K}(\R^n)$ with $0\in \mathrm{int} P$, $y\in \R^n$ and $c\in\R$, then $\mu_1\equiv \mu_2$ on $\Conv(\R^n,\R)$.
		\end{lemma}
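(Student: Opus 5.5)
We follow the argument of Mussnig's Lemma 5.1 and only use the valuation property, continuity, and cancellation. Put $D:=\{f\in\Conv(\R^n,\R): \mu_1(f)=\mu_2(f)\}$. The goal is to show that $D$ contains all finite maxima of affine functions, i.e.\ all piecewise affine convex functions. Since these are dense in $\Conv(\R^n,\R)$ with respect to locally uniform convergence, continuity of $\mu_1,\mu_2$ and the Hausdorff property of $G$ then give $D=\Conv(\R^n,\R)$.

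First, affine functions lie in $D$. Given $\ell(x)=\langle v,x\rangle+c$, we would like to write it as $h_P(\cdot-y)+c'$. This is not possible directly, since $h_P$ with $0\in\mathrm{int}P$ is never affine. So we use inclusion--exclusion instead. Choose a polytope $P$ with $0\in\mathrm{int}P$ that has $v$ as a vertex. Near a point $y$, in the cone where $v$ is the maximizing vertex, $h_P(\cdot-y)=\langle v,\cdot-y\rangle$. We want a max--min identity expressing $\ell$ through functions of the given form. Take $P$ and $P'=\mathrm{conv}(P\cup\{w\})$ with $w$ chosen suitably. Then $h_P\vee h_{Q}=h_{\mathrm{conv}(P\cup Q)}$ always. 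If $P\cup Q$ is convex, then $h_P\wedge h_Q=h_{P\cap Q}$ is convex, but the intersection of polytopes containing $0$ in their interiors again contains $0$ in its interior. Thus the class $\{h_P(\cdot -y)+c\}$ is closed under these operations, but affine functions are not reached directly this way.

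So the alternative, which I will actually use, is a Mussnig-type induction on the number of affine pieces. Every piecewise affine convex $f=\max_{i\le m}\ell_i$ with at least $n+1$ pieces whose gradients $v_i$ are affinely spanning with $0$ in the interior of their convex hull is of the form $h_P(\cdot-y)+c$ only if the pieces share a common point. In general this fails, so we argue by induction on $m$ and on the combinatorics. For a general $f=\max\ell_i$, pick a hyperplane $H$ cutting the domain. Write $f=f_1\vee f_2$, where $f_1$ and $f_2$ agree with $f$ on either side of $H$ and are extended by an extra affine piece, so that $f_1\wedge f_2$ is convex with fewer breaks. Applying the valuation identity $\mu(f_1\vee f_2)=\mu(f_1)+\mu(f_2)-\mu(f_1\wedge f_2)$ to both $\mu_1$ and $\mu_2$, and using cancellation in $G$, reduces the claim to functions of lower complexity. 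This is exactly the scheme in the cited proof.

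The base cases are functions of the form $h_P(\cdot-y)+c$, possibly plus affine pieces handled via a limit. Functions such as $\max(\ell_1,\ell_2)$ whose gradients do not surround $0$ can be obtained as locally uniform limits of $h_{P_t}(\cdot-y_t)+c_t$. To do this, push $y_t\to\infty$ in a direction and add far-away vertices, so that on compacts the support function looks like the desired maximum of affine functions up to a constant. By continuity, these functions then lie in $D$.

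The main obstacle is organizing the induction so that each cut produces functions of strictly smaller complexity and so that the minimum $f_1\wedge f_2$ is convex. I will follow Mussnig's decomposition along a hyperplane through a vertex of the cell complex of $f$, observing that his proof never uses translation invariance beyond needing the hypothesis for every translate $y$ and every constant $c$, which we have.
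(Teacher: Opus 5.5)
Your reduction to piecewise affine functions (density, continuity, and closedness of $D$ because $G$ is Hausdorff) is fine. Your limit construction, sending $y_t\to\infty$ inside the normal cone of a vertex, also works for affine functions. The paper itself only cites Mussnig's Lemma~5.1 and remarks that its proof uses the hypothesis only through translates, which is your final remark. The argument you attribute to that proof, however, is not carried out, and as sketched it does not close.

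\textbf{The induction step is left open.} You say neither how the cut is chosen, nor why $f_1\wedge f_2$ is convex, nor which complexity drops. Consider the obvious primal version: $f_1,f_2$ are the maxima of the pieces active on the two sides of a hyperplane supporting no cell, and then $f_1\wedge f_2$ is the maximum of the pieces active on both sides. This only reduces the number of pieces if two cells are disjoint. So it cannot decompose, e.g., $\max(0,x_1-1,x_2-1,-x_1-x_2-1)$, whose four cells meet pairwise but have no common point. A reduction that does work lives on the gradient side. Let $f^*$ be the Legendre transform and $I_A$ the convex indicator of $A$. Cut the cell decomposition of $\mathrm{dom}\, f^*$ by a hyperplane $H$ through a common facet of two cells, and use $f=(f^*+I_{H^+})^*\vee(f^*+I_{H^-})^*$. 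Its minimum $(f^*+I_H)^*$ is convex, because the Legendre transform exchanges $\vee$ and $\wedge$ when the minimum is convex. Induction on $\dim\mathrm{dom}\, f^*$ and the number of cells then reduces everything to single-cell functions $h_S(\cdot-y)+c$ with $S$ an arbitrary polytope.

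\textbf{The genuine gap is the base case.} Your scheme has no way to reach $h_S(\cdot-y)+c$ when $S$ is full-dimensional and $0\notin S$. Such functions are not in the hypothesis class, and, contrary to your claim, they are not limits of it.

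For $n=1$, $\max(x,2x)=h_{[1,2]}$ is not a locally uniform limit of $h_{[a_t,b_t]}(\cdot-y_t)+c_t$ with $a_t<0<b_t$. Convergence of derivatives at $-1$ forces $y_t<-1$ and $b_t\to 1$, and then the derivative at $1$ tends to $1\neq 2$. In general, the Legendre transforms $I_{P_t}+\langle y_t,\cdot\rangle-c_t$ would epi-converge to $I_S+\langle y,\cdot\rangle-c$. Full-dimensionality of $S$ forces $c_t\to c$, while $0\in P_t\setminus S$ forces $c_t\to-\infty$.

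Nor can such functions be cut. If $f=f_1\vee f_2$ with $f_1\wedge f_2$ convex, then $f^*=f_1^*\wedge f_2^*$. So all gradient sets involved lie in $S$, and one of them is again full-dimensional and misses $0$.

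What is missing is an \emph{extension} step, in which the unknown function is one of the two arguments rather than their maximum. This is exactly the identity you set aside. If $K\cup L$ is convex, then $h_K\vee h_L=h_{K\cup L}$ and $h_K\wedge h_L=h_{K\cap L}$. By cancellation, $\mu_i(h_K(\cdot-y)+c)$ is therefore determined by the values for $K\cup L$, $L$ and $K\cap L$. For example, $\mu_i(h_{[1,2]})=\mu_i(h_{[0,2]})+\mu_i(h_{\{1\}})-\mu_i(h_{[0,1]})$, and all three functions on the right are limits of the class.

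In general, limits cover exactly the cases $0\in S$ or $\dim S<n$. For the latter, translate $S$ slightly if necessary so that it becomes a face of a polytope with $0$ in its interior, then send $y$ to infinity inside the normal cone of that face. The remaining $S$ are handled by induction on the number of facets of $S$ visible from $0$. Take $K=S$, and let $K\cup L$ be the polytope obtained by dropping one visible facet inequality, intersected with a large cube around $0$. Then $L$ and $K\cup L$ have fewer visible facets, and $K\cap L$ is lower-dimensional.
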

		
		This implies the following refined result for polynomial valuations of degree $0$. For a $k$-dimensional subspace $E\in \Gr_k(\R^n)$ let $\pi_E:\R^n\rightarrow E$ denote the orthogonal projection. Given a convex body $K$ in $\R^n$, we denote by $h_K\in\Conv(\R^n,\R)$ its support function.
		\begin{proposition}
			\label{proposition:RestrictionHomValuationLowerDimensionalSupportFunctions}
			Let $F$ be a locally convex vector space and assume that $\mu\in\P_0\VConv_k(\R^n,F)$ has the property that for every $E\in\Gr_k(\R^n)$ and all $K_E\in \mathcal{K}(E)$, $x_E\in E$, $\mu(\pi_E^{*}(h_{K_E}(\cdot-x_E)))=0$. Then $\mu=0$.
		\end{proposition}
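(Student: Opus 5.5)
By \autoref{lemma:injectivity_characteristic_function} (applied to $\mu$ and the zero valuation, with $G$ the additive group of the Hausdorff space $F$), it is enough to show that
\begin{align*}
	\mu(h_P(\cdot-y)+c)=0
\end{align*}
for all polytopes $P$ with $0\in\mathrm{int}P$, all $y\in\R^n$ and all $c\in\R$. Since $\mu$ is polynomial of degree $0$, we have $\mu(f+c)=\mu(f)$ for constants $c$, so we may take $c=0$. The plan is to reduce $\mu(h_P(\cdot-y))$ to a sum of values of $\mu$ on functions of the form $\pi_E^*(h_{K_E}(\cdot-x_E))$ with $E\in\Gr_k(\R^n)$. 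The hypothesis kills each such value.

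\textbf{Step 1: reduce to polarizations of support functions.} Write $P$ as a Minkowski sum of simplices, or more usefully of segments plus a remainder. A cleaner route uses the polarization $\bar\mu$. The map $(f_1,\dots,f_k)\mapsto\bar\mu(f_1,\dots,f_k)$ is additive in each argument and $\mu(f)=\bar\mu(f,\dots,f)$. Translating all arguments by the same $y$ preserves this structure, so $\nu(f):=\mu(f(\cdot-y))$ is again in $\P_0\VConv_k(\R^n,F)$. The functions $f\mapsto h_K(\cdot-y)$ satisfy $h_{K+L}(\cdot-y)=h_K(\cdot-y)+h_L(\cdot-y)$. Hence $K\mapsto\mu(h_K(\cdot-y))$ is a continuous, Minkowski-additive-in-polarization homogeneous polynomial of degree $k$ on convex bodies. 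Its polarization $\bar\nu(h_{K_1},\dots,h_{K_k})$ is then a continuous Minkowski multilinear form.

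\textbf{Step 2: zonotope approximation and lower-dimensional reduction.} By multilinearity and continuity, $\bar\nu(h_{K_1},\dots,h_{K_k})$ is determined by its values on segments $K_i=[0,v_i]$, or alternatively on zonotopes. This uses density of zonoids among bodies to handle general $K_i$. Density of zonoids fails for $n\ge3$, so I expect this is the main obstacle. To get around it, I would instead use that $\bar\nu$ is determined on polytopes, and that the mixed value $\bar\nu(h_{P},\dots,h_P)$ depends only on $P$ through a decomposition into simplices. This leads to the more robust route below.

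\textbf{Robust route via the distribution $\GW$.} Expand $\mu(h_P(\cdot-y))$ via \autoref{theorem:GW}: for smooth $f_i$, $\bar\mu(f_1,\dots,f_k)=\GW(\mu)[f_1\otimes\dots\otimes f_k]$, which extends by continuity. Now $h_P(\cdot-y)$ is piecewise linear, so $D^2 h_P(\cdot-y)$ is a sum of measures on the codimension-one faces of its normal fan. Translation-invariance of degree-$0$ polynomial valuations under adding affine maps means $\GW(\mu)$ annihilates affine functions in each argument. So $\GW(\mu)$ factors through the Hessians, i.e.\ $\GW(\mu)=(\Delta_x)^{\otimes k}$-type derivatives of a distribution. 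One then shows that the mixed term $\bar\mu(h_{[0,v_1]}(\cdot-y),\dots,h_{[0,v_k]}(\cdot-y))$ vanishes unless the $v_i$ are linearly independent. When they are independent, this term equals a value on $\pi_E^*$-functions with $E=\mathrm{span}(v_i)$, because $h_{[0,v_i]}(x-y)=\pi_E^*(h_{[0,v_i]}(\cdot-\pi_Ey))(x)$. Then the hypothesis together with polarization inside $E$ shows these mixed terms vanish. General polytopes $P$ in the plane case reduce to sums of segments; in higher dimension I would approximate $h_P$ by functions $\sum_i h_{[0,v_i]}(\cdot-y)$ plus affine terms, which is possible only for zonotopes. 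So the genuine obstacle is passing from zonotopes to all $P$.

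I would try first the following resolution. Every polytope $P$ is, up to mixed-volume-type multilinear identities, expressible through its $k$-dimensional projections. Concretely, $\bar\nu(h_P,\dots,h_P)$ with $P$ a simplex can be written by the valuation property on its normal fan as a sum over $k$-faces of the dual fan. On each cone where the function is of the form $\pi_E^*(\cdot)$, the hypothesis applies. Locality of support (\autoref{proposition:characterizationSupport}(1)) cuts $\mu$ into such pieces.
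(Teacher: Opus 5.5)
There is a genuine gap, and it sits where you suspect it. Applying \autoref{lemma:injectivity_characteristic_function} on $\R^n$ reduces the claim to $\mu(h_P(\cdot-y))=0$ for full-dimensional polytopes $P$. The hypothesis says nothing about these directly, and none of your routes recovers them from $k$-dimensional data.

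In fact, no argument through Minkowski decompositions of $P$ can work on its own. Set $\phi_y(K):=\mu(h_K(\cdot-y))$. This is a continuous, translation invariant (translating $K$ adds an affine function), $k$-homogeneous valuation on $\mathcal{K}(\R^n)$. The hypothesis is equivalent to: for every $y$, $\phi_y$ vanishes on all bodies of dimension at most $k$.

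\begin{itemize}
\item By Klain's theorem this forces only the even part of $\phi_y$ to vanish.
\item The odd part vanishes identically on centrally symmetric bodies, hence on zonotopes and zonoids. So it is invisible both to the hypothesis and to your zonotope computation.
\item Odd valuations of this kind do exist, e.g.\ $K\mapsto\int_{S^{n-1}}f\,dS_{n-1}(K,\cdot)$ with $f$ odd and $k=n-1$. So you must use that $\phi_y$ comes from a valuation on functions.
\item Planar polygons do not reduce to sums of segments either: such sums are centrally symmetric, a triangle is not.
\end{itemize}

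Your final ``resolution'' does not supply the missing input. \autoref{proposition:characterizationSupport}(1) only says that $\mu(f)$ depends on the germ of $f$ near $\supp\mu$. It does not split $\mu(f)$ into local contributions, since $\mu$ is $F$-valued, not measure-valued. The hypothesis also controls global values $\mu(\pi_E^*f_E)$, not localized pieces. Finally, near the apex $y$, the function $h_P(\cdot-y)$ is not locally a pullback from any $k$-dimensional subspace when $k<n$.

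The remedy is to apply \autoref{lemma:injectivity_characteristic_function} on each $E\in\Gr_k(\R^n)$ instead, to the continuous valuation $f_E\mapsto\mu(\pi_E^*f_E)$ on $\Conv(E,\R)$. Constants drop out since the degree is $0$. This upgrades the hypothesis to $\mu(\pi_E^*f_E)=0$ for all $f_E\in\Conv(E,\R)$.

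What is then needed is that these restrictions determine $\mu$. The paper passes to $\lambda\circ\mu$ for $\lambda\in F'$ and imports this injectivity statement from \cite{KnoerrUnitarilyinvariantvaluations2026}.

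Your Goodey--Weil idea can also supply it directly:
\begin{itemize}
\item Polarize $\mu(\pi_E^*(\sum_j t_jf_j))=0$ and use \autoref{theorem:GW} with multilinearity. This gives $\lambda(\GW(\mu))[\pi_E^*\phi_1\otimes\dots\otimes\pi_E^*\phi_k]=0$ for smooth $\phi_j$ on $E$, since near the compact support each $\pi_E^*\phi_j$ is a difference of smooth convex pullbacks.
\item In particular this holds for the plane waves $e^{-i\langle w_j,\cdot\rangle}$ with $w_j\in E$.
\item Any $k$ vectors of $\R^n$ lie in a common $E\in\Gr_k(\R^n)$. So the Fourier transform of the compactly supported distribution $\lambda(\GW(\mu))$ vanishes on $(\R^n)^k$, hence $\lambda(\GW(\mu))=0$.
\item Then $\lambda\circ\mu=0$ on smooth convex functions, and by density on all of $\Conv(\R^n,\R)$.
\end{itemize}
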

		\begin{proof}
			Since $\mu\in\P_0\VConv_k(\R^n,F)$ vanishes if $\lambda\circ \mu=0$ for every $\lambda\in F'$, it is sufficient to consider scalar-valued valuations. In this case, it follows from \cite[Theorem 3]{KnoerrUnitarilyinvariantvaluations2026} that $\mu$ vanishes identically if and only if $\mu(\pi_E^*f_E)=0$ for all $f_E\in \Conv(E,\R)$ and every $k$-dimensional subspace $E\in\Gr_k(\R^n)$. Due to Lemma \ref{lemma:injectivity_characteristic_function}, this is the case if and only if $\mu(\pi_E^{*}(h_{K_E}(\cdot-x_E)))=0$ for all $K\in\mathcal{K}(E)$ and $x_E\in E$.
		\end{proof}
		
	\subsection{Topologies on $\P_d\LV(\R^n)$}
		\label{section:topologies}
		If $F$ is a locally convex vector space, then we equip $\P_d\VConv(\R^n,F)$ with the compact-open topology, i.e. with the family of semi-norms
		\begin{align*}
			\|\mu\|_{F;K}:=\sup_{f\in K}|\mu(f)|_F
		\end{align*}
		where $|\cdot|_F$ is a continuous semi-norm on $F$ and $K\subset\Conv(\R^n,\R)$ is compact (see \cite{Knoerrsupportduallyepi2021}*{Proposition~2.4.} for a description of these sets).\\
		For local functionals, the topology induced by considering $\P_d\LV(\R^n)$ as a subspace of $\P_d\VConv(\R^n,\M(\R^n))$, where $\M(\R^n)$ is equipped with the weak* topology, is exactly the compact-to-weak* topology. However, there are two other standard topologies on $\M(\R^n)$ that lead to two additional topologies on $\P_d\LV(\R ^n)$\\
		Recall that we consider the space $C_c(\R^n)$ of all compactly supported, continuous, $\C$-valued functions on $\R^n$ as a topological vector space with respect to the inductive topology induced by the inclusion of the subspaces $C_A(\R^n)$ of all such functions with support contained in a compact set $A\subset\R^n$ equipped with the topology of uniform convergence. In other words, a subset $U\subset C_c(\R^n)$ is open if and only if the set $U\cap C_A(\R^n)$ is open with respect to the $\|\cdot\|_\infty$-norm. Then it is easy to see that a set $B\subset C_c(\R ^n)$ is bounded if and only if there exits a compact set $A\subset \R^n$ such that $B\subset C_A(\R ^n)$ is bounded. In particular, $\M(\R^n)=C_c(\R^n)'$ can also be considered with the topologies induced by uniform convergence on relatively compact subsets, or the topology induced by uniform convergence on bounded subsets (which is usually called the strong topology). \\		
		It was shown in \cite{KnoerrPolynomiallocalfunctionals2025}*{Lemma~3.2} that any $\Psi\in \P_d\LV(\R^n)$ is uniformly bounded on bounded subsets of $C_c(\R^n)$ on any compact subset of $\Conv(\R^n,\R)$, i.e., for any bounded set $B\subset C_c(\R^n)$ and compact $K\subset\Conv(\R^n,\R)$,
		\begin{align*}
			\|\Psi\|_{(B;K)}:=\sup_{f\in K,\phi\in B}|\Psi(f;\phi)|<\infty
		\end{align*}
		for any $\Psi\in\P_d\LV(\R^n)$ (note, however, that $\Psi$ is in general not continuous with respect to the strong topology on $\M(\R^n)$). In particular, we may take different families $\mathcal{B}$ of bounded sets in $C_c(\R^n)$ and consider the topology induced by the semi-norms $\|\cdot\|_{(B;K)}$ for $B\in\mathcal{B}$ and $K\subset\Conv(\R^n,\R)$ compact. In addition to the family of finite subsets (which induces the uniform-to-weak* topology), the following cases were considered in \cite{KnoerrPolynomiallocalfunctionals2025}.
		\begin{definition}\label{definition:topologies}
			\begin{enumerate}
				\item If $\mathcal{B}$ is the family of all relatively compact subsets, we call the induced topology the compact-to-compact topology.
				\item If $\mathcal{B}$ is the family of all bounded subsets, we call the induced topology the compact-to-bounded topology.
			\end{enumerate}
		\end{definition}
		The compact-to-bounded topology also admits the following description. Let us denote by $B_1(0)$ the unit ball in $\R^n$.
		\begin{proposition}[\cite{KnoerrPolynomiallocalfunctionals2025}*{Proposition~4.18}]
			\label{proposition:semiNormslocalFunctional}
				For $A\subset\R^n$ compact and convex with non-empty interior and $\Psi\in\P_d\LV(\R^n)$, set
			\begin{align*}
				\|\Psi\|_{A,\delta}:=\sup\left\{|\Psi(f;\phi)|: \supp\phi\subset A,\|\phi\|_\infty\le 1, \sup_{x\in A+\delta B_1(0)}|f(x)|\le1\right\}.
			\end{align*}
			Then $\|\cdot\|_{A,\delta}$ is a continuous semi-norm on $\P_d\LV(\R^n)$ with respect to the compact-to-bounded topology and the topology generated by these semi-norms for all  compact and convex $A\subset\R^n$ with non-empty interior and a fixed $\delta>0$ coincides with the compact-to-bounded topology. Moreover, for $0<s<t$ and $\Psi\in\P_d\LV_k(\R^n)$,
			\begin{align}
				\label{eq:relationLocalSeminorms}
				\|\Psi\|_{A,t}\le \|\Psi\|_{A,s}\le \left(\frac{2}{s}(2t+\diam A)+1\right)^k\|\Psi\|_{A,t}.
			\end{align}
		\end{proposition}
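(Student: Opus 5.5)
The plan is to prove the three assertions separately: the inequality \eqref{eq:relationLocalSeminorms}, then continuity of $\|\cdot\|_{A,\delta}$, then that these semi-norms generate the compact-to-bounded topology. Two tools are used throughout. The first is locality: $\Psi(f;\phi)$ with $\supp\phi\subset A$ only depends on $f$ on a neighborhood of $A$. The second is a Lipschitz extension for convex functions.

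\emph{Extension and the inequality.} Let $f\in\Conv(\R^n,\R)$ with $|f|\le 1$ on $A+sB_1(0)$. For $y\in A+\varepsilon B_1(0)$, where $\varepsilon>0$ is small, every subgradient of $f$ at $y$ has norm at most roughly $2/s$, since $f$ is bounded by $1$ on a ball of radius close to $s$ around $y$. Define
\begin{align*}
	\tilde f(x):=\sup_{y\in A+\varepsilon B_1(0)}\left(f(y)+\langle v_y,x-y\rangle\right),
\end{align*}
where $v_y$ is a subgradient of $f$ at $y$. Then $\tilde f$ is convex, finite and globally Lipschitz with constant about $2/s$, and $\tilde f=f$ on the open set $A+\varepsilon B_1(0)$.

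For $x\in A+tB_1(0)$, every admissible $y$ satisfies $|x-y|\le t+\diam A+\varepsilon$. This gives $|\tilde f(x)|\le 1+\frac{2}{s}(2t+\diam A)$ after letting $\varepsilon\to0$; the slack $2t$ in place of $t$ absorbs the $\varepsilon$-errors and the lower bound. Set $c:=\frac2s(2t+\diam A)+1$. Then $\tilde f/c$ is admissible for $\|\cdot\|_{A,t}$. By locality and $k$-homogeneity, for $\supp\phi\subset A$,
\begin{align*}
	\Psi(f;\phi)=\Psi(\tilde f;\phi)=c^k\Psi(\tilde f/c;\phi).
\end{align*}
This proves the right-hand inequality. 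The left-hand inequality is immediate, because the constraint set for $t$ is contained in the one for $s$.

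\emph{Continuity.} The same construction, with $s=\delta$, replaces every admissible $f$ by some $\tilde f$ in the family $K$ of convex functions that are $(2/\delta)$-Lipschitz and satisfy $|\tilde f(0)|\le C_A$. By Arzel\`a--Ascoli, $K$ is compact in $\Conv(\R^n,\R)$. The test functions $\phi$ range over the unit ball of $C_A(\R^n)$, which is a bounded set. Hence $\|\Psi\|_{A,\delta}\le\|\Psi\|_{(B;K)}$, and this holds for non-homogeneous $\Psi$ as well, since locality suffices here.

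\emph{Generation of the topology.} Fix a bounded $B\subset C_c(\R^n)$ and a compact $K$. Choose a compact convex $A'$ with non-empty interior such that $B\subset C_{A'}(\R^n)$ and $\|\phi\|_\infty\le M$ on $B$. Choose $R\ge1$ with $|f|\le R$ on $A'+\delta B_1(0)$ for all $f\in K$; this is possible because compact sets are locally uniformly bounded. For each homogeneous component we get
\begin{align*}
	|\Psi_k(f;\phi)|\le MR^k\|\Psi_k\|_{A',\delta}.
\end{align*}
It remains to bound $\|\Psi_k\|_{A',\delta}$ by $\|\Psi\|_{A',\delta}$. Pick distinct $t_0,\dots,t_{n+d}\in(0,1]$. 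Lagrange interpolation gives constants $a_{kj}$ with $\Psi_k(f)=\sum_j a_{kj}\Psi(t_jf)$. Each $t_jf$ is still admissible, so $\|\Psi_k\|_{A',\delta}\le\sum_j|a_{kj}|\,\|\Psi\|_{A',\delta}$. Summing over $k$ shows that $\|\cdot\|_{(B;K)}$ is dominated by a multiple of $\|\cdot\|_{A',\delta}$. Independence of the fixed $\delta$ then follows from \eqref{eq:relationLocalSeminorms}.

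The main obstacle is the extension step. One must get a convex extension that agrees with $f$ on an \emph{open} neighborhood of $A$, so that locality applies, while keeping explicit control of the sup-norm on $A+tB_1(0)$ so that the precise constant appears.
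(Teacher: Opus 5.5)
Your proof is correct. The paper does not prove this statement itself; it quotes it from Proposition~4.18 of the earlier article on polynomial local functionals, so there is no in-paper argument to compare against. Your route is a sound self-contained proof:
\begin{itemize}
\item replace $f$ by the convex, globally Lipschitz $\tilde f$ built from supporting affine functions at points near $A$, then use locality plus $k$-homogeneity;
\item use Arzel\`a--Ascoli for continuity;
\item use Lagrange interpolation across the homogeneous decomposition for the reverse domination.
\end{itemize}
The shape of the constant, $1+\frac{2}{s}\diam(A+tB_1(0))$, is exactly what such an extension argument produces.

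A few points of precision:
\begin{itemize}
\item Since $\tilde f$ depends on $\varepsilon$, you should not ``let $\varepsilon\to 0$''. Instead, fix one $\varepsilon$ with $0<\varepsilon\le \frac{st}{s+2t+\diam A}$; this is exactly the condition for
\begin{align*}
1+\frac{2}{s-\varepsilon}(t+\diam A+\varepsilon)\le \frac{2}{s}(2t+\diam A)+1 .
\end{align*}
The lower bound $\tilde f(x)\ge -1-\frac{2t}{s-\varepsilon}$ on $A+tB_1(0)$ comes from the affine minorant at a nearest point of $A$.
\item In the continuity step the Lipschitz constant is $2/(\delta-\varepsilon)$ rather than $2/\delta$, for instance $4/\delta$ with $\varepsilon=\delta/2$. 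This is harmless for the compactness of your family $K$.
\item Your closing remark about independence of $\delta$ is superfluous. Both dominations were already proved for an arbitrary fixed $\delta$, and the two-sided estimate alone applies only to homogeneous $\Psi$.
\end{itemize}
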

		Eq.~\eqref{eq:relationLocalSeminorms} will be relevant for the estimates in \autoref{section:FourierLaplace}. Using this description, it is not difficult to see that $\P_d\LV(\R^n)$ is actually a Fr\'echet space with respect to the compact-to-bounded topology, compare \cite{KnoerrPolynomiallocalfunctionals2025}*{Theorem F}.\\
		
		We refer to \cite{KnoerrPolynomiallocalfunctionals2025}*{Section~3} for a more in depth discussion of the differences between the three topologies and their properties. In this article, we are mostly interested in their compatibility with the action of the group $\Aff(n,\R)$ of affine transformations of $\R^n$. Recall that we defined an action $\pi$ of $\Aff(n,\R)$ on $\P_d\LV(\R^n)$ by
		\begin{align*}
			\pi(g)\Psi(f;\phi)=\Psi(f\circ g;\phi\circ g)
		\end{align*}
		for $g\in\Aff(n,\R)$, $\Psi\in\P_d\LV(\R ^n)$, $f\in\Conv(\R^n,\R)$, and $\phi\in C_c(\R^n)$.

		\begin{proposition}[\cite{KnoerrPolynomiallocalfunctionals2025}*{Lemma~3.12 and Proposition~3.13}]
			\label{proposition:AffActionSeparatelyContinuous}
			The map
			\begin{align*}
				\Aff(n,\R)\times \P_d\LV(\R^n)&\rightarrow \P_d\LV(\R^n)\\
				(g,\Psi)&\mapsto \pi(g)\Psi
			\end{align*} 
			is separately continuous in the compact-to-weak* topology. It is continuous in the compact-to-compact topology.
		\end{proposition}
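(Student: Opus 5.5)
The statement to prove is \autoref{proposition:AffActionSeparatelyContinuous}: the action map is separately continuous in the compact-to-weak* topology and jointly continuous in the compact-to-compact topology. I would handle the two variables separately and then combine them via an equicontinuity argument.

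\textbf{Continuity in $\Psi$ for fixed $g$.} This is essentially formal. For a compact $K\subset\Conv(\R^n,\R)$ and $\phi\in C_c(\R^n)$ we have
\begin{align*}
\sup_{f\in K}|\pi(g)\Psi(f;\phi)|=\sup_{f'\in K\circ g}|\Psi(f';\phi\circ g)|.
\end{align*}
The map $f\mapsto f\circ g$ is continuous for locally uniform convergence, so $K\circ g$ is compact. Also $\phi\circ g\in C_c(\R^n)$, and $g$ maps relatively compact (resp. bounded) subsets of $C_c(\R^n)$ to sets of the same type. Hence each semi-norm of $\pi(g)\Psi$ is a semi-norm of $\Psi$, which gives continuity of $\pi(g)$ in all three topologies.

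\textbf{Continuity in $g$ for fixed $\Psi$.} Let $g_i\to g$. I would reduce to a single compact set of functions and a single compact set of test functions. The set
\begin{align*}
\tilde K=\{f\circ h: f\in K,\ h\in \text{compact neighborhood of } g\}
\end{align*}
is compact in $\Conv(\R^n,\R)$ by joint continuity of composition. The functions $\phi\circ g_i$ form a relatively compact set in $C_c(\R^n)$: their supports lie in a common compact set and they converge uniformly to $\phi\circ g$ by uniform continuity of $\phi$. More generally, for a relatively compact $B\subset C_c(\R^n)$, the set $\{\phi\circ h\}$ is relatively compact by Arzel\`a--Ascoli, and $\phi\circ g_i\to\phi\circ g$ uniformly in $\phi\in B$. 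Write
\begin{align*}
\Psi(f\circ g_i;\phi\circ g_i)-\Psi(f\circ g;\phi\circ g)=\Psi(f\circ g_i;\phi\circ g_i-\phi\circ g)+\bigl(\Psi(f\circ g_i;\phi\circ g)-\Psi(f\circ g;\phi\circ g)\bigr).
\end{align*}

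\textbf{The key step: an equicontinuity bound.} For the first term I need
\begin{align*}
|\Psi(f;\psi)|\le C\|\psi\|_\infty\quad\text{for } f\in\tilde K,\ \supp\psi\subset A.
\end{align*}
This follows from \cite{KnoerrPolynomiallocalfunctionals2025}*{Lemma~3.2}, i.e. boundedness of $\|\Psi\|_{(B;K)}$ on bounded sets, applied to the unit ball of $C_A(\R^n)$. By linearity in $\psi$ it yields the bound $C\|\phi\circ g_i-\phi\circ g\|_\infty\to0$, uniformly in $f\in K$, and also uniformly in $\phi\in B$ for relatively compact $B$.

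For the second term, consider $F\colon h\mapsto \Psi(\cdot;\phi\circ g)$, a continuous function on the compact set $\tilde K$. The map $(f,h)\mapsto f\circ h$ is jointly continuous on $K\times$(a neighborhood of $g$), with $K$ compact. Hence $\sup_{f\in K}|F(f\circ g_i)-F(f\circ g)|\to0$ by uniform continuity on compacts.

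\textbf{Joint continuity in the compact-to-compact topology.} I would combine the two steps through the splitting
\begin{align*}
\pi(g_i)\Psi_i-\pi(g)\Psi=\pi(g_i)(\Psi_i-\Psi)+\bigl(\pi(g_i)-\pi(g)\bigr)\Psi .
\end{align*}
The first term is controlled by $\|\Psi_i-\Psi\|_{(B';\tilde K)}$, where $B'=\{\phi\circ h\}$ is relatively compact. This is exactly why relatively compact families are needed: in the weak* topology, $B'$ is not finite, so only separate continuity survives there.

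I expect the main obstacle to be the uniform-in-$\phi$ version of the second term for relatively compact $B$. There I would use a finite $\varepsilon$-net of $B$ together with the equicontinuity bound above.
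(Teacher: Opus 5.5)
Your proof is correct and complete. The paper gives no argument of its own here and simply imports the statement from \cite{KnoerrPolynomiallocalfunctionals2025}. Your route is the standard one for such statements: separate continuity first, then local equicontinuity of $\{\pi(h)\}_{h\in U}$ via $\|\pi(h)\Psi\|_{(B;K)}\le\|\Psi\|_{(B';\tilde K)}$ and the uniform bound of Lemma~3.2 there, combined through the splitting $\pi(g_i)(\Psi_i-\Psi)+(\pi(g_i)-\pi(g))\Psi$. Two small fixes: for the joint continuity, let your indices run over nets rather than sequences, since the compact-to-compact topology need not be first countable; and your $F$ should read $f'\mapsto\Psi(f';\phi\circ g)$.
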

		For $\Psi\in \P_d\LV(\R^n)$, the map
		\begin{align*}
			\Aff(n,\R)&\rightarrow \P_d\LV(\R^n)\\
			g&\mapsto \pi(g)\Psi
		\end{align*}
		is in general not continuous if $\P_d\LV(\R^n)$ is equipped with the compact-to-bounded topology, compare the example in \cite{KnoerrPolynomiallocalfunctionals2025}*{Corollary~4.20}. This difficulty is the main reason why we have to consider multiple topologies. Fortunately, the space of smooth local functionals is in a sense independent of the chosen topology, as the following special case of \cite{KnoerrPolynomiallocalfunctionals2025}*{Corollary~3.17} shows. Recall that we identify $\R^n$ with the subgroup of $\Aff(n,\R)$ consisting of translations.
		\begin{proposition}
			\label{proposition:EquivalenceNotionsSmoothness}
			For $\Psi\in \P_d\LV(\R^n)$, consider the map
			\begin{align}
				\label{eq:orbitmap}
				\begin{split}
					\R^n&\rightarrow \P_d\LV(\R^n)\\
					x&\mapsto \pi(x)\Psi.
				\end{split}
			\end{align}
			The following are equivalent.
			\begin{enumerate}
				\item This map is smooth in the compact-to-weak* topology.
				\item This map is smooth in the compact-to-compact topology.
				\item This map is smooth in the compact-to-bounded topology.
			\end{enumerate}
		\end{proposition}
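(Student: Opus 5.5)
The statement to prove is \autoref{proposition:EquivalenceNotionsSmoothness}. Since the compact-to-bounded topology is finer than the compact-to-compact topology, which in turn is finer than the compact-to-weak* topology, smoothness of the orbit map \eqref{eq:orbitmap} in a finer topology implies smoothness in a coarser one. This gives (3)$\Rightarrow$(2)$\Rightarrow$(1) at once. The substance is (1)$\Rightarrow$(3), and I would prove it by showing that a single derivative can be upgraded: if the orbit map is $C^1$ into the compact-to-weak* topology, then it is $C^0$ into the compact-to-bounded topology. Iterating this over higher derivatives will then give smoothness in the compact-to-bounded topology.

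\textbf{Step 1: the derivative is a local functional.} Assume the orbit map is smooth in the compact-to-weak* topology. For each direction $v\in\R^n$ set
\begin{align*}
\Psi_v:=\frac{d}{dt}\Big|_{0}\pi(tv)\Psi .
\end{align*}
This is a compact-to-weak* limit of difference quotients of elements of $\P_d\LV(\R^n)$. Such limits remain continuous, local and polynomial of degree at most $d$, because the limit is locally uniform on compact subsets of $\Conv(\R^n,\R)$. Hence $\Psi_v\in\P_d\LV(\R^n)$, and by the same argument all iterated derivatives $\Psi_{v_1\dots v_m}$ lie in $\P_d\LV(\R^n)$ as well.

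\textbf{Step 2: Lipschitz estimate.} By the fundamental theorem of calculus applied pairwise with $\phi\in C_c(\R^n)$, we have
\begin{align*}
\pi(x)\Psi(f;\phi)-\pi(y)\Psi(f;\phi)=\int_0^1 \pi(y+s(x-y))\Psi_{x-y}(f;\phi)\,ds .
\end{align*}
By \cite{KnoerrPolynomiallocalfunctionals2025}*{Lemma~3.2}, each $\Psi_v$ is uniformly bounded on $B\times K$ for every bounded $B\subset C_c(\R^n)$ and compact $K\subset\Conv(\R^n,\R)$. Translations by points in a bounded set map $B$ into a bounded set and $K$ into a compact set. Since $\Psi_v$ is linear in $v$, we can bound it using the coordinate directions. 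This gives
\begin{align*}
\|\pi(x)\Psi-\pi(y)\Psi\|_{(B;K)}\le C|x-y|
\end{align*}
locally uniformly in $x,y$. So the orbit map is locally Lipschitz, and in particular continuous, in the compact-to-bounded topology.

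\textbf{Step 3: higher derivatives.} Apply Step 2 to $\Psi_v$ in place of $\Psi$; this is allowed since $\Psi_v$ is again smooth in the compact-to-weak* sense. Then $x\mapsto\pi(x)\Psi_v$ is continuous in the compact-to-bounded topology. Next, use the second-order Taylor identity, with the remainder written as an integral of $\Psi_{vv}$. This shows that the difference quotients converge to $\pi(x)\Psi_v$ in the compact-to-bounded topology, with error $O(|t|)$. Hence the orbit map is $C^1$ in that topology with the same derivative, and induction on the order gives $C^\infty$.

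\textbf{Main obstacle.} The delicate point is Step 2. Because $\P_d\LV(\R^n)$ is Fréchet in the compact-to-bounded topology and the orbit map need not be continuous there, we must take care that the weak* integral identity holds pointwise for each $\phi$. Only after fixing $\phi$ may we take the supremum over $\phi\in B$ and $f\in K$. I would handle this using the uniform boundedness from \cite{KnoerrPolynomiallocalfunctionals2025}*{Lemma~3.2}, made explicit through the seminorms $\|\cdot\|_{A,\delta}$ of \autoref{proposition:semiNormslocalFunctional}. The key check is that translation by $|x|\le R$ changes $A$ and $\delta$ only in a controlled way.
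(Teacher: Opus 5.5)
Your proof is correct. The paper gives no argument at this point: it quotes the statement as a special case of Corollary~3.17 of the first part \cite{KnoerrPolynomiallocalfunctionals2025}, so your proposal is a self-contained alternative rather than a reconstruction of the paper's proof.

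Your route is the standard mechanism \emph{weakly $C^1$ implies strongly Lipschitz}. You fix $(f,\phi)$ and apply the fundamental theorem of calculus to the scalar $C^1$ function $s\mapsto\pi(y+s(x-y))\Psi(f;\phi)$. Only afterwards do you take the supremum over $f\in K$, $\phi\in B$. The uniform bound comes from Lemma~3.2 of \cite{KnoerrPolynomiallocalfunctionals2025}, applied to $\Psi_{e_1},\dots,\Psi_{e_n}\in\P_d\LV(\R^n)$ on the compact set $\{f(\cdot+z):f\in K,|z|\le R\}$ and the bounded set $\{\phi(\cdot+z):\phi\in B,|z|\le R\}$.

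You use one further input implicitly and should state it. Each $\pi(x)$ is continuous in the compact-to-weak* topology, since evaluating $\pi(x)\Phi$ at $(f;\phi)$ is evaluating $\Phi$ at translated data. This gives $\frac{d}{dt}\Big|_{t=0}\pi(x+tv)\Psi=\pi(x)\Psi_v$. That identity justifies both the integral formula in Step~2 and the claim in Step~3 that $\Psi_v$ again has a compact-to-weak* smooth orbit map, which is what drives your induction.

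The detour through the seminorms $\|\cdot\|_{A,\delta}$ in your last paragraph is not needed. The seminorms $\|\cdot\|_{(B;K)}$ together with Lemma~3.2 already do the job.

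Compared with the citation, your argument makes explicit that only uniform boundedness and translation equivariance are used. It also gives two by-products: the derivatives in the three topologies coincide, and every compact-to-weak* $C^1$ vector is locally Lipschitz in the compact-to-bounded topology, so $\P_d\LV^\infty(\R^n)\subset\P_d\LV^0(\R^n)$. The citation, in turn, simply reuses the result already established in the first part.
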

		Recall that we denote by the space of smooth local functionals in $\P_d\LV(\R^n )$ by $\P_d\LV^\infty(\R^n)$.
		If the map in Eq.~\eqref{eq:orbitmap} is continuous with respect to the compact-to-bounded topology, then we will call $\Psi$ \emph{strongly continuous}. We denote the space of strongly continuous elements in $\P_d\LV(\R^n)$ by $\P_d\LV^0(\R^n)$. Similarly, we denote by $\P_d\LV_k^0(\R^n)$ the set of strongly continuous and $k$-homogeneous local functionals. It is then easy to see that $\P_d\LV^0(\R^n)=\bigoplus_{k=0}^{n+d}\P_d\LV^0_k(\R^n)$.
		\begin{proposition}[\cite{KnoerrPolynomiallocalfunctionals2025}*{Proposition 3.19. and Corollary 3.20}]
			\label{proposition:PropertiesStronglyContVectors}
			The space $\P_d\LV_k^0(\R^n)$ is closed in $\P_d\LV(\R^n)$ with respect to the compact-to-bounded topology. Moreover, the map
			\begin{align*}
				\R^n\times \P_d\LV^0_k(\R^n)&\rightarrow \P_d\LV^0_k(\R^n)\\
				(x,\Psi)&\mapsto \pi(x)\Psi
			\end{align*}
			is continuous with respect to the compact-to-bounded topology.
		\end{proposition}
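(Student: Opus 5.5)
The plan is to derive both statements from one \emph{local equicontinuity} estimate for the translation action with respect to the semi-norms $\|\cdot\|_{A,\delta}$ of \autoref{proposition:semiNormslocalFunctional}. Closedness and joint continuity then follow from a standard $\varepsilon/3$-argument.

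\textbf{Step 1 (equicontinuity).} For $x\in\R^n$ we have $\pi(x)\Psi(f;\phi)=\Psi(f(\cdot+x);\phi(\cdot+x))$. Unwinding the definition gives
\begin{align*}
\|\pi(x)\Psi\|_{A,\delta}=\|\Psi\|_{A+x,\delta}
\end{align*}
(up to the sign convention for the translation). Fix $R>0$ and a compact convex $A'$ with non-empty interior containing $A+x$ for all $|x|\le R$. The goal is a constant $C=C(A,A',\delta,k)$ such that
\begin{align*}
\|\Psi\|_{A+x,\delta}\le C\|\Psi\|_{A',\delta}\quad\text{for all }\Psi\in\P_d\LV_k(\R^n),\ |x|\le R.
\end{align*}
The semi-norms are not monotone in $A$, because enlarging $A$ also strengthens the constraint on $f$. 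I therefore use locality, in the form of \autoref{proposition:characterizationSupport}(2) applied to $\phi\bullet\Psi$, together with \autoref{corollary:supportModuleProduct}.

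Let $f$ be convex with $|f|\le 1$ on $A+x+\delta B_1(0)$. Then $f$ is $(4/\delta)$-Lipschitz on $A+x+\frac{\delta}{2}B_1(0)$. Define $\tilde f$ as the supremum of the supporting affine functions of $f$ at points of $A+x+\frac{\delta}{2}B_1(0)$. Then $\tilde f$ is convex and coincides with $f$ on a neighbourhood of $A+x$. Its Lipschitz constant and values on $A+x$ are controlled, so $|\tilde f|\le M$ on $A'+\delta B_1(0)$ with $M$ depending only on $A,A',\delta,R$.

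Since $\supp\phi\subset A+x$ and $\Psi(f)$ restricted to a neighbourhood of $A+x$ depends only on $f$ there, we get $\Psi(f;\phi)=\Psi(\tilde f;\phi)$. By $k$-homogeneity, $\Psi(\tilde f;\phi)=M^k\Psi(\tilde f/M;\phi)$. Now $\tilde f/M$ is admissible for $\|\cdot\|_{A',\delta}$ and $\supp\phi\subset A'$. This yields the estimate with $C=M^k$. Eq.~\eqref{eq:relationLocalSeminorms} absorbs any loss in $\delta$ if the extension only works on a smaller collar.

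I expect Step 1 to be the main technical point: the extension $\tilde f$ must agree with $f$ on a genuine neighbourhood of the support and have bounds uniform in $x$.

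\textbf{Step 2 (closedness).} Let $\Psi_i\in\P_d\LV^0_k(\R^n)$ converge to $\Psi\in\P_d\LV(\R^n)$ in the compact-to-bounded topology. Then $\Psi$ is $k$-homogeneous, because homogeneity is preserved under pointwise limits. For $x,x_0$ with $|x|,|x_0|\le R$,
\begin{align*}
\pi(x)\Psi-\pi(x_0)\Psi=\pi(x)(\Psi-\Psi_i)+\bigl(\pi(x)\Psi_i-\pi(x_0)\Psi_i\bigr)+\pi(x_0)(\Psi_i-\Psi).
\end{align*}
By Step 1, the first and third terms are bounded in $\|\cdot\|_{A,\delta}$ by $C\|\Psi-\Psi_i\|_{A',\delta}$, which is small for large $i$. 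The middle term is small for $x$ close to $x_0$ because $\Psi_i$ is strongly continuous. Hence $\Psi$ is strongly continuous.

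\textbf{Step 3 (joint continuity).} For $(x,\Psi)\to(x_0,\Psi_0)$ in $\R^n\times\P_d\LV^0_k(\R^n)$, write
\begin{align*}
\pi(x)\Psi-\pi(x_0)\Psi_0=\pi(x)(\Psi-\Psi_0)+\bigl(\pi(x)\Psi_0-\pi(x_0)\Psi_0\bigr).
\end{align*}
The first term is bounded by $C\|\Psi-\Psi_0\|_{A',\delta}$ by Step 1. The second term tends to zero by strong continuity of $\Psi_0$. Since the semi-norms $\|\cdot\|_{A,\delta}$ generate the compact-to-bounded topology by \autoref{proposition:semiNormslocalFunctional}, this proves the claim.
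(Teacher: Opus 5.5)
Your proof is correct. Its skeleton is the standard mechanism for results of this kind: local equicontinuity of the translation action, followed by the $\varepsilon/3$ arguments of Steps 2 and 3. The paper itself does not prove the statement; it quotes it from the companion article.

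The genuine difference lies in how you obtain equicontinuity. Because you work with the localized semi-norms $\|\cdot\|_{A,\delta}$, you have to dominate $\|\cdot\|_{A+x,\delta}$ by a single $\|\cdot\|_{A',\delta}$. You do this with the convex extension $\tilde f$, locality and $k$-homogeneity, and this is sound. Subgradients of $f$ at points of $A+x+\frac{\delta}{2}B_1(0)$ are bounded by $4/\delta$, so $\tilde f$ is globally $4/\delta$-Lipschitz and $M$ is uniform in $|x|\le R$. No loss in $\delta$ occurs, so the appeal to Eq.~\eqref{eq:relationLocalSeminorms} is unnecessary.

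However, equicontinuity is already immediate from the defining semi-norms $\|\cdot\|_{(B;K)}$ of the compact-to-bounded topology. For compact $X\subset\R^n$,
\begin{align*}
\sup_{x\in X}\|\pi(x)\Psi\|_{(B;K)}\le \|\Psi\|_{(B_X;K_X)},\quad B_X=\{\phi(\cdot+x):\phi\in B,\ x\in X\},\quad K_X=\{f(\cdot+x):f\in K,\ x\in X\}.
\end{align*}
Here $B_X$ is bounded, since its supports lie in a fixed compact set and the sup-norm bound is unchanged. $K_X$ is compact as the image of $X\times K$ under $(x,f)\mapsto f(\cdot+x)$, which is continuous for locally uniform convergence.

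This direct route needs neither locality nor homogeneity, so it gives the statement on all of $\P_d\LV(\R^n)$. It also carries over verbatim to compact subsets of $\Aff(n,\R)$, which is the form in which the cited result is invoked in the proof of \autoref{proposition:stronglyContAffCont}. What your route buys is an explicit constant $M^k$ in terms of the local semi-norms, but that is not needed here.

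A minor point: in Step 2 sequences suffice because the compact-to-bounded topology is metrizable, and in any case your argument works verbatim for nets.
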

		The following is a direct consequence of \cite{KnoerrPolynomiallocalfunctionals2025}*{Corollary~3.22}
		\begin{proposition}\label{proposition:densitySmoothVectors}
			Let $W\subset \P_d\LV_k(\R^n)$ be a translation invariant subspace.
			\begin{enumerate}
				\item If $W$ is closed with respect to the compact-to-weak* or compact-to-compact topology, then $W\cap\P_d\LV_k^\infty(\R^n)$ is sequentially dense in $W$ with respect to the compact-to-weak* or compact-to-compact topology respectively. 
				\item If $W\subset \P_d\LV^0(\R^n)$ and if $W$ is closed in the compact-to-bounded topology, then $W\cap\P_d\LV_k^\infty(\R^n)$ is sequentially dense in $W$ with respect to the compact-to-bounded topology.
			\end{enumerate}
		\end{proposition}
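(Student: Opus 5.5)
This is a regularization-by-convolution argument. Fix a translation invariant subspace $W\subset\P_d\LV_k(\R^n)$ that is closed in the relevant topology, and fix $\Psi\in W$. Choose an approximate identity $\rho_m\in C^\infty_c(\R^n)$ with $\rho_m\ge0$, $\int\rho_m=1$ and $\supp\rho_m\subset B_{1/m}(0)$. Define
\begin{align*}
	\Psi_m:=\int_{\R^n}\rho_m(x)\,\pi(x)\Psi\,dx .
\end{align*}
The plan has four steps: (i) give this integral a meaning inside $\P_d\LV_k(\R^n)$; (ii) show $\Psi_m\in W$; (iii) show $\Psi_m$ is smooth; (iv) show $\Psi_m\to\Psi$ sequentially.

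\emph{Step (i): defining $\Psi_m$.} For each $f\in\Conv(\R^n,\R)$ and $\phi\in C_c(\R^n)$ I set
\begin{align*}
	\Psi_m(f;\phi):=\int\rho_m(x)\,\Psi\bigl(f\circ x;\phi\circ x\bigr)\,dx .
\end{align*}
By \autoref{proposition:AffActionSeparatelyContinuous}, the integrand is continuous in $x$. The uniform bound of \cite{KnoerrPolynomiallocalfunctionals2025}*{Lemma~3.2} on bounded sets of test functions then makes $\phi\mapsto\Psi_m(f;\phi)$ a Radon measure. I then check the remaining properties one by one:
\begin{enumerate}
	\item Locality, polynomiality in $\ell$ and $k$-homogeneity hold pointwise in $x$, so they pass to the integral.
	\item Continuity in $f$ follows from joint continuity on compacta via dominated convergence.
\end{enumerate}

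\emph{Step (ii): $\Psi_m\in W$.} This is where closedness and translation invariance of $W$ are used. I approximate the integral by Riemann sums $\sum_i\rho_m(x_i)|Q_i|\,\pi(x_i)\Psi$, each of which lies in $W$. The sums converge in the given topology, because the orbit map is uniformly continuous on the compact set $\supp\rho_m$:
\begin{enumerate}
	\item in the compact-to-compact case by \autoref{proposition:AffActionSeparatelyContinuous};
	\item in the compact-to-bounded case, for $W\subset\P_d\LV^0(\R^n)$, by \autoref{proposition:PropertiesStronglyContVectors}.
\end{enumerate}

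\emph{Step (iii): smoothness.} We have $\pi(y)\Psi_m=\int\rho_m(x-y)\,\pi(x)\Psi\,dx$. Derivatives in $y$ therefore fall on $\rho_m$, and difference quotients converge, uniformly on compacta in $f$, to the integrals against $-\partial_j\rho_m$. This gives smoothness of the orbit map, using \autoref{proposition:EquivalenceNotionsSmoothness} to pick whichever topology is convenient.

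\emph{Step (iv): convergence.} We have $\Psi_m-\Psi=\int\rho_m(x)\bigl(\pi(x)\Psi-\Psi\bigr)\,dx$. Each continuous seminorm of this is at most $\sup_{|x|\le1/m}$ of the seminorm of $\pi(x)\Psi-\Psi$, which tends to $0$ by continuity of the orbit map at $0$. Again this uses \autoref{proposition:AffActionSeparatelyContinuous} in the first two topologies, and strong continuity in the compact-to-bounded case.

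\emph{Main obstacle.} The hardest step is the compact-to-weak* topology. There the action is only separately continuous, so the Riemann-sum and difference-quotient limits cannot rely on joint continuity. I would handle it by reducing to weak* pairings with a fixed $\phi$ and using the uniform bounds on compact $K\subset\Conv(\R^n,\R)$ over the family $\{\phi\circ x:|x|\le1\}$, which is bounded in $C_c(\R^n)$. Alternatively, I would observe that the integral lies in the compact-to-compact closure, which is contained in the compact-to-weak* closure, and hence in $W$.
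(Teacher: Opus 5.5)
Your proposal is correct and is essentially the paper's argument: the paper takes this from Corollary~3.22 of the companion article, which rests on the mollifications $\Psi_\phi=\int_{\R^n}\phi(x)\pi(x)\Psi\,dx$ as Gelfand--Pettis integrals (see \autoref{remark:MollifierSmoothVector}), whereas you define this integral pointwise and place it in $W$ via Riemann sums rather than weak integrals. Your ``main obstacle'' is not actually an obstacle: the compact-to-weak* case needs only continuity of the single orbit map $x\mapsto\pi(x)\Psi$, which is uniformly continuous on compacta for each seminorm $\sup_{f\in K}|\cdot(f;\phi)|$, so none of your steps uses joint continuity.
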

		
		\begin{remark}\label{remark:MollifierSmoothVector}
			The proof of the previous result relies on the existence of weak integrals with respect to the three topologies. We refer to \cite{KnoerrPolynomiallocalfunctionals2025}*{Section~3.4.} for details and only note that for any $\phi\in C^\infty_c(\R^n)$ and $\Psi\in\P_d\LV(\R^n)$, the integral
			\begin{align*}
				\Psi_\phi:=\int_{\R^n}\phi(x)\pi(x)\Psi dx
			\end{align*}
			exists in the Gelfand--Pettis sense with respect to any of the three topologies in \autoref{definition:topologies} and defines an element of $\P_d\LV^\infty(\R^n)$.
		\end{remark}		
	Let us finally examine the interaction of the module structure and the three topologies. We equip $C(\R^n)$ with the topology of uniform convergence on compact subsets.
	\begin{proposition}\label{proposition:continuityModuleStructure}
		The map
		\begin{align*}
			C(\R^n)\times \P_d\LV(\R^n)&\rightarrow\P_d\LV(\R^n)\\
			(\phi,\Psi)&\mapsto \phi\bullet \Psi
		\end{align*}
		\begin{enumerate}
			\item is continuous with respect to the compact-to-bounded topology,
			\item is separately continuous with respect to the compact-to-weak* and compact-to-compact topology.
		\end{enumerate}
	\end{proposition}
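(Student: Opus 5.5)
We argue directly from the definition $(\phi\bullet\Psi)(f;\psi)=\Psi(f;\phi\psi)$ and from \autoref{proposition:semiNormslocalFunctional}. We first check that $\phi\bullet\Psi$ lies in $\P_d\LV(\R^n)$. Locality is clear, since restricting the measure $\Psi(f)$ to an open set commutes with multiplication by the density $\phi$. Polynomiality in $\ell$ is preserved because multiplication by $\phi$ is a linear map $\M(\R^n)\to\M(\R^n)$. For weak* continuity of $f\mapsto\phi\bullet\Psi(f)$, we need continuity of $f\mapsto\Psi(f;\phi\psi)$, and this holds because $\phi\psi\in C_c(\R^n)$. The map is also homogeneous, since it preserves $k$-homogeneity.

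\emph{Separate continuity (2).} Fix $\phi\in C(\R^n)$. For $\psi\in C_c(\R^n)$ and compact $K$,
\[
\sup_{f\in K}|(\phi\bullet\Psi)(f;\psi)|=\sup_{f\in K}|\Psi(f;\phi\psi)|,
\]
which is a compact-to-weak* seminorm of $\Psi$. For the compact-to-compact topology, the map $\psi\mapsto\phi\psi$ is continuous on $C_c(\R^n)$, so it sends relatively compact sets to relatively compact sets; the same computation then gives continuity in $\Psi$.

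Now fix $\Psi$ and let $\phi$ vary. Take a relatively compact set $B$, contained in $C_A(\R^n)$ for some compact $A$. Then
\[
|(\phi\bullet\Psi)(f;\psi)-(\phi'\bullet\Psi)(f;\psi)|=|\Psi(f;(\phi-\phi')\psi)|,
\]
and the functions $(\phi-\phi')\psi$ with $\psi\in B$ lie in $C_A(\R^n)$ with sup norm at most $\|\phi-\phi'\|_{L^\infty(A)}\sup_{\psi\in B}\|\psi\|_\infty$. Hence this difference is bounded by $\|\phi-\phi'\|_{L^\infty(A)}\sup_B\|\psi\|_\infty\cdot\|\Psi\|_{(B_A;K)}$, where $B_A$ is the unit ball of $C_A(\R^n)$. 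The quantity $\|\Psi\|_{(B_A;K)}$ is finite by \cite{KnoerrPolynomiallocalfunctionals2025}*{Lemma~3.2}. So the map is continuous in $\phi$ with respect to all three topologies, and in particular separately continuous for the compact-to-weak* and compact-to-compact topologies.

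\emph{Joint continuity for the compact-to-bounded topology (1).} Here we use the seminorms $\|\cdot\|_{A,\delta}$. For $\supp\psi\subset A$ with $\|\psi\|_\infty\le1$, the function $\phi\psi$ is supported in $A$ and has sup norm at most $\|\phi\|_{L^\infty(A)}$, so
\[
\|\phi\bullet\Psi\|_{A,\delta}\le\|\phi\|_{L^\infty(A)}\|\Psi\|_{A,\delta}.
\]
Since $\|\cdot\|_{L^\infty(A)}$ is a continuous seminorm on $C(\R^n)$ and $(\phi,\Psi)\mapsto\phi\bullet\Psi$ is bilinear, this bound gives joint continuity via
\[
\phi\bullet\Psi-\phi_0\bullet\Psi_0=(\phi-\phi_0)\bullet\Psi+\phi_0\bullet(\Psi-\Psi_0).
\]

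The only real point to check is that the $\phi$-direction estimate is uniform over compact sets of functions $f$ when $B$ is merely bounded. This is exactly what \cite{KnoerrPolynomiallocalfunctionals2025}*{Lemma~3.2} and the seminorm description in \autoref{proposition:semiNormslocalFunctional} provide. In the compact-to-weak* topology we cannot prove joint continuity this way, because finite sets $B$ do not control the uniform bound, which is why (2) only asserts separate continuity.
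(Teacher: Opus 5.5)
Your proof is correct and follows the paper's route. For (1), both you and the paper use the bound $\|\phi\bullet\Psi\|\le\sup_{A}|\phi|\cdot\|\Psi\|$ for seminorms whose test functions lie in $C_A(\R^n)$, combined with bilinearity. The paper calls the separate continuity in (2) obvious, and your check of it is the intended one. Your use of the seminorms $\|\cdot\|_{A,\delta}$, where the test functions range over the whole unit ball of $C_A(\R^n)$, makes the key estimate hold as written. The paper's version with a general bounded $B$ implicitly needs $B$ enlarged to such a ball.
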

	\begin{proof}
		If $B\subset C_c(\R^n)$ is bounded, then there exists a compact subset $A\subset\R^n$ such that $B\subset C_A(\R^n)$. In particular, for $\phi\in C(\R^n)$ and $\Psi\in\P_d\LV(\R^n)$, we have the estimate
		\begin{align*}
			\|\phi\bullet\Psi\|_{(B;K)}\le \sup_{x\in A}|\phi(x)| \cdot \|\Psi\|_{(B;K)},
		\end{align*}
		which shows the claim in the first case. The second case is obvious.
	\end{proof}
	Note that for $g\in\Aff(n,\R)$, we have for $\phi\in C(\R^n)$ and $\Psi\in\P_d\LV(\R^n)$,
	\begin{align}
		\label{eq:moduleEquivariant}
		\pi(g)[\phi\bullet\Psi]=[\phi\circ g^{-1}]\bullet [\pi(g)\Psi].
	\end{align}
	
	\begin{corollary}\label{corollary:smoothnessModuleStructure}
		$\P_d\LV^\infty(\R^n)$ is a module over $C^\infty(\R^n)$.
	\end{corollary}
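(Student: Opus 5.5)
We need to show: if $\phi\in C^\infty(\R^n)$ and $\Psi\in\P_d\LV^\infty(\R^n)$, then $\phi\bullet\Psi\in\P_d\LV^\infty(\R^n)$, i.e. the orbit map $x\mapsto\pi(x)[\phi\bullet\Psi]$ is smooth. By \autoref{proposition:EquivalenceNotionsSmoothness} it does not matter which of the three topologies we use. We work in the compact-to-bounded topology, because there the module multiplication is jointly continuous (\autoref{proposition:continuityModuleStructure}). Note that $\phi\bullet\Psi\in\P_d\LV(\R^n)$ is clear, since multiplying by a fixed continuous function preserves continuity, locality and polynomiality.

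The key identity is Eq.~\eqref{eq:moduleEquivariant}, which for the translation $x$ reads
\begin{align*}
\pi(x)[\phi\bullet\Psi]=[\phi(\cdot-x)]\bullet[\pi(x)\Psi].
\end{align*}
So the orbit map of $\phi\bullet\Psi$ is the composition of the map $x\mapsto(\phi(\cdot-x),\pi(x)\Psi)$ into $C(\R^n)\times\P_d\LV(\R^n)$ with the bilinear map $m(\psi,\Phi)=\psi\bullet\Phi$.

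The first component $x\mapsto\phi(\cdot-x)$ is smooth into $C(\R^n)$ with the compact-open topology: its partial derivatives are $x\mapsto(-1)^{|\alpha|}(\partial^\alpha\phi)(\cdot-x)$, and difference quotients converge locally uniformly because $\phi$ is smooth. The second component is smooth in the compact-to-bounded topology by assumption and \autoref{proposition:EquivalenceNotionsSmoothness}.

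Since $m$ is bilinear and jointly continuous (\autoref{proposition:continuityModuleStructure}(1)), a Leibniz rule holds for the composition. Concretely, write the difference quotient of $x\mapsto m(a(x),b(x))$ as
\begin{align*}
m\Big(\tfrac{a(x+te_i)-a(x)}{t},\,b(x+te_i)\Big)+m\Big(a(x),\,\tfrac{b(x+te_i)-b(x)}{t}\Big).
\end{align*}
As $t\to0$, each argument converges, so joint continuity of $m$ gives convergence of the sum to $m(\partial_ia,b)+m(a,\partial_ib)$. The resulting derivative is again a sum of terms of the same form, built from smooth maps, and it is continuous by joint continuity. Induction on the order of derivatives then yields smoothness of all orders.

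The main obstacle is justifying the limit via joint continuity. $\P_d\LV(\R^n)$ is a Fréchet space in the compact-to-bounded topology, and $C(\R^n)$ is metrizable, so sequential arguments suffice and the estimate $\|\psi\bullet\Psi\|_{(B;K)}\le\sup_A|\psi|\,\|\Psi\|_{(B;K)}$ handles the limit directly. This is the reason for working in the compact-to-bounded topology rather than the compact-to-weak* one, where only separate continuity is available.
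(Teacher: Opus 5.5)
Your proof is correct and follows the same route as the paper. The paper also combines the translation-equivariance identity \eqref{eq:moduleEquivariant}, the bilinearity and joint continuity of $(\phi,\Psi)\mapsto\phi\bullet\Psi$ in the compact-to-bounded topology, and the fact that $x\mapsto\phi(\cdot-x)$ is smooth into $C(\R^n)$ for $\phi\in C^\infty(\R^n)$; your explicit Leibniz-rule induction just spells out the step the paper leaves implicit.
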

	\begin{proof}
		Since the map 
		\begin{align*}
			C(\R^n)\times \P_d\LV(\R^n)&\rightarrow \P_d\LV(\R^n)\\
			(\phi,\Psi)&\mapsto \phi\bullet \Psi
		\end{align*}
		is bilinear, equivariant with respect to translations by Eq.~\eqref{eq:moduleEquivariant}, and continuous with respect to the compact-to-bounded topology by \autoref{proposition:continuityModuleStructure}, the claim follows directly from the fact that any element in $C^\infty(\R^n)$ is a smooth vector of the representation of the group of translations on $C(\R^n)$.
	\end{proof}
	
	\begin{corollary}\label{corollary:smoothIsAffineSmoothPreliminary}
		For $\Psi\in \P_d\LV(\R^n)^{tr}$ and $\phi\in C^\infty(\R^n)$, the map
		\begin{align*}
			\Aff(n,\R)\mapsto \P_d\LV(\R^n)\\
			g\mapsto \pi(g)[\phi\bullet \Psi]
		\end{align*}
		is smooth with respect to the compact-to-weak*, compact-to-compact, or compact-to-bounded topology.
	\end{corollary}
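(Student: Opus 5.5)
The plan is to combine the equivariance relation Eq.~\eqref{eq:moduleEquivariant} with the translation invariance of $\Psi$. This turns the orbit map $g\mapsto \pi(g)[\phi\bullet\Psi]$ into the composition of two maps. The first is the smooth map $\Aff(n,\R)\to C^\infty(\R^n)$, $g\mapsto \phi\circ g^{-1}$. The second is a map $\Aff(n,\R)\to \P_d\LV(\R^n)$, $g\mapsto \pi(g)\Psi$. These are then combined through the bilinear continuous module multiplication of \autoref{proposition:continuityModuleStructure}.

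First, I would show that $g\mapsto\pi(g)\Psi$ is smooth, and in fact real-analytic, on $\Aff(n,\R)$ for $\Psi\in\P_d\LV(\R^n)^{tr}$. Write $g=\tau_x\circ A$ with $A\in\GL(n,\R)$. Translation invariance gives $\pi(g)\Psi=\pi(A)\Psi$, so only the linear part matters. By \autoref{theorem:ClassificationPolyTranslationInv}, for $f\in C^2$,
\begin{align*}
	\pi(A)\Psi(f;B)=\int_{A^{-1}B}P_\Psi\big(f(Ax),A^T df(Ax),A^TD^2f(Ax)A\big)dx=|\det A|^{-1}\int_B P_\Psi\big(f,A^Tdf,A^TD^2fA\big)dy.
\end{align*}
Since $P_\Psi\in\Poly_d(\A(n,\R))\otimes\mathrm{M}_n$, the integrand is a polynomial in the entries of $A$. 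Minors of $A^TD^2fA$ are polynomial combinations of minors of $D^2f$ by Cauchy--Binet. The polynomial in $df$ becomes another polynomial in $(f,df)$ of degree at most $d$. Hence
\begin{align*}
	\pi(A)\Psi=|\det A|^{-1}\sum_{j} c_j(A)\Psi_j,
\end{align*}
where $\Psi_j$ is a fixed basis of the finite dimensional space $\P_d\LV(\R^n)^{tr}$ and the $c_j$ are polynomials. This identity holds first on $C^2$ functions. It extends to all of $\Conv(\R^n,\R)$ by density and continuity, using the converse part of \autoref{theorem:ClassificationPolyTranslationInv}. So $g\mapsto\pi(g)\Psi$ takes values in a finite dimensional space with smooth coefficients, and it is smooth in all three topologies.

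Second, I would verify that $g\mapsto\phi\circ g^{-1}$ is smooth into $C(\R^n)$ with the compact-open topology. Derivatives in $g$ are given by first-order differential operators with polynomial coefficients applied to $\phi$, evaluated at $g^{-1}x$, and these converge locally uniformly by the smoothness of $\phi$. The derivatives are of the same form, so smoothness of all orders follows.

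Finally, I would combine the two maps, which is the main point requiring care. For the compact-to-bounded topology, the module map is jointly continuous and bilinear by \autoref{proposition:continuityModuleStructure}(1). Composing a continuous bilinear map with two smooth maps is smooth, by the usual product rule for difference quotients. For the other two topologies only separate continuity is available. However, $\pi(g)\Psi$ ranges in the finite dimensional span of the $\Psi_j$, so
\begin{align*}
	\pi(g)[\phi\bullet\Psi]=\sum_j |\det A|^{-1}c_j(A)\,(\phi\circ g^{-1})\bullet\Psi_j.
\end{align*}
It therefore suffices that $\psi\mapsto\psi\bullet\Psi_j$ is continuous and linear from $C(\R^n)$. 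That holds by separate continuity (part (2)), and a continuous linear map composed with a smooth map is smooth. Since all three topologies are handled this way, the claim follows in each of them.
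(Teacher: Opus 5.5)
Your proposal is correct and follows essentially the paper's route: Eq.~\eqref{eq:moduleEquivariant} splits the orbit map into $g\mapsto\phi\circ g^{-1}$ and $g\mapsto\pi(g)\Psi$, where the latter is smooth because $\P_d\LV(\R^n)^{tr}$ is a finite dimensional representation, and the two pieces are recombined through the continuous bilinear module product. The differences are only in execution. You verify the smoothness of the finite dimensional action explicitly via \autoref{theorem:ClassificationPolyTranslationInv} and Cauchy--Binet, and you handle the two coarser topologies by expanding in a basis. The paper instead relies on joint continuity in the compact-to-bounded topology, which is finer than the other two, so smoothness there already implies smoothness in the coarser topologies.
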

	\begin{proof}
		Since $\P_d\LV(\R^n)^{tr}$ is a finite dimensional representation of $\Aff(n,\R)$ by \autoref{theorem:ClassificationPolyTranslationInv}, every element in $\P_d\LV(\R^n)^{tr}$ as an $\Aff(n,\R)$-smooth vector. If $\phi\in C^\infty(\R^n)$, then the map $g\mapsto \phi\circ g$ is smooth with respect to the topology of uniform convergence on compact subsets, so the claim follows with the same argument as in the proof of \autoref{corollary:smoothnessModuleStructure}
	\end{proof}

\section{A Paley--Wiener--Schwartz-type theorem}\label{section:PWS}
	Let $0\le k\le n$ and denote by $\P_0\LV_{c,k}(\R^n)\subset \P_0\LV_k(\R^n)$ the subspace of compactly supported local functionals (i.e. with compact local support). For $\Psi\in \P_0\LV_{c,k}(\R^n)$, the support of the distribution $\widehat{\GW}(\Psi)$ is compact by definition, so its Fourier--Laplace transform is the entire function on $(\C^n)^{k+1}$ given by
	\begin{align*}
		\mathcal{F}(\widehat{\GW}(\Psi))[w_1,\dots,w_{k+1}]=\widehat{\GW}(\Psi)\left[\exp(-i\langle w_1,\cdot\rangle)\otimes\dots\otimes \exp(-i\langle w_{k+1},\cdot\rangle)\right].
	\end{align*}
	Note that this implies for $y_1,\dots,y_k\in\R^n$, $w_{k+1}\in\C^n$, 
	\begin{align}
		\notag
		&\mathcal{F}(\widehat{\GW}(\Psi))[iy_1,\dots,iy_k,w_{k+1}]\\
		\notag
		=&\int_{\R^n}\exp(-i\langle w_{k+1},\cdot\rangle)d\GW(\Psi)\left[\exp(\langle y_1,\cdot\rangle)\otimes\dots\otimes \exp(\langle y_{k},\cdot\rangle)\right]\\
		\label{eq:calculateFourierFromValuation}
		=&\frac{1}{k!}\frac{\partial^k}{\partial\lambda_1\dots\partial\lambda_k}\Big|_0\int_{\R^n}\exp(-i\langle w_{k+1},\cdot\rangle)d\Psi\left(\sum_{j=1}^k\lambda_j\exp(\langle y_j,\cdot\rangle)\right),
	\end{align}
	where we used the relation of the Goodey--Weil distributions to the polarization of $\Psi$, compare \autoref{theorem:GW}, and that $\exp(\langle y_j,\cdot\rangle)$ is a convex function for every $1\le j\le k$.\\
	
	In this section, we will examine the space of these entire functions and relate them to the modules considered in Section \ref{section:minors}. We will again identify $(\C^n)^{k+1}\cong \Mat_{n,k+1}(\C)$ with the space of complex $(n\times(k+1))$-matrices. Let us consider the space $\mathcal{O}_{\Mat_{n,k+1}(\C)}$ as a module over $\mathcal{O}_{\C^n}$ using the action of $g\in \mathcal{O}_{\C^n}$ on $F\in \mathcal{O}_{\Mat_{n,k+1}(\C)}$ given by
	\begin{align*}
		[g\odot F](w)=g\left(\sum_{j=1}^{k+1}w_j\right)F(w),\quad w\in\Mat_{n,k+1}(\C).
	\end{align*}
	Let $\widehat{\M^2_k}\subset \mathcal{O}_{\Mat_{n,k+1}(\C)}$ denote the $\mathcal{O}_{\C^n}$-submodule generated by the space $\M^2_k$ of all quadratic products of $k$-minors of the first $k$ columns of an element in $\Mat_{n,k+1}(\C)$, compare \autoref{section:minors}. For $w=(w_1,\dots,w_{k+1})\in\Mat_{n,k+1}(\C)$, we set $\mathrm{d}(w)=\sum_{j=1}^{k+1}w_j\in\C^n$.
	In this section, establish the following description of the Goodey--Weil distributions.
	\begin{theorem}
		\label{theorem:PWS_LV}
		Let $A\subset \R^n$ be compact and convex with non-empty interior and $\Psi\in \P_0\LV_k(\R^n)$ be compactly supported. Then $\Psi$ is smooth and satisfies $\locsupp\Psi\subset A$ if only if the following holds: For every $N\in\mathbb{N}$ there exists a constant $C_N>0$ such that
		\begin{align}
			\label{eq:PWSconditionValuations}
			\begin{split}
				&|\mathcal{F}(\widehat{\GW}(\Psi))[w]|\le C_N \prod_{j=1}^k|w_j|^2 \cdot (1+|\mathrm{d}(w)|)^{-N}\exp\left(h_A\left(\Im \mathrm{d}(w)\right)\right).
			\end{split}
		\end{align}
		Moreover, if an entire function belongs to $\widehat{\M^2_k}$ and satisfies estimates of the form \eqref{eq:PWSconditionValuations} for every $N\in\mathbb{N}$, then it is the Fourier--Laplace transform of the Goodey--Weil distribution of a unique smooth local functional in $\P_0\LV(\R^n)$ with local support contained in $A$.
	\end{theorem}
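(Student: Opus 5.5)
The plan is to treat the translation-invariant functionals, via \autoref{theorem:ClassificationPolyTranslationInv}, as the \emph{generators} of the relevant Fourier--Laplace transforms. Three ingredients are needed. First, compute $\mathcal{F}(\widehat{\GW}(\phi\bullet\Psi_j))$ for a basis $\Psi_j$ of $\P_0\LV_k(\R^n)^{tr}$ and $\phi\in C^\infty_c(\R^n)$. Second, show that every compactly supported $\Psi$ has transform in $\widehat{\M^2_k}$. Third, combine the division theorem (\autoref{theorem:GroebnerAlgModuleMinor}) with the classical Paley--Wiener--Schwartz theorem in the variable $\mathrm{d}(w)$.

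\textbf{Generators.} By \autoref{lemma:QBijectiveMA} (cf.\ \autoref{remark:dimensionM2k}) there is a basis $\Psi_j$ whose polynomials correspond to a basis $Q_j$ of $\M^2_k$. Using Eq.~\eqref{eq:calculateFourierFromValuation} with exponentials $\exp(\langle y_l,\cdot\rangle)$, and noting that the Hessian of $\sum\lambda_l e^{\langle y_l,x\rangle}$ is $\sum\lambda_l e^{\langle y_l,x\rangle}y_ly_l^T$, the mixed $k$-minors polarize to products of minors of $(y_1,\dots,y_k)$. I expect a formula of the form
\begin{align*}
\mathcal{F}(\widehat{\GW}(\phi\bullet\Psi_j))[w]=c\,Q_j(w_1,\dots,w_k)\,\hat\phi(\mathrm{d}(w))
\end{align*}
for $w_1,\dots,w_k\in i\R^n$, and hence everywhere by analyticity. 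This gives the ``if'' direction and the \emph{Moreover} part, provided we can divide.

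Suppose $F$ lies in $\widehat{\M^2_k}$ and satisfies \eqref{eq:PWSconditionValuations}. Substitute $w'=(w_1,\dots,w_k,\mathrm{d}(w))$, a linear change of variables preserving the first $k$ columns. Then $F$ becomes a function in $\O\widetilde{\M^2_k}$, and \autoref{theorem:GroebnerAlgModuleMinor} yields $F=\sum_j g_j(\mathrm{d}(w))Q_j$. The estimate on $g_j$, taken over $D_\delta(0,\dots,0,z)$, together with \eqref{eq:PWSconditionValuations}, gives
\begin{align*}
|g_j(z)|\le C_{N,\delta}(1+|z|)^{-N}e^{h_A(\Im z)+\delta'}.
\end{align*}
Here the factor $\prod|\zeta_j|^2$ is bounded on $D_\delta$, since there $|\zeta_j|\le C\delta$ for $j\le k$. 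By the classical Paley--Wiener--Schwartz theorem, $g_j=\hat\phi_j$ with $\phi_j\in C^\infty_c$ supported in $A+\delta B$ for every $\delta>0$, hence in $A$. Then $\Psi:=\sum\phi_j\bullet\Psi_j$ is smooth by \autoref{corollary:smoothnessModuleStructure}, has local support in $A$ by \autoref{corollary:supportModuleProduct}, and its transform is $F$. Uniqueness follows because the Fourier transform determines $\widehat{\GW}$, which determines $\Psi$ (use \autoref{theorem:GW} and the definition of $\widehat{\GW}$).

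\textbf{Only if.} Let $\Psi$ be smooth with $\locsupp\Psi\subset A$. The key algebraic step, which I expect to be the main obstacle, is showing that every homogeneous term of $F=\mathcal{F}(\widehat{\GW}(\Psi))$ lies in $\widetilde{\M^2_k}$ after the change of variables; this is where \autoref{proposition:CharacterizationModuleSquareMinors} and \autoref{lemma:PolynomialFromComplexifiedSpaces} enter. For a real $k$-plane $E_0$ and $w_1,\dots,w_k\in iE_0$, the functions $\exp(\langle y_l,\cdot\rangle)$ factor through $\pi_{E_0}$. Restricted to such pullbacks, a $k$-homogeneous functional behaves like a top-degree functional on $E_0$ in the fibre directions. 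Using \autoref{theorem:LocalFuncValuations} and the top-degree Monge--Amp\`ere classification, this should force the polarization to be $\det(y_1,\dots,y_k)^2_{E_0}$ times a measure $\nu_{E_0}$ independent of the $y_l$, so that the restriction is $\Delta^2\cdot\hat\nu_{E_0}(w_{k+1}')$. Taylor coefficients of $F$ are polynomials and inherit this property, and \autoref{proposition:CharacterizationModuleSquareMinors} then places them in $\widetilde{\M^2_k}$. If the Monge--Amp\`ere reduction is hard to make rigorous, I would first test it on functionals of the form $\pi^*_{E_0}(h_K(\cdot-x))$, using \autoref{proposition:RestrictionHomValuationLowerDimensionalSupportFunctions}.

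\textbf{Estimate.} Smoothness gives $\partial^\alpha_x\pi(x)\Psi$ in the compact-to-bounded topology, which means $\mathrm{d}(w)^\alpha F$ is again the transform of a functional supported in $A$. It therefore suffices to bound $|F(w)|\le C\prod|w_j|^2e^{h_A(\Im\mathrm{d}(w))}$. By \autoref{corollary:EstimateFourierDiagonalCoordinates} this reduces to bounding $F$ on $D_\delta(0,\dots,0,z)$. That bound follows from the seminorms of \autoref{proposition:semiNormslocalFunctional}, testing on $\exp$-type functions of size $O(\delta)$ near $A$. The $e^{\delta}$ losses are removed by the usual trick of letting $\delta$ depend on $|z|$, as in the classical proof, which is consistent with Eq.~\eqref{eq:relationLocalSeminorms}.
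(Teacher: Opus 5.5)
Your architecture matches the paper's. You build the generators $\phi\bullet\Psi_j$ from the Monge--Amp\`ere Fourier formula, prove membership in $\widetilde{\M^2_k}$ by restricting to complexified $k$-planes and applying \autoref{proposition:CharacterizationModuleSquareMinors}, then use division plus the classical Paley--Wiener--Schwartz theorem, and the derivative trick for decay in $\mathrm{d}(w)$. However, the ``if'' half of the equivalence has a gap. There $\Psi$ is only assumed compactly supported, and smoothness is what you must prove. So before dividing you need $\mathcal{F}(\widehat{\GW}(\Psi))\in\widehat{\M^2_k}$ for a possibly non-smooth $\Psi$, but your membership argument is only run under ``Let $\Psi$ be smooth''. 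Your ``provided we can divide'' is therefore never discharged in the case that matters.

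The paper handles this by mollification (\autoref{lemma:modulePropertiesLowestOrderTerms}, \autoref{lemma:FbelongsToModule}). The functional $\Psi_\phi=\int\phi(x)\pi(x)\Psi\,dx$ is smooth, and $\F(\Psi_\phi)[w]=\mathcal{F}(\phi)[w_{k+1}]\,\F(\Psi)[w]$ with $\mathcal{F}(\phi)[0]=1$. Since your argument puts \emph{all} Taylor terms of $\F(\Psi_\phi)$ in $\widetilde{\M^2_k}$, the repair is one line. Near $0$, $1/\mathcal{F}(\phi)$ is a power series in $w_{k+1}$ alone, and $\widetilde{\M^2_k}$ is a $\Poly(\C^n)$-module, so every homogeneous term of $\F(\Psi)$ lies in $\widetilde{\M^2_k}$ too. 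Alternatively, note that your top-degree route never uses smoothness. It yields a weak*-continuous family of fibre measures in place of the density $\Phi_E$, so the restriction is $\Delta^2$ times the Fourier--Laplace transform of a compactly supported measure. That is all \autoref{lemma:PolynomialFromComplexifiedSpaces} needs.

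Where you differ, your route is more direct. You restrict the whole function to $(E_0\otimes\C)^k\times\C^n$ and read off every Taylor term at once. This bypasses the paper's lowest-order-term bookkeeping and the inductive subtraction in \autoref{lemma:FbelongsToModule}. Citing the top-degree classification fibrewise on $E_0$ also replaces \autoref{proposition:RestrictionSubspaceSmoothCase}.

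The indispensable input, though, is the fibre localization $\supp\Psi(\pi_{E_0}^*h_K(\cdot-x))\subset\{x\}\times E_0^\perp$. This is \autoref{lemma:supportRestrictionLocalFunctional}, which comes from locality via \autoref{lemma:supportFunctionLowerDimensionalFace}. The top-degree classification supplies it, but your fallback of testing on $\pi_{E_0}^*h_K(\cdot-x)$ and invoking \autoref{proposition:RestrictionHomValuationLowerDimensionalSupportFunctions} does not. Without localization, the restriction in the original coordinates has the form $\Delta^2\,G(\sum_{j\le k}w_j,w_{k+1})$ rather than being a function of $\mathrm{d}(w)$ alone.

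Finally, in the decay estimate the complex exponentials $e^{-i\langle\zeta_j,\cdot\rangle}$ are not convex and cannot be fed into $\Psi$ directly. The tool you need is the Goodey--Weil continuity bound of \autoref{lemma:continuityFourier_F}. Since $|\zeta_j|=O(\delta)$ for $j\le k$ on $D_\delta(0,\dots,0,z)$, the resulting factor $e^{C\delta}$ is just a constant, so no $|z|$-dependent choice of $\delta$ is needed.
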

	
	The proof of this result requires several steps. We first establish that $\mathcal{F}(\widehat{\GW(\Psi)})\in\widehat{\M^2_k}$ in \autoref{section:FourierLaplace} using the decomposition results from \autoref{section:minors}. To that end, we examine the power series expansion of $\mathcal{F}(\widehat{\GW}(\Psi))$ in $0$ and show that the homogeneous terms belong to a corresponding module of polynomials. This step relies on a description of the restriction of  $\mathcal{F}(\widehat{\GW}(\Psi))$ to certain subspaces, which is a consequence of a restriction property of smooth local functionals in $\P_0\LV_k(\R^n)$ established in \autoref{section:restrictionSubspaces}.  Finally, we combine certain estimates for $\mathcal{F}(\widehat{\GW}(\Psi))$ for smooth $\Psi$ with the decomposition results from \autoref{section:minors} and the classical Paley--Wiener--Schwartz Theorem (see \autoref{theorem:PaleyWienerSchwartz_distributions} below) to prove \autoref{theorem:PWS_LV} in \autoref{section:ProofPWS}.

	\subsection{Restriction to subspaces}\label{section:restrictionSubspaces}
		The goal of this section is the proof of the following result.
		\begin{proposition}
			\label{proposition:RestrictionSubspaceSmoothCase}
			Let $\Psi\in \P_0\LV^\infty_k(\R^n)$ be smooth, $E\in \Gr_k(\R^n)$. Then there exists a continuous function $\Phi_E\in C(\R^n)$ such that
			\begin{align*}
				d\Psi(\pi_E^*f)=\Phi_E d(\MA_E(f)\otimes\vol_{E^\perp})
			\end{align*}
			for every $f\in \Conv(E,\R)$.
		\end{proposition}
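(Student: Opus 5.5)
Fix $E\in\Gr_k(\R^n)$ and consider the map $\Conv(E,\R)\to\M(\R^n)$, $f\mapsto\Psi(\pi_E^*f)$. The plan is to identify this restricted functional, up to a density, with $\MA_E(f)\otimes\vol_{E^\perp}$ in three steps: a translation invariant model case, a smoothing step for non-invariant $\Psi$, and a recovery step that extracts the density $\Phi_E$.

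\emph{Model case: translation invariant functionals.} Let $\Psi_0\in\P_0\LV_k(\R^n)^{tr}$. By \autoref{theorem:ClassificationPolyTranslationInv} with $d=0$,
\begin{align*}
	\Psi_0(f;B)=\int_B P(D^2f(x))\,dx
\end{align*}
for some $P\in\mathrm{M}_n$ that is $k$-homogeneous, i.e. a linear combination of $k$-minors. For $f\in\Conv(E,\R)\cap C^2(E)$, the Hessian of $\pi_E^*f$ vanishes outside $E\times E$. Hence every $k$-minor of $D^2(\pi_E^*f)$ equals a constant $c_E$ times $\det D^2_Ef$. Consequently
\begin{align*}
	\Psi_0(\pi_E^*f)=c_E\,\MA_E(f)\otimes\vol_{E^\perp},
\end{align*}
and by continuity this identity holds for all $f\in\Conv(E,\R)$.

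\emph{Structure of the restriction for non-invariant $\Psi$.} Here I would use locality, the valuation property, and $k$-homogeneity of the map $f\mapsto\Psi(\pi_E^*f)$. Since $\pi_E^*f$ is invariant under translations in $E^\perp$, the idea is to write $\Psi(\pi_E^*f)$ as an integral over $E^\perp$ of translated copies. Smoothness of $\Psi$ is what should make this possible: the mollified functionals $\Psi_\phi=\int\phi(x)\pi(x)\Psi\,dx$ of \autoref{remark:MollifierSmoothVector} have smooth dependence on translations. The cleaner route I would attempt is to prove, for every $y\in E^\perp$, that $\Psi(\pi_E^*f)$ is absolutely continuous with respect to $\MA_E(f)\otimes\vol_{E^\perp}$ with a density independent of $f$. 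Concretely, for each fixed Borel set $U\subset E^\perp$, the map $f\mapsto\Psi(\pi_E^*f;\,\cdot\times U)$ is an $\M(E)$-valued continuous, locally determined, $k$-homogeneous, degree-$0$ polynomial functional on $\Conv(E,\R)$ with $k=\dim E$. It therefore lies in the top-degree component on $E$. By \cite{KnoerrPolynomiallocalfunctionals2025}*{Theorem~C} it is given by $\rho_U\cdot\MA_E(f)$ for a unique measure-like density $\rho_U$ on $E$. Additivity in $U$ then produces a measure $\nu$ on $E\times E^\perp$ with
\begin{align*}
	\Psi(\pi_E^*f)=\nu\cdot\MA_E(f).
\end{align*}
The precise meaning of the product must be taken from Theorem~C (e.g.\ integrating $\MA_E(f)$ against the disintegration of $\nu$); I take that form as given.

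\emph{Continuity of the density; the main obstacle.} It remains to show that $\nu=\Phi_E\,(\vol_E\otimes\vol_{E^\perp})$ with $\Phi_E$ continuous. For the $E^\perp$-directions I use smoothness: translation by $y\in E^\perp$ leaves $\pi_E^*f$ unchanged. Thus $\pi(y)\Psi(\pi_E^*f)$ is the translate of $\Psi(\pi_E^*f)$, and the orbit map being smooth in the compact-to-bounded topology (\autoref{proposition:EquivalenceNotionsSmoothness}) forces the measure to be smooth along $E^\perp$, with derivatives given by measures. For the $E$-directions the same argument applies, after replacing $f$ by $f(\cdot-x_E)$ and using the equivariance of $\MA_E$. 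The derivative-bounded measures then force $\nu$ to have a density that is $C^1$ in every direction, and continuity follows. I expect this step to be the main difficulty. If the density argument is too delicate, I would first test on $f=|\cdot|_E^2/2$, where $\MA_E(f)=\vol_E$. This identifies $\nu$ with $\Psi(\pi_E^*f)$, a measure whose orbit under translations is smooth in the compact-to-bounded topology; such a measure has a $C^\infty$ density. Finally, \autoref{lemma:injectivity_characteristic_function} on $E$ extends the identity from this test function to all $f$.
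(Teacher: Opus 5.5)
Your outline can be completed, and its second half differs from the paper's argument and is in one respect better. As written, however, it has a real gap in how the fibre structure is formulated.

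\textbf{How the paper argues.} It builds the fibres by hand:
Hadwiger's theorem gives $\Psi(\pi_E^*h_K(\cdot-x_E))=\vol_k(K)\,\nu_{E,x_E}$.
\autoref{lemma:supportRestrictionLocalFunctional} puts $\nu_{E,x_E}$ on $\{x_E\}\times E^\perp$.
Smoothness along $E^\perp$ together with \autoref{proposition:smoothRadonMeasure} gives a density on each fibre.
Continuity in $x_E$ comes from the quantitative bound in \autoref{proposition:smoothRadonMeasure}(2), which is why the paper first reduces to compact support.
Finally, \autoref{proposition:RestrictionHomValuationLowerDimensionalSupportFunctions} passes from the functions $h_K(\cdot-x_E)$ to all of $\Conv(E,\R)$.

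\textbf{How your route differs.} You replace the first two steps by Theorem~C of \cite{KnoerrPolynomiallocalfunctionals2025} applied on $E$, which is legitimate. Your fallback with $q=\frac12|\cdot|_E^2$ replaces the continuity estimate altogether. It even shows that $\Phi_E$ is smooth, which the paper only remarks on.

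\textbf{The gap.} Take test functions $\chi\otimes\psi$ with $\psi\in C_c(E^\perp)$; indicator functions of Borel sets $U$ do not give weak*-continuous functionals a priori. What you then need, and what Theorem~C gives for $d=0$, is a \emph{continuous} density $\rho_\psi\in C(E)$. Hence you obtain a weak*-continuous family $x_E\mapsto\nu_{x_E}\in\M(E^\perp)$ with $\Psi(\pi_E^*f)=\int_E\delta_{x_E}\otimes\nu_{x_E}\,d\MA_E(f)(x_E)$. This continuity also follows directly from $\nu_{x_E}=\vol_k(K)^{-1}\Psi(\pi_E^*h_K(\cdot-x_E))$ and continuity of $\Psi$.

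Collapsing this family into one measure $\nu$ on $E\times E^\perp$ and a disintegration loses exactly what is needed, because $\MA_E(h_K(\cdot-x_E))=\vol_k(K)\delta_{x_E}$ is a point mass. Knowing that $\int_E\delta_{x_E}\otimes\nu_{x_E}\,dx_E=\Psi(\pi_E^*q)$ has a smooth density $\Phi$ only gives $\nu_{x_E}=\Phi(x_E,\cdot)\vol_{E^\perp}$ for almost every $x_E$. You must use continuity of both sides in $x_E$ to get it for every $x_E$. With that, the kernel formula yields the claim for all $f$ at once.

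\autoref{lemma:injectivity_characteristic_function} cannot propagate an identity from the single test function $q$; it needs agreement at every $h_P(\cdot-y)+c$. So your last sentence should be replaced by the argument above.

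\textbf{A missing justification.} You assert without reason that the translation orbit of the measure $\Psi(\pi_E^*q)$ is smooth. Smoothness of $x\mapsto\pi(x)\Psi$ concerns $\Psi(g(\cdot+x);\phi(\cdot+x))$, not translates of a fixed measure. You need the observation that $\pi_E^*q(\cdot+x)-\pi_E^*q$ is affine and $\Psi$ has degree $0$. Then $[\pi(x)\Psi](\pi_E^*q)$ is the translate of $\Psi(\pi_E^*q)$, and \autoref{proposition:smoothRadonMeasure}(1) applies.

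\textbf{Your first route.} The argument for the $E$-directions (``derivative-bounded measures force a $C^1$ density'') is too vague as stated. It also has the same almost-everywhere problem, and the test-function argument supersedes it.
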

		\begin{remark}
			The function $\Phi_E$ is in fact smooth, however, since we do not need this fact for the proof of \autoref{maintheorem:IntegralRepSmooth}, we will only show the simpler version above. Once \autoref{maintheorem:IntegralRepSmooth} is established, this more refined version follows immediately.
		\end{remark}
		Recall that we identify the space of complex Radon measures $\M(\R^n)$ with $C_c(\R^n)'$ equipped with the weak* topology. Since $\P_0\LV_0(\R^n)\cong \M(\R^n)$, we may consider $\M(\R^n)$ as a subspace of $\P_d\LV(\R^n)$. The following result may be seen as a special case of \autoref{maintheorem:IntegralRepSmooth}.
		\begin{proposition}
			\label{proposition:smoothRadonMeasure}
			Let $\nu\in \M(\R^n)$ be a measure such that the map
			\begin{align*}
				\R^n&\rightarrow\M(\R^n)\\
				x&\mapsto \pi(x)\nu
			\end{align*}
			is smooth with respect to the weak* topology. Then the following holds:
			\begin{enumerate}
				\item $\nu$ is absolutely continuous with respect to the Lebesgue measure and the density is smooth.
				\item If $\nu$ is in addition compactly supported on $[-R,R]^n$ and $\nu=f\bullet \vol$ for $f\in C^\infty_c(\R^n)$, then there exist constants $C_{N,R}>0$ independent of $\nu$ such that for all $|\alpha|\le N$,
				\begin{align*}
					\|\partial^\alpha f\|_\infty \le C_{N,R} \|(\partial_1\dots\partial_n)^{N+2}\pi(x)\nu|_{x=0}\|_{C_c(\R^n)'}.
				\end{align*}
			\end{enumerate}	
		\end{proposition}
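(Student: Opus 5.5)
Since $\pi(x)\nu(\phi)=\nu(\phi\circ x)$, so $\pi(x)\nu$ is the translate $\nu(\cdot+x)$ up to sign convention, the orbit map $x\mapsto\pi(x)\nu$ is essentially the translation orbit of the distribution $\nu$. The approach is to use smoothness of this orbit map to show that $\nu$ has weak derivatives of all orders which are measures, and then to invoke the standard fact that a distribution whose derivatives $(\partial_1\cdots\partial_n)^m\nu$ are finite measures for large $m$ is a smooth function. The estimates in (2) come from representing $f$ through a fundamental solution of $(\partial_1\cdots\partial_n)^{N+2}$.

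\textbf{Step 1 (derivatives of the orbit map).} The derivative of the orbit map at $x=0$ in direction $e_j$ is a weak* limit of difference quotients $\frac{1}{t}(\pi(te_j)\nu-\nu)$, and is therefore a Radon measure $\nu_j\in\M(\R^n)$. Testing against $\phi\in C^\infty_c(\R^n)$ gives $\nu_j(\phi)=\nu(\partial_j\phi)=-\partial_j\nu(\phi)$ in the distributional sense. Iterating, smoothness implies that for every multi-index $\alpha$ the distributional derivative $\partial^\alpha\nu$ equals $\pm\partial^\alpha_x\pi(x)\nu|_{x=0}$ and is a Radon measure. In particular, $\mu_m:=(\partial_1\cdots\partial_n)^m\nu$ is a measure for every $m$, and under the hypothesis of (2) it is supported in $[-R,R]^n$. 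Its total variation is $\|(\partial_1\cdots\partial_n)^m\pi(x)\nu|_{x=0}\|_{C_c(\R^n)'}$, which is finite because $\mu_m$ is a Radon measure.

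\textbf{Step 2 (regularity).} To prove (1), localize by multiplying $\nu$ by a cutoff $\chi\in C^\infty_c(\R^n)$. By Leibniz's rule, all derivatives of $\chi\nu$ are still finite measures of compact support, so we may assume $\nu$ is compactly supported. Let $E_m(x)=\prod_{i=1}^n\frac{x_i^{m-1}}{(m-1)!}\mathbf{1}_{x_i\ge0}$, a fundamental solution of $(\partial_1\cdots\partial_n)^m$. For compactly supported $\nu$ we have $\nu=E_m*\mu_m$, and we may replace $E_m$ by $\psi E_m$, with $\psi$ a cutoff equal to $1$ on $[-2R,2R]^n$, without changing the identity on $[-R,R]^n$. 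Since $E_m\in C^{m-2}$ with locally bounded derivatives up to order $m-2$, the convolution $E_m*\mu_m$ is a $C^{m-2}$ function. As $m$ is arbitrary, the density is smooth.

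\textbf{Step 3 (estimate).} For (2), take $m=N+2$. For $|\alpha|\le N$ and $x\in[-R,R]^n$, we get
\begin{align*}
|\partial^\alpha f(x)|\le\sup_{|y_i|\le 2R}|\partial^\alpha E_{N+2}(y)|\,\|\mu_{N+2}\|,
\end{align*}
and the supremum is a constant $C_{N,R}$ independent of $\nu$. Outside $[-R,R]^n$ the function $f$ vanishes, so this bounds $\|\partial^\alpha f\|_\infty$. By Step 1, $\|\mu_{N+2}\|$ is exactly the right-hand side of the claimed inequality.

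\textbf{Main obstacle.} The main point to verify is the identification in Step 1: a derivative of the orbit map taken in the weak* topology coincides with the distributional derivative of $\nu$, and it is a genuine Radon measure. This holds because $\M(\R^n)$ is weak* sequentially closed as the dual of $C_c(\R^n)$, and the derivative lies in the space by assumption. Once this identification is established, the rest is classical.
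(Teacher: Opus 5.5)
Your proof is correct and follows the paper's argument. The paper's operator $T$ integrates a measure over the orthants $(-\infty,x_1]\times\dots\times(-\infty,x_n]$, i.e.\ it is convolution with $\prod_{i}\mathbf{1}_{x_i\ge 0}$, so $T^m$ is exactly convolution with your fundamental solution $E_m$. Both arguments then identify the weak* derivatives of the orbit map with distributional derivatives, localize, and read off $C^{m-2}$ regularity and the estimate from $\nu=E_{N+2}*(\partial_1\cdots\partial_n)^{N+2}\nu$. The only cosmetic difference is that you localize via the Leibniz rule, while the paper uses the $C^\infty(\R^n)$-module structure of smooth functionals.
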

		\begin{proof}
			Note first that the distributional derivatives of $\nu$ are given by $\partial_j \nu=-\partial_j\pi(x)\nu|_0$, since for $\phi\in C^\infty_c(\R^n)$
			\begin{align*}
				\int_{\R^n}\partial_j\phi(x)d\nu(x)=&\lim\limits_{h\rightarrow 0}\frac{1}{h}\left(
				\int_{\R^n}\phi(x+he_j)d\nu(x)-\int_{\R^n}\phi(x)d\nu(x)\right)\\
				=&\lim\limits_{h\rightarrow 0}\int_{\R^n} \phi(x)d\left(\frac{\pi(he_j)\nu-\nu}{h}\right)\\
				=&\int_{\R^n} \phi(x)d\left[\partial_j\pi(y)\nu|_{y=0}\right].
			\end{align*}
			In particular, the distributional derivatives of arbitrary order are again measures.\\
			
			Next, observe that for $\phi\in C^\infty_c(\R^n)$, the measure $\phi\bullet \nu$ has the same property, compare \autoref{corollary:smoothnessModuleStructure}. Using a smooth partition of unity, we may therefore assume that $\nu$ has compact support. \\
			Assume that $\tilde{\nu}$ is a measure with support bounded from below in all coordinate directions. Then $T(\tilde{\nu})$ given by
			\begin{align*}
				T(\tilde{\nu})[x]:=\int_{(-\infty,x_1]\times \dots(-\infty,x_n]}d\tilde{\nu}
			\end{align*}
			defines a locally bounded and measurable function on $\R^n$ with the same support property, so we may repeat the construction by identifying locally integrable functions with the corresponding absolutely continuous measure with respect to the Lebesgue measure. In particular, $T^2$ maps any compactly supported measure to a continuous function.
			Using Fubini's Theorem, we obtain for $\phi\in C^\infty_c(\R^n)$ and a compactly supported measure $\tilde{\nu}$, 
			\begin{align*}
					\int_{\R^n}\phi(x)T(\tilde{\nu})[x] dx=&\int_{\R^n} \left[\int_{[y_1,\infty)\times\dots \times [y_n,\infty)}\phi(x)dx\right]d\tilde{\nu}(y).
			\end{align*}
			If the distributional derivative $\partial_1\dots\partial_n\tilde{\nu}$ is a measure, this implies 
			\begin{align*}
				T(\partial_1\dots\partial_n\tilde{\nu})=\tilde{\nu}
			\end{align*}
			as distributions. In our case, this implies that $T^2((\partial_1\dots\partial_n)^k \nu)$ is a continuous function for every $k\in\mathbb{N}$, and so
			\begin{align*}
				\nu=T^{k}((\partial_1\dots\partial_n)^k \nu)=T^{k-2}(T^{2}((\partial_1\dots\partial_n)^k \nu))
			\end{align*}
			is of class $C^{k-2}$. This shows the first claim.\\
			
			Now assume that $\nu=f\bullet \vol$ for $f\in C^\infty_c(\R^n)$ with $\supp \nu=\supp f\subset [-R,R]^n$. Then for any multiindex $\alpha$ with $|\alpha|\le N$,
			\begin{align*}
				\partial^\alpha f=(-1)^{nN}\partial^\alpha T^{N+2}((\partial_1\dots\partial_n)^{N+2}\pi(x)\nu|_{x_0}).
			\end{align*}
			It is easy to see that $T^l((\partial_1\dots\partial_n)^N \nu)$ is supported on $[-R,R]^n$ for any $l\le N$. From the definition of $T$, we thus obtain that
			\begin{align*}
				\|\partial^\alpha f\|_\infty\le& R^{nN-|\alpha|} \|T^{2}((\partial_1\dots\partial_n)^{N+2}\pi(x)\nu|_{x=0})\|_\infty\\
				\le&R^{n(N+2)-|\alpha|} \|(\partial_1\dots\partial_n)^{N+2}\pi(x)\nu|_{x=0}\|_{C_c(\R^n)'}.
			\end{align*}
			The completes the proof of the second claim.
		\end{proof}
		
		In order to obtain \autoref{proposition:RestrictionSubspaceSmoothCase} from the previous characterization, we are going to exploit the fact that our functionals are locally determined. This step relies on the following observation, which is a simple variation of the case $k=n$ used in \cite[Lemma 4.7]{KnoerrMongeAmpereoperators2024}.
		\begin{lemma}
			\label{lemma:supportFunctionLowerDimensionalFace}
			Let $E\in\Gr_k(\R^n)$ and $P\subset E$ a polytope. For every $y_0\in \R^n\setminus E^\perp$ there exists a neighborhood $U_{y_0}$ of $y_0$ and a $(k-1)$-dimensional face $P_{y_0}$ of $P$ such that
			\begin{align*}
				h_P(y)=h_{P_{y_0}}(y)\quad\text{for all}~y\in U_{y_0}.
			\end{align*}
		\end{lemma}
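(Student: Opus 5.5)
The plan is to reduce everything to the subspace $E$ and then use the elementary local structure of support functions of polytopes. Since $P\subset E$, we have $h_P(y)=h_P(\pi_E y)$ for all $y\in\R^n$. The same identity holds for any face of $P$. Moreover, $y_0\notin E^\perp$ means exactly that $z_0:=\pi_E y_0\neq 0$. It therefore suffices to find a neighborhood $V$ of $z_0$ in $E$ and a $(k-1)$-dimensional face $P_{y_0}$ of $P$ with $h_P=h_{P_{y_0}}$ on $V$. We then take $U_{y_0}:=\pi_E^{-1}(V)$, which is open by continuity of $\pi_E$.

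\emph{Step 1: semicontinuity of the support set.} Let $F(P,z)=\{x\in P:\langle x,z\rangle=h_P(z)\}$ denote the support set in direction $z\in E$, and let $v_1,\dots,v_m$ be the vertices of $P$. Every vertex $v_i\notin F(P,z_0)$ satisfies $\langle v_i,z_0\rangle<h_P(z_0)$. Since there are only finitely many such vertices and $h_P$ is continuous, there is a neighborhood $V$ of $z_0$ on which each such $v_i$ still satisfies $\langle v_i,z\rangle<\max_{j:\,v_j\in F(P,z_0)}\langle v_j,z\rangle$. For $z\in V$ the maximum of $\langle\cdot,z\rangle$ over $P$ is attained at a vertex, so it is attained at some vertex of $F(P,z_0)$. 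Hence $h_P(z)=h_{F(P,z_0)}(z)$ for $z\in V$, and in fact $F(P,z)\subset F(P,z_0)$.

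\emph{Step 2: enlarge to a $(k-1)$-dimensional face.} Since $z_0\neq0$ lies in $E$, the face $F(P,z_0)$ is a proper face of $P$ whenever $P$ is $k$-dimensional in $E$. In the case $\dim P=k$, every proper face is contained in a facet of $P$ relative to $E$, which is a $(k-1)$-dimensional face $P_{y_0}\supset F(P,z_0)$. For $z\in V$ we then get the chain
\begin{align*}
	h_P(z)=h_{F(P,z_0)}(z)\le h_{P_{y_0}}(z)\le h_P(z),
\end{align*}
using monotonicity of support functions under inclusion $F(P,z_0)\subset P_{y_0}\subset P$. So $h_P=h_{P_{y_0}}$ on $V$, and hence $h_P=h_{P_{y_0}}$ on $U_{y_0}$.

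The only delicate point is the dimension bookkeeping in Step 2. I will treat $P$ as a full-dimensional polytope in $E$, which is the situation in which the lemma is applied. If $\dim P\le k-1$, the same argument still yields a face of dimension at most $k-1$, namely $F(P,z_0)$ itself, with $h_P=h_{F(P,z_0)}$ near $y_0$. That weaker conclusion is what the subsequent application needs, since it only uses that $h_P$ coincides near $y_0$ with the support function of a polytope of dimension less than $k$. Step 1 is routine and presents no real obstacle.
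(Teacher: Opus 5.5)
Your proof is correct and is essentially the paper's argument. Your Step 1 is exactly what the paper does: take the exposed face $\{x\in P:\langle x,y_0\rangle=h_P(y_0)\}$ and a neighborhood of $y_0$ on which the remaining vertices stay strictly below the maximum. Your Step 2 (enlarging to a facet when $\dim P=k$) is a refinement the paper skips; its proof only produces a face of dimension at most $k-1$, which is precisely what is used in \autoref{lemma:supportRestrictionLocalFunctional}, and, as you note, the literal statement cannot hold when $\dim P<k-1$.
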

		\begin{proof}
			We decompose $\R^n=E\oplus E^\perp$ and write $y=(y_1,y_2)\in E\oplus E^\perp$ for $y\in \R^n$. Fix $y_0\in\R^n\setminus E^\perp$ and write $y_0=(y_{0,1},y_{0,2})$. As $P$ is convex, the function $x\mapsto \langle y_0,x\rangle$ defined for $x\in P\subset E$ attains its maximum $h_P(y_0)$ in certain vertices of $P$. If $P$ has $m$ vertices, we may thus assume that the vertices $v_1,\dots,v_m\in E$ of $P$ satisfy
			\begin{align*}
				h_P(y_0)=&\langle y_0,v_i\rangle= \langle y_{0,1},v_i\rangle\quad \text{for }1\le i\le l,\\
				h_P(y_0)>&\langle y_0,v_i\rangle=\langle y_{0,1},v_i\rangle \quad \text{for }l+1\le i\le m,
			\end{align*}
			for some $1\le l\le m$.
			Note that this implies that $v_1,\dots,v_l$ belong to the face $P':=\{x\in P: \langle x,y_0\rangle =h_P(y_0)\}$, which is of dimension at most $k-1$ since it is the intersection of $P$ with a hyperplane in $E$ (as $y_{0,1}\ne0$), and $\dim E=k$. If $l=m$, this shows the claim, as $P'=P$ in this case. Thus assume $1\le l <m$.\\
			As $h_P$ is continuous, we may choose a neighborhood $U\subset \R^n\setminus E^\perp$ of $y_0$ such that for all $y\in U$
			\begin{align*}
				h_P(y)>\langle y,v_i\rangle  \quad \text{for }l+1\le i\le m.
			\end{align*}
			For $y\in U$, the function $x\mapsto \langle x,y\rangle$ defined on $P$ attains its maximum $h_P(y)$ on a vertex of $P$, so we find $1\le i\le l$ such that $h_P(y)=\langle y,v_i\rangle$. In particular, $h_P(y)\le h_{P'}(y)$ for all $y\in U$, which implies $h_{P'}(y)=h_P(y)$ for all $y\in U$ as $P'\subset P$.
		\end{proof}
		For the next step, we require some facts about valuations on convex bodies, and we refer to \cite{SchneiderConvexbodiesBrunn2014}*{Section~6} for a general overview. Let $\mathcal{K}(\R^n)$ denote the space of all non-empty compact and convex subsets of $\R^n$ equipped with the Hausdorff metric. Then a map $\mu:\mathcal{K}(\R^n)\rightarrow F$ into a locally convex vector space is called a valuation if
		\begin{align*}
			\mu(K)+\mu(L)=\mu(K\cup L)+\mu(K\cap L)
		\end{align*}
		for all $K,L\in\mathcal{K}(\R^n)$ such that $K\cup L$ is convex. Valuations on convex bodies are intimately related to valuations on convex functions: For $K\in\mathcal{K}(\R^n)$, its support function $h_K\in \Conv(\R ^n,\R)$ is defined by
		\begin{align*}
			h_K(y)=\sup_{x\in K}\langle x,y\rangle.
		\end{align*}
		If $\Psi:\Conv(\R^n,\R)\rightarrow F$ is a valuation, then $K\mapsto \Psi(h_K)$ defines a valuation on $\mathcal{K}(\R^n)$, compare \cite{AleskerValuationsconvexfunctions2019}*{Section~1} or \cite{KnoerrUlivelliPolynomialvaluationsconvex2026}*{Lemma 2.15}. In particular, the restriction of any continuous local functional to support functions defines a valuation on convex bodies, so we may use classification results for valuations on convex bodies to examine these functionals.\\ 
		The following was shown by Hadwiger for real-valued valuations in \cite{HadwigerVorlesungenuberInhalt1957}, however, the proof applies in the more general setting below (see also \cite{KnoerrUlivelliPolynomialvaluationsconvex2026}*{Theorem~2.13}). 
		\begin{theorem}
			\label{theorem:VolumeCharacterization}
			Let $F$ be a Hausdorff topological vector space and $\mu:\mathcal{K}(\R^n)\rightarrow F$ a continuous and translation invariant valuation that is homogeneous of degree $n$. Then there exists $v\in F$ such that $\mu(K)=\vol_n(K) v$ for all $K\in\mathcal{K}(\R^n)$.
		\end{theorem}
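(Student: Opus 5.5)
The plan is to show that $\nu:=\mu-\vol_n(\cdot)\,v$ vanishes identically, where $v:=\mu([0,1]^n)$. The argument has three steps: first show that $\mu$ is simple, then that $\nu$ vanishes on boxes, and finally obtain $\nu=0$ by a grid dissection.

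\emph{Step 1: $\mu$ is simple.} I would show that $\mu(K)=0$ whenever $\dim K<n$, by induction on the dimension $j$ of a reference cube. For a point, $n$-homogeneity and translation invariance give $\mu(\{0\})=\mu(t\{0\})=t^n\mu(\{0\})$ for all $t>0$, so $\mu(\{0\})=0$. For $1\le j<n$, let $Q$ be a $j$-dimensional unit cube. Then $mQ$ is the union of $m^j$ translates of $Q$ that meet only in lower-dimensional boxes. Using the inclusion--exclusion principle on boxes (valid for valuations on $\mathcal{K}(\R^n)$ since unions of adjacent boxes in a grid are convex, so one can glue box by box) and the induction hypothesis, we get $\mu(mQ)=m^j\mu(Q)$. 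Homogeneity gives $\mu(mQ)=m^n\mu(Q)$, so $\mu(Q)=0$. The same holds for all $j$-dimensional boxes, since these arise from cubes by the same gluing together with continuity. Finally, every $K$ with $\dim K<n$ lies in a hyperplane. There it can be approximated, and dissected along grid hyperplanes of that hyperplane, into pieces contained in $(n-1)$-dimensional boxes. The most direct way to conclude $\mu(K)=0$ is to note that $K\mapsto \mu(K)$ restricted to bodies in a hyperplane $H$ is a continuous, translation invariant valuation on $\mathcal{K}(H)$ which is $n$-homogeneous with $n>\dim H$. Applying the Step~3 estimate below (with exponent $n-1$ in place of $n$) then forces it to vanish.

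\emph{Step 2: boxes.} Since $\mu$ is simple and translation invariant, gluing boxes gives $\mu(B)=\vol_n(B)\,v$ for boxes with rational side lengths. Continuity extends this to all boxes, so $\nu$ is a simple, continuous, translation invariant, $n$-homogeneous valuation vanishing on all boxes.

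\emph{Step 3: grid dissection (main obstacle).} Fix $K$ and $m\in\mathbb{N}$, and cut $K$ by the grid $\frac1m\mathbb{Z}^n$ into the convex pieces $K\cap C$, where $C$ ranges over the grid cells. Because $\nu$ is simple, repeated gluing gives $\nu(K)=\sum_C\nu(K\cap C)$. Cells contained in $K$ contribute $\nu(C)=0$. The remaining boundary cells number at most $c_K m^{n-1}$. For such a cell, translation invariance and homogeneity give $\nu(K\cap C)=m^{-n}\nu(L_C)$, where $L_C\subset[0,1]^n$ is a translate of $m(K\cap C)$. The set $\{L\in\mathcal{K}(\R^n):L\subset[0,1]^n\}$ is compact, so its image $\mathcal{B}$ under $\nu$ is compact and hence bounded in $F$. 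Thus $\nu(K)$ is $m^{-n}$ times a sum of at most $c_K m^{n-1}$ elements of $\mathcal{B}$, and the task is to show this tends to $0$ as $m\to\infty$.

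This is the main obstacle. In a locally convex space (the case needed in the paper, where $F$ is locally convex) it is immediate: the closed absolutely convex hull $\Gamma$ of $\mathcal{B}$ is bounded, the sum lies in $c_K m^{-1}\Gamma$, and this shrinks to $0$. Since $F$ is Hausdorff, $\nu(K)=0$. For a general Hausdorff topological vector space I would instead follow Hadwiger's original route. This means comparing $K$ with prisms/cylinders via the simplicity and translation invariance (a Klain-type cylinder argument), which uses only linear operations and limits, hence stays valid in any Hausdorff TVS. I would try the locally convex estimate first and only fall back on the cylinder argument if the full generality is actually required.
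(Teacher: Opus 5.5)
\paragraph{The general Hausdorff case is not proved.} Your argument proves the statement when $F$ is locally convex. That covers every application in the paper, where $F=\M(\R^n)$ carries the weak* topology. But the theorem is stated for an arbitrary Hausdorff topological vector space, and in that generality the proposal has a genuine gap.

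The decisive step is Step~3, together with the last part of Step~1, which reuses it. It only tells you that $\nu(K)$ is $m^{-n}$ times a sum of at most $c_Km^{n-1}$ elements of the compact set $\mathcal{B}$. To conclude $\nu(K)=0$ from this, you need averages of elements of $\mathcal{B}$ to stay in a bounded set. Without local convexity this fails even for compact sets. In $L^p[0,1]$ with $0<p<1$, take $f_j=\epsilon_j2^{j/p}\mathbf{1}_{[2^{-j},2^{1-j}]}$ with $\epsilon_j^p=1/\log(j+1)$. Together with $0$ these form a compact set, yet $\bigl\|\tfrac1m\cdot\tfrac1N\sum_{j\le N}f_j\bigr\|_p^p\ge m^{-p}N^{1-p}/\log(N+1)$. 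This is unbounded along $N\approx m^{n-1}$ once $p<(n-1)/n$, so your counting bound yields nothing.

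The fallback you mention is only named, and it is not a routine substitute. Klain's cylinder argument characterizes volume among \emph{even} simple valuations. Schneider's odd simple valuations $K\mapsto\int_{S^{n-1}}f\,dS_{n-1}(K,\cdot)$, with $f$ odd, are continuous, translation invariant and vanish on cubes. So after establishing simplicity you still have to use $n$-homogeneity in an exact, non-asymptotic way. One option is McMullen's polynomial expansion for translation invariant valuations on polytopes, with continuity used only to pass from polytopes to general bodies. The paper gives no argument of its own; it asserts that Hadwiger's proof carries over verbatim. Your route via quantitative grid estimates is genuinely different, but as it stands it only covers the locally convex case.

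\paragraph{A circularity in Step~1.} This matters even for locally convex $F$: the last part of Step~1 is circular as written. Running the grid dissection inside a hyperplane $H$ requires $\mu|_{\mathcal{K}(H)}$ to be simple. That means vanishing on the slices $K\cap\{x_i=a\}$, which are arbitrary $(n-2)$-dimensional convex bodies, whereas you have only shown vanishing on boxes.

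The fix is an induction on $j=\dim V$ for the claim that a continuous translation invariant valuation on $\mathcal{K}(V)$, homogeneous of degree $n>j$, vanishes. The induction hypothesis gives simplicity on $\mathcal{K}(V)$. Then all $O(m^j)$ cells meeting $K$ can be treated crudely, giving $\mu(K)\in c_Km^{j-n}\Gamma$, where $\Gamma$ is the closed absolutely convex hull of a bounded set. This also makes the box computations in Steps~1 and~2 unnecessary. The full cells in Step~3 are translates of $\frac1m[0,1]^n$, so $\nu(C)=m^{-n}\nu([0,1]^n)=0$ by the choice of $v$.
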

		
		\begin{lemma}
			\label{lemma:supportRestrictionLocalFunctional}
			Let $\Psi\in\P_0\LV_k(\R^n)$, $E\in\Gr_k(\R^n)$. For $K\in\mathcal{K}(E)$ and $x_E\in E$, we have
			\begin{align*}
				\supp\Psi(h_K(\cdot-x_E))\subset \{x_E\}\times E^\perp.
			\end{align*}
		\end{lemma}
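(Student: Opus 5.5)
Since $\Psi$ is local, the plan is to show that the measure $\Psi(f)$ for $f=h_K(\cdot-x_E)$ is not charged by any point $y_0$ with $y_0-x_E\notin E^\perp$. Replacing $\Psi$ by $\pi(x_E)\Psi$ (translation preserves $\P_0\LV_k(\R^n)$, and the support moves by $x_E$), we may assume $x_E=0$; we then have to show $\supp\Psi(h_K)\subset E^\perp$. Since $\Psi$ is continuous and $\supp$ is lower semicontinuous under weak* limits in the right sense, it suffices to prove this first for polytopes $K=P\subset E$. For general $K$ we approximate $K$ by polytopes $P_j\to K$ in $E$, so that $h_{P_j}\to h_K$ locally uniformly, and use weak* convergence: if $\phi\in C_c(\R^n\setminus E^\perp)$, then $\Psi(h_K;\phi)=\lim_j\Psi(h_{P_j};\phi)=0$.

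For a polytope $P\subset E$ and a point $y_0\in\R^n\setminus E^\perp$, \autoref{lemma:supportFunctionLowerDimensionalFace} provides a neighborhood $U_{y_0}$ and a face $P_{y_0}$ of dimension at most $k-1$ with $h_P=h_{P_{y_0}}$ on $U_{y_0}$. Locality then gives $\Psi(h_P)|_{U_{y_0}}=\Psi(h_{P_{y_0}})|_{U_{y_0}}$. It therefore remains to show that $\Psi(h_Q)=0$ whenever $Q$ is a convex body of dimension at most $k-1$. Since the $U_{y_0}$ cover $\R^n\setminus E^\perp$, this yields the claim.

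Vanishing on lower-dimensional bodies is the key step. Consider the valuation $\mu(L):=\Psi(h_L)$ on $\mathcal{K}(F)$ for a $(k-1)$-dimensional subspace $F$ containing $Q$, with values in $\M(\R^n)$. Because $\Psi$ is polynomial of degree $0$, we have $\Psi(f+\ell)=\Psi(f)$ for affine $\ell$. In particular $h_{L+x}=h_L+\langle x,\cdot\rangle$, so $\mu$ is translation invariant. It is continuous and $k$-homogeneous by the $k$-homogeneity of $\Psi$. A continuous translation invariant valuation on convex bodies in a space of dimension $k-1$ has homogeneous components of degree at most $k-1$ only; this is the McMullen decomposition, which also holds for vector-valued valuations by applying functionals. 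Hence $\mu$ vanishes, since $k>k-1$. In the case $k=0$ the statement is trivial, because $E=\{0\}$ and $E^\perp=\R^n$. If one prefers to avoid McMullen's theorem, an alternative is induction on dimension using \autoref{theorem:VolumeCharacterization}: restricting to a $k$-dimensional space $F'\supset F$ gives $\mu(L)=\vol_k(L)v$, which vanishes on $(k-1)$-dimensional $L$.

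The main obstacle is the approximation step for non-polytopal $K$, together with making sure that locality is applied to functions agreeing on an open set; \autoref{lemma:supportFunctionLowerDimensionalFace} guarantees exactly this. The rest is routine.
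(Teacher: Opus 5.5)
Your proof is correct and rests on the same ingredients as the paper's: \autoref{lemma:supportFunctionLowerDimensionalFace}, locality, and the vanishing of a continuous, translation invariant, $k$-homogeneous valuation on bodies of dimension less than $k$. The face $P_{y_0}$ lies in an affine rather than a linear $(k-1)$-plane, but the translation invariance of $\mu$ that you note handles this.

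The paper is slightly more economical. It first applies \autoref{theorem:VolumeCharacterization} on $\mathcal{K}(E)$ to get $\Psi(h_K(\cdot-x_E))=\vol_E(K)\nu$ for all $K$ at once. It then only has to show $\supp\nu\subset\{x_E\}\times E^\perp$, which it does with a single $k$-dimensional polytope, so no approximation of $K$ by polytopes is needed.
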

		\begin{proof}
			Consider the map $\tilde{\Psi}:\mathcal{K}(E)\rightarrow\M(\R^n)$ given by $\tilde{\Psi}(K)=\Psi(h_K(\cdot-x_E))$. Then it is easy to see that $\tilde{\Psi}$ is a continuous and translation invariant valuation on $\mathcal{K}(E)$ that is homogeneous of degree $k=\dim E$, compare \cite{KnoerrUlivelliPolynomialvaluationsconvex2026}*{Lemma~2.15}, so by \autoref{theorem:VolumeCharacterization} there exists a measure $\nu\in\M(\R^n)$ with $\tilde{\Psi}(K)=\vol_E(K)\nu$. We have to show that $\nu[\phi]=0$ for every $\phi\in C_c(\R^n)$ with $\supp\phi\cap( \{x_E\}\times E^\perp)=\emptyset$. Let $P\subset E$ be a polytope of dimension $k$, i.e. such that $\vol_E(P)\ne 0$. By \autoref{lemma:supportFunctionLowerDimensionalFace} there exists a cover of $\R^n\setminus \{x_E\}\times E^\perp$ by open subsets $U_\alpha$ such that $h_P|_{U_\alpha}=h_{P_\alpha}|_{U_\alpha}$ for a polytope $P_{\alpha}\in \mathcal{K}(E)$ of dimension at most $(k-1)$. Choose a partition of unity $(\psi_\alpha)_\alpha$ subordinate to this cover. Let $\phi\in C_c(\R^n)$ be a function with $\supp\phi\cap( \{x_E\}\times E^\perp)=\emptyset$. Since $\Psi$ is locally determined and since $h_P$ and $h_{P_\alpha}$ coincide on a neighborhood of $\psi_\alpha\phi$ by construction, we obtain
			\begin{align*}
				\vol_E(P)\nu[\phi]=&\tilde{\Psi}(P)[\phi]=\Psi(h_P)[\phi]=\sum_\alpha \Psi(h_P)[\psi_\alpha\phi]=\sum_\alpha \Psi(h_{P_\alpha})[\psi_\alpha\phi]\\
				=&\sum_\alpha \tilde{\Psi}(P_\alpha)[\psi_\alpha\phi]=\sum_\alpha \vol_E(P_\alpha)\nu[\psi_\alpha\phi]=0,
			\end{align*}
			since $P_\alpha$ is a polytope of dimension at most $k-1$. Since $\phi$ was arbitrary and $\vol_E(P)\ne0$, this implies that $\supp\nu\subset\{x_E\}\times E^\perp$.
		\end{proof}
		
		\begin{proof}[Proof of \autoref{proposition:RestrictionSubspaceSmoothCase}]
			Using a partition of unity, we may assume that $\locsupp\Psi\subset [-R,R]^n$ for some $R>0$. Let us consider the valuation $\Psi_E\in\P_0\VConv_k(E,\M(\R^n))$ defined for $f\in\Conv(E,\R)$ by
			\begin{align*}
				\Psi_E(f):=\Psi(\pi_E^*f).
			\end{align*}
			For $x_E\in E$, we define $\mu_{E,x_E}\in \Val_k(E,\M(\R^n))$ by $\mu_{E,x_E}(K)=\Psi_E(h_K(\cdot-x_E))$. Since this is a valuation of degree $k$ on a $k$-dimensional space, \autoref{theorem:VolumeCharacterization} implies that there exists $\nu_{E,x_E}\in \M(\R^n)$ such that
			\begin{align*}
				\Psi_E(h_K(\cdot-x_E))=\mu_{E,x_E}(K)=\vol_k(K) \nu_{E,x_E}.
			\end{align*}
			\autoref{lemma:supportRestrictionLocalFunctional} implies that $\nu_{E,x_E}$ is supported on $\{x_E\}\times E^\perp$. We may thus consider $\nu_{E,x_E}$ as an element of $\M(E^\perp)$. Moreover, since $\locsupp\Psi\subset [-R,R]^n$ by assumption, $\supp \nu_{E,x_E}\subset \{x_E\}\times [-R,R]^{n-k}$.\\
			
			Since the function $h_K(\cdot-x_E)$ is invariant under translations in $E^\perp$, we obtain for $x_{E^\perp}\in E^\perp$
			\begin{align*}
				\pi(x_{E^\perp})\nu_{E,x_E}=\frac{1}{\vol_E(B_E(0))}[\pi(x_{E^\perp})\Psi_E](h_{B_E(0)}(\cdot-x_E)),
			\end{align*}
			where $B_E(0)\subset E$ denotes the unit ball.	In particular, the map 
			\begin{align*}
				E^\perp&\rightarrow\M(E^\perp)\\
				x_{E^\perp}&\mapsto \pi(x_{E^\perp})\nu_{E,x_E}
			\end{align*}
			is smooth with respect to the weak* topology on $\M(E^\perp)$. From \autoref{proposition:smoothRadonMeasure}, we obtain a smooth function $\Phi_{x_E}\in C^\infty(E^\perp)$ with $\supp \Phi_{x_E}\subset [-R,R]^{\dim E^\perp}$ such that
			\begin{align*}
				d\nu_{E,x_E}=\Phi_{x_E}d[\delta_{x_E}\otimes \vol_{E^\perp}].
			\end{align*}
			Set $\Phi_E(x_E,x_{E^\perp}):=\Phi_{x_E}(x_{E^\perp})$. From \autoref{proposition:smoothRadonMeasure} we obtain for every $L>0$ a constant $C(L)>0$ independent of $x_E,x_E'\in E$ with $|x_E|,|x_E'|\le L$ such that
			\begin{align*}
				&\|\Phi_E(x_E,\cdot)-\Phi_E(x_E',\cdot)\|_\infty\\
				&\le C(L)\|\left((\partial_1\dots\partial_n)^{2}[\pi(x)\Psi]|_{-x_E}-(\partial_1\dots\partial_n)^{2}[\pi(x)\Psi]|_{-x_E'}\right)(h_{B_E(0)})\|_{C_c(\R^n)'}.
			\end{align*}
			Since $x\mapsto \pi(x)\Psi$ is smooth in the compact-to-bounded topology, the right hand side of this inequality converges to zero for $x_E'\rightarrow x_E$, and it is easy to see that this implies that $\Phi_E$ is continuous. \\
			
			Now define $\tilde{\Psi}_E\in \VConv_k(E,\M(\R^n))$ by
			\begin{align*}
				\tilde{\Psi}(f_E;B)=\int_{B}\Phi_E(x)d(\MA_E(f_E)\otimes \vol_{E^\perp}) 
			\end{align*}
			for $f_E\in\Conv(E,\R)$, $B\subset\R^n$ bounded Borel set. Then $\tilde{\Psi}_E(h_K(\cdot-x_E))=\Psi_E(h_K(\cdot-x_E))$ for every $x_E\in E$. \autoref{proposition:RestrictionHomValuationLowerDimensionalSupportFunctions} thus implies $\tilde{\Psi}_E=\Psi_E$, which completes the proof.
		\end{proof}
		
		Let us add the following interesting consequence of \autoref{lemma:supportRestrictionLocalFunctional} for the continuity properties of elements in $\P_0\LV(\R^n)$. 
		\begin{corollary}
			\label{corollary:DuallyEpiNotContinuousWRTStrongTopology}
			Let $1\le k\le n$ and $\Psi\in\P_0\LV_k(\R^n)$. If $\Psi:\Conv(\R^n,\R)\rightarrow\M(\R^n)$ is continuous with respect to the strong topology, then $\Psi=0$.
		\end{corollary}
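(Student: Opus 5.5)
We argue by contradiction, using \autoref{lemma:supportRestrictionLocalFunctional} together with \autoref{proposition:RestrictionHomValuationLowerDimensionalSupportFunctions}. Suppose that $\Psi\in\P_0\LV_k(\R^n)$, $1\le k\le n$, is continuous when $\M(\R^n)$ carries the strong topology. We will show that $\Psi(\pi_E^*(h_K(\cdot-x_E)))=0$ for every $E\in\Gr_k(\R^n)$, $K\in\mathcal{K}(E)$ and $x_E\in E$. Then \autoref{proposition:RestrictionHomValuationLowerDimensionalSupportFunctions}, applied with $F=\M(\R^n)$, gives $\Psi=0$.

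Fix $E$ and $K$, and take $K$ with nonempty relative interior in $E$; the general case then follows by continuity, or because the volume vanishes. As in the proof of \autoref{proposition:RestrictionSubspaceSmoothCase}, \autoref{theorem:VolumeCharacterization} applied to $K\mapsto\Psi(h_K(\cdot-x_E))$ gives
\begin{align*}
	\Psi(\pi_E^*h_K(\cdot-x_E))=\vol_E(K)\,\nu_{x_E}
\end{align*}
for a measure $\nu_{x_E}$. By \autoref{lemma:supportRestrictionLocalFunctional}, $\nu_{x_E}$ is supported on $\{x_E\}\times E^\perp$. Now consider the map $x_E\mapsto \pi_E^*h_{B_E}(\cdot-x_E)$ from $E$ to $\Conv(\R^n,\R)$. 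It is continuous, so strong continuity of $\Psi$ makes $x_E\mapsto\nu_{x_E}$ continuous into $\M(\R^n)$ with the strong topology. The measures $\nu_{x_E}$ and $\nu_{x_E'}$ have disjoint supports on the parallel affine slices $\{x_E\}\times E^\perp$ and $\{x_E'\}\times E^\perp$ whenever $x_E\ne x_E'$. We will show that this forces $\nu_{x_E}=0$.

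To do so, suppose $\nu_{x_0}\ne0$ for some $x_0$. Choose $\phi\in C_c(\R^n)$ with $\|\phi\|_\infty\le1$ and $|\nu_{x_0}[\phi]|\ge c>0$, with support in a fixed compact set $A$. For each $x_E$ near $x_0$, let $\phi_{x_E}$ be the function $\phi$ translated along $E$ so that its trace on the slice through $x_E$ matches the trace of $\phi$ on the slice through $x_0$, and then cut off to a thin slab around that slice. Because $\phi_{x_E}$ is cut off to a thin slab around its own slice, it vanishes on the slice through $x_0$ for $x_E\neq x_0$ with the slab width below $|x_E-x_0|$, and so $\nu_{x_0}[\phi_{x_E}]=0$. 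The family $\{\phi_{x_E}\}$ lies in $C_{A'}(\R^n)$ for a compact $A'$ and is bounded in sup-norm, so it is a bounded set $B$. The strong seminorm of $\nu_{x_E}-\nu_{x_0}$ over $B$ is therefore at least $|\nu_{x_E}[\phi_{x_E}]-\nu_{x_0}[\phi_{x_E}]|=|\nu_{x_E}[\phi_{x_E}]|$. Continuity in $x_E$ forces this quantity to tend to $0$ as $x_E\to x_0$.

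It remains to see that $|\nu_{x_E}[\phi_{x_E}]|$ does not tend to $0$. The key observation is that translation along $E$ carries the slice through $x_0$ to the slice through $x_E$. Combining this with the relation $\pi(x_E-x_0)\nu_{x_0}\leftrightarrow\nu_{x_E}$, which comes from equivariance of $\Psi$ under translations in $E$, we can relate $\nu_{x_E}[\phi_{x_E}]$ to $\nu_{x_0}[\phi]$. The subtlety is that $\Psi$ is not assumed translation invariant. I would avoid relying on equivariance by a cleaner choice: take $\phi_{x_E}=\phi\cdot\chi_{x_E}$, where $\chi_{x_E}$ equals $1$ near the slice through $x_E$ and vanishes on the slice through $x_0$. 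Then $\nu_{x_E}[\phi_{x_E}]=\nu_{x_E}[\phi]$, which tends to $\nu_{x_0}[\phi]$ by weak* continuity, and this limit is nonzero. Together with the previous paragraph, this gives the contradiction, so every $\nu_{x_E}=0$.

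The main obstacle is the construction of the bounded family: the cutoffs $\chi_{x_E}$ must all be supported in a common compact set and be uniformly bounded, which is straightforward, while still separating the two slices as $x_E\to x_0$. This works precisely because the strong topology allows the test functions to vary with $x_E$, whereas the weak* topology would not detect the difference.
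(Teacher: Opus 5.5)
Your final argument is correct and is essentially the paper's proof. You combine \autoref{proposition:RestrictionHomValuationLowerDimensionalSupportFunctions} with \autoref{lemma:supportRestrictionLocalFunctional} to reduce to measures carried by the parallel slices $\{x_E\}\times E^\perp$, and then contradict strong continuity with a bounded family of cut-offs of a fixed $\phi$ that separate the slices. The only difference is that the paper's cut-offs equal $\phi$ on the fixed slice and vanish on the moving one, so the tested difference is exactly $\Psi(h_K(\cdot-x_E))[\phi]$ and your weak* limit $\nu_{x_E}[\phi]\to\nu_{x_0}[\phi]$ is not needed.
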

		\begin{proof}
			Assume that $\Psi\ne 0$. Combining \autoref{proposition:RestrictionHomValuationLowerDimensionalSupportFunctions} with \autoref{lemma:supportRestrictionLocalFunctional}, we obtain a $k$-dimensional subspace $E\in\Gr_k(\R^n)$, $K\in\mathcal{K}(E)$, and $x_E\in E$ such that $\Psi(h_K(\cdot-x_E))\ne 0$ and $\locsupp\Psi(h_K(\cdot-x_E))\subset \{x_E\}\times E^\perp$. In particular, there exists $\phi\in C_c(\R^n)$ with $\|\phi\|_\infty\le 1$ and  $\Psi(h_K(\cdot-x_E))[\phi]>0$. Let $(x_j)_j$ be a sequence in $E$ converging to $x_E$ with $x_j\ne x_E$ for every $j\in \mathbb{N}$. By multiplying $\phi$ with continuous cut-off functions, we obtain functions $\phi_j$ with $\supp\phi_j\cap \{x_j\}\times E^\perp=\emptyset$ for every $j\in\mathbb{N}$ and $\phi_j=\phi$ on $\{x_E\}\times E^\perp$. Then for every $j\in\mathbb{N}$,
			\begin{align*}
				\Psi(h_K(\cdot-x_E))[\phi_j]-\Psi(h_K(\cdot-x_j))[\phi_j]=\Psi(h_K(\cdot-x_E))[\phi],
			\end{align*}
			and therefore 
			\begin{align*}
				\|\Psi(h_K(\cdot-x_E))-\Psi(h_K(\cdot-x_j))\|_{C_{\supp\phi}(\R^n)'}\ge|\Psi(h_K(\cdot-x_E))[\phi]|>0.
			\end{align*}
			However, $h_{K}(\cdot-x_j)$ converges to $h_K(\cdot-x_E)$ for $j\rightarrow\infty$, which shows that $\Psi:\Conv(\R^n,\R)\rightarrow\M(\R^n)$ is not continuous with respect to the strong topology on $\M(\R^n)$. 
		\end{proof}
		\begin{remark}
			For $d>0$, this result no longer holds: The functional $\Psi\in\P_d\LV_l(\R^n)$, $d\ge l$, given by  $\Psi(f)[\phi]:=\int_{\R^n}\phi(x)f(x)^l dx$ is continuous with respect to the strong topology.
		\end{remark}
	
	\subsection{The Fourier--Laplace transform of Goodey--Weil distributions}\label{section:FourierLaplace}
		
		\begin{lemma}
			\label{lemma:continuityFourier_F}
			Let $A\subset \R^n$ be compact and convex with non-empty interior. There exists a constant $C(A)>0$ such that for every $\Psi\in   \P_0\LV_{c,k}(\R^n)$ with $\locsupp\Psi\subset A$,
			\begin{align*}
				|\mathcal{F}(\widehat{\GW}(\Psi))[w_1,\dots,w_{k+1}]|\le C(A)\|\Psi\|_{A,1} \left(\prod_{j=1}^{k}(1+|w_j|)\right)^3 e^{\sum_{j=1}^{k+1}h_A(\Im(w_j))}.
			\end{align*}
		\end{lemma}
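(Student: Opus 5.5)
We want to bound the Fourier--Laplace transform of the compactly supported distribution $\widehat{\GW}(\Psi)$ on $(\R^n)^{k+1}$. The plan is to estimate its order and its values on test functions concentrated near $A^{k+1}$, using the semi-norm $\|\Psi\|_{A,1}$ from \autoref{proposition:semiNormslocalFunctional}. Concretely, the aim is an estimate of the form
\begin{align*}
	|\widehat{\GW}(\Psi)[\phi_1\otimes\dots\otimes\phi_{k+1}]|\le C\|\Psi\|_{A,1}\|\phi_{k+1}\|_{\infty}\prod_{j=1}^k\|\phi_j\|_{C^2(A_1)},
\end{align*}
where $A_1=A+B_1(0)$ and the $\phi_j$, $j\le k$, are smooth functions on a neighborhood of $A$. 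The Fourier--Laplace transform is then evaluated at $\phi_j=\chi\exp(-i\langle w_j,\cdot\rangle)$, with $\chi$ a cut-off equal to $1$ near $A$. The support properties (\autoref{definition:support}, \autoref{proposition:characterizationSupport}) make this evaluation independent of the cut-off.

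First step: reduce to convex arguments. The plan is to write each $\phi_j$, $1\le j\le k$, on a neighborhood of $A$ as a difference of two convex functions, $\phi_j=(\phi_j+c_j|x|^2)-c_j|x|^2$, with $c_j$ comparable to $\|D^2\phi_j\|_{\infty}$ on $A_1$. Multilinearity of the polarization $\bar\Psi$ together with \autoref{theorem:GW} then express $\widehat{\GW}(\Psi)[\phi_1\otimes\dots\otimes\phi_k\otimes\phi_{k+1}]$ as a sum of $2^k$ terms $\bar\Psi(g_1,\dots,g_k;\phi_{k+1})$ with convex $g_j$. Each term is bounded by the polarization formula, which writes $\bar\Psi$ as a finite combination of values $\Psi(\sum_{j\in S} g_j)$; by $k$-homogeneity each value is at most $\|\Psi\|_{A,1}\,\|\phi_{k+1}\|_\infty\,(\sup_{A_1}|\sum_{j\in S} g_j|)^k$. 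Applying this to rescaled arguments $g_j/\|g_j\|_{L^\infty(A_1)}$ and using multilinearity gives a product bound $\prod_j\|g_j\|_{L^\infty(A_1)}$. Here $\phi_{k+1}$ is only required to have support in $A$ after localization. Since $\locsupp\Psi\subset A$, it suffices to cut off $\phi_{k+1}$ near $A$, which is also what makes the $\|\cdot\|_{A,1}$ semi-norm appropriate. If a slightly larger set is needed, the comparison \eqref{eq:relationLocalSeminorms} absorbs the difference into $C(A)$.

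Second step: plug in the exponentials. For $\phi_j=\chi e^{-i\langle w_j,\cdot\rangle}$ on $A_1$, the $C^2$ norm is at most $C(1+|w_j|)^2e^{h_{A_1}(\Im w_j)}$. Since $h_{A_1}=h_A+|\cdot|$, this is at most $C(1+|w_j|)^2e^{|\Im w_j|}e^{h_A(\Im w_j)}$, and the factor $e^{|\Im w_j|}$ is where the main obstacle lies. To remove it I would shrink the neighborhood: with cut-offs supported in $A+\epsilon B_1(0)$ and $\epsilon=1/(1+|w_j|)$, the $C^2$ norm of the cut-off costs $\epsilon^{-2}$, giving $(1+|w_j|)^2\cdot(1+|w_j|)^2$. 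That is too many powers. So I would instead take $\epsilon\sim(1+|w_j|)^{-1}$ only for the exponential factor, $e^{\epsilon|\Im w_j|}\le e$, and pay $\epsilon^{-1}$ via a sharper use of \eqref{eq:relationLocalSeminorms}. This relation converts the semi-norm on an $\epsilon$-neighborhood back to $\|\Psi\|_{A,1}$ at the cost $(C/\epsilon)^k$, i.e. a factor $(1+|w_j|)$ per variable. Balancing these pieces is the delicate step, and I expect the bookkeeping to produce the exponent $3$ per factor $(1+|w_j|)$. The last variable $w_{k+1}$ enters only through $\|\phi_{k+1}\|_\infty\le e^{h_A(\Im w_{k+1})}$ on $A$, which explains why no polynomial factor in $w_{k+1}$ appears.
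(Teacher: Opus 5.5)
Your first step is correct, and it takes a different, more self-contained route than the paper. The paper quotes \cite{KnoerrPaleyWienerSchwartz2025}*{Proposition~4.3} for the scalar valuation $\Psi[e^{-i\langle w_{k+1},\cdot\rangle}]$. That estimate is stated with $\|\cdot\|_{A,\delta}$ and $e^{h_{A+\delta B_1(0)}(\Im w_j)}$, and the paper then converts back to $\|\Psi\|_{A,1}$ at a cost of $\delta^{-k}$ via \eqref{eq:relationLocalSeminorms}. Your argument runs as follows: take real and imaginary parts of compactly supported $\phi_j$, choose $c_j=\frac12\sup_{\R^n}\|D^2\phi_j\|$ so that $\phi_j+c_j|x|^2$ is globally convex, then use polarization, $k$-homogeneity and rescaling. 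This gives
\[
|\widehat{\GW}(\Psi)[\phi_1\otimes\dots\otimes\phi_{k+1}]|\le C(A)\|\Psi\|_{A,1}\|\phi_{k+1}\|_\infty\prod_{j=1}^k\|\phi_j\|_{C^2(\R^n)}.
\]
This bound is insensitive to how close to $A$ the supports of $\phi_1,\dots,\phi_k$ are, because the convex functions only need to be controlled on $A+B_1(0)$.

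The gap is in your second step. You never carry it out; you only ``expect'' the exponent $3$, and the mechanism you propose cannot produce it.

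\begin{itemize}
\item Your reason for abandoning shrinking cut-offs is a miscalculation. Take $\chi_\epsilon$ equal to $1$ near $A$, supported in $A+\epsilon B_1(0)$, with $|\partial^\alpha\chi_\epsilon|\le C\epsilon^{-|\alpha|}$. By the Leibniz rule,
\[
\|\chi_\epsilon e^{-i\langle w,\cdot\rangle}\|_{C^2}\le C(\epsilon^{-1}+1+|w|)^2e^{h_A(\Im w)+\epsilon|\Im w|},
\]
so the powers add rather than multiply.
\item The proposed ``balancing'' through \eqref{eq:relationLocalSeminorms} fails. That relation has one $\delta$ shared by all $k$ arguments. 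Keeping every $e^{\delta|\Im w_j|}$ bounded forces $\delta\lesssim\min_j(1+|w_j|)^{-1}$, and then $\delta^{-k}\gtrsim\max_j(1+|w_j|)^k$. For $k\ge 2$ this is not bounded by $\prod_j(1+|w_j|)$: take $|w_1|$ large and all other $w_j=0$. So this relation does not give ``a factor $(1+|w_j|)$ per variable.''
\item The same tension appears in the paper's last step. With $\delta=\max_j(1+|w_j|)^{-1}$ the terms $\delta|\Im w_j|$ are not uniformly bounded. A consistent single $\delta$ only yields $\prod_j(1+|w_j|)^{k+2}$. This is harmless later, since only polynomial growth in $w_1,\dots,w_k$ is used.
\item In your framework the detour is unnecessary anyway, because no seminorm on an $\epsilon$-neighbourhood ever appears.
\end{itemize}

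The fix is already available to you. In the step-one estimate, use $\phi_j=\chi_{\epsilon_j}e^{-i\langle w_j,\cdot\rangle}$ with a separate $\epsilon_j=(1+|w_j|)^{-1}$ for each $j\le k$. The product cut-off equals $1$ near the diagonal copy of $A$, where you already argued the evaluation localizes. This gives $\|\phi_j\|_{C^2}\le C(1+|w_j|)^2e^{h_A(\Im w_j)}$. You therefore get the lemma with $\prod_j(1+|w_j|)^2$, hence also with exponent $3$, without using \eqref{eq:relationLocalSeminorms} at all.

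One smaller point: \eqref{eq:relationLocalSeminorms} compares different $\delta$ for a fixed $A$. It therefore cannot absorb an enlargement of $A$ to accommodate $\phi_{k+1}$. That said, the mismatch between the requirement $\supp\phi\subset A$ in $\|\Psi\|_{A,1}$ and the test function $e^{-i\langle w_{k+1},\cdot\rangle}$ is handled equally briefly in the paper.
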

		\begin{proof}
			Let $\mu_{w_{k+1}}:=\Psi[\exp(-i\langle w_{k+1},\cdot\rangle)]\in \P_0\VConv_{k}(\R^n,\C)$. Then we have $\supp\mu_{w_{k+1}}\subset A$ by \autoref{proposition:characterizationSupport}. By \cite[Proposition 4.3]{KnoerrPaleyWienerSchwartz2025} there exists a constant $C(A)>0$ such that for every $\delta\in(0,1)$,
			\begin{align*}
				&|\mathcal{F}(\GW(\mu_{w_{k+1}})[w_1,\dots,w_k]|\\
				&\le C(A)\|\mu_{w_{k+1}}\|_{A,\delta}\prod_{j=1}^k(1+|w_j|)^2 e^{\sum_{j=1}^kh_{A+\delta B_{1}(0)}(\Im(w_j))},
			\end{align*}
			where $\|\mu_{w_{k+1}}\|_{A,\delta}=\sup\left\{|\mu_{w_{k+1}}(f)|: f\in\Conv(\R ^n,\R), \sup_{x\in A+\delta B_1(0)}|f(x)|\le 1 \right\}$. Since $\Psi$ is supported on $A$, we obtain
			\begin{align*}
				&\|\mu_{w_{k+1}}\|_{A,\delta}=\sup\left\{|\mu_{w_{k+1}}(f)|:f\in\Conv(\R^n,\R),~\sup_{x\in A+\delta B_{1}(0)}|f(x)|\le 1\right\}\\
				&\quad=\sup\left\{|\Psi(f)[e^{-i\langle w_{k+1},\cdot\rangle}]|:f\in\Conv(\R^n,\R),~\sup_{x\in A+\delta B_{1}(0)}|f(x)|\le 1\right\}\\
				&\quad\le \|\Psi\|_{A,\delta} e^{h_{A}(\Im(w_{k+1}))}.
			\end{align*}
			Combining these estimates with Eq.~\eqref{eq:relationLocalSeminorms} from \autoref{proposition:semiNormslocalFunctional}, we obtain a constant $\tilde{C}(A)>0$ such that for $\delta\in(0,1]$,
			\begin{align*}
				&|\mathcal{F}(\widehat{\GW}(\mu))[w]|=|\mathcal{F}(\GW(\mu_{w_{k+1}})[w_1,\dots,w_k]|\\
				\le&\frac{\tilde{C}(A)}{\delta^k}\|\Psi\|_{A,1}\prod_{j=1}^k(1+|w_j|)^2 e^{h_A(\Im(w_{k+1}))+\sum_{j=1}^{k}h_{A+\delta B_{1}(0)}(\Im(w_j))}.
			\end{align*}
			For every fixed $w\in \Mat_{n,k+1}(\C)$, the choice of $\delta:=\max\left(\frac{1}{1+|w_j|}: 1\le j\le k\right)$ (which implies $\delta\in(0,1]$) provides the estimate
			\begin{align*}
				&|\mathcal{F}(\widehat{\GW}(\mu))[w]|\le\tilde{C}(A)\|\Psi\|_{A,1}\prod_{j=1}^{k}(1+|w_j|)^3 e^{k+\sum_{j=1}^{k+1}h_{A}(\Im(w_j))},
			\end{align*}
			which completes the proof.
		\end{proof}
	
		In order to apply the result from Section \ref{section:minors}, we will need to change coordinates. For $\Psi\in \P_0\LV_{c,k}(\R^n)$ we denote by $\F(\Psi)\in \mathcal{O}_{\Mat_{n,k+1}(\C)}$ the entire function on $\Mat_{n,k+1}(\C)$ defined by
		\begin{align}\label{eq:DefinitionF}
			\F(\Psi)[w]=\frac{k!}{(-1)^k}\mathcal{F}(\widehat{\GW}(\Psi))\left[w_1,\dots,w_k,w_{k+1}-\sum_{j=1}^kw_j\right].
		\end{align}
		Note that
		\begin{align}\label{eq:relationF_FourierGW}
			\mathcal{F}(\widehat{\GW}(\Psi))[w]=\frac{(-1)^k}{k!}\F(\Psi)\left[w_1,\dots,w_k,\sum_{j=1}^{k+1}w_j\right].
		\end{align}
		\begin{corollary}
			\label{corollary:EstimateF}
			Let $A\subset \R^n$ be compact and convex with non-empty interior. There exists a constant $C(A)>0$ such that for every $\Psi\in  \P_0\LV_{c,k}(\R^n)$ with $\locsupp\Psi\subset A$,
			\begin{align*}
				&|\F(\Psi)[w]|\\
				&\le C(A)\|\Psi\|_{A,1} \left(\prod_{j=1}^{k}(1+|w_j|)\right)^3 e^{h_A(\Im(w_{k+1})+\sum_{j=1}^kh_A(\Im(w_j))+h_A(-\Im(w_j))}.
			\end{align*}
				In particular, the restriction of the map
			$\F:  \P_0\LV_{c,k}(\R^n)\rightarrow \mathcal{O}_{\Mat_{n,k+1}(\C)}$ to local functionals with support contained in $A$ is continuous in the compact-to-bounded topology.
		\end{corollary}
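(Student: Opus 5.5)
This follows directly from \autoref{lemma:continuityFourier_F} by substituting the change of variables in Eq.~\eqref{eq:DefinitionF}. By definition,
\begin{align*}
	\F(\Psi)[w]=\frac{k!}{(-1)^k}\mathcal{F}(\widehat{\GW}(\Psi))[w_1,\dots,w_k,w_{k+1}-\textstyle\sum_{j=1}^kw_j].
\end{align*}
We apply \autoref{lemma:continuityFourier_F} at the point $(w_1,\dots,w_k,w_{k+1}-\sum_{j=1}^kw_j)$. The polynomial factor only involves the first $k$ columns, so it is unchanged. The exponential factor becomes
\begin{align*}
	\exp\Big(\sum_{j=1}^k h_A(\Im w_j)+h_A\big(\Im w_{k+1}-\textstyle\sum_{j=1}^k\Im w_j\big)\Big).
\end{align*}

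Since $h_A$ is sublinear (convex and positively $1$-homogeneous), it is subadditive. Hence
\begin{align*}
	h_A\big(\Im w_{k+1}-\textstyle\sum_{j}\Im w_j\big)\le h_A(\Im w_{k+1})+\sum_{j=1}^k h_A(-\Im w_j).
\end{align*}
Inserting this bound gives the stated estimate, with $C(A)$ replaced by $k!\,C(A)$.

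For the continuity statement, we note that the map $\Psi\mapsto\F(\Psi)$ is linear on the subspace of local functionals with $\locsupp\Psi\subset A$. By \autoref{proposition:semiNormslocalFunctional}, $\|\cdot\|_{A,1}$ is a continuous semi-norm for the compact-to-bounded topology. The space $\mathcal{O}_{\Mat_{n,k+1}(\C)}$ carries the topology of uniform convergence on compact sets. On a compact set $L\subset\Mat_{n,k+1}(\C)$, the factor multiplying $\|\Psi\|_{A,1}$ is bounded by a constant $C_L$, so
\begin{align*}
	\sup_{w\in L}|\F(\Psi)[w]|\le C(A)C_L\|\Psi\|_{A,1}.
\end{align*}
This bounds every defining semi-norm of the target by a continuous semi-norm of the source, which gives continuity. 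The argument has no genuine obstacle; the only point requiring care is the subadditivity step for the shifted last argument.
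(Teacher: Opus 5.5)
Your proof is correct and matches the paper's argument: apply \autoref{lemma:continuityFourier_F} at the shifted point $(w_1,\dots,w_k,w_{k+1}-\sum_{j=1}^k w_j)$, then bound the last exponential term using subadditivity of $h_A$. Your explicit check of the continuity claim, via the continuous semi-norm $\|\cdot\|_{A,1}$ and boundedness of the weight on compact sets, spells out what the paper leaves implicit.
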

		\begin{proof}
			This follows directly from \autoref{lemma:continuityFourier_F} and the estimate
			\begin{align*}
				h_A\left(\Im(w_{k+1})-\sum_{j=1}^k\Im(w_j)\right)\le h_A\left(\Im(w_{k+1})\right)+\sum_{j=1}^kh_A\left(-\Im(w_j)\right),
			\end{align*}
			which follows from the subadditivity of the support function.
		\end{proof}
		Let us consider $\mathcal{O}_{\Mat_{n,k+1}(\C)}$ as a module over $\mathcal{O}_{\C^n}$ by identifying $\mathcal{O}_{\C^n}$ with functions that depend on the variable $w_{k+1}$ only. Note that this corresponds to the $\O_{\C^n}$-module structure considered in \autoref{section:minors} under the coordinate change in Eq.~\eqref{eq:DefinitionF}.	We will show that the functions $\F(\Psi)$, $\Psi\in\P_0\LV_{c,k}(\R^n)$ belong to the $\mathcal{O}_{\C^n}$-submodule generated by $\M^2_k\subset \mathcal{O}_{\Mat_{n,k+1}(\C)}$. We start by calculating this function for local functionals constructed from translation invariant elements in $\P_0\LV_k(\R^n)$. Our argument requires two observations from \cite{KnoerrMongeAmpereoperators2024}, which concern locally determined, continuous valuations $\Psi:\Conv(\R^n,\R)\rightarrow\M(\R^n)$ that are invariant with respect to translations. Due to \autoref{theorem:LocalFuncValuations}, this space (denoted by $\MAVal(\R^n)$ in \cite{KnoerrMongeAmpereoperators2024}) coincides with $\P_0\LV(\R ^n)^{tr}$, and so we will use the notation $\MAVal_k(\R^n):=\P_0\LV_k(\R^n)^{tr}$ in order to be consistent with the notation in \cite{KnoerrMongeAmpereoperators2024}.
		\begin{theorem}[\cite{KnoerrMongeAmpereoperators2024}*{Theorem 6.16 and Corollary 6.17}]
			\label{theorem:FourierMA}
			For every $\Psi\in \MAVal_k(\R^n)$ there exists a unique $Q(\Psi)\in \M_k^2$ such that for all $\phi\in C_c(\R^n)$ we have
			\begin{align*}
				\mathcal{F}(\GW(\Psi[\phi]))[w_1,\dots,w_k]=\frac{(-1)^k}{k!}Q(\Psi)[w_1,\dots,w_k] \mathcal{F}(\phi)\left[\sum_{j=1}^k w_j\right].
			\end{align*}
		\end{theorem}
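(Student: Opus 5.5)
This statement is cited from \cite{KnoerrMongeAmpereoperators2024}, so I outline how I would prove it directly from the tools above. The plan has three steps: show the Fourier transform factors through the test function $\phi$, pin down the remaining polynomial by testing on exponentials, and then see that it lies in $\M^2_k$.

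\textbf{Factoring off $\phi$.} Fix $\Psi\in\MAVal_k(\R^n)$. By \autoref{theorem:ClassificationPolyTranslationInv} with $d=0$, there is a polynomial $P_\Psi\in\M_n$ of homogeneity $k$ such that
\begin{align*}
\Psi(f;B)=\int_B P_\Psi(D^2f(x))\,dx
\end{align*}
for $f\in\Conv(\R^n,\R)\cap C^2(\R^n)$. The homogeneity forces $P_\Psi$ to be a linear combination of $k$-minors of the Hessian. Polarizing gives, for smooth convex $f_1,\dots,f_k$,
\begin{align*}
\bar\Psi(f_1,\dots,f_k)=\bar P_\Psi\bigl(D^2f_1,\dots,D^2f_k\bigr)\,dx,
\end{align*}
where $\bar P_\Psi$ is the multilinear polarization, a mixed-minor form. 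I evaluate \eqref{eq:calculateFourierFromValuation} on the convex exponentials $f_j=\exp(\langle y_j,\cdot\rangle)$ with $y_j\in\R^n$. Then $D^2f_j=y_jy_j^Te^{\langle y_j,x\rangle}$, so
\begin{align*}
\GW(\Psi[\phi])\bigl[e^{\langle y_1,\cdot\rangle}\otimes\dots\otimes e^{\langle y_k,\cdot\rangle}\bigr]
=\bar P_\Psi\bigl(y_1y_1^T,\dots,y_ky_k^T\bigr)\int_{\R^n}\phi(x)\,e^{\langle \sum_j y_j,x\rangle}\,dx .
\end{align*}

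\textbf{Identifying the polynomial.} Put $w_j=iy_j$. The integral in the display is $\mathcal{F}(\phi)[\sum_j w_j]$. Since $y_jy_j^T=-w_jw_j^T$, the first factor equals $(-1)^k\bar P_\Psi(w_1w_1^T,\dots,w_kw_k^T)$. Both sides of the claimed identity are entire in $(w_1,\dots,w_k)$, because $\GW(\Psi[\phi])$ is compactly supported by \autoref{theorem:GW}. They agree on the totally real subspace $(i\R^n)^k$, hence everywhere. This gives the formula with
\begin{align*}
Q(\Psi)[w]=k!\,\bar P_\Psi\bigl(w_1w_1^T,\dots,w_kw_k^T\bigr),
\end{align*}
the factor $k!$ matching the normalization in the statement. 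Uniqueness of $Q(\Psi)$ is clear by taking $\phi$ with $\mathcal{F}(\phi)\neq0$ on a nonempty open set.

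\textbf{Showing $Q(\Psi)\in\M^2_k$.} This is the step I expect to need the most care. Fully polarizing a $k$-minor $\det(M_{IJ})$ over rank-one arguments gives, by Cauchy--Binet,
\begin{align*}
\overline{\det_{IJ}}\bigl(w_1w_1^T,\dots,w_kw_k^T\bigr)=\frac{1}{k!}\det\bigl(W_{I}\bigr)\det\bigl(W_{J}\bigr),
\end{align*}
where $W=(w_1,\dots,w_k)\in\Mat_{n,k}(\C)$ and $W_I$ denotes the $k\times k$ submatrix with rows in $I$. Hence $Q(\Psi)$ is a linear combination of products of two $k$-minors of $W$, i.e. it lies in $\M^2_k$. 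To justify the identity, I compute $\det\bigl(\sum_j\lambda_j w_{j,I}w_{j,J}^T\bigr)$ via Cauchy--Binet as $\det(W_I\Lambda W_J^T)$ with $\Lambda=\mathrm{diag}(\lambda_j)$; its coefficient of $\lambda_1\cdots\lambda_k$ is $\det W_I\det W_J$. The only subtle point is bookkeeping of signs and the constant $k!$, which I would check on $k=1$: there $\Psi=\Delta$-type functionals give $Q=w^Tw$-type quadratics.
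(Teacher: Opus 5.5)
Your computation is correct, but it runs in the opposite logical direction from the intrinsic route to this statement. Starting from $P_\Psi=\sum_{I,J}c_{IJ}\det_{IJ}$ (the $d=0$ case of \autoref{theorem:ClassificationPolyTranslationInv}), you test $\overline{\Psi[\phi]}$ on $e^{\langle y_j,\cdot\rangle}$, continue analytically from $(i\R^n)^k$, and use $\det_{IJ}\bigl(\sum_{j=1}^k\lambda_jw_jw_j^T\bigr)=\lambda_1\cdots\lambda_k\det W_I\det W_J$. This gives $Q(\Psi)=\sum_{I,J}c_{IJ}\det W_I\det W_J\in\M^2_k$ with the right constant.

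The paper itself gives no proof; it imports the statement from \cite{KnoerrMongeAmpereoperators2024}. Your route takes the representation $\Psi(f)=P_\Psi(D^2f)\,dx$ as input. That buys an explicit formula for $Q$ and makes \autoref{lemma:QBijectiveMA} immediate: injectivity because the matrices $ww^T$ span the complex symmetric matrices and $\overline{P}_\Psi$ is multilinear, surjectivity because the products $\det W_I\det W_J$ span $\M^2_k$. The price is that the $d=0$ case of \autoref{theorem:ClassificationPolyTranslationInv} is itself the classification of $\MAVal_k(\R^n)$. So you are deriving one cited black box from another, and your argument cannot be used to establish that classification.

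The intrinsic route is the translation invariant version of \autoref{lemma:SubmoduleContainedMinors}, and it uses only locality and invariance. First, $\widehat{\GW}(\Psi)$ is supported on the diagonal of $(\R^n)^{k+1}$ and invariant along it. Hence it is a finite sum of transversal derivatives of $\delta$, which gives $\mathcal{F}(\GW(\Psi[\phi]))[w]=P(w)\,\mathcal{F}(\phi)\bigl[\sum_{j=1}^kw_j\bigr]$ for some polynomial $P$. Second, \autoref{lemma:supportRestrictionLocalFunctional}, \autoref{theorem:VolumeCharacterization}, translation invariance and \autoref{lemma:injectivity_characteristic_function} give $\Psi(\pi_E^*f)=c_E\,\MA_E(f)\otimes\vol_{E^\perp}$. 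So $P$ restricts to a multiple of the squared determinant on $(E\otimes\C)^k$ for every $E\in\Gr_k(\R^n)$. Finally, the argument of \autoref{lemma:PolynomialFromComplexifiedSpaces} and \autoref{proposition:CharacterizationModuleSquareMinors}, without the column $w_{k+1}$, yields $P\in\M^2_k$. Along that route the classification of $\MAVal_k(\R^n)$ is a consequence rather than an input.
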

		\begin{lemma}[\cite{KnoerrMongeAmpereoperators2024}*{Corollary 6.19}]
			\label{lemma:QBijectiveMA}
			The map
			\begin{align*}
				\MAVal_k(\R^n)&\rightarrow \M^2_k\\
				\Psi&\mapsto Q(\Psi)
			\end{align*}
			is bijective.
		\end{lemma}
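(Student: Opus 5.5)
We prove that $\Psi\mapsto Q(\Psi)$ is linear, injective and surjective. Linearity is clear, since $Q(\Psi)$ is determined by the linear map $\Psi\mapsto\mathcal{F}(\GW(\Psi[\phi]))$: take any $\phi$ with $\mathcal{F}(\phi)\not\equiv 0$ and divide on the dense set where $\mathcal{F}(\phi)[\sum_j w_j]\neq0$.

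\emph{Injectivity.} Suppose $Q(\Psi)=0$. Then \autoref{theorem:FourierMA} gives $\mathcal{F}(\GW(\Psi[\phi]))\equiv 0$ for every $\phi\in C_c(\R^n)$. Since $\GW(\Psi[\phi])$ is compactly supported by \autoref{theorem:GW}, injectivity of the Fourier--Laplace transform yields $\GW(\Psi[\phi])=0$. By \autoref{theorem:GW}, the polarization $\overline{\Psi[\phi]}(f_1,\dots,f_k)$ then vanishes for all smooth convex $f_j$. Evaluating on the diagonal gives $\Psi(f)[\phi]=\overline{\Psi[\phi]}(f,\dots,f)=0$ for all $f\in\Conv(\R^n,\R)\cap C^\infty(\R^n)$. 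Smooth convex functions are dense in $\Conv(\R^n,\R)$ and $\Psi$ is continuous, so $\Psi(f)[\phi]=0$ for all $f$. As $\phi$ is arbitrary, $\Psi=0$.

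\emph{Surjectivity.} We exhibit preimages of a spanning set of $\M^2_k$. For index sets $I,J\subset\{1,\dots,n\}$ with $|I|=|J|=k$, let $\Delta_I$ be the $k$-minor of an $(n\times k)$-matrix formed from the rows in $I$; the products $\Delta_I\Delta_J$ span $\M^2_k$. By \autoref{theorem:ClassificationPolyTranslationInv} (with $d=0$), the functional
\begin{align*}
\Psi_{I,J}(f;B)=\int_B\det\big((D^2f)_{I,J}\big)\,dx
\end{align*}
extends to an element of $\MAVal_k(\R^n)$, where $\det((D^2f)_{I,J})$ is a $k$-minor of the Hessian. We compute $Q(\Psi_{I,J})$ via Eq.~\eqref{eq:calculateFourierFromValuation}, that is, by polarizing on $f_j=\exp(\langle y_j,\cdot\rangle)$, $y_j\in\R^n$. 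Here $D^2f_j=e^{\langle y_j,x\rangle}y_jy_j^T$, so $D^2(\sum_j\lambda_jf_j)=Y\Lambda(x)Y^T$, where $Y=(y_1,\dots,y_k)$ and $\Lambda(x)=\mathrm{diag}(\lambda_je^{\langle y_j,x\rangle})$. By Cauchy--Binet,
\begin{align*}
\det\big((Y\Lambda Y^T)_{I,J}\big)=\det(Y_I)\det(Y_J)\prod_j\lambda_je^{\langle y_j,x\rangle}.
\end{align*}
Taking $\partial_{\lambda_1}\cdots\partial_{\lambda_k}$ at $0$ and dividing by $k!$ gives
\begin{align*}
\frac{1}{k!}\Delta_I(Y)\Delta_J(Y)\int\phi(x)e^{\langle\sum_jy_j,x\rangle}dx.
\end{align*}
This is the Fourier--Laplace transform of $\GW(\Psi_{I,J}[\phi])$ at $(iy_1,\dots,iy_k)$, and it equals $\mathcal{F}(\phi)[\sum_j iy_j]$ times a polynomial. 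By analytic continuation from $i\R^{nk}$ to $\C^{nk}$, $Q(\Psi_{I,J})$ is a nonzero constant (a power of $i$, sign $\pm$) multiple of $\Delta_I\Delta_J$. Hence the image contains a spanning set of $\M^2_k$, and the map is surjective.

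The main obstacle is this last step: justifying that the polarization formula of \autoref{theorem:GW} may be applied to the non-compactly-supported test functions $e^{\langle y_j,\cdot\rangle}$. This is exactly what Eq.~\eqref{eq:calculateFourierFromValuation} provides, because the distributions have compact support, so the data on $i\R^{nk}$ determine the entire function. The remaining work is tracking the normalizing constant $(-1)^k/k!$ and the factors of $i$, which only affects the nonzero scalar in front of $\Delta_I\Delta_J$.
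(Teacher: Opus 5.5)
The paper gives no proof of this lemma; it is quoted from the earlier work on Monge--Amp\`ere operators, so there is no internal argument to compare against. Your proof is correct as a self-contained derivation from \autoref{theorem:FourierMA}, \autoref{theorem:GW} and the existence half of \autoref{theorem:ClassificationPolyTranslationInv}.

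\emph{Injectivity.} This part is sound. It uses injectivity of the Fourier--Laplace transform on the compactly supported $\GW(\Psi[\phi])$, restriction of the polarization to the diagonal via $k$-homogeneity, and density of smooth convex functions.

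\emph{Surjectivity.} Your argument is explicit and yields more than you state. Since $Q(\Psi_{I,J})\in\M^2_k$ is homogeneous of degree $2k$, you have $Q(\Psi_{I,J})[iY]=(-1)^kQ(\Psi_{I,J})[Y]$. This cancels the factor $(-1)^k$ in \autoref{theorem:FourierMA}, so in fact $Q(\Psi_{I,J})=\Delta_I\Delta_J$ exactly. Combined with injectivity, this shows that the Hessian-minor functionals $\Psi_{I,J}$ span $\MAVal_k(\R^n)$ and gives the inverse of $Q$ explicitly. So, given \autoref{theorem:FourierMA}, you recover the classification half of \autoref{theorem:ClassificationPolyTranslationInv} for $d=0$ instead of presupposing it. Relying only on the existence half is the right choice. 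The isomorphism $\M^2_k\cong\P_0\LV_k(\R^n)^{tr}$ and the dimension count in \autoref{remark:dimensionM2k} are attributed to this very lemma. The existence half for $d=0$ only needs that the Hessian-minor densities extend continuously.

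Two points should be tightened.

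First, what you call the main obstacle is settled directly by \autoref{theorem:GW}. Its polarization identity is stated for all $f_j\in\Conv(\R^n,\R)\cap C^\infty(\R^n)$, not only compactly supported ones. Because $\GW(\Psi_{I,J}[\phi])$ has compact support, its pairing with $e^{\langle y_1,\cdot\rangle}\otimes\dots\otimes e^{\langle y_k,\cdot\rangle}$ is defined and equals $\mathcal{F}(\GW(\Psi_{I,J}[\phi]))[iy_1,\dots,iy_k]$ by definition. Also, Eq.~\eqref{eq:calculateFourierFromValuation} is formulated for $\widehat{\GW}$ of compactly supported local functionals, and $\Psi_{I,J}$ is not one. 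You should run that computation for the compactly supported scalar valuation $\Psi_{I,J}[\phi]$ instead. Analytic continuation is needed only afterwards, to pass from $i\R^{nk}$ to all of $\Mat_{n,k}(\C)$.

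Second, to divide by $\mathcal{F}(\phi)[\sum_j iy_j]$, choose $\phi\ge 0$ with $\phi\ne 0$. Then $\mathcal{F}(\phi)[iy]=\int\phi(x)e^{\langle y,x\rangle}dx>0$ for every $y\in\R^n$.
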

		This implies the following for the $C_c(\R^n)$-submodule of $\P_0\LV_{c,k}(\R^n)$ generated by $\MAVal_k(\R^n)$.
		\begin{lemma}
			\label{lemma:FourierMA}
			For $\Psi\in \MAVal_k(\R^n)$ and $\phi\in C_c(\R^n)$, we have
			\begin{align*}
				\mathcal{F}(\widehat{\GW}(\phi\bullet \Psi))[w]=\frac{(-1)^k}{k!}Q(\Psi)[w_1,\dots,w_k]\mathcal{F}(\phi)\left[\sum_{j=1}^{k+1}w_j\right].
			\end{align*}
			In particular, $\F(\phi\bullet \Psi)[w]=Q(\Psi)[w_1,\dots,w_k]\mathcal{F}(\phi)[w_{k+1}]$.
		\end{lemma}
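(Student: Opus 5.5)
The plan is to reduce the statement to the case of products of test functions via the polarization description of $\widehat{\GW}$, and then to invoke \autoref{theorem:FourierMA}. Fix $\Psi\in\MAVal_k(\R^n)$ and $\phi\in C_c(\R^n)$. First, $\phi\bullet\Psi$ is compactly supported: by \autoref{corollary:supportModuleProduct}, $\locsupp(\phi\bullet\Psi)\subset\supp\phi$. Hence $\widehat{\GW}(\phi\bullet\Psi)$ has compact support and its Fourier--Laplace transform is entire, so both sides of the claimed identity are entire functions on $\Mat_{n,k+1}(\C)$.

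By \autoref{definition:ExtendedGW} and the remark following it, for test functions $\phi_1,\dots,\phi_{k+1}\in C^\infty_c(\R^n)$ we have
\begin{align*}
\widehat{\GW}(\phi\bullet\Psi)[\phi_1\otimes\dots\otimes\phi_{k+1}]=\GW\bigl((\phi\bullet\Psi)[\phi_{k+1}]\bigr)[\phi_1\otimes\dots\otimes\phi_k]=\GW\bigl(\Psi[\phi\phi_{k+1}]\bigr)[\phi_1\otimes\dots\otimes\phi_k].
\end{align*}
The last equality holds because $(\phi\bullet\Psi)(f;\phi_{k+1})=\Psi(f;\phi\phi_{k+1})$, and the scalar valuations $(\phi\bullet\Psi)[\phi_{k+1}]$ and $\Psi[\phi\phi_{k+1}]$ therefore coincide, so their Goodey--Weil distributions do too. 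This identity extends by linearity and continuity to $\phi_{k+1}$ ranging over smooth functions, using compact support in the last variable (the factor $\phi$ localizes). Since $\widehat{\GW}(\phi\bullet\Psi)$ is a compactly supported distribution, it may be evaluated on tensor products of exponentials. Setting $\phi_{k+1}=\exp(-i\langle w_{k+1},\cdot\rangle)$ and $\phi_j=\exp(-i\langle w_j,\cdot\rangle)$ for $j\le k$, we obtain
\begin{align*}
\mathcal{F}(\widehat{\GW}(\phi\bullet\Psi))[w]=\mathcal{F}\bigl(\GW(\Psi[\phi e^{-i\langle w_{k+1},\cdot\rangle}])\bigr)[w_1,\dots,w_k].
\end{align*}

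Next, apply \autoref{theorem:FourierMA} to the compactly supported continuous function $\phi e^{-i\langle w_{k+1},\cdot\rangle}$. If that theorem is only stated for real arguments, split into real and imaginary parts by linearity. This gives
\begin{align*}
\frac{(-1)^k}{k!}Q(\Psi)[w_1,\dots,w_k]\,\mathcal{F}\bigl(\phi e^{-i\langle w_{k+1},\cdot\rangle}\bigr)\Bigl[\sum_{j=1}^k w_j\Bigr],
\end{align*}
and $\mathcal{F}(\phi e^{-i\langle w_{k+1},\cdot\rangle})[z]=\mathcal{F}(\phi)[z+w_{k+1}]$, which yields the first formula. The formula for $\F(\phi\bullet\Psi)$ then follows by substituting into Eq.~\eqref{eq:DefinitionF}: the last argument becomes $w_{k+1}-\sum_{j\le k}w_j$, so the sum of all arguments is $w_{k+1}$, and the factors $\frac{k!}{(-1)^k}$ and $\frac{(-1)^k}{k!}$ cancel.

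The main obstacle is justifying the evaluation on exponentials, which are not compactly supported test functions. I would handle it by replacing each exponential with a cutoff $\chi e^{-i\langle w_j,\cdot\rangle}$, where $\chi\equiv1$ on a neighborhood of the compact support of $\widehat{\GW}(\phi\bullet\Psi)$. This is allowed because compactly supported distributions only see the values of test functions near their support. The same cutoff argument applies to $\GW(\Psi[\cdot])$, since its support is compact by \autoref{theorem:GW}. With the cutoffs in place, the identity of distributions above applies directly, and the cutoffs do not affect the values of either Fourier--Laplace transform.
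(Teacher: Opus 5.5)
Your proposal is correct and takes the same route as the paper. You rewrite $\mathcal{F}(\widehat{\GW}(\phi\bullet\Psi))[w]$ as $\mathcal{F}(\GW(\Psi[\phi e^{-i\langle w_{k+1},\cdot\rangle}]))[w_1,\dots,w_k]$, apply \autoref{theorem:FourierMA}, use the shift $\mathcal{F}(\phi e^{-i\langle w_{k+1},\cdot\rangle})[z]=\mathcal{F}(\phi)[z+w_{k+1}]$, and read off the formula for $\F(\phi\bullet\Psi)$ from Eq.~\eqref{eq:DefinitionF}; your cutoff argument just makes explicit the evaluation of compactly supported distributions on exponentials, which the paper uses implicitly.
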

		\begin{proof}
			Using \autoref{theorem:FourierMA}, the definition of $\mathcal{F}(\widehat{\GW}(\phi\bullet \Psi))$ implies
			\begin{align*}
				&\mathcal{F}(\widehat{\GW}(\phi\bullet \Psi))[w]=\mathcal{F}(\GW(\Psi[\phi\exp(-i\langle w_{k+1},\cdot\rangle)]))[w_1,\dots,w_k]\\
				=&\frac{(-1)^k}{k!}Q(\Psi)[w_1,\dots,w_k] \mathcal{F}(\phi\exp(-i\langle w_{k+1},\cdot\rangle)]))\left[\sum_{j=1}^k w_j\right]\\
				=&\frac{(-1)^k}{k!}Q(\Psi)[w_1,\dots,w_k]\mathcal{F}(\phi)\left[\sum_{j=1}^{k+1}w_j\right].
			\end{align*}
			The second claim follows directly from the definition of $\F(\phi\bullet\Psi)$.
		\end{proof}
		
		Next, we investigate the homogeneous terms of the power series expansion of these functions. Let $\M\subset \Poly(\Mat_{n,k+1}(\C))$ denote the subspace generated by the lowest order terms of $\F(\Psi)$ for $\Psi\in \P_0\LV_{c,k}(\R^n)$. As in Section \ref{section:minors}, we consider $\Poly(\Mat_{n,k+1}(\C))$ as a module over $\Poly(\C^n)$ by identifying $\Poly(\C^n)$ with the subspace of polynomials in the variable $w_{k+1}$.
		\begin{lemma}
			\label{lemma:modulePropertiesLowestOrderTerms}
			$\M$ is spanned by the lowest order terms of the power series expansion of $\F(\Psi)$ for smooth local functionals $\Psi\in\P_0\LV_{c,k}(\R^n)$. Moreover, $\M$ is a $\Poly(\C^n)$-submodule of $\Poly(\Mat_{n,k+1}(\C))$.
		\end{lemma}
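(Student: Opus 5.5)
The plan is to reduce both claims to one identity: the effect of mollification along translations on $\F$. Fix $\Psi\in\P_0\LV_{c,k}(\R^n)$ and $\phi\in C^\infty_c(\R^n)$, and put $\Psi_\phi=\int_{\R^n}\phi(x)\pi(x)\Psi\,dx$ as in \autoref{remark:MollifierSmoothVector}. This is a smooth element of $\P_0\LV_k(\R^n)$. By \autoref{proposition:characterizationSupport} its local support lies in $\locsupp\Psi+\supp(\phi)$ (up to the sign convention of the translation action), so it is again compactly supported. I will show that
\begin{align*}
	\F(\Psi_\phi)[w]=\mathcal{F}(\phi)[\pm w_{k+1}]\,\F(\Psi)[w],
\end{align*}
where the sign is fixed by the convention for $\pi$.

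To prove the identity, first compute the effect of a single translation. Since $[\pi(x)\Psi](f;\psi)=\Psi(f(\cdot+x);\psi(\cdot+x))$, the distribution $\widehat{\GW}(\pi(x)\Psi)$ is $\widehat{\GW}(\Psi)$ with all $k+1$ arguments translated by the same vector $x$. Hence its Fourier--Laplace transform equals that of $\widehat{\GW}(\Psi)$ multiplied by $e^{\pm i\langle x,\mathrm{d}(w)\rangle}$. After the change of coordinates \eqref{eq:DefinitionF}, $\mathrm{d}(w)$ becomes $w_{k+1}$. Next, interchange the Gelfand--Pettis integral with $\F$. By Eq.~\eqref{eq:calculateFourierFromValuation}, for $w=(iy_1,\dots,iy_k,w_{k+1})$ with $y_j\in\R^n$, the value $\mathcal{F}(\widehat{\GW}(\Psi))[w]$ is the coefficient of $\lambda_1\cdots\lambda_k$ in the polynomial $\lambda\mapsto\Psi(\sum_j\lambda_j e^{\langle y_j,\cdot\rangle})[e^{-i\langle w_{k+1},\cdot\rangle}]$. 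That coefficient is a finite linear combination of evaluations $\Psi(f)[\psi]$ (finite differences), and each such evaluation is continuous in the compact-to-weak* topology. Cutting $\psi$ off outside a neighborhood of the supports changes nothing. So these values commute with the weak integral, and the identity holds on this totally real set. Both sides are entire, so the identity theorem extends it to all of $\Mat_{n,k+1}(\C)$. I expect this interchange to be the only delicate step.

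For the first claim, choose $\phi$ with $\int\phi=1$, so that $\mathcal{F}(\phi)(0)=1$. The power series of $\mathcal{F}(\phi)(\pm w_{k+1})$ then has constant term $1$. Therefore the lowest order term of $\F(\Psi_\phi)$ coincides with the lowest order term of $\F(\Psi)$, and $\Psi_\phi$ is smooth and compactly supported. Thus every generator of $\M$ is already the lowest order term of a smooth functional.

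For the module property, $\M$ is a linear span by definition. It therefore suffices to show that $w_{k+1,l}\,P\in\M$ whenever $P$ is the lowest order term of some $\F(\Psi)$, for $1\le l\le n$. Take $\phi$ as above and use $\partial_l\phi$ in place of $\phi$. Then $\mathcal{F}(\partial_l\phi)(z)=i z_l\mathcal{F}(\phi)(z)$, so $\F(\Psi_{\partial_l\phi})[w]=\pm i\,w_{k+1,l}\,\mathcal{F}(\phi)(\pm w_{k+1})\F(\Psi)[w]$. Its lowest order term is $\pm i\,w_{k+1,l}P$, which is nonzero because $\Poly(\Mat_{n,k+1}(\C))$ has no zero divisors. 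Since $\Psi_{\partial_l\phi}$ is smooth and compactly supported, $w_{k+1,l}P\in\M$. Multiplication by the coordinates and linearity then give closure under all of $\Poly(\C^n)$.
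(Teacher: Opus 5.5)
Your proposal is correct and is essentially the paper's proof: you mollify along translations to get $\F(\Psi_\phi)[w]=\mathcal{F}(\phi)[w_{k+1}]\F(\Psi)[w]$ (the sign is $+$ with the paper's convention for $\pi$), use $\int\phi=1$ for the first claim, and replace $\phi$ by a derivative of $\phi$ for the module property. Your extra steps fill in points the paper leaves implicit: compactness of $\locsupp\Psi_\phi$, the interchange of the weak integral with $\mathcal{F}\circ\widehat{\GW}$ via evaluation maps on $(i\R^n)^k\times\C^n$ plus the identity theorem, and the nonvanishing of the product of lowest order terms. One harmless slip: by Eq.~\eqref{eq:calculateFourierFromValuation} the value is $\frac{1}{k!}$ times the coefficient of $\lambda_1\cdots\lambda_k$, not the coefficient itself.
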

		\begin{proof}
			Let us show that every lowest order term of $\F(\widehat{\GW}(\Psi))$ for $\Psi\in\P_0\LV_{c,k}(\R^n)$ is the lowest order term of a smooth local functional. Let $\Psi\in\P_0\LV_{c,k}(\R^n)$ and $\phi\in C^\infty_c(\R^n)$ be a function with $\int_{\R^n}\phi(x)dx=1$. Then the weak integral (with respect to the compact-to-weak* topology)
			\begin{align*}
				\Psi_\phi=\int_{\R^n}\phi(x)\pi(x)\Psi dx
			\end{align*}
			exists and defines a smooth local functional, compare \autoref{remark:MollifierSmoothVector}. From the definition of $\mathcal{F}\circ\widehat{\GW}$, we obtain
			\begin{align*}
				&\mathcal{F}(\widehat{\GW}(\Psi_\phi))[w]\\
				=&\int_{\R^n}\phi(x)\widehat{\GW}(\Psi)\left[\exp(-i\langle w_1,\cdot+x\rangle)\otimes\dots\otimes \exp(-i\langle w_{k+1},\cdot+x\rangle)\right]dx\\
				=&\int_{\R^n}\phi(x)\exp\left(-i\sum_{j=1}^{k+1}\langle w_j,x\rangle\right)dx\cdot \mathcal{F}(\widehat{\GW}(\Psi))[w],
			\end{align*}
			and therefore
			\begin{align*}
				\F(\Psi_\phi)[w]=\mathcal{F}(\phi)[w_{k+1}]\F(\Psi)[w].
			\end{align*}
			Since $\mathcal{F}(\phi)[0]=\int_{\R^n}\phi(x)dx=1$, the lowest order term of the power series expansion of this function coincides with the lowest order term of $\F(\Psi)$, which shows the claim.\\
			
			For the second claim, observe that it is now sufficient to show that the product of the lowest order term of  $\F(\Psi_\phi)[w]$ and $w_{j,k+1}$, $1\le j\le n$, belongs to $\M$. However, this product is the lowest order term of
			\begin{align*}
				w_{j,k+1}\F(\Psi_\phi)[w]=\mathcal{F}((-i\partial_j)\phi)[w_{k+1}]\F(\Psi)[w]=\F(\Psi_{-i\partial_j\phi})[w],
			\end{align*}
			which belongs to $\M$.
		\end{proof}
		
		Recall that $\widetilde{\M^2_k}$ denotes the submodule of $\Poly(\Mat_{n,k+1}(\C))$ generated by quadratic products of the $k$-minors involving the first $k$ columns.	
		\begin{lemma}
			\label{lemma:SubmoduleContainedMinors}
			$\M\subset \widetilde{\M^2_k}$.
		\end{lemma}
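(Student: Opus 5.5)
By \autoref{lemma:modulePropertiesLowestOrderTerms}, it suffices to show that the lowest order term $P$ of $\F(\Psi)$ lies in $\widetilde{\M^2_k}$ for every \emph{smooth} $\Psi\in\P_0\LV_{c,k}(\R^n)$. We will verify the restriction criterion of \autoref{proposition:CharacterizationModuleSquareMinors}. By \autoref{lemma:PolynomialFromComplexifiedSpaces}, it is enough to consider complexifications $E=E_0\otimes\C$ of real subspaces $E_0\in\Gr_k(\R^n)$. So the goal is the following: for such $E$ and $\Delta\in\M_k$ with $E\in U_\Delta$, find a polynomial $P_{\Delta,E}$ with
\[
P(w_1,\dots,w_{k+1})=\Delta^2(w_1,\dots,w_k)P_{\Delta,E}(w_{k+1})\quad\text{for } w_1,\dots,w_k\in E.
\]

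The key input is \autoref{proposition:RestrictionSubspaceSmoothCase}. It gives $d\Psi(\pi_{E_0}^*f)=\Phi_{E_0}\,d(\MA_{E_0}(f)\otimes\vol_{E_0^\perp})$. Its right-hand side is $\phi\bullet$ of the translation invariant functional $f\mapsto\MA_{E_0}(f)\otimes\vol$ evaluated at $\pi_{E_0}^*f$. We first take $w_1=iy_1,\dots,w_k=iy_k$ with $y_j\in E_0$. Then the functions $\exp(\langle y_j,\cdot\rangle)$ are pullbacks $\pi_{E_0}^*$ of convex functions on $E_0$, so Eq.~\eqref{eq:calculateFourierFromValuation} expresses $\mathcal{F}(\widehat{\GW}(\Psi))[iy_1,\dots,iy_k,w_{k+1}]$ through the polarization of $\Psi(\pi_{E_0}^*\cdot)$. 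Inserting the representation above reduces the mixed Monge--Amp\`ere measure on $E_0$ of the exponentials to
\[
\det(y_1,\dots,y_k)^2\exp\Big(\Big\langle \sum_j y_j,x_E\Big\rangle\Big)\,dx_E
\]
(up to a constant), computed in an orthonormal basis of $E_0$. Integrating against $\Phi_{E_0}e^{-i\langle w_{k+1},\cdot\rangle}$ then yields
\[
\mathcal{F}(\widehat{\GW}(\Psi))[w]=c\,\det_{E_0}(w_1,\dots,w_k)^2\,G\Big(\sum_{j=1}^{k+1}w_j\Big),
\]
where $G$ is the Fourier--Laplace transform of the compactly supported measure $\Phi_{E_0}\vol$, an entire function. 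One can also see this directly from \autoref{lemma:FourierMA}, since on $\pi_{E_0}^*\Conv(E_0,\R)$ the functional $\Psi$ agrees with $\Phi_{E_0}\bullet\Psi_0$ for a suitable $\Psi_0\in\MAVal_k(\R^n)$. Both sides are entire, so the identity extends from $w_j\in iE_0$ to $w_j\in E$ by the identity theorem: each variable lies in a totally real subspace of maximal dimension in $E$.

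Next we pass to $\F(\Psi)$ through Eq.~\eqref{eq:DefinitionF}. For $w_1,\dots,w_k\in E$ this gives $\F(\Psi)[w]=c'\det_{E}(w_1,\dots,w_k)^2G(w_{k+1})$. Taking the lowest order homogeneous term, $P(w)=c'\det_E(w_1,\dots,w_k)^2\,g(w_{k+1})$ on $E^k\times\C^n$, where $g$ is the lowest order term of $G$. There is a subtlety here: the lowest order term of the restriction might be of higher degree than $P$, in which case $P|_{E^k\times\C^n}=0$. That case is fine, since $0=\Delta^2\cdot0$. Otherwise the restriction of $P$ is the degree-$\deg P$ part of the restricted series, which still has the product form with $g$ replaced by the appropriate homogeneous part of $G$. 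Finally, $\Delta|_{E^k}$ is a nonzero multiple of $\det_E$ because $E\in U_\Delta$, so $P_{\Delta,E}$ is $g$ times a constant.

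The main obstacle I expect is the computation in the second paragraph. It requires justifying the polarization and Fourier evaluation for the non-compactly-supported convex functions $e^{\langle y_j,\cdot\rangle}\circ\pi_{E_0}$. This is legitimate because $\Psi$ has compact local support and the Goodey--Weil distributions are compactly supported. It also requires identifying the mixed Monge--Amp\`ere measure on $E_0$ of these exponentials with the $\det^2$ factor. The $\MAVal$ route via \autoref{lemma:FourierMA} and \autoref{theorem:FourierMA} avoids the explicit computation, because $Q$ of $\MA_{E_0}\otimes\vol$ restricted to $E^k$ must be a multiple of $\det_E^2$.
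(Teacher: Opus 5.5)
Your proposal is correct and follows essentially the same route as the paper: reduce to smooth compactly supported $\Psi$ via \autoref{lemma:modulePropertiesLowestOrderTerms}, use \autoref{proposition:RestrictionSubspaceSmoothCase} together with Eq.~\eqref{eq:calculateFourierFromValuation} to write $\mathcal{F}(\widehat{\GW}(\Psi))$ on $(iE_0)^k\times\C^n$ as a squared determinant (Gram determinant) times $\mathcal{F}(\Phi_{E_0})$, extend holomorphically to $(E_0\otimes\C)^k\times\C^n$, pass to $\F(\Psi)$, and conclude with \autoref{lemma:PolynomialFromComplexifiedSpaces} and \autoref{proposition:CharacterizationModuleSquareMinors}. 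You also treat explicitly how the lowest-order term restricts to $E^k\times\C^n$, including the case where that restriction vanishes. The paper passes over this point quickly, and your handling of it is correct.
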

		\begin{proof}
		Note that \autoref{lemma:modulePropertiesLowestOrderTerms} implies that it is sufficient to show that the lowest order term of the power series expansion of $\F(\Psi)$  belongs to $\widetilde{\M^2_k}$ for every smooth local functional $\Psi\in \P_0\LV_{c,k}(\R^n)$. Let $E\in\Gr_k(\R^n)$. We can apply \autoref{proposition:RestrictionSubspaceSmoothCase} to obtain a continuous function $\Phi_E\in C(\R^n)$ such that
			\begin{align}
				\label{eq:formulaRestriction}
				d\Psi(\pi_E^*f)=\Phi_Ed(\MA_E(f)\otimes\vol_{E^\perp})
			\end{align}
			for all $f\in \Conv(E,\R)$. Note that $\Phi_E$ has compact support since $\Psi$ is compactly supported.	For $y_1,\dots,y_k\in E$ and $w_{k+1}\in\C^n$, we may combine Eq.~\eqref{eq:calculateFourierFromValuation} and Eq.~\eqref{eq:formulaRestriction} to obtain
			\begin{align*}
				&\mathcal{F}(\widehat{\GW}(\Psi))[iy_1,\dots,iy_k,w_{k+1}]\\
				=&\frac{1}{k!}\frac{\partial^k}{\partial \lambda_1\dots\partial\lambda_k}\Big|_0\int_{\R^n}\exp(-i\langle w_{k+1},x\rangle)\Phi_E(x)d\left[\MA_E\left(\sum_{j=1}^k\lambda_j\exp(\langle y_j,\cdot\rangle);\cdot\right)\otimes \vol_{E^\perp}\right],
			\end{align*}
			which implies
			\begin{align*}
				&\mathcal{F}(\widehat{\GW}(\Psi))[iy_1,\dots,iy_k,w_{k+1}]
				=&\frac{\det\nolimits_k\left(\langle y_i,y_j\rangle\right)_{i,j=1}^k}{k!} \mathcal{F}(\Phi_E)\left[w_k+\sum_{j=1}^k iy_j\right].
			\end{align*}
			Thus for $w_1,\dots,w_k\in E\otimes\C$, $w_{k+1}\in\C^n$,
			\begin{align*}
				\mathcal{F}(\widehat{\GW}(\Psi))[w]=\frac{(-1)^k}{k!}\det\nolimits_k\left(\langle w_i,w_j\rangle\right)_{i,j=1}^k \mathcal{F}(\Phi_E)\left[\sum_{j=1}^{k+1} w_j\right],
			\end{align*}
			since both sides are holomorphic functions on $(E\otimes\C)^k\times \C^n$ that coincide on $(iE)^k\times \C^n$. From the definition of $\F$, we thus obtain for $w_1,\dots,w_k\in E\otimes \C$ and $w_{k+1}\in \C^n$
			\begin{align*}
				\mathcal
				\F(\Psi)[w]=\det\nolimits_k\left(\langle w_i,w_j\rangle\right)_{i,j=1}^k \mathcal{F}(\Phi_E)\left[w_{k+1}\right].
			\end{align*}
			Since this holds for every subspace $E\in\Gr_k(\R^n)$, the lowest order term $P$ of the power series expansion of $\F(\Psi)$ satisfies
			\begin{align*}
				P|_{(E\otimes \C)^k\times \C^n}(w_1,\dots,w_{k+1})=\det\nolimits_k\left(\langle w_i,w_j\rangle\right)_{i,j=1}^kQ_E(w_{k+1})
			\end{align*}
			for $w_1,\dots,w_k\in E\otimes \C$, $w_{k+1}\in\C^n$, for a polynomial $Q_E\in\Poly(\C^n)$. Since $(w_1,\dots,w_k)\mapsto \det\nolimits_k\left(\langle w_i,w_j\rangle\right)_{i,j=1}^k$ is a multiple of the square of the determinant function on $E$, it is now easy to see that $P$ satisfies the condition in \autoref{lemma:PolynomialFromComplexifiedSpaces}, so \autoref{proposition:CharacterizationModuleSquareMinors} implies $P\in\widetilde{\M^2_k}$. Thus $\M\subset \widetilde{\M^2_k}$.
		\end{proof}
		
		Next we are going to show that every term in the power series expansion of $\F(\Psi)$, $\Psi\in\P_0\LV_{c,k}(\R^n)$, belongs to $\widetilde{\M^2_k}$ in order to apply \autoref{theorem:GroebnerAlgModuleMinor}.
		
		\begin{lemma}
			\label{lemma:FbelongsToModule}
			\begin{enumerate}
				\item Let $N\in\mathbb{N}$. For every element $Q\in\widetilde{\M^2_k}$ of degree at most $N+2k$, there exists $\Psi\in\P_0\LV^\infty_{c,k}(\R^n)$ such that the power series expansion of $\F(\Psi)$ coincides with $Q$ up to order $N+2k$. 
				\item Every homogeneous term of the power series expansion of $\F(\Psi)$ for $\Psi\in\P_0\LV_{c,k}(\R^n)$ belongs to $\widetilde{\M^2_k}$.
			\end{enumerate}
		\end{lemma}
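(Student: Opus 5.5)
For (1), I will realize polynomials in $\widetilde{\M^2_k}$ through the module structure, using \autoref{lemma:FourierMA}. By \autoref{lemma:QBijectiveMA}, any element of $\widetilde{\M^2_k}$ can be written as $Q=\sum_j p_j(w_{k+1})Q(\Psi_j)[w_1,\dots,w_k]$. Here $\Psi_j\in\MAVal_k(\R^n)$ runs over a basis and $p_j\in\Poly(\C^n)$. We may assume $\deg p_j\le N+2k-2k=N$, since each $Q(\Psi_j)$ is homogeneous of degree $2k$.

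For each $p_j$ I want $\phi_j\in C^\infty_c(\R^n)$ whose Fourier transform agrees with $p_j$ up to order $N$ at $0$. This is a finite-dimensional moment problem: the moments $\int x^\alpha\phi(x)dx$ for $|\alpha|\le N$ can be prescribed arbitrarily. To see this, note that the map $\phi\mapsto(\int x^\alpha\phi)_{|\alpha|\le N}$ from $C^\infty_c(\R^n)$ into a finite-dimensional space is surjective. Otherwise a nonzero polynomial would annihilate every test function. Equivalently, one can take $\phi_j=p_j(i\partial)\rho$ for a bump $\rho$ with $\mathcal{F}(\rho)=1+O(|w|^{N+1})$. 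Such a $\rho$ exists by the same surjectivity, and then $\mathcal{F}(\phi_j)=p_j\mathcal{F}(\rho)$.

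Set $\Psi=\sum_j\phi_j\bullet\Psi_j$. This functional is compactly supported by \autoref{corollary:supportModuleProduct} and smooth by \autoref{corollary:smoothIsAffineSmoothPreliminary}. By \autoref{lemma:FourierMA},
\[
\F(\Psi)[w]=\sum_j Q(\Psi_j)[w_1,\dots,w_k]\,\mathcal{F}(\phi_j)[w_{k+1}],
\]
which agrees with $Q$ up to order $N+2k$. This proves (1).

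For (2), take $\Psi\in\P_0\LV_{c,k}(\R^n)$, and suppose some homogeneous term of $\F(\Psi)$ lies outside $\widetilde{\M^2_k}$. Let $P$ be the lowest-order such term, of degree $m$. Its degree is at least $2k$: every function $\F(\Psi)$ vanishes to order $2k$ at $0$, because it is $k$-homogeneous and, by the $\prod|w_j|$-type bound obtained via the restriction argument, has degree $\ge2$ in each $w_j$. If needed, this can instead be derived from multilinearity plus the vanishing of $\GW$ on affine functions.

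Let $Q$ be the sum of the terms of $\F(\Psi)$ of degree $<m$. All of these lie in $\widetilde{\M^2_k}$ by minimality. Apply (1) with $N+2k=m$, taking $Q$ plus the degree-$m$ components chosen as $0$. This gives a smooth $\Psi'$ such that $\F(\Psi')$ agrees with $Q$ in degrees below $m$ and has some degree-$m$ term $P'\in\widetilde{\M^2_k}$. To arrange $P'\in\widetilde{\M^2_k}$, I apply (1) with order $m$ and target $Q$; the resulting degree-$m$ term is $\sum_j Q(\Psi_j)\cdot(\text{degree }m-2k\text{ part of }\mathcal{F}(\phi_j))$, which lies in $\widetilde{\M^2_k}$.

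Then $\F(\Psi-\Psi')=\F(\Psi)-\F(\Psi')$ by linearity. Its lowest-order term is $P-P'$, which is nonzero and lies outside $\widetilde{\M^2_k}$. But $P-P'\in\M\subset\widetilde{\M^2_k}$ by \autoref{lemma:SubmoduleContainedMinors}, a contradiction.

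The main obstacle is the moment realization in (1): it needs an exact match up to order $N+2k$ with all higher-order terms still in the module. The $\mathcal{F}(\rho)$ factorization handles this cleanly.
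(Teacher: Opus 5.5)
Your proposal is correct and follows essentially the same route as the paper: part (1) via $\Psi=\sum_j\phi_j\bullet\Psi_j$ using \autoref{lemma:QBijectiveMA} and \autoref{lemma:FourierMA}, and part (2) by subtracting such a functional at the first degree that leaves $\widetilde{\M^2_k}$ and invoking \autoref{lemma:SubmoduleContainedMinors}; the one difference is that you prove the moment-matching step directly, whereas the paper cites \cite{KnoerrPaleyWienerSchwartz2025}. The remaining issues are minor: with the paper's Fourier convention you need $p_j(-i\partial)\rho$ rather than $p_j(i\partial)\rho$, the degree-$m$ term $P'$ is simply $0$ by (1), and the order-$2k$ vanishing detour is unnecessary (if $m<2k$ the lower terms are $0$ and \autoref{lemma:SubmoduleContainedMinors} applies to $\Psi$ itself), which matters because the $\prod_j|w_j|^2$ bound you allude to is derived later from this very lemma.
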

		\begin{proof}
			Note that the second claim follows from the first: Assume that $\Psi\in\P_0\LV_{c,k}(\R^n)$. If there is a homogeneous term in the power series expansion of $\F(\Psi)$ that does not belong to $\widetilde{\M^2_k}$, then there exists a maximal $N\in\mathbb{N}$ such that the power series expansion up to order $2k+N$ belongs to $\widetilde{\M^2_k}$ but the term of order $2k+N+1$ does not. Let $Q$ be the power series expansion of $\F(\Psi)$ up to order $2k+N$, and choose $\tilde{\Psi}\in\P_0\LV_{c,k}(\R^n)$ such that $\F(\tilde{\Psi})$ coincides with $Q$ up to order $2k+N+1$. Then $\Psi-\tilde{\Psi}_0$ has the property that the lowest order term of $\F(\Psi-\tilde{\Psi}_0)$ is given by the homogeneous term of degree $2k+N+1$ of the power series expansion of $\F(\Psi)$, which does not belong to $\widetilde{\M^2_k}$ by assumption. However, this contradicts \autoref{lemma:SubmoduleContainedMinors}. Thus every homogeneous term in the power series expansion belongs to $\widetilde{\M^2_k}$.\\
			
			In order to see that the first claim holds, write $Q=\sum_{j=1}^{N_{n,k}}P_j(w_{k+1})Q(\Psi_j)$ for a basis $\Psi_j$, $1\le j\le N_{n,k}$, of $\MAVal_k(\R^n)$, which is possible due to \autoref{lemma:QBijectiveMA}. We may assume that $P_j$ is a polynomial of degree at most $N$. Choose functions $\phi_j\in C_c^\infty(\R^n)$ such that the power series expansion of $\mathcal{F}(\phi_j)$ up to order $N$ coincides with $P_j$ (see for example \cite[Proposition 4.11]{KnoerrPaleyWienerSchwartz2025}). Then the local functional $\tilde{\Psi}:=\sum_{j=1}^{N_{n,k}}\phi_j\bullet \Psi_j\in\P_0\LV_{c,k}^\infty(\R^n)$ has the desired property due to \autoref{lemma:FourierMA}.
		\end{proof}
		The following result provides a general description of the Fourier--Laplace transform of $\widehat{\GW}(\Psi)$ for $\Psi\in\P_0\LV_{c,k}(\R^n)$.
		\begin{proposition}
			For every compact and convex set $A\subset \R^n$ with non-empty interior and every basis $\Psi_j$ of $\MAVal_k(\R^n)$, there exists a constant $C(A)>0$ such that for every $\Psi\in   \P_0\LV_{c,k}(\R ^n)$ with $\locsupp\Psi\subset A$ there exist unique functions $g\in \mathcal{O}_{\C^n}$ with
			\begin{align}
				\label{eq:presentationPsi}
				\mathcal{F}(\widehat{\GW}(\Psi))[w]=\sum_{j=1} g_j\left(\sum_{j=1}^{k+1}w_j\right)Q(\Psi_j)[w_1,\dots,w_k]
			\end{align}
			where 
			\begin{align*}
				|g_j(z)|\le& C(A)\|\Psi\|_{A;1}(1+|z|)^{nN_{n,k}} \exp(h_A(\Im z)).
			\end{align*}
		\end{proposition}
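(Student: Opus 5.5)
The plan is to work with $\F(\Psi)$ from Eq.~\eqref{eq:DefinitionF} rather than with $\mathcal{F}(\widehat{\GW}(\Psi))$ directly. The two are related by the linear coordinate change in Eq.~\eqref{eq:relationF_FourierGW}, under which the $\odot$-module structure becomes multiplication by functions of $w_{k+1}$ alone. By \autoref{lemma:FbelongsToModule}(2), every homogeneous term of the power series expansion of $\F(\Psi)$ lies in $\widetilde{\M^2_k}$. Since $\Psi\mapsto Q(\Psi)$ is a bijection $\MAVal_k(\R^n)\to\M^2_k$ by \autoref{lemma:QBijectiveMA}, the polynomials $Q(\Psi_j)$ form a basis of $\M^2_k$. \autoref{theorem:GroebnerAlgModuleMinor} then gives $\tilde g_j\in\O_{\C^n}$ with
\begin{align*}
	\F(\Psi)[w]=\sum_{j=1}^{N_{n,k}}\tilde g_j(w_{k+1})Q(\Psi_j)[w_1,\dots,w_k],
\end{align*}
together with the bound $|\tilde g_j(z)|\le C_\delta(1+|z|)^{nN_{n,k}}\sup_{\zeta\in D_\delta(0,\dots,0,z)}|\F(\Psi)(\zeta)|$. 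Setting $g_j:=\frac{(-1)^k}{k!}\tilde g_j$ and substituting $w_{k+1}\mapsto\sum_{j=1}^{k+1}w_j$ via Eq.~\eqref{eq:relationF_FourierGW} yields the presentation \eqref{eq:presentationPsi}.

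For the estimate, fix $\delta=1$ and bound the supremum over $D_1(0,\dots,0,z)$ using \autoref{corollary:EstimateF}. A point $\zeta$ in this set has $|\Re\zeta_j|_1,|\Im\zeta_j|_1\le 1$ for $j\le k$, so the factor $\prod_{j=1}^k(1+|\zeta_j|)^3$ is bounded by a constant. The terms $h_A(\pm\Im\zeta_j)$ are likewise bounded by a constant depending only on $A$, since $A$ is compact. The last column satisfies $|\Im\zeta_{k+1}|_1\le|\Im z|_1+1$, but this only controls the norm of $\Im\zeta_{k+1}$, not its direction, so it does not directly bound $h_A(\Im\zeta_{k+1})$ by $h_A(\Im z)$. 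This is the one point needing care.

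To handle it, I would replace the set $D_\delta$ in the proof of \autoref{theorem:DivisionAlg} by a polydisc of fixed radius around $(0,\dots,0,z)$. That proof is based on Cauchy estimates, so presumably it works with any neighbourhood of bounded size; for this step I would check that it tolerates the change. On such a polydisc, $h_A(\Im\zeta_{k+1})\le h_A(\Im z)+c\,\max_{a\in A}|a|$ by subadditivity of $h_A$ and $|\Im\zeta_{k+1}-\Im z|\le c$. If that modification is not available, I would use that the expression is symmetric under translating $A$ (both $\Psi$ and the bound transform by $e^{-i\langle z,x_0\rangle}$), translate so that $0\in\operatorname{int}A$, and then absorb the discrepancy into the constant. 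Combining these bounds gives $|g_j(z)|\le C(A)\|\Psi\|_{A,1}(1+|z|)^{nN_{n,k}}e^{h_A(\Im z)}$.

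For uniqueness, suppose $\sum_j g_j(w_{k+1})Q(\Psi_j)[w_1,\dots,w_k]\equiv0$. Fix $w_{k+1}$; this is then a linear combination of the linearly independent polynomials $Q(\Psi_j)$ in $w_1,\dots,w_k$, so every $g_j(w_{k+1})$ vanishes. Hence the $g_j$ are unique. The main obstacle is the exponential-type bookkeeping in the supremum over $D_\delta$ described above.
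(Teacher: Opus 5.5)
Your route is the paper's: \autoref{lemma:FbelongsToModule}, the basis $Q(\Psi_j)$ of $\M^2_k$ from \autoref{lemma:QBijectiveMA}, \autoref{theorem:GroebnerAlgModuleMinor} applied to $\F(\Psi)$, and then \autoref{corollary:EstimateF}. The paper simply asserts that the final bound follows.

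The point you isolate is genuine for the literal definition of $D_\delta$. Read literally, $D_1(0,\dots,0,z)$ contains points with $\zeta_{k+1}=0$, so it could not give the decay in $z$ needed in \autoref{proposition:PWS_non-trivialDirection} either. What the argument actually uses is a bounded neighbourhood of $(0,\dots,0,z)$, i.e.\ your option (a).

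You do not need to reopen the Gr\"obner-basis proof to justify (a). Each homogeneous term of $\F(\Psi)$ has the form $\sum_j p_j(w_{k+1})Q(\Psi_j)[w_1,\dots,w_k]$. So for fixed $z$, the function $\F(\Psi)[\cdot,z]$ is a pointwise limit of elements of the finite-dimensional space $\M^2_k$, and hence lies in it. Choose points $a^{(1)},\dots,a^{(M)}\in(\C^n)^k$, all columns of norm at most $1$, such that evaluation at them is injective on $\M^2_k$. Then there are constants $c_{ji}$ with $\tilde g_j(z)=\sum_i c_{ji}\F(\Psi)[a^{(i)},z]$, which is visibly entire. \autoref{corollary:EstimateF} then gives
\begin{align*}
	|\tilde g_j(z)|\le C\max_i|\F(\Psi)[a^{(i)},z]|\le C'(A)\|\Psi\|_{A,1}e^{h_A(\Im z)}.
\end{align*}
Here the last column equals $z$ exactly, and the factor $(1+|z|)^{nN_{n,k}}$ is not even needed.

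Your fallback (b) is wrong and should be dropped. The difference between $\sup\{h_A(y):|y|_1\le|\Im z|_1+1\}$ and $h_A(\Im z)$ in general grows linearly in $|\Im z|$, even if $0\in\operatorname{int}A$. So it changes the exponential type, not just the constant. For instance, take $n=1$, $A=[-1,2]$ and $\Im z=-s$ with $s>0$. Then $h_A(\Im z)=s$, whereas $\Im\zeta_{k+1}=s+1$ is admissible and gives $h_A(\Im\zeta_{k+1})=2s+2$.

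The rest of your proposal is fine: the normalisation $g_j=\frac{(-1)^k}{k!}\tilde g_j$, and the uniqueness argument via linear independence of the $Q(\Psi_j)$.
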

		\begin{proof}
			By \autoref{lemma:FbelongsToModule}, every homogeneous term in the power series expansion of $\F(\Psi)$ belongs to the $\Poly(\C^n)$-module generated by $\widetilde{\M}^2_k$. We may therefore apply \autoref{theorem:GroebnerAlgModuleMinor} to obtain functions $g_j\in\mathcal{O}_{\C^n}$ such that
			\begin{align*}
				\F(\Psi)[w]=\sum_{j=1}^{N_{n,k}}g_j(w_{k+1})Q(\Psi_j)[w_1,\dots,w_k],
			\end{align*}
			where
			\begin{align}
				\label{eq:estimateG}
				|g_j(z)|\le  C\left(1+|z|\right)^{nN_{n,k}}\sup_{\zeta\in D_1(0,\dots,0,z)}|\F(\Psi)[\zeta]|,
			\end{align}
			so the claim follows from the estimate in \autoref{lemma:continuityFourier_F} and the relation between $\F(\Psi)$ and $\mathcal{F}(\widehat{\GW}(\Psi))$.
		\end{proof}
		We obtain the following two corollaries.	Recall that $\mathrm{d}(w)=\sum_{j=1}^{k+1}w_j$ for $w\in\Mat_{n,k+1}(\C)$.
		\begin{corollary}\label{corollary:GeneralEstimateFourierInTermsOfF}
			There is a constant $C>0$ such that for all $\Psi\in \P_0\LV_{c,k}(\R^n)$,
			\begin{align*}
				|\mathcal{F}(\widehat{\GW}(\Psi))[w]|\le C(1+|\mathrm{d}(w)|)^{nN_{n,k}}\prod_{j=1}^k|w_j|^2\sup_{\zeta\in D_1(0,\dots,0,d(w))}|\F(\Psi)[\zeta]|.
			\end{align*}
		\end{corollary}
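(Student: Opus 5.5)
The plan is to combine the presentation of $\F(\Psi)$ from the preceding proposition with the coordinate change in Eq.~\eqref{eq:relationF_FourierGW}, and then bound the generators $Q(\Psi_j)$ directly, as in \autoref{corollary:EstimateFourierDiagonalCoordinates}.

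First, fix a basis $\Psi_j$, $1\le j\le N_{n,k}$, of $\MAVal_k(\R^n)$. Let $\Psi\in\P_0\LV_{c,k}(\R^n)$. By \autoref{lemma:FbelongsToModule}, every homogeneous term of the power series expansion of $\F(\Psi)$ lies in $\widetilde{\M^2_k}$. Hence \autoref{theorem:GroebnerAlgModuleMinor}, applied with $\delta=1$ to the basis $Q(\Psi_j)$ of $\M^2_k$ (a basis by \autoref{lemma:QBijectiveMA}), gives entire functions $g_j\in\O_{\C^n}$ with
\begin{align*}
	\F(\Psi)[w]=\sum_{j=1}^{N_{n,k}}g_j(w_{k+1})Q(\Psi_j)[w_1,\dots,w_k]
\end{align*}
and
\begin{align*}
	|g_j(z)|\le C_1(1+|z|)^{nN_{n,k}}\sup_{\zeta\in D_1(0,\dots,0,z)}|\F(\Psi)[\zeta]|.
\end{align*}
Here $C_1$ depends only on $n$, $k$ and the chosen basis, and not on $\Psi$ or on any support set $A$. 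This independence is what makes $C$ in the statement universal.

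Next, I substitute via Eq.~\eqref{eq:relationF_FourierGW}:
\begin{align*}
	\mathcal{F}(\widehat{\GW}(\Psi))[w]=\frac{(-1)^k}{k!}\F(\Psi)[w_1,\dots,w_k,\mathrm{d}(w)]=\frac{(-1)^k}{k!}\sum_{j=1}^{N_{n,k}}g_j(\mathrm{d}(w))Q(\Psi_j)[w_1,\dots,w_k].
\end{align*}
Each $Q(\Psi_j)$ is a linear combination of products of two $k$-minors of $(w_1,\dots,w_k)$, so it is homogeneous of degree $2$ in each column $w_i$ separately. Comparing with its maximum on the product of unit spheres gives $|Q(\Psi_j)[w_1,\dots,w_k]|\le C_2\prod_{i=1}^k|w_i|^2$. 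Inserting this bound and the bound on $g_j$ at $z=\mathrm{d}(w)$, and summing over $j$, yields the claim with $C=N_{n,k}C_1C_2/k!$.

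No step is a real obstacle. The only point needing care is to check that the constant in \autoref{theorem:GroebnerAlgModuleMinor} is uniform in $\Psi$. This holds because it depends only on the Gröbner basis and $\delta$. Note also that the supremum is taken over $D_1(0,\dots,0,\mathrm{d}(w))$ in the $\F$-coordinates, exactly as \autoref{theorem:GroebnerAlgModuleMinor} provides, so no support-function estimate is needed at this stage.
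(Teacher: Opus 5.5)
Your proposal is correct and follows essentially the same route as the paper. The paper also takes the presentation of $\F(\Psi)$ from the preceding proposition with the bound \eqref{eq:estimateG} from \autoref{theorem:GroebnerAlgModuleMinor}, substitutes $w_{k+1}\mapsto \mathrm{d}(w)$ via Eq.~\eqref{eq:relationF_FourierGW}, and bounds each $Q(\Psi_j)$ by a multiple of $\prod_{j=1}^k|w_j|^2$ using its degree-$2$ homogeneity in each column.
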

		\begin{proof}
			This follows by individually estimating the terms in the sum in Eq.~\eqref{eq:presentationPsi} using Eq.~\eqref{eq:estimateG} and the fact that $Q(w_1,\dots,w_k)$ is homogeneous of degree $2$ in each argument and therefore bounded by a multiple of $\prod_{j=1}^k|w_j|^2$.
		\end{proof}
		\begin{corollary}
			For every $\Psi\in  \P_0\LV_{c,k}(\R^n)$,  $\mathcal{F}(\widehat{\GW}(\Psi))\in\widehat{\M^2_k}$.
		\end{corollary}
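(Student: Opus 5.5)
This is essentially a direct consequence of the decomposition just established in the preceding proposition; the only thing to check is that the change of variables between $\F(\Psi)$ and $\mathcal{F}(\widehat{\GW}(\Psi))$ transports the $\O_{\C^n}$-module structure used in \autoref{section:minors} onto the $\odot$-module structure defining $\widehat{\M^2_k}$.

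First, fix a compact convex set $A$ with non-empty interior containing $\locsupp\Psi$, which exists because $\Psi$ is compactly supported. Fix also a basis $\Psi_j$, $1\le j\le N_{n,k}$, of $\MAVal_k(\R^n)$. By \autoref{lemma:QBijectiveMA}, the polynomials $Q(\Psi_j)$ then form a basis of $\M^2_k$.

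Next, \autoref{lemma:FbelongsToModule}(2) says that every homogeneous term of the power series of $\F(\Psi)$ lies in $\widetilde{\M^2_k}$. So \autoref{theorem:GroebnerAlgModuleMinor} gives entire functions $g_j\in\O_{\C^n}$ with
\begin{align*}
\F(\Psi)[w]=\sum_{j=1}^{N_{n,k}}g_j(w_{k+1})\,Q(\Psi_j)[w_1,\dots,w_k].
\end{align*}
No estimates are needed here, only the existence of this decomposition.

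Now substitute Eq.~\eqref{eq:relationF_FourierGW}, that is, evaluate at $(w_1,\dots,w_k,\mathrm{d}(w))$. This gives
\begin{align*}
\mathcal{F}(\widehat{\GW}(\Psi))[w]=\frac{(-1)^k}{k!}\sum_{j=1}^{N_{n,k}}g_j(\mathrm{d}(w))\,Q(\Psi_j)[w_1,\dots,w_k]=\sum_{j=1}^{N_{n,k}}\Big(\tfrac{(-1)^k}{k!}g_j\Big)\odot Q(\Psi_j).
\end{align*}
Each $Q(\Psi_j)$ depends only on the first $k$ columns, so it is unaffected by the substitution and still lies in $\M^2_k$. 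The right-hand side is therefore an $\O_{\C^n}$-linear combination, with respect to $\odot$, of elements of $\M^2_k$, so $\mathcal{F}(\widehat{\GW}(\Psi))\in\widehat{\M^2_k}$.

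There is no real obstacle. The only point needing care is the bookkeeping for the two module structures: multiplication by functions of $w_{k+1}$ for $\F$ versus composition with $\mathrm{d}(w)$ for $\odot$. These correspond exactly under the coordinate change in Eq.~\eqref{eq:DefinitionF}, because that change fixes the first $k$ columns.
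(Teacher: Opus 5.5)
Your proposal is correct and follows the paper's own route. The paper states this corollary as an immediate consequence of the preceding proposition, whose proof is exactly your combination of \autoref{lemma:FbelongsToModule}(2), \autoref{theorem:GroebnerAlgModuleMinor}, and the coordinate change \eqref{eq:relationF_FourierGW}, with the factor $(-1)^k/k!$ absorbed into the $g_j$. The estimates carried by that proposition are not needed for membership in $\widehat{\M^2_k}$, as you noted.
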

	
	\subsection{Proof of \autoref{theorem:PWS_LV}}
		\label{section:ProofPWS}
		The proof of the Paley--Wiener--Schwartz-type characterization of smooth local functionals with compact local support in \autoref{theorem:PWS_LV} is based on the following version of the classical Paley--Wiener--Schwartz Theorem for smooth functions with compact support.		
		\begin{theorem}[\cite{Hoermanderanalysislinearpartial2003}*{Theorem 7.3.1}]
			\label{theorem:PaleyWienerSchwartz_distributions}
			Let $A\subset \R^n$ be compact and convex with non-empty interior. If $F$ is an entire function on $\C^n$ such that for every $N\in\mathbb{N}$ there exists a constant $C_N>0$ with
			\begin{align*}
				|F(z)|\le C_N (1+|z|)^{-N}e^{h_A(\Im z)} \quad\text{for}~z\in\C^n,
			\end{align*} 
			then $F$ is the Fourier--Laplace transform of a function in $C^\infty_c(\R^n)$ with support contained in $A$. Conversely, the Fourier--Laplace transform of any smooth function with support contained in $A$ satisfies an estimate of this form for every $N\in\mathbb{N}$.
		\end{theorem}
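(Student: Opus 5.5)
The converse direction is a direct computation, and the main direction goes by Fourier inversion on $\R^n$ followed by a contour shift into the complex domain. Throughout I use the convention of the paper, $\mathcal{F}(\phi)[z]=\int_{\R^n}\phi(x)e^{-i\langle z,x\rangle}dx$. The basic observation is that for $x\in A$ we have $|e^{-i\langle z,x\rangle}|=e^{\langle \Im z,x\rangle}\le e^{h_A(\Im z)}$.

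\emph{Converse direction.} Let $\phi\in C^\infty_c(\R^n)$ with $\supp\phi\subset A$. Then $F=\mathcal{F}(\phi)$ is entire, by differentiation under the integral sign over the compact set $A$. Integration by parts gives $z^\alpha F(z)=\mathcal{F}((-i\partial)^\alpha\phi)[z]$ for every multiindex $\alpha$. Combined with the observation above, this yields
\begin{align*}
|z^\alpha F(z)|\le \|\partial^\alpha\phi\|_{L^1}e^{h_A(\Im z)}.
\end{align*}
Summing over $|\alpha|\le N$ and using $(1+|z|)^N\le C\sum_{|\alpha|\le N}|z^\alpha|$ gives the estimate for every $N$.

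\emph{Main direction.} Assume $F$ is entire and satisfies the estimates. Restricted to $\R^n$ (where $\Im z=0$), $F$ decays faster than any power, and applying Cauchy estimates on unit polydiscs gives the same decay for all derivatives. Hence $F|_{\R^n}$ is a Schwartz function. Define
\begin{align*}
\phi(x)=(2\pi)^{-n}\int_{\R^n}F(\xi)e^{i\langle x,\xi\rangle}d\xi .
\end{align*}
This is smooth, and $\mathcal{F}(\phi)=F$ on $\R^n$ by Fourier inversion.

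The key step is to show $\supp\phi\subset A$ by shifting the contour. For fixed $\eta\in\R^n$, I apply Cauchy's theorem one variable at a time. The decay $(1+|z|)^{-N}$, which holds uniformly on the strip between $\R^n$ and $\R^n+i\eta$ because $h_A$ is bounded there, makes the vertical contributions vanish. This gives
\begin{align*}
\phi(x)=(2\pi)^{-n}\int_{\R^n}F(\xi+i\eta)e^{i\langle x,\xi+i\eta\rangle}d\xi .
\end{align*}
Taking $N=n+1$ yields $|\phi(x)|\le C e^{h_A(\eta)-\langle x,\eta\rangle}$. Now let $x\notin A$. Since $A$ is compact and convex, the separating hyperplane theorem gives $\eta_0$ with $\langle x,\eta_0\rangle>h_A(\eta_0)$. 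Setting $\eta=t\eta_0$ and using the $1$-homogeneity of $h_A$, the bound becomes $Ce^{-t(\langle x,\eta_0\rangle-h_A(\eta_0))}$, which tends to $0$ as $t\to\infty$. So $\phi(x)=0$, and $\phi\in C^\infty_c(\R^n)$ with support in $A$.

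Finally, $\mathcal{F}(\phi)$ is entire and agrees with $F$ on the totally real subspace $\R^n$, so the identity theorem gives $\mathcal{F}(\phi)=F$ on all of $\C^n$.

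I expect the only delicate point to be justifying the contour shift, namely controlling the boundary terms uniformly on the strip. This is handled by the hypothesis with $N$ large, since $e^{h_A(\Im z)}$ is bounded for $\Im z$ in the compact segment $[0,\eta]$.
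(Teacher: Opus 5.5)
Your proof is correct and is the standard argument behind H\"ormander's Theorem~7.3.1. For the converse you use integration by parts; for the main direction you use Fourier inversion on $\R^n$, a contour shift to $\R^n+i\eta$, and then let $\eta=t\eta_0$ with $t\to\infty$ along a separating direction. The paper states this result only by citation and does not reprove it, so there is no separate argument in the paper to compare against. One small point: shifting one variable at a time keeps $\Im z$ in the coordinate box spanned by $0$ and $\eta$, not on the segment $[0,\eta]$. That box is also compact, so your uniform bound on $e^{h_A(\Im z)}$, and hence the vanishing of the boundary terms, holds exactly as you describe.
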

		The key estimate is contained in the following lemma. 
		\begin{lemma}
			\label{lemma:PWS_SatisfiedBySmoothVLV}
			Let $A\subset \R^n$ be compact and convex with non-empty interior and let $\Psi\in\P_0\LV_k(\R^n)$ be smooth with $\locsupp\Psi\subset A$. For every $N\in\mathbb{N}$ there exists a constant $C_N>0$ such that 
			\begin{align*}
				&|\F(\Psi)[w_1,\dots,w_{k+1}]|\\
				\le& C_N\prod_{j=1}^{k}(1+|w_j|)^3e^{\sum_{j=1}^k h_A(\Im(w_{j}))+h_A(-\Im(w_{j}))}(1+|w_{k+1}|)^{-N}e^{h_A(\Im(w_{k+1}))}
			\end{align*}
		\end{lemma}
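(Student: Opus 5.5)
The idea is to trade powers of $w_{k+1}$ for derivatives along translations. Translations act on $\F(\Psi)$ as multiplication by a character in $w_{k+1}$ only, and differentiating the orbit map then gives polynomial factors in $w_{k+1}$. Combined with the uniform estimate of \autoref{corollary:EstimateF}, this yields the decay.

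\textbf{Step 1: effect of translations.} For $x\in\R^n$, the translated functional $\pi(x)\Psi$ has $\locsupp\pi(x)\Psi\subset A-x$. I will compute its transform exactly as in the proof of \autoref{lemma:modulePropertiesLowestOrderTerms}, where the mollified functional $\Psi_\phi$ was handled:
\begin{align*}
	\mathcal{F}(\widehat{\GW}(\pi(x)\Psi))[w]=\exp\left(i\langle \mathrm{d}(w),x\rangle\right)\mathcal{F}(\widehat{\GW}(\Psi))[w],
\end{align*}
with the sign fixed by the convention in \eqref{eq:DefAction}. In the coordinates of \eqref{eq:DefinitionF} this reads $\F(\pi(x)\Psi)[w]=e^{i\langle w_{k+1},x\rangle}\F(\Psi)[w]$.

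\textbf{Step 2: differentiate.} Since $\Psi$ is smooth, \autoref{proposition:EquivalenceNotionsSmoothness} shows that $x\mapsto\pi(x)\Psi$ is smooth in the compact-to-bounded topology. For a multiindex $\alpha$, set $\Psi^\alpha:=\partial_x^\alpha\pi(x)\Psi|_{x=0}\in\P_0\LV_k(\R^n)$. This functional is $k$-homogeneous, and I claim $\locsupp\Psi^\alpha\subset A$. Indeed, the difference quotients of translates by small vectors are supported in $A+\epsilon B_1(0)$ for every $\epsilon>0$, and the limit inherits this support by the characterization in \autoref{proposition:characterizationSupport}; then let $\epsilon\to0$.

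The map $\F$ restricted to functionals supported in a fixed compact convex set is linear and continuous in the compact-to-bounded topology by \autoref{corollary:EstimateF}. Applying it to a slightly larger set $A'\supset A$, which contains the supports of all translates for small $|x|$, I can commute it with $\partial^\alpha_x$ and get
\begin{align*}
	\F(\Psi^\alpha)[w]=(iw_{k+1})^\alpha\F(\Psi)[w].
\end{align*}
Since the left side only involves $\Psi^\alpha$, whose support lies in $A$, the estimate of \autoref{corollary:EstimateF} with $A$ itself then applies to $\F(\Psi^\alpha)$ and gives
\begin{align*}
	|w_{k+1}^\alpha|\,|\F(\Psi)[w]|\le C(A)\|\Psi^\alpha\|_{A,1}\left(\prod_{j=1}^k(1+|w_j|)\right)^3e^{h_A(\Im w_{k+1})+\sum_{j=1}^k h_A(\Im w_j)+h_A(-\Im w_j)}.
\end{align*}

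\textbf{Step 3: sum.} Summing over all $|\alpha|\le N$ and using $(1+|w_{k+1}|)^N\le C_N\sum_{|\alpha|\le N}|w_{k+1}^\alpha|$ gives the claimed bound. The constant is $C_N=C(A)\,C_N\sum_{|\alpha|\le N}\|\Psi^\alpha\|_{A,1}$, which is finite because each $\Psi^\alpha\in\P_0\LV_k(\R^n)$ and $\|\cdot\|_{A,1}$ is a continuous seminorm by \autoref{proposition:semiNormslocalFunctional}.

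\textbf{Main obstacle.} The delicate step is justifying that the derivative commutes with $\F$ while the support condition is preserved. Translates are only supported in $A-x$, not in $A$, so I will work with $A'=A+B_1(0)$ for the continuity argument and then return to $A$ via the support bound for $\Psi^\alpha$. Alternatively, the commutation identity can be checked pointwise: evaluating at fixed $w$ is a continuous linear functional on local functionals supported in $A'$, and both sides are entire functions.
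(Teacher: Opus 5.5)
Your proposal is correct and follows the paper's proof. Translation multiplies $\F(\Psi)$ by a character in $w_{k+1}$; differentiating the orbit map at $x=0$, using the continuity from \autoref{corollary:EstimateF}, produces the factors $w_{k+1}^\alpha$; summing over $|\alpha|\le N$ then gives the decay. Your enlarged set $A'$ and your check that $\locsupp\Psi^\alpha\subset A$ make explicit a point the paper leaves implicit. The sign slips are harmless because only absolute values enter: under \eqref{eq:DefAction} the character is $e^{-i\langle \mathrm{d}(w),x\rangle}$ and translates are supported in $A+x$.
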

		\begin{proof}
			The proof of this result is nearly identical to \cite[Lemma 5.1]{KnoerrPaleyWienerSchwartz2025}. For completeness we include the argument with the necessary modifications. For $w\in\Mat_{n,k+1}(\C)$ and $x\in\R^n$,
			\begin{align*}
				\mathcal{F}(\widehat{\GW}(\pi(x)\Psi))[w]=&\widehat{\GW}(\Psi)[\exp(-i\langle w_1,\cdot+x\rangle)\otimes\dots\otimes\exp(-i\langle w_{k+1},\cdot+x\rangle)]\\
				=&\exp\left(-i\left\langle\sum_{j=1}^{k+1}w_j,x\right\rangle \right)\mathcal{F}(\widehat{\GW}(\Psi))[w].
			\end{align*}	
			In particular,
			\begin{align*}
				\exp\left(-i\left\langle w_{k+1},x\right\rangle \right)\F(\Psi)[w]=\F(\pi(x)\Psi)[w].
			\end{align*}
			Since the restriction of $\F$ to local functionals with bound on their support is continuous with respect to the compact-to-bounded topology by \autoref{corollary:EstimateF}, the right hand side defines a smooth function in $x$ for every $w\in \Mat_{n,k+1}(\C)$ and $\partial_x^\alpha\F(\pi(x)\Psi)[w]=\F(\partial_x^\alpha\pi(x)\Psi)[w]$ for any multiindex $\alpha\in \mathbb{N}^n$. Thus
			\begin{align*}
				(-i)^{|\alpha|}w_{k+1}^\alpha\exp\left(i\left\langle w_k,x\right\rangle \right)\F(\Psi)[w]=&\F(\partial_{x}^{\alpha}\pi(x)\Psi)[w].
			\end{align*}
			If we evaluate this expression in $x=0$ and sum over the appropriate indices $\alpha$ with $|\alpha|= N$, we obtain 
			\begin{align*}
				|w_{k+1}|^{N}\cdot |\F(\Psi)[w]|\le&|w_{k+1}|_1^{N}\cdot |\F(\Psi)[w]\le  n^{N}\max_{|\alpha|= N}\left|\F(\partial_x^\alpha\pi(x)\Psi|_0)\left[w\right]\right|,
			\end{align*}
			which implies
			\begin{align*}
				(1+|w_{k+1}|)^N|\F(\Psi)[w]|
				\le& 2^{N}n^{N}\max_{|\alpha|\le N}\left|\F(\partial_{x}^{\alpha}\pi(x)\Psi|_0)\left[w\right]\right|.
			\end{align*}
			Using \autoref{corollary:EstimateF}, we obtain a constant $C(A,k)$ depending on $k$ and $A\subset\R^n$ only such that for $N\in\mathbb{N}$,
			\begin{align*}
				&|\F(\Psi)[w]|\le (1+|w_{k+1}|)^{-N}(2n)^{N}\max_{|\alpha|\le N}|\F(\partial_{x}^{\alpha}\pi(x)\Psi|_0)\left[w\right]|\\
				\le&(1+|w_{k+1}|)^{-N} (2n)^{N}C(A,k)\prod_{j=1}^{k}(1+|w_j|)^3  \cdot \max_{|\alpha|\le N}\|\partial_{x}^{\alpha}\pi(x)\Psi|_0\|_{A,1}\\
				&e^{h_A(\Im(w_{k+1}))+\sum_{j=1}^k h_A(\Im(w_j))+h_{A}(-\Im(w_j))},
			\end{align*}
			which shows the desired estimate.
		\end{proof}
		
		The following is a version of \autoref{theorem:PWS_LV} for the functions $\F(\Psi)$, $\Psi\in\P_0\LV_{c,k}(\R^n)$.
		\begin{proposition}
			\label{proposition:PWS_non-trivialDirection}
			Let $F\in\mathcal{O}_{\C^n}\widetilde{\M^2_k}$, $A\subset \R^n$ compact and convex with non-empty interior, and assume that for every $N\in\mathbb{N}$ there exists $\delta_N>0$ and $C_N\in\mathbb{N}$ such that for $w\in\Mat_{n,k+1}(\C)$ with $|w_j|\le \delta_N$ for $1\le j\le n$,
			\begin{align}
				\label{eq:estimatePWS}
				|F(w)|\le C_N(1+|w_{k+1}|)^{-N}e^{h_A(\Im (w_{k+1}))}.
			\end{align}
			Then $F=\F(\Psi)$ for a unique smooth local functional $\Psi\in \P_0\LV_k(\R^n)$ with $\locsupp\Psi\subset A$. More precisely, for any basis $\Psi_j$, $1\le j\le N_{n,k}$, of $\MAVal_k(\R^n)$ there exist functions $\phi_j\in C_c^\infty(\R^n)$ with $\supp \phi_j\subset A$ such that 
			\begin{align*}
				\Psi=\sum_{j=1}^{N_{n,k}}\phi_j\bullet \Psi_j.
			\end{align*} 
			Conversely, for any smooth local functional $\Psi\in \P_0\LV_k(\R^n)$ with $\locsupp\Psi\subset A$, the function $\F(\Psi)$ satisfies estimates of the form \eqref{eq:estimatePWS} for every $N\in\mathbb{N}$.
		\end{proposition}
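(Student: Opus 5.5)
The plan is to decompose $F$ along a basis of $\M^2_k$ using the division theorem, then convert the coefficient functions into smooth compactly supported functions via the classical Paley--Wiener--Schwartz theorem. Fix a basis $\Psi_j$ of $\MAVal_k(\R^n)$. By \autoref{lemma:QBijectiveMA}, the polynomials $Q(\Psi_j)$ form a basis of $\M^2_k$. Since $F\in\O_{\C^n}\widetilde{\M^2_k}$, every homogeneous term of its power series lies in $\widetilde{\M^2_k}$, because the module is homogeneous in the $w_{k+1}$-variables. \autoref{theorem:GroebnerAlgModuleMinor} then gives $g_j\in\O_{\C^n}$ with
\begin{align*}
F(w)=\sum_{j=1}^{N_{n,k}}g_j(w_{k+1})Q(\Psi_j)[w_1,\dots,w_k]
\end{align*}
and, for every $\delta>0$,
\begin{align*}
|g_j(z)|\le C_\delta(1+|z|)^{nN_{n,k}}\sup_{\zeta\in D_\delta(0,\dots,0,z)}|F(\zeta)|.
\end{align*}

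The next step is to bound the $g_j$. For $\zeta\in D_\delta(0,\dots,0,z)$, the first $k$ columns satisfy $|\zeta_j|_1\le 2\delta$, hence $|\zeta_j|\le 2\delta$. For $\zeta_{k+1}$ we have $|\Re\zeta_{k+1}|_1\le|\Re z|_1+\delta$ and $|\Im\zeta_{k+1}|_1\le|\Im z|_1+\delta$. The issue is that $h_A(\Im\zeta_{k+1})$ need not be bounded by $h_A(\Im z)$, since $\Im\zeta_{k+1}$ is not close to $\Im z$ coordinatewise; only the $\ell^1$-norms are controlled.

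This is the step I expect to be the main obstacle. The first thing to try is to check the definition of $D_\delta$ as used in \autoref{theorem:DivisionAlg} (the $D_\delta(0,z)$ there), which I suspect is meant as a $\delta$-polydisc neighbourhood of $(0,z)$. With that reading, taking $\delta\le\delta_N/2$ in \eqref{eq:estimatePWS} gives
\begin{align*}
|g_j(z)|\le C'_N(1+|z|)^{nN_{n,k}-N}e^{h_A(\Im z)+c\delta}.
\end{align*}
If the $\ell^1$-box reading is really the intended one, I would instead apply the estimate after translating $A$ so that $0\in\operatorname{int}A$ and exploit $h_A(\Im\zeta)\le h_A(\Im z)+C\delta$ in the polydisc version. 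The error $e^{c\delta}$ can be removed in either of two ways: the Paley--Wiener--Schwartz conclusion for $A+\delta' B_1(0)$ holds for all $\delta'>0$, so the support lies in $\bigcap_{\delta'}(A+\delta'B_1(0))=A$; alternatively, absorb the constant. Since $N$ is arbitrary, \autoref{theorem:PaleyWienerSchwartz_distributions} yields $\phi_j\in C^\infty_c(\R^n)$ with $\supp\phi_j\subset A$ and $\mathcal{F}(\phi_j)=g_j$.

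Now set $\Psi:=\sum_j\phi_j\bullet\Psi_j$. By \autoref{corollary:smoothIsAffineSmoothPreliminary} this functional is smooth, and by \autoref{corollary:supportModuleProduct} it satisfies $\locsupp\Psi\subset A$. \autoref{lemma:FourierMA} gives $\F(\Psi)=\sum_j\mathcal{F}(\phi_j)(w_{k+1})Q(\Psi_j)=F$. For uniqueness, $\F(\Psi)$ determines the Fourier--Laplace transform of $\widehat{\GW}(\Psi)$ through \eqref{eq:relationF_FourierGW}. This determines $\widehat{\GW}(\Psi)$, hence the measures $\GW(\Psi)[\phi_1\otimes\dots\otimes\phi_k]$, hence $\bar\Psi$ on smooth convex functions and by continuity $\Psi$ itself.

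For the converse, let $\Psi$ be smooth with $\locsupp\Psi\subset A$. \autoref{lemma:PWS_SatisfiedBySmoothVLV} gives, for $|w_j|\le\delta_N:=1$ with $j\le k$, that the prefactor $\prod_{j=1}^k(1+|w_j|)^3e^{h_A(\Im w_j)+h_A(-\Im w_j)}$ is bounded by a constant depending on $A$. This yields \eqref{eq:estimatePWS} directly.
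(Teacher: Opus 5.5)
Your proposal follows the paper's proof. You decompose $F$ along the basis $Q(\Psi_j)$ of $\M^2_k$ and apply \autoref{theorem:PaleyWienerSchwartz_distributions} to the coefficients. You then set $\Psi=\sum_j\phi_j\bullet\Psi_j$ and use \autoref{lemma:FourierMA}, get uniqueness from injectivity of $\F$, and get the converse from \autoref{lemma:PWS_SatisfiedBySmoothVLV}. For the converse, restricting to $|w_j|\le 1$ suffices; the paper additionally cites \autoref{corollary:EstimateFourierDiagonalCoordinates}. You are also right that $\delta$ must be tied to $\delta_N$, a point the paper passes over by using $D_1$.

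The step you leave open is the coefficient bound, and your fallback for the literal reading of $D_\delta$ would not work. As defined, $D_\delta(0,\dots,0,z)$ is a box centred at the origin, not at $z$. It contains points with $\zeta_{k+1}=0$, so all decay in $z$ is lost. Moreover, $\Im\zeta_{k+1}$ fills an $\ell^1$-ball, so the exponent becomes $(|\Im z|_1+\delta)\max_i\max(h_A(e_i),h_A(-e_i))$. This exceeds $h_A(\Im z)$ by an unbounded amount unless $A$ is a centred cube, and translating $A$ does not help in general. The polydisc reading is clearly the intended one, and with it your argument goes through.

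You do not need the division theorem here at all, because the generators involve only the first $k$ columns. The hypothesis $F\in\O_{\C^n}\widetilde{\M^2_k}$ already gives $F(w)=\sum_j g_j(w_{k+1})Q(\Psi_j)[w_1,\dots,w_k]$ with $g_j$ entire. For fixed $z$, the polynomial $F(\cdot,z)$ is the element $\sum_j g_j(z)Q(\Psi_j)$ of the finite-dimensional space $\M^2_k$. Pick $p_1,\dots,p_{N_{n,k}}\in\Mat_{n,k}(\C)$, all columns of norm at most $\delta$, such that $(Q(\Psi_j)[p_l])_{l,j}$ is invertible. This is possible because a nontrivial combination of the $Q(\Psi_j)$ cannot vanish on an open set. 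Inverting this matrix gives
\[
|g_j(z)|\le C_\delta\max_{l}|F(p_l,z)|,
\]
where the last column is $z$ itself. With $\delta=\delta_N$, the hypothesis yields $|g_j(z)|\le C_N'(1+|z|)^{-N}e^{h_A(\Im z)}$ for every $N$. Then \autoref{theorem:PaleyWienerSchwartz_distributions} gives $\phi_j\in C^\infty_c(\R^n)$ with $\supp\phi_j\subset A$ directly. There is no polynomial loss $(1+|z|)^{nN_{n,k}}$ and no factor $e^{c\delta}$ to remove.
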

		\begin{proof}
			If $F$ is a function with the given properties, then we may apply \autoref{theorem:GroebnerAlgModuleMinor} to obtain $g_j\in\mathcal{O}_{\C^n}$ such that
			\begin{align*}
				F(w)=\sum_{j=1}^{N_{n,k}}g_j(w_{k+1})Q(\Psi_j)[w_1,\dots,w_k],
			\end{align*}
			where
			\begin{align*}
				|g_j(z)|\le C\left(1+|z|\right)^{nN_{n,k}}\sup_{\zeta\in D_1(0,\dots,0,z)}|F(\zeta)|.
			\end{align*}
			The estimates for $F$ thus imply that $g_j$ satisfies the conditions of the Paley--Wiener--Schwartz \autoref{theorem:PaleyWienerSchwartz_distributions}, so $g_j=\mathcal{F}(\phi_j)$ for some $\phi_j\in C^\infty_c(\R^n)$ with $\supp\phi_j\subset A$. The smooth local functional $\Psi:=\sum_{j=1}^{N_{n,k}}\phi_j\bullet \Psi_j$ is thus supported on $A$, and from \autoref{lemma:FourierMA} we obtain
			\begin{align*}
				\F(\Psi)[w]=\sum_{j=1}^{N_{n,k}}g_j(w_{k+1})Q(\Psi_j)[w_1,\dots,w_k]=F(w).
			\end{align*}
			The uniqueness of $\Psi$ follows from the fact that $\F$ is injective.\\
			
			Conversely, if $\Psi\in \P_0\LV_k(\R^n)$ is a smooth local functional with $\locsupp\Psi\subset A$, then we may combine the estimate in \autoref{lemma:PWS_SatisfiedBySmoothVLV} with the estimate in \autoref{corollary:EstimateFourierDiagonalCoordinates} to see that $\F(\Psi)$ satisfies inequalities of the form \eqref{eq:estimatePWS} for every $N\in\mathbb{N}$. 
		\end{proof}

		We summarize the previous results in the following proposition. 
		\begin{proposition}
			\label{proposition:EquivalenceSmoothCompactSupport}
			Let $A\subset \R^n$ be compact and convex with non-empty interior. The following are equivalent for $\Psi\in  \P_0\LV_{c,k}(\R^n)$:
			\begin{enumerate}
				\item $\Psi$ is smooth and $\locsupp\Psi\subset A$.
				\item For every $N\in\mathbb{N}$ there exists a constant $C_N>0$ such that
				\begin{align*}
					|\mathcal{F}(\widehat{\GW}(\Psi))[w]|\le C_N \prod_{j=1}^k|w_j|^2 \cdot (1+|\mathrm{d}(w)|)^{-N}\exp\left(h_A\left(\Im \mathrm{d}(w)\right)\right).
				\end{align*} 
				\item For every basis $\Psi_j$, $1\le j\le N_{n,k}$, of $\MAVal_k(\R^n)$ there exist functions $\phi_j\in C^\infty_c(\R^n)$ with $\supp\phi_j\subset A$ such that 
				\begin{align*}
					\Psi=\sum_{j=1}^{N_{n,k}}\phi_j\bullet\Psi_j.
				\end{align*}
			\end{enumerate}
		\end{proposition}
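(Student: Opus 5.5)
The plan is to prove the cycle (1)$\Rightarrow$(2)$\Rightarrow$(3)$\Rightarrow$(1), working throughout with the function $\F(\Psi)$ of Eq.~\eqref{eq:DefinitionF}. Via Eq.~\eqref{eq:relationF_FourierGW}, $\F(\Psi)$ and $\mathcal{F}(\widehat{\GW}(\Psi))$ differ only by a constant and the linear change of variables $w_{k+1}\leftrightarrow \mathrm{d}(w)$. Hence condition (2) is equivalent to the bound
\begin{align*}
|\F(\Psi)[w]|\le C_N\prod_{j=1}^k|w_j|^2(1+|w_{k+1}|)^{-N}e^{h_A(\Im w_{k+1})}.
\end{align*}

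For (1)$\Rightarrow$(2), let $\Psi$ be smooth with $\locsupp\Psi\subset A$. \autoref{lemma:PWS_SatisfiedBySmoothVLV} gives the decay $(1+|w_{k+1}|)^{-N}e^{h_A(\Im w_{k+1})}$, but only with the extra factors $\prod_j(1+|w_j|)^3e^{h_A(\Im w_j)+h_A(-\Im w_j)}$. These factors are bounded once $w$ ranges over a set $D_1(0,\dots,0,w_{k+1})$. To remove them, I will insert this bound into \autoref{corollary:GeneralEstimateFourierInTermsOfF} (equivalently \autoref{corollary:EstimateFourierDiagonalCoordinates}), which requires $\F(\Psi)\in\O_{\C^n}\widetilde{\M^2_k}$; this was established via \autoref{lemma:FbelongsToModule} and \autoref{theorem:GroebnerAlgModuleMinor}. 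The result is
\begin{align*}
|\F(\Psi)[w]|\le C\prod_{j=1}^k|w_j|^2(1+|w_{k+1}|)^{nN_{n,k}}\sup_{\zeta\in D_1(0,\dots,0,w_{k+1})}|\F(\Psi)[\zeta]|.
\end{align*}
On $D_1(0,\dots,0,w_{k+1})$ the first $k$ columns are bounded and $\Im\zeta_{k+1}$ is within distance $1$ in $\ell^1$ of $\Im w_{k+1}$. Therefore $h_A(\Im\zeta_{k+1})\le h_A(\Im w_{k+1})+c_A$ and $(1+|\zeta_{k+1}|)^{-N}\lesssim(1+|w_{k+1}|)^{-N}$. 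Replacing $N$ by $N+nN_{n,k}$ absorbs the polynomial loss, which gives (2) after changing variables back.

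For (2)$\Rightarrow$(3), condition (2) yields estimate \eqref{eq:estimatePWS} for $\F(\Psi)$ on the region $|w_j|\le\delta_N$, $j\le k$, because there $\prod_j|w_j|^2$ is bounded. Since $\Psi\in\P_0\LV_{c,k}(\R^n)$, we already know $\F(\Psi)\in\O_{\C^n}\widetilde{\M^2_k}$. \autoref{proposition:PWS_non-trivialDirection} then produces $\phi_j\in C^\infty_c(\R^n)$ with $\supp\phi_j\subset A$ such that $\F(\Psi)=\F\bigl(\sum_j\phi_j\bullet\Psi_j\bigr)$.

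The main obstacle is to conclude that $\Psi$ itself equals $\sum_j\phi_j\bullet\Psi_j$, that is, that $\F$ is injective on $\P_0\LV_{c,k}(\R^n)$. I will argue as follows. $\F(\Psi)$ determines $\mathcal{F}(\widehat{\GW}(\Psi))$, so it determines the compactly supported distribution $\widehat{\GW}(\Psi)$. That distribution determines $\GW(\Psi[\phi])$ for every $\phi$, hence the polarization of $\Psi[\phi]$ on smooth convex functions, hence $\Psi[\phi]$ on smooth convex functions. By continuity and density of smooth convex functions, this determines $\Psi[\phi]$ everywhere. This proves (3), for any choice of basis.

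Finally, for (3)$\Rightarrow$(1), each $\phi_j\bullet\Psi_j$ is smooth by \autoref{corollary:smoothnessModuleStructure}, since $\Psi_j$ is translation invariant and therefore smooth. Moreover, \autoref{corollary:supportModuleProduct} gives $\locsupp(\phi_j\bullet\Psi_j)\subset\supp\phi_j\subset A$. Since local supports of sums are contained in the union of the local supports, (1) follows.
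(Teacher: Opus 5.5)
Your proof is correct and follows the paper's route: the same cycle (3)$\Rightarrow$(1)$\Rightarrow$(2)$\Rightarrow$(3). As in the paper, (1)$\Rightarrow$(2) comes from feeding the estimate of \autoref{lemma:PWS_SatisfiedBySmoothVLV} into \autoref{corollary:GeneralEstimateFourierInTermsOfF}, and (2)$\Rightarrow$(3) comes from \autoref{proposition:PWS_non-trivialDirection}. Your explicit argument that $\F$ is injective on all of $\P_0\LV_{c,k}(\R^n)$ supplies a step the paper only asserts, and it is needed because $\Psi$ is not yet known to be smooth. Your treatment of $D_1(0,\dots,0,w_{k+1})$ as a neighborhood of $(0,\dots,0,w_{k+1})$ is also what the paper's argument requires. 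In fact you may keep $\zeta_{k+1}=w_{k+1}$ fixed, since each $g_j(z)$ is a fixed linear combination of the values $\F(\Psi)[u^{(i)},z]$ at finitely many small points $u^{(i)}\in\Mat_{n,k}(\C)$.
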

		\begin{proof}
			The implication (3) $\Rightarrow$ (1) follows from \autoref{corollary:smoothnessModuleStructure} (or \cite{KnoerrPolynomiallocalfunctionals2025}*{Theorem~4.21}) and the characterization of $\locsupp\Psi$. The implication (1) $\Rightarrow$ (2) follows by combining the estimates in \autoref{corollary:GeneralEstimateFourierInTermsOfF} and \autoref{proposition:PWS_non-trivialDirection}. Finally, if $\Psi$ satisfies the estimate in (2), then it is easy to see that $\F(\Psi)$ satisfies the estimates in \autoref{proposition:PWS_non-trivialDirection} for any $N\in\mathbb{N}$. Thus \autoref{proposition:PWS_non-trivialDirection} shows that (2) implies (3).
		\end{proof}
		
		\begin{proof}[Proof of \autoref{theorem:PWS_LV}]
			The first claim follows directly from \autoref{proposition:EquivalenceSmoothCompactSupport}. If $F\in\widehat{\M^2_k}$ satisfies the estimates in Eq.~\eqref{eq:PWSconditionValuations}, then it is easy to see that the function
			\begin{align*}
				\tilde{F}(w):=\frac{k!}{(-1)^k}F\left(w_1,\dots,w_k,w_{k+1}-\sum_{j=1}^kw_j\right)
			\end{align*}
			satisfies the conditions in \autoref{proposition:PWS_non-trivialDirection}, so there exists a unique $\Psi\in \P_0\LV_{c,k}^\infty(\R^n)$ with $\locsupp \Psi\subset A$ such that $\F(\Psi)=\tilde{F}$. Unraveling the definitions, this implies $F=\mathcal{F}(\widehat{\GW}(\Psi))$, which completes the proof.
		\end{proof}

\section{Integral representation of polynomial local functionals}
	\label{section:IntegralRepresentations}
	\subsection{Smooth polynomial local functionals}
		In this section, we establish the classification of smooth polynomial local functionals in \autoref{maintheorem:IntegralRepSmooth}. We start with the case $d=0$. Recall that $\MAVal_k(\R^n)=\P_0\LV_k(\R ^n)^{tr}$ and that $N_{n,k}=\dim\P_0\LV_k(\R ^n)^{tr}$.
		\begin{theorem}
			\label{theorem:RepresentationSmoothFunctionalDuallyEpiInvariantCase}
			The following are equivalent for $\Psi\in \P_0\LV_k(\R^n)$:
			\begin{enumerate}
				\item $\Psi$ is smooth.
				\item For every basis $\Psi_j$, $1\le j\le N_{n,k}$, of $\MAVal_k(\R^n)$ there exist functions $\phi_j\in C^\infty(\R^n)$ such that 
				\begin{align*}
					\Psi=\sum_{j=1}^{N_{n,k}}\phi_j\bullet \Psi_j.
				\end{align*}
			\end{enumerate}
		\end{theorem}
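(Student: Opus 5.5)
The implication (2) $\Rightarrow$ (1) is immediate. Each $\Psi_j\in\MAVal_k(\R^n)$ is translation invariant, hence trivially smooth. By \autoref{corollary:smoothnessModuleStructure}, $\P_0\LV^\infty(\R^n)$ is a $C^\infty(\R^n)$-module, so every $\phi_j\bullet\Psi_j$ is smooth, and so is the finite sum. (Alternatively, \autoref{corollary:smoothIsAffineSmoothPreliminary} gives this directly.)

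For (1) $\Rightarrow$ (2), I will localize with a partition of unity and then apply the compactly supported case. Fix a basis $\Psi_j$ of $\MAVal_k(\R^n)$. Choose a locally finite smooth partition of unity $(\rho_\alpha)_\alpha$ on $\R^n$ in which each $\supp\rho_\alpha$ is contained in a compact convex set $A_\alpha$ with non-empty interior (e.g.\ cubes), and in which every point has a neighborhood meeting only finitely many $A_\alpha$. Then $\rho_\alpha\bullet\Psi$ is smooth by \autoref{corollary:smoothnessModuleStructure}, and $\locsupp(\rho_\alpha\bullet\Psi)\subset A_\alpha$ by \autoref{corollary:supportModuleProduct}. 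Hence $\rho_\alpha\bullet\Psi\in\P_0\LV_{c,k}(\R^n)$ is smooth with local support in $A_\alpha$, and \autoref{proposition:EquivalenceSmoothCompactSupport}, (1) $\Rightarrow$ (3), yields $\phi_{j,\alpha}\in C^\infty_c(\R^n)$ with $\supp\phi_{j,\alpha}\subset A_\alpha$ and $\rho_\alpha\bullet\Psi=\sum_j\phi_{j,\alpha}\bullet\Psi_j$.

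Set $\phi_j:=\sum_\alpha\phi_{j,\alpha}$. Local finiteness of the family $(A_\alpha)$ makes this a locally finite sum, so $\phi_j\in C^\infty(\R^n)$. To conclude, I test against $\psi\in C_c(\R^n)$. Only finitely many $\alpha$ have $A_\alpha\cap\supp\psi\neq\emptyset$, and using $\sum_\alpha\rho_\alpha=1$ together with the definition of $\bullet$,
\begin{align*}
\Psi(f;\psi)=\sum_\alpha\Psi(f;\rho_\alpha\psi)=\sum_\alpha\sum_j\Psi_j(f;\phi_{j,\alpha}\psi)=\sum_j\Psi_j(f;\phi_j\psi).
\end{align*}
For indices $\alpha$ whose $A_\alpha$ misses $\supp\psi$, both $\rho_\alpha\psi=0$ and $\phi_{j,\alpha}\psi=0$, so these terms vanish on both sides. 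Thus $\Psi=\sum_j\phi_j\bullet\Psi_j$.

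The substantive content, namely the Paley--Wiener--Schwartz-type characterization, is already contained in \autoref{proposition:EquivalenceSmoothCompactSupport}; here the only point needing care is the locality bookkeeping that makes $\phi_j$ well defined and smooth. This step should be unproblematic. The uniqueness of the $\phi_j$ is not asserted in this theorem, so I would not address it here.
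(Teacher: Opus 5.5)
Your proposal is correct and follows the paper's proof: a locally finite smooth partition of unity subordinate to compact convex sets, \autoref{proposition:EquivalenceSmoothCompactSupport} applied to each localized piece, and the locally finite sum $\phi_j=\sum_\alpha\phi_{j,\alpha}$. The only difference is cosmetic. You verify $\Psi=\sum_j\phi_j\bullet\Psi_j$ by pairing with $\psi\in C_c(\R^n)$, whereas the paper appeals to convergence of $\sum_m\psi_m\bullet\Psi$ in the compact-to-bounded topology.
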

		\begin{proof}
			The implication (2) $\Rightarrow$ (1) is a consequence of \autoref{corollary:smoothnessModuleStructure}. Let us show the implication (1) $\Rightarrow$ (2). It is sufficient to consider the case where $\Psi\in \P_0\LV_k(\R^n)$ is a smooth local functional of degree $0\le k\le n$. Choose a locally finite cover of $\R^n$ by a countable family $(B_m)_m$ of compact convex sets with non-empty interior and fix a smooth partition of unity $(\psi_m)_m$ subordinate to this cover. Then
			\begin{align*}
				\Psi=\sum_{m=1}^\infty \psi_m\bullet \Psi,
			\end{align*}
			where the sum converges in the compact-to-bounded topology since the supports of these functionals are locally finite. Note that $\Phi_m:=\psi_m\bullet \Psi\in\P_0\LV_{c,k}(\R^n)$ is smooth by \autoref{corollary:smoothnessModuleStructure}. Since $\locsupp\Phi_m\subset B_m$ by \autoref{corollary:supportModuleProduct}, we may apply \autoref{proposition:EquivalenceSmoothCompactSupport} to find smooth functions $\phi^m_j\in C^\infty_c(\R^n)$ with $\supp\phi^m_j\subset B_m$ such that
			\begin{align*}
				\Phi_m=\sum_{j=1}^{N_{n,k}}\phi^m_j\bullet \Psi_j.
			\end{align*}
			Then $\phi_j:=\sum_{m=1}^\infty \phi^m_j\in C^\infty(\R^n)$ is well defined since the sum is locally finite, and we obtain
			\begin{align*}
				\Psi=\sum_{m=1}^\infty\Phi_m=\sum_{j=1}^{N_{n,k}}\phi_j\bullet \Psi_j.
			\end{align*}
			The claim follows.
		\end{proof}
		 Recall that a representation of a Lie group $G$ on a locally convex vector space $F$ is called continuous if the map
		\begin{align*}
			G\times F&\rightarrow F\\
			(g,v)&\mapsto g\cdot v
		\end{align*}
		is jointly continuous. We denote by $F^{sm}\subset F$ the subspace of smooth vectors i.e. the subspace of all $v\in F$ such that the map $G\rightarrow F$, $g\mapsto g\cdot v$ is smooth. Note that we do not require $F$ to be complete, so this subspace can be trivial.
		The following well known result is stated in \cite{Aleskermultiplicativestructurecontinuous2004}*{Lemma~1.5} for continuous representations of a Lie group $G$ on a Fr\'echet space. The proof holds verbatim in the more general situation below.
		\begin{lemma}\label{lemma:smoothVectorsTensorProduct}
			Let $G$ be a Lie group and $F$ a continuous representation of $G$ on a locally convex vector space. If $S$ is a finite dimensional continuous representation of $G$, then $(F\otimes S)^{sm}=F^{sm}\otimes S^{sm}$.
		\end{lemma}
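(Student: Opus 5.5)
We prove $(F\otimes S)^{sm}=F^{sm}\otimes S^{sm}$ by fixing a basis of $S$ and reducing everything to coordinates. Since $S$ is finite dimensional and carries a continuous representation of the Lie group $G$, every vector of $S$ is smooth. Indeed, $g\mapsto \rho(g)$ is a continuous homomorphism into $\GL(S)$, hence a Lie group homomorphism, hence smooth. So $S^{sm}=S$, and we must show $(F\otimes S)^{sm}=F^{sm}\otimes S$.

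Fix a basis $s_1,\dots,s_m$ of $S$ with dual basis $s_1^*,\dots,s_m^*$. Write the matrix coefficients as $\rho(g)s_j=\sum_i a_{ij}(g)s_i$, where the $a_{ij}$ are smooth functions on $G$. Every element of $F\otimes S$ can be written uniquely as $\sum_j v_j\otimes s_j$, so $F\otimes S\cong F^m$ as topological vector spaces, with the product topology. A map into a finite product of locally convex spaces is smooth if and only if each component is smooth; this uses only the definition of derivatives as limits of difference quotients, so no completeness is needed.

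For the inclusion $\supseteq$, take $v\in F^{sm}$. Then
\begin{align*}
g\cdot(v\otimes s_j)=\sum_i a_{ij}(g)\,(g\cdot v)\otimes s_i .
\end{align*}
Its $i$-th component is the product of a smooth scalar function with a smooth $F$-valued map. The Leibniz rule for such products holds in any locally convex space, because scalar multiplication $\C\times F\rightarrow F$ is continuous and bilinear. So each component is smooth, and hence $v\otimes s_j$ is a smooth vector.

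For the inclusion $\subseteq$, let $u=\sum_j v_j\otimes s_j$ be smooth. Consider $\tilde u(g):=(g^{-1}\otimes \mathrm{id})\,(\mathrm{id}\otimes g)^{-1}$ applied to the orbit; more concretely,
\begin{align*}
\sum_j (g\cdot v_j)\otimes s_j=(\mathrm{id}_F\otimes\rho(g)^{-1})\,(g\cdot u).
\end{align*}
The $j$-th component of the right hand side is $\sum_i b_{ji}(g)\,(g\cdot u)_i$, where $(b_{ji}(g))$ is the inverse matrix of $(a_{ij}(g))$. The $b_{ji}$ are smooth because inversion in $\GL_m$ is smooth. Each $(g\cdot u)_i$ is smooth in $g$ since $u$ is a smooth vector. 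Therefore $g\mapsto g\cdot v_j$ is smooth, i.e. $v_j\in F^{sm}$.

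The only point requiring care is that $F$ is not assumed complete. This affects neither the product rule nor the coordinatewise characterization of smoothness, since both use only continuity of the linear operations and the definition of the derivative as a limit. This is why the argument of \cite{Aleskermultiplicativestructurecontinuous2004}*{Lemma~1.5} applies verbatim.
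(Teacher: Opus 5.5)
Your proof is correct and is essentially the paper's approach. The paper gives no argument beyond stating that the proof of Alesker's Lemma~1.5 (stated there for Fr\'echet $F$) carries over verbatim. Your coordinate argument supplies the details: the matrix coefficients of $S$ are smooth, the Leibniz rule handles a scalar function times an $F$-valued map, and $g\mapsto g\cdot v_j$ is recovered as the components of $(\mathrm{id}_F\otimes\rho(g)^{-1})(g\cdot u)$. Your observation that only continuity of the vector space operations is used, never completeness, is exactly the justification for the paper's claim.

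The only blemish is the garbled first formula for $\tilde u(g)$: the factor $g^{-1}\otimes\mathrm{id}$ would undo the action on $F$, which is not what you want. Delete it, since the displayed identity you actually use is the correct one.
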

		
		\begin{proof}[Proof of \autoref{maintheorem:IntegralRepSmooth}]
			Any valuation of the form 
			\begin{align}
				\label{eq:polynomialRepSmoothCase}
				\Psi=\sum_{j=1}^{N(n,d)} \phi_j\bullet \Psi_j
			\end{align} for $\phi_j\in C^\infty(\R^n)$ and a basis $\Psi_j$ of $\P_d\LV(\R^n)^{tr}$ is smooth by \autoref{corollary:smoothnessModuleStructure}. For the converse implication, note that it is sufficient to show that any smooth polynomial local functional of degree at most $d$ admits such a representation with respect to some basis, which we establish by induction on $d$. The base case $d=0$ is covered by \autoref{theorem:RepresentationSmoothFunctionalDuallyEpiInvariantCase}, so let $d\ge 1$ and assume that the claim holds for all smooth polynomial local functionals of degree at most $d-1$. For $\Psi\in\P_d\LV(\R^n)$, let $\Psi_j\in \P_{d-j}\VConv(\R^n,\M(\R^n)\otimes \Sym^{j}(\A(n,\R)^*)_\C)$ be the unique functionals with 
			\begin{align*}
				\Psi(f+\ell)=\sum_{j=0}^d \Psi_j(f)[\ell]
			\end{align*}
			for $f\in\Conv(\R^n,\R)$, $\ell\in \A(n,\R)$, compare \autoref{proposition:translativeDecompCompatibleHomDecomp}.	Using the inverse of the Vandermonde matrix, we find constants $c_{ij}\in\R$ independent of $\Psi$ such that
			\begin{align}
				\label{eq:formulaPsi_i}
				\Psi_i(f)[\ell]=\sum_{j=0}^{d} c_{ij}\Psi(f+j\ell)
			\end{align}
			for all $f\in\Conv(\R^n,\R)$ and $\ell\in \A(n,\R)$. In particular $f\mapsto \Psi_i(f)[\ell]$ is locally determined, so the map 
			\begin{align}
				\label{eq:MapHighestOrderTerm}
				\begin{split}
				\P_d\LV(\R^n)&\rightarrow \P_0\LV(\R^n)\otimes \Sym^d(\A(n,\R)^*)_\C\\
				\Psi&\mapsto \Psi_d
				\end{split}
			\end{align}
			is well defined, and from Eq.~\eqref{eq:formulaPsi_i} we obtain that this map is continuous with respect to the compact-to-compact topology. Moreover, for $x\in\R^n$, we have
			\begin{align*}
					[\pi(x)\Psi](f+\ell;\phi)=&\Psi(f(\cdot+x)+\ell+\ell(x);\phi(\cdot+x))\\
					=&\sum_{j=0}^d \Psi_j(f(\cdot+x);\phi(\cdot+x))[\ell+\ell(x)],
			\end{align*}
			so comparing degrees, we obtain
			\begin{align*}
				[\pi(x)\Psi]_d(f;\phi)[\ell]=\Psi_d(f(\cdot+x);\phi(\cdot+x))[\ell+\ell(x)].
			\end{align*}
			If we let $\R^n$ act on $\Sym^d(\A(n,\R)^*)$ using the representation induced from the natural operation of $\R^n$ on $\A(n,\R)$ given by $\ell\mapsto \ell(\cdot+x)=\ell+\ell(x)$, this implies that the map in Eq.~\eqref{eq:MapHighestOrderTerm} is equivariant with respect to translations. Since $\Sym^d(\A(n,\R))_\C$ is a finite dimensional representation, it now follows from \autoref{lemma:smoothVectorsTensorProduct} that any smooth $\Psi\in \P_d\LV(\R^n)$ is mapped to an element of $\P_0\LV(\R^n)\otimes \Sym^d(\A(n,\R)^*)_\C$ whose components with respect to some basis of $\Sym^d(\A(n,\R)^*)_\C$ (considered as elements in $\P_0\LV(\R^n)$) are smooth (here we use that $\P_d\LV(\R^n)$ is a continuous representation of $\R^n$ with respect to the compact-to-compact topology by \autoref{proposition:AffActionSeparatelyContinuous}). From \autoref{theorem:RepresentationSmoothFunctionalDuallyEpiInvariantCase}, we thus obtain smooth functions $\phi_j\in C^\infty(\R^n)\otimes\Sym^d(\A(n,\R)^*)_\C$ such that for all $f\in\Conv(\R^n,\R)$, $\ell\in\A(n,\R)$,
			\begin{align*}
				\Psi_d(f)[\ell]=\sum_{j=1}^{N(n,0)} \left[\phi_j(\cdot)[\ell]\bullet\Psi_j\right](f),
			\end{align*}
			where $\Psi_j$, $1\le j\le N(n,0)$, denotes a basis of $\MAVal(\R^n)$. Using \autoref{theorem:ClassificationPolyTranslationInv}, we obtain $\tilde{\Psi}\in\P_d\LV(\R^n)$ such that for $f\in C^2(\R^n)\cap \Conv(\R^n,\R)$, $B\subset\R^n$ bounded Borel set,
			\begin{align*}
				\tilde{\Psi}(f;B)=\sum_{j=1}^{N(n,0)}\int_{B} \phi_j(x)[df(x),f(x)]d\Psi_j(f;x).
			\end{align*}
			Then $\tilde{\Psi}$ is given by an expression of the form in Eq.~\eqref{eq:polynomialRepSmoothCase} and is thus smooth by \autoref{corollary:smoothnessModuleStructure}. Moreover, it is easy to see that $\Psi-\tilde{\Psi}\in \P_{d-1}\LV(\R^n)$. Since $\Psi-\tilde{\Psi}$ is smooth, we may apply the induction assumption to express this difference as a sum of local functionals of the form given in Eq.~\eqref{eq:polynomialRepSmoothCase}. Thus $\Psi$ also admits a representation of the form in Eq.~\eqref{eq:polynomialRepSmoothCase}, which completes the proof.	
		\end{proof}
			
	\subsection{Continuous polynomial local functionals}
		In order to obtain \autoref{maintheorem:IntegralRepContinuousCase} from the description of smooth local functionals in \autoref{maintheorem:IntegralRepSmooth}, we will realize this decomposition using certain evaluation maps in convex polynomials. Let $\mathcal{C}\subset\Conv(\R^n,\R)$ denote the space of convex polynomials of degree at most $2$. Any such function is given by
		\begin{align*}
			q(x)=\langle x,Ax\rangle+l(x)+c
		\end{align*}
		for a positive semi-definite matrix $A\in\Sym^2(\R^n)$, $l\in (\R^n)^*$ and $c\in\R$. In particular, \autoref{theorem:ClassificationPolyTranslationInv} implies that for every $\Psi\in \P_d\LV(\R^n)^{tr}$, we have
		\begin{align*}
			\Psi(q;B)=\int_{B} P_\Psi(\langle x,Ax\rangle+l(x)+c,\langle Ax,\cdot\rangle+l,A)dx,
		\end{align*}
		for a polynomial $P_\Psi\in \Poly_d(\A(n,\R))\otimes\mathrm{M}_n$. In other words, the measure $\Psi(q)$ is absolutely continuous with respect to the Lebesgue measure with a polynomial density. We will show that suitable combinations of these evaluation maps can be used to obtain sufficiently many functionals $E:\P_d\LV(\R^n)\rightarrow \M(\R^n)$ such that
		\begin{enumerate}
			\item the functional $E$ maps $\P_d\LV(\R^n)^{tr}$ to scalar multiples of the Lebesgue measure,
			\item the functionals separate elements in $\P_d\LV(\R^n)^{tr}$.
		\end{enumerate}
		In order to makes this precise, let $W_{d,l}\subset \P_d\LV(\R^n)^{tr}$ be the subspace of all $\Psi$ such that
		\begin{align*}
			c\mapsto \Psi(f+c)
		\end{align*}
		is a polynomial of degree at most $0\le l\le d$. Consider the subspace $V_{d,l}\subset W_{d,l}^*$ of all linear maps $T:W_{d,l}\rightarrow \C$ such that there exist $N\in\mathbb{N}$, $q_j\in \mathcal{C}$, and $c_j\in \C$ for $1\le j\le N$ such that
		\begin{align}
			\label{eq:defVd}
			\sum_{j=1}^N c_j\Psi(q_j)= T(\Psi)\vol.
		\end{align}
		Note that we have a natural restriction map $V_{d,l}\mapsto V_{d,l-1}$ for $1\le l\le d$ and $V_{d,0}\mapsto V_{d-1,0}$ for $d\ge 1$. However, it is a priori not clear whether these maps are surjective, since the linear combination of evaluation maps corresponding to an element in $V_{d,l-1}$ may map elements of the larger space $W_{d,l}$ to a measure with a polynomial density. The following two lemmas show how this problem can be avoided. We start with the case $l=0$.
		\begin{lemma}\label{lemma:EvaluationSpanWd0}
			\begin{enumerate}
				\item For every $d\ge 0$ there exist $T_1,\dots,T_{M(d)}\in V_{d,0}$ with $T_j|_{W_{d-1,0}}=0$, and $\Psi_1,\dots,\Psi_{M(d)}\in W_{d,0}$, where $M(d)=\dim W_{d,0}-\dim W_{d-1,0}$, such that
				\begin{align*}
					\Psi-\sum_{j=1}^{M(d)}T_j(\Psi)\Psi_j\in W_{d-1,0}.
				\end{align*} 
				\item The natural restriction map $V_{d,0}\rightarrow V_{d-1,0}$ is surjective.
				\item $V_{d,0}=W_{d,0}^*$.
			\end{enumerate}
		\end{lemma}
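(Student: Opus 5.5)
The plan is to work entirely with the polynomials $P_\Psi$ from \autoref{theorem:ClassificationPolyTranslationInv}. Elements of $W_{d,0}$ are exactly those $\Psi\in\P_d\LV(\R^n)^{tr}$ with $P_\Psi\in\Poly_d((\R^n)^*)\otimes\mathrm{M}_n$, i.e.\ $P_\Psi$ does not depend on $f(x)$. Write $P_\Psi=P_\Psi^{<d}+P_\Psi^{top}$, where $P_\Psi^{top}$ is the part homogeneous of degree $d$ in the gradient variable. By the converse part of \autoref{theorem:ClassificationPolyTranslationInv}, $P_\Psi^{<d}$ defines an element $\Psi^{<d}\in W_{d-1,0}$. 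For $q(x)=\langle x,Ax\rangle+l(x)+c\in\mathcal{C}$, the measure $\Psi(q)$ has density $x\mapsto P_\Psi(2Ax+l,2A)$.

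\emph{Step 1 (part (1)).} Fix $A$ positive semi-definite and $l\in(\R^n)^*$, and consider $q_t(x)=\langle x,Ax\rangle+t\,l(x)$ for $d+1$ distinct values $t_0,\dots,t_d\in\R$. Each $q_t$ is convex, and the density of $\Psi(q_t)$ is a polynomial of degree at most $d$ in $t$. Inverting the Vandermonde matrix gives coefficients $a_m$ with
\[
\sum_m a_m\Psi(q_{t_m})=\bigl(\text{coefficient of }t^d\text{ in }P_\Psi(2Ax+tl,2A)\bigr)\vol=P^{top}_\Psi(l,2A)\vol .
\]
The right-hand side is constant in $x$. Hence $T_{A,l}(\Psi):=P_\Psi^{top}(l,2A)$ belongs to $V_{d,0}$ and vanishes on $W_{d-1,0}$. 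These functionals separate $W_{d,0}/W_{d-1,0}$. Indeed, an element of $\Poly^{hom}_d((\R^n)^*)\otimes \mathrm{M}_n$ that vanishes on $(\R^n)^*\times\{\text{psd matrices}\}$ vanishes identically, since the psd cone is Zariski dense in $\Sym^2(\R^n)$. So finitely many of them, $T_1,\dots,T_{M(d)}$, form a basis of $(W_{d,0}/W_{d-1,0})^*$. Taking $\Psi_1,\dots,\Psi_{M(d)}\in W_{d,0}$ dual to this basis modulo $W_{d-1,0}$ gives (1).

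\emph{Step 2 (parts (2) and (3)).} Part (3) follows from (1) and (2) by induction on $d$, the case $d=0$ being covered by Step 1 with $W_{-1,0}=0$. Given $\lambda\in W_{d,0}^*$, use (2) together with the induction hypothesis $V_{d-1,0}=W_{d-1,0}^*$ to lift $\lambda|_{W_{d-1,0}}$ to some $T\in V_{d,0}$. Then $\lambda-T$ vanishes on $W_{d-1,0}$, so it is a linear combination of the $T_j$ and lies in $V_{d,0}$.

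For (2), let $T\in V_{d-1,0}$ be realized by $S(\Psi)=\sum_i c_i\Psi(q_i)$. For $\Psi\in W_{d,0}$ we have $\Psi-\sum_jT_j(\Psi)\Psi_j\in W_{d-1,0}$, which yields
\[
S(\Psi)=T\Bigl(\Psi-\sum_jT_j(\Psi)\Psi_j\Bigr)\vol+\sum_j T_j(\Psi)\,\rho_j\vol,
\]
where $\rho_j$ is the polynomial density of $S(\Psi_j)$. The desired lift of $T$ is therefore realized by $S$ minus evaluation-combinations realizing $\Psi\mapsto T_j(\Psi)\rho_j\vol$, plus the constants $T(\Psi_j)T_j$, which are already in $V_{d,0}$ by Step 1.

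\emph{Main obstacle.} The hard point is realizing $\Psi\mapsto T_j(\Psi)\rho_j\vol$ for a non-constant polynomial $\rho_j$, i.e.\ producing $x$-dependent densities multiplied by the top-degree functionals. I would first try to avoid the problem by choosing the realization of $T$ carefully. The $x$-dependence of $\sum_ic_iP_\Psi(2A_ix+l_i,2A_i)$ enters only through the shifts $2A_ix$ of the gradient argument, and for $\Psi\in W_{d-1,0}$ it cancels by assumption. So only $P^{top}_\Psi$ contributes $x$-dependent terms. I would then refine each $q_i$ by a two-parameter family $s\langle x,A_ix\rangle+t\,l_i(x)$ and use Vandermonde extraction in $(s,t)$. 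Within the total-degree-$d$ (in $(s,t)$) part, the goal is to subtract exactly those monomials of $P^{top}_\Psi(2sA_ix+tl_i,2sA_i)$ that carry positive powers of $x$, while leaving the contribution of $P_\Psi^{<d}$, which is already constant after summation, untouched. If this bookkeeping fails, the fallback is to express $\rho_j(x)T_j(\Psi)$ directly as a mixed coefficient in $t$ and an auxiliary linear perturbation of $l$. Here one uses that the coefficient of $t^{d-m}$ in $P_\Psi^{top}(2Ax+tl,2A)$ is a polynomial in $x$ whose coefficients are derivatives of $P^{top}_\Psi$ at $(l,2A)$, and hence again lie in $V_{d,0}$ restricted to the quotient.
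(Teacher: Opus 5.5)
Your Step~1 is essentially the paper's proof of (1): Vandermonde extraction of the part of top degree in the gradient variable along $q+t\ell$, followed by a dual basis modulo $W_{d-1,0}$. Deducing (3) from (1) and (2) by induction is also how the paper proceeds. The gap is (2). The obstacle you isolate there, realizing $\Psi\mapsto T_j(\Psi)\rho_j\vol$ for non-constant $\rho_j$, cannot be removed by any choice of the $q_i$, because for $d\ge 1$ parts (2) and (3) are false.

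Fix $d\ge1$ and let $\Phi,\Theta\in W_{d,0}$ be the functionals with polynomials $y_1$ and $H_{11}$, i.e.\ $\Phi(f)=\partial_1 f\,dx$ and $\Theta(f)=\partial_1^2 f\,dx$ for $f\in C^2(\R^n)\cap\Conv(\R^n,\R)$. Note that $0\neq\Theta\in W_{0,0}$. Suppose $T\in V_{d,0}$ is realized by $\sum_i c_i\Psi(q_i)=T(\Psi)\vol$. Applying this to $\Phi$ shows that the polynomial $\sum_i c_i\partial_1 q_i$ is constant. Hence $\sum_i c_i\partial_1^2q_i\equiv 0$, i.e.\ $T(\Theta)=0$. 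So every element of $V_{d,0}$ annihilates $\Theta$, and therefore $V_{d,0}\neq W_{d,0}^*$. For $d=1$, any $\lambda\in W_{0,0}^*=V_{0,0}$ with $\lambda(\Theta)=1$ has no preimage under $V_{1,0}\to V_{0,0}$.

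For $n=1$ you can check this by hand. On $W_{1,0}=\mathrm{span}\{1,y,h,yh\}$, the evaluation at $ax^2+lx+b$ with $a\ge0$ has density
\[
\alpha+\beta(2ax+l)+2a\gamma+2a\epsilon(2ax+l).
\]
Constancy in $x$ forces $\sum_i c_ia_i=0$, which kills the $h$-coefficient $\gamma$.

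The paper's proof handles (2) by asserting that the composition of an element of $V_{d-1,0}$ with the projection from (1) lies in $V_{d,0}$ ``by construction''. Your display is exactly the computation showing that this composition is realized only up to the terms $T_j(\Psi)\rho_j\vol$, and the example shows these terms cannot be cancelled. The argument only uses that the $q_i$ are of class $C^2$. So neither your two-parameter refinement, nor the fallback, nor enlarging $\mathcal{C}$ can help. The same reasoning shows that the dual-basis corollary derived from (3) cannot hold for $d\ge1$ either.

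What your computation does prove is weaker. Iterating (1), you get evaluation combinations whose densities on a basis adapted to $W_{0,0}\subset\dots\subset W_{d,0}$ form a block unitriangular matrix with polynomial entries. Such a matrix is invertible with polynomial inverse. A corrected downstream argument would have to use that instead of a constant-density dual basis.
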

		\begin{proof}
			Note that (1) implies (2) since the map
			\begin{align*}
				\Psi\mapsto \Psi-\sum_{j=1}^{M(d)}T_j(\Psi)\Psi_j
			\end{align*}
			is a projection $W_{d,0}\rightarrow W_{d-1,0}$ and the composition of any element in $V_{d-1,0}$ with this projection defines an element of $V_{d,0}$ by construction. Noting that $V_{0,0}=W_{0,0}^*$ due to the description of $\P_0\LV(\R^n)^{tr}$ in \autoref{theorem:ClassificationPolyTranslationInv}, (3) follows by induction from (1) and (2).\\
			
			Let us show (1). By \cite{KnoerrPolynomiallocalfunctionals2025}*{Lemma~5.5}, the isomorphism	$\Poly_d(\A(n,\R))\otimes\mathrm{M}_n\cong \P_d\LV(\R^n)^{tr}$ from \autoref{theorem:ClassificationPolyTranslationInv} restricts to an isomorphism 
			\begin{align}
				\label{eq:isomWd}
				\Poly_d((\R^n)^*)\otimes \mathrm{M}_n\cong W_{d,0}.
			\end{align}
			If we choose a basis $\Psi_1,\dots,\Psi_{N(n,0)}$ of $\P_0\LV(\R^n)$, we may consider $W_{d,0}$ with the basis $\Psi^\alpha_1,\dots,\Psi^\alpha_{N(n,0)}$ corresponding to the polynomials $\ell^\alpha P_{\Psi_j}\in \Poly_d((\R^n)^*)\otimes \M_n$, $\alpha\in \mathbb{N}^n$, $|\alpha|\le d$, under this isomorphism. Using the inverse of the Vandermonde matrix, we obtain coefficients $c_j$ independent of $\Psi\in W_{d,0}$ and $\ell\in(\R^n)^*$ such that the homogeneous term of order $d$ of the polynomial $\ell\mapsto \Psi(\cdot+\ell)$ is given by $\ell\mapsto \sum_{j=0}^{d}c_j\Psi(\cdot+j\ell)$. This implies for every $\alpha\in \mathbb{N}^n$ with $|\alpha|\le d$, and $1\le j\le N(n,0)$,
			\begin{align*}
				\sum_{j=0}^{d}c_j\Psi_i^\alpha(q+j\ell)=\begin{cases}
					\ell^\alpha P_{\Psi_i}(D^2q)\vol, &\text{for}~ |\alpha|=d,\\
					0, & \text{else},
				\end{cases}
			\end{align*}
			for all $q\in\mathcal{C}$ and every $\ell\in (\R^n)^*$.
			We therefore find $N>0$ and for every $|\alpha|=d$ and $1\le j\le N(n,0)$ elements $q^{\alpha,j}_i\in\mathcal{C}$ and constants $c^{\alpha,j}_i$, $1\le i\le N$,  such that
			\begin{align*}
				\sum_{|\alpha|=d}\sum_{i=0}^{N}c^{\alpha,j}_i\Psi_{j'}^{\alpha'}(q^{\alpha,j}_i)=\delta_{j,j'}\delta_{\alpha,\alpha'}\vol
			\end{align*}
			for all $1\le j'\le N(n,0)$ and $\alpha'\in \mathbb{N}^n$ with $|\alpha'|\le d$.
			In particular, the left hand side defines a map $T^\alpha_j\in V_{d,0}$ for all $|\alpha|=d$ such that 
			\begin{align*}
				\Psi-\sum_{|\alpha|=d}\sum_{j=1}^{N(n,0)}T^\alpha_j(\Psi) \Psi_j^\alpha\in W_{d-1,0}
			\end{align*}
			for all $\Psi\in W_{d,0}$. Moreover, by construction the corresponding map $W_{d,0}\rightarrow W_{d-1,0}$ is the identity on $W_{d-1,0}$, which concludes the proof of (1).
		\end{proof}
			
		\begin{lemma}\label{lemma:evaluationSpanWdl}
		\begin{enumerate}
			\item For every $d\ge 1$, $1\le l\le d$, there exist $T_1,\dots,T_{M(d,l)}\in V_{d,l}$ with $T_j|_{W_{d,l-1}}=0$, and $\Psi_1,\dots,\Psi_{M(d,l)}\in W_{d,l}$, where $M(d,l)=\dim W_{d,l}-\dim W_{d,l-1}$ such that
			\begin{align*}
				\Psi-\sum_{j=1}^{M(d,l)}T_j(\Psi)\Psi_j\in W_{d,l-1}.
			\end{align*} 
			\item The natural restriction maps $V_{d,l+1}\rightarrow V_{d,l}$ are surjective for $0\le l\le d-1$, $d\ge 1$.
			\item $V_{d,l}=W_{d,l}^*$.
		\end{enumerate}
		\end{lemma}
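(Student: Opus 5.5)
The plan is to mirror the proof of \autoref{lemma:EvaluationSpanWd0}, replacing the translation $\ell\mapsto\Psi(\cdot+\ell)$ by the shift in the constant term $c\mapsto\Psi(\cdot+c)$. The deductions between the three parts are identical in structure. First, (1) implies (2) for the step $V_{d,l}\to V_{d,l-1}$: the map $P(\Psi)=\Psi-\sum_j T_j(\Psi)\Psi_j$ is a projection $W_{d,l}\to W_{d,l-1}$ that is the identity on $W_{d,l-1}$. Moreover, for $T\in V_{d,l-1}$ realized by $\sum_i c_i\Psi(q_i)=T(\Psi)\vol$, the composition $T\circ P$ is realized on $W_{d,l}$ by the combination $\sum_i c_i\Psi(q_i)-\sum_j T(\Psi_j)\sum_i c_i'\Psi(q_i')$, which uses the evaluations of the $T_j$. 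This is again of the form \eqref{eq:defVd}, so $T\circ P\in V_{d,l}$ extends $T$. Second, (3) follows by induction on $l$, starting from $V_{d,0}=W_{d,0}^*$ given by \autoref{lemma:EvaluationSpanWd0}(3). If $V_{d,l-1}=W_{d,l-1}^*$, then $V_{d,l}$ contains extensions of all of $W_{d,l-1}^*$ together with the $T_j$. Since the $T_j$ vanish on $W_{d,l-1}$ and satisfy $T_j(\Psi_i)=\delta_{ij}$ (apply (1) to $\Psi_i$ itself, after arranging the basis), these span $W_{d,l}^*$.

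For (1), I would use \autoref{theorem:ClassificationPolyTranslationInv} together with the description of $W_{d,l}$ analogous to \eqref{eq:isomWd}. The natural guess is that $W_{d,l}$ corresponds to polynomials in $\Poly_d(\A(n,\R))\otimes\mathrm{M}_n$ of degree at most $l$ in the variable $c$ (the value $f(x)$). This follows because $\Psi(f+c)$ has density $P_\Psi(f+c,df,D^2f)$ and evaluating on quadratic $q$ determines $P_\Psi$. A basis is then given by $\Psi^{m,\alpha}_i\leftrightarrow c^m y^\alpha P_{\Psi_i}$ with $m\le l$ and $m+|\alpha|\le d$. The Vandermonde trick gives constants $c_0,\dots,c_l$ such that $\sum_j c_j\Psi(q+j)$ extracts the coefficient of $c^l$ in $c\mapsto\Psi(q+c)$. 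For $\Psi\in W_{d,l}$ this coefficient is the element of $W_{d-l,0}$ whose density is $\sum_{\alpha}y^\alpha P_{\Psi_i}$, where the $c^l$ factor is removed. Applied to the basis, it kills $W_{d,l-1}$ and maps $\Psi^{l,\alpha}_i$ to the functional $\Psi^{\alpha}_i\in W_{d-l,0}$.

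It remains to compose with the functionals from \autoref{lemma:EvaluationSpanWd0}(3) for $W_{d-l,0}$. For each $T\in V_{d-l,0}=W_{d-l,0}^*$ given by $\sum_r a_r\Phi(q_r)$, I would form the combination $\sum_r\sum_j a_rc_j\Psi(q_r+j)$. This is again a combination of evaluations at elements of $\mathcal{C}$, and it equals $T(\text{coefficient of }c^l)\vol$, hence lies in $V_{d,l}$ and vanishes on $W_{d,l-1}$. Choosing the dual basis of $\{\Psi^\alpha_i\}$ in $W_{d-l,0}^*$ gives $T^{\alpha}_i\in V_{d,l}$ with $T^\alpha_i(\Psi^{l,\alpha'}_{i'})=\delta\delta$. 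The count of these functionals equals $\dim W_{d,l}-\dim W_{d,l-1}$, and $\Psi-\sum T^\alpha_i(\Psi)\Psi^{l,\alpha}_i\in W_{d,l-1}$. The main obstacle is justifying the identification of $W_{d,l}$ with polynomials of $c$-degree at most $l$. In particular, one must check that the $c^l$-coefficient of $\Psi(q+c)$ is exactly the translation invariant functional with the factored density, which uses the uniqueness part of \autoref{theorem:ClassificationPolyTranslationInv} and density of $\mathcal{C}$-evaluations via the polynomial densities.
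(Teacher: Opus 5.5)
\textbf{Where the argument breaks.} Your outline follows the paper's proof essentially step for step. You use the Vandermonde trick in the constant $c$ to isolate the top $c$-coefficient $Y_{\Psi,l}\in W_{d-l,0}$, then compose with $V_{d-l,0}=W_{d-l,0}^*$ from \autoref{lemma:EvaluationSpanWd0}; (1)$\Rightarrow$(2) comes from composing with the projection, and (3) from induction. The identification of $W_{d,l}$ with polynomials of $c$-degree at most $l$, which you flag as the main obstacle, is harmless.

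The real gap is the step (1)$\Rightarrow$(2). Take $T\in V_{d,l-1}$ realized by $\sum_i c_i\Psi(q_i)$. The quantities $T(\Psi)$ and $T(\Psi_j)$ in your formula are undefined for $\Psi,\Psi_j\notin W_{d,l-1}$. What actually holds for $\Psi\in W_{d,l}$ is
\[
\sum_i c_i\Psi(q_i)=T(P\Psi)\vol+\sum_j T_j(\Psi)\,\mu_j,\qquad \mu_j:=\sum_i c_i\Psi_j(q_i),
\]
where $\mu_j$ has a polynomial, but in general non-constant, density. A fixed non-constant density multiplied by the scalar $T_j(\Psi)$ is not a constant-coefficient combination of evaluations, so $T\circ P$ need not lie in $V_{d,l}$. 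This is exactly the difficulty the paper itself points out just before \autoref{lemma:EvaluationSpanWd0}, and the paper's ``(1) implies (2)'' passes over it in the same way. Your proof of (1) also feeds in $V_{d-l,0}=W_{d-l,0}^*$, which rests on the same composition argument, so (1) is affected as well.

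\textbf{Why it cannot be repaired.} With constant coefficients $c_j$ the statement is false.
\begin{itemize}
\item \emph{Part (3).} Suppose $\sum_s b_s\Psi(q_s)=T(\Psi)\vol$ on $W_{d,l}$ with $d\ge 1$, and apply this to $f\mapsto\partial_1 f\,dx\in W_{d,l}$. Constancy of the density $\sum_s b_s\,\partial_1 q_s(x)$ forces $\sum_s b_s\,\partial_{11}q_s=0$. That number is $T$ evaluated at $f\mapsto\partial_{11}f\,dx\in W_{d,l}$ (for $n=1$, at $\MA$). Hence $V_{d,l}\subsetneq W_{d,l}^*$ for every $d\ge1$, contradicting (3) and \autoref{lemma:EvaluationSpanWd0}(3).
\item \emph{Part (1).} Take $n=1$, $d=2$, $l=1$ and $q_s(x)=\tfrac{a_s}{2}x^2+y_sx+\kappa_s$. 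For $\Psi(f)=ff'\,dx$, the linear coefficient of the density of $\sum_s b_s\Psi(q_s)$ is $\sum_s b_s(y_s^2+a_s\kappa_s)$, and it must vanish. Hence
\[
T(ff''\,dx)=\sum_s b_sa_s\kappa_s=-\sum_s b_sy_s^2=-T(f'^2\,dx).
\]
So every $T\in V_{2,1}$ vanishing on $W_{2,0}$ annihilates $ff''\,dx\in W_{2,1}\setminus W_{2,0}$, and no family $T_j,\Psi_j$ as in (1) exists.
\end{itemize}

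\textbf{What does hold.} The version with polynomial coefficients is true, and it appears to be all that \autoref{corollary:evaluationMapsDualBasis} and \autoref{theorem:IntegralRepCont} actually need. For translation invariant $\Psi$, the density of $\Psi(q(\cdot-z))$ is $\rho(x-z)$, where $\rho$ is the polynomial density of $\Psi(q)$. Lagrange interpolation in $z$ at finitely many nodes $z_i$ then gives polynomials $L_i$ with
\[
\sum_i L_i(x)\Psi(q(\cdot-z_i))=P_\Psi(q(0),dq(0),D^2q)\vol,
\]
and such point evaluations of $P_\Psi$ span $W_{d,l}^*$.
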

		\begin{proof}
			As in the proof of \autoref{lemma:EvaluationSpanWd0}, (1) implies (2), from which (3) follows by induction from the case $W_{d,0}$, which is covered by \autoref{lemma:EvaluationSpanWd0}.\\
			
			In order to see that (1) holds, note that since $c\mapsto \Psi(\cdot+c)$, $c\in\R$, is a polynomial of degree at most $l$ for every $\Psi\in W_{d,l}$, there are maps $Y_{\Psi,j}:\Conv(\R^n,\R)\rightarrow\M(\R^n)$ such that
			\begin{align*}
				\Psi(f+c)=\sum_{j=0}^l c^jY_{\Psi,j}(f)
			\end{align*}
			for every $f\in\Conv(\R^n,\R)$, $c\in\R$. Comparing coefficients in $c$, it is easy to see that $c\mapsto Y_{\Psi,j}(f+c)$ is a polynomial of degree at most $l-j$. We may use the inverse of the Vandermonde matrix to obtain coefficients $c_0,\dots,c_l$ independent of $\Psi$, $f$, and $c$, such that
			\begin{align*}
				Y_{\Psi,l}(f)=\sum_{j=0}^lc_j \Psi(f+j).
			\end{align*}
			By comparing degrees, it is not difficult to see that $Y_{\Psi,l}\in W_{d-l,0}$. Let $\Psi_1,\dots,\Psi_{\dim W_{d-l,0}}$ be a basis of $W_{d-l,0}$ and let $\Psi^l_j\in W_{d,l}$ be the element corresponding to the polynomial $c^l P_{\Psi_j}\in \Poly_d(\Aff(n,\R))\otimes\M_n$ under the isomorphism in \autoref{theorem:ClassificationPolyTranslationInv} (note that it is not difficult to see that $\Psi^l_j$ actually belongs to $W_{d,l}$). From \autoref{lemma:EvaluationSpanWd0}, we obtain elements $T_1,\dots, T_{\dim W_{d-l,0}}\in V_{d-l,0}$ that form the dual basis to $\Psi_1,\dots,\Psi_{\dim W_{d-l,0}}$. Since 
			\begin{align*}
				Y_{\Psi,l}(f)=\sum_{j=1}^{\dim W_{d-l,0}}T_j(Y_{\Psi,l})\Psi_j=\sum_{j=1}^{\dim W_{d-l,0}}T_j\left(\sum_{j=0}^lc_j \Psi(\cdot+j)\right)\Psi_j,
			\end{align*}
			it is easy to see that
			\begin{align*}
				\Psi-\sum_{j=1}^{\dim W_{d-l,0}}T_j\left(\sum_{j=0}^lc_j \Psi(\cdot+j)\right)\Psi^l_j\in W_{d,l-1},
			\end{align*}
			where the maps 
			\begin{align*}
				\Psi\mapsto T_j\left(\sum_{j=0}^lc_j \Psi(\cdot+j)\right)
			\end{align*}
			belong to $W_{d,l}$ by construction. This completes the proof.
		\end{proof}
		
		\begin{corollary}\label{corollary:evaluationMapsDualBasis}
				Let $\Psi_j$, $1\le j\le N(n,d)$, be a basis of $\P_d\LV(\R^n)^{tr}$. There exist $M_d\in\mathbb{N}$, $c_{ij}\in \R$ and $q_{ij}\in \mathcal{C}$, $1\le i,j\le M_d$, such that the maps $T^d_j:\P_d\LV(\R^n)\rightarrow\M(\R^n)$ given by
			\begin{align*}
				E_j(\Psi):=\sum_{i=1}^{M_d}c_{ij}\Psi(q_{ij})
			\end{align*}
			satisfy
			\begin{align*}
				E_j(\Psi_i)=\delta_{ij}\vol.
			\end{align*}
		\end{corollary}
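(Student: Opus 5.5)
The plan is to obtain the corollary from \autoref{lemma:evaluationSpanWdl}(3) with $l=d$, after checking that $W_{d,d}=\P_d\LV(\R^n)^{tr}$. Every $\Psi\in\P_d\LV(\R^n)^{tr}$ is polynomial of degree at most $d$ in $\ell\in\A(n,\R)$, in particular in the constant $c$, so $W_{d,d}$ is all of $\P_d\LV(\R^n)^{tr}$. Hence \autoref{lemma:evaluationSpanWdl}(3) gives $V_{d,d}=(\P_d\LV(\R^n)^{tr})^*$. (For $d=0$ we use \autoref{lemma:EvaluationSpanWd0}(3) instead, which gives $V_{0,0}=W_{0,0}^*$.)

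Given the basis $\Psi_1,\dots,\Psi_{N(n,d)}$, let $\Psi_1^*,\dots,\Psi_{N(n,d)}^*$ be the dual basis of $(\P_d\LV(\R^n)^{tr})^*$. Each $\Psi_j^*$ lies in $V_{d,d}$, so by the definition \eqref{eq:defVd} there are finitely many $q_{ij}\in\mathcal{C}$ and constants $c_{ij}$ with
\begin{align*}
	\sum_i c_{ij}\Psi(q_{ij})=\Psi_j^*(\Psi)\vol\quad\text{for all }\Psi\in\P_d\LV(\R^n)^{tr}.
\end{align*}
Padding with zero coefficients, we may assume all these sums have a common length $M_d$. We then define $E_j(\Psi):=\sum_{i=1}^{M_d}c_{ij}\Psi(q_{ij})$ for arbitrary $\Psi\in\P_d\LV(\R^n)$; this is a well-defined, and indeed continuous, map into $\M(\R^n)$, since it is a finite combination of evaluations. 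By construction $E_j(\Psi_i)=\Psi_j^*(\Psi_i)\vol=\delta_{ij}\vol$.

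The only delicate point is that the statement asks for real coefficients $c_{ij}\in\R$, whereas $V_{d,l}$ allows complex $c_j$. If the basis is complex, real coefficients cannot be guaranteed in general, so I would take this as the intended reading: the coefficients are complex, or real for a real basis. For a real basis the real part can be taken, because all the operations in the proofs of \autoref{lemma:EvaluationSpanWd0} and \autoref{lemma:evaluationSpanWdl} (inverse Vandermonde matrices, dual bases) are real. The substantive work, namely the surjectivity of the restriction maps between the $V_{d,l}$, has already been done in those two lemmas, so no step here should present a real obstacle.
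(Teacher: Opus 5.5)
Your proposal is correct and follows the paper's proof: since $W_{d,d}=\P_d\LV(\R^n)^{tr}$, \autoref{lemma:evaluationSpanWdl}(3) (or \autoref{lemma:EvaluationSpanWd0}(3) when $d=0$) gives $V_{d,d}=(\P_d\LV(\R^n)^{tr})^*$, and the dual basis is then realized by finite combinations of evaluations at elements of $\mathcal{C}$. Your caveat on $c_{ij}\in\R$ is justified: the definition of $V_{d,l}$ only produces complex coefficients, and taking real parts works when the $\Psi_j$ are real-valued, a point the paper's one-line proof does not address.
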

		\begin{proof}
			Since $V_{d,d}=W_{d,d}^*$ by \autoref{lemma:evaluationSpanWdl}, which is the dual space of $\P_d\LV(\R^n)^{tr}$, we can realize the dual basis to $\Psi_j$, $1\le j\le N(n,d)$, by maps in $V_{d,d}$, which provides the maps $E_j$.
		\end{proof}	
		\autoref{maintheorem:IntegralRepContinuousCase} follows from the following result.
		\begin{theorem}
			\label{theorem:IntegralRepCont}
			Let $\Psi_j$, $1\le j\le N(n,d)$, be a basis of $\P_d\LV(\R^n)^{tr}$ and $E_j:\P_d\LV(\R^n)\rightarrow\M(\R^n)$ the associated maps from \autoref{corollary:evaluationMapsDualBasis}. Then for every $\Psi\in \P_d\LV(\R^n)$,
			\begin{align*}
				\Psi(f;\phi)
				=\sum_{j=1}^{N(n,d)}\int_{\R^n} \phi(x) P_{\Psi_j}(f(x),df(x),D^2f(x))dE_j(\Psi)
			\end{align*}
			for every $f\in C^2(\R^n)\cap \Conv(\R^n,\R)$, $\phi\in C_c(\R^n)$. Moreover, if $\Psi\in \P_d\LV^0(\R^n)$, then $E_j(\Psi)$ is absolutely continuous with respect to the Lebesgue measure.
		\end{theorem}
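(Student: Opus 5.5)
The plan is to prove the identity first for smooth $\Psi$ and then pass to general $\Psi$ by density, using that both sides depend continuously on $\Psi$ in the compact-to-weak* topology for fixed $f\in C^2(\R^n)\cap\Conv(\R^n,\R)$ and $\phi\in C_c(\R^n)$.

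\emph{Smooth case.} Let $\Psi\in\P_d\LV^\infty(\R^n)$. By \autoref{maintheorem:IntegralRepSmooth} there are $\phi_j\in C^\infty(\R^n)$ with $\Psi=\sum_j\phi_j\bullet\Psi_j$. Applying $E_j$ and using that $E_j$ is a finite linear combination of evaluations, together with \autoref{corollary:evaluationMapsDualBasis}, gives
\begin{align*}
E_j(\Psi)=\sum_i c_{ij}\sum_l\phi_l\bullet\Psi_l(q_{ij})=\sum_l\phi_l\bullet E_j(\Psi_l)=\phi_j\bullet\vol.
\end{align*}
Hence $dE_j(\Psi)=\phi_j\,dx$. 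For $f\in C^2$, \autoref{theorem:ClassificationPolyTranslationInv} gives
\begin{align*}
\Psi(f;\phi)=\sum_j\int\phi\,\phi_j\,P_{\Psi_j}(f,df,D^2f)\,dx,
\end{align*}
which is exactly the claimed formula. The same computation shows $E_j(\Psi)$ is absolutely continuous whenever $\Psi$ is smooth.

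\emph{General case.} The left side $\Psi\mapsto\Psi(f;\phi)$ is continuous in the compact-to-weak* topology. For the right side, $E_j$ is continuous from the compact-to-weak* topology to the weak* topology on $\M(\R^n)$, since it is a finite sum of evaluations. The integrand $\phi\,P_{\Psi_j}(f,df,D^2f)$ lies in $C_c(\R^n)$ because $f\in C^2$. Therefore both sides are continuous linear functionals of $\Psi$. By \autoref{proposition:densitySmoothVectors} applied to $W=\P_d\LV_k(\R^n)$, together with the homogeneous decomposition, smooth functionals are sequentially dense, so the identity extends to all of $\P_d\LV(\R^n)$.

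\emph{Absolute continuity for strongly continuous $\Psi$.} This is the step I expect to be the main obstacle. Let $\Psi\in\P_d\LV^0(\R^n)$ and apply \autoref{proposition:densitySmoothVectors}(2) with $W=\P_d\LV^0_k(\R^n)$, which is closed by \autoref{proposition:PropertiesStronglyContVectors}. This yields smooth $\Psi^{(m)}\to\Psi$ in the compact-to-bounded topology.

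Each $E_j$ is then continuous into $\M(\R^n)$ with the strong topology. Indeed, evaluation at a fixed $q$ is controlled by the seminorm $\|\cdot\|_{(B;\{q\})}$ for bounded $B$, so $E_j(\Psi^{(m)})\to E_j(\Psi)$ in total variation on each compact set.

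By the smooth case, each $E_j(\Psi^{(m)})$ is absolutely continuous. Total-variation limits of absolutely continuous measures are absolutely continuous, since $L^1_{loc}$ is closed in $\M(\R^n)$ under local total-variation convergence. This proves the last claim.

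Uniqueness in \autoref{maintheorem:IntegralRepContinuousCase} should follow by testing against quadratic $q$ via the dual maps. That step is not needed for this theorem.
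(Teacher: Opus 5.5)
Your proposal is correct and follows the paper's proof. You verify the identity on smooth functionals via \autoref{maintheorem:IntegralRepSmooth} and \autoref{corollary:evaluationMapsDualBasis}, extend it by compact-to-weak* continuity and density of smooth functionals, and get absolute continuity from compact-to-bounded approximation, continuity of $E_j$ into the strong topology, and strong closedness of the absolutely continuous measures; your explicit computation $E_j(\Psi)=\phi_j\,\vol$ for $\Psi=\sum_l\phi_l\bullet\Psi_l$ spells out the step the paper only asserts.
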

		\begin{proof}
			For any $f\in C^2(\R^n)\cap\Conv(\R^n,\R)$ and $\phi\in C_c(\R^n)$, the map
			\begin{align*}
			 	\P_d\LV(\R^n)&\rightarrow\C\\
			 	\Psi&\mapsto \Psi(f;\phi)-\sum_{j=1}^{N(n,d)}\int_{\R^n} \phi(x) P_{\Psi_j}(f(x),df(x),D^2f(x))dE_j(\Psi)
			\end{align*} 
			is continuous with respect to the compact-to-weak* topology. \autoref{corollary:evaluationMapsDualBasis} and \autoref{maintheorem:IntegralRepSmooth} show that this map vanishes on the subspace of smooth local functionals. Since smooth local functionals are dense in $\P_d\LV(\R^n)$ with respect to the compact-to-weak* topology by \autoref{proposition:densitySmoothVectors}, this map vanishes identically, which shows the desired representation formula.\\
			
			If $\Psi$ is strongly continuous, then we may approximate $\Psi$ with respect to the compact-to-bounded topology by a sequence of smooth local functionals by \autoref{proposition:densitySmoothVectors}. Obviously the maps $E_j:\P_d\LV(\R^n)\rightarrow\M(\R^n)$ are continuous if the two spaces are equipped with the compact-to-bounded and strong topology respectively. On smooth local functionals, the image of $E_j$ consists of measures that are absolutely continuous with respect to the Lebesgue measure, and the space of these measures is closed in $\M(\R^n)$ with respect to the strong topology. Thus $E_j(\Psi)$ is absolutely continuous with respect to the Lebesgue measure as well.
		\end{proof}
		
\section{Affine invariant submodules}
		\label{section:affineInvSubModule}
		In this section, we prove a stronger version of \autoref{maintheorem:affineInvariantSubmoduleDense}. We begin with a correspondence between certain submodules of $\P_d\LV(\R^n)$ and subspaces of $\P_d\LV(\R^n)^{tr}$.

		\begin{theorem}\label{theorem:CorrespondenceSubmodulesSubspaces}
			\begin{enumerate}
				\item Consider $\P_d\LV(\R^n)$ with the compact-to-weak* or 	compact-to-bounded topology. The map
					\begin{align*}
						W\mapsto W\cap \P_d\LV(\R^n)^{tr}
					\end{align*}
					establishes a $1$-to-$1$-correspondence between the closed and translation invariant $C(\R^n)$-submodules $W$ of $\P_d\LV(\R^n)$ and the subspaces of $\P_d\LV(\R^n)^{tr}$. 
				\item Consider the space $\P_d\LV^0(\R^n)$ of all strongly continuous local functionals with the compact-to-bounded topology. The map
					\begin{align*}
						W\mapsto W\cap \P_d\LV(\R^n)^{tr}
					\end{align*}
					establishes a $1$-to-$1$-correspondence between the closed and translation invariant $C(\R^n)$-submodules $W$ of $\P_d\LV^0(\R^n)$ and the subspaces of $\P_d\LV(\R^n)^{tr}$. 
			\end{enumerate}
		\end{theorem}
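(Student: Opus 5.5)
The plan is to construct the inverse map explicitly and check that the two compositions are the identity. Given a subspace $V\subset\P_d\LV(\R^n)^{tr}$, let $W_V$ be the closure (in the relevant topology) of the $C(\R^n)$-submodule $C(\R^n)\bullet V=\{\sum_j\phi_j\bullet\Psi_j:\phi_j\in C(\R^n),\Psi_j\in V\}$. In case (2) we first check that $\phi\bullet\Psi\in\P_d\LV^0(\R^n)$ for $\Psi\in V$ and $\phi\in C(\R^n)$. This should follow from Eq.~\eqref{eq:moduleEquivariant}, the joint continuity in \autoref{proposition:continuityModuleStructure}(1), and the fact that translation-invariant elements are fixed by $\pi(x)$, while $x\mapsto\phi(\cdot-x)$ is continuous in $C(\R^n)$. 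Eq.~\eqref{eq:moduleEquivariant} also shows that $C(\R^n)\bullet V$ is translation invariant. Translation invariance passes to the closure, by \autoref{proposition:AffActionSeparatelyContinuous} for the compact-to-weak* case and \autoref{proposition:PropertiesStronglyContVectors} in $\P_d\LV^0$. The module property also passes to the closure, by separate continuity (\autoref{proposition:continuityModuleStructure}). So $W_V$ is a closed, translation invariant submodule.

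We then verify the two identities $W_V\cap\P_d\LV(\R^n)^{tr}=V$ and $W=W_{W\cap\P_d\LV(\R^n)^{tr}}$.

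For the first identity, choose a basis $\Psi_1,\dots,\Psi_{N(n,d)}$ of $\P_d\LV(\R^n)^{tr}$ whose first $m$ elements span $V$. Take the maps $E_j$ from \autoref{corollary:evaluationMapsDualBasis}. The formula in \autoref{theorem:IntegralRepCont}, applied to $\Psi=\phi\bullet\Psi_i$, gives $E_j(\phi\bullet\Psi_i)=\delta_{ij}\phi\,\vol$, since $E_j$ is built from evaluations at fixed functions and so commutes with $\bullet$. Each $E_j$ is continuous into $\M(\R^n)$ (weak* or strong, according to the topology). Hence $E_j$ vanishes on $C(\R^n)\bullet V$ for $j>m$, and therefore on its closure $W_V$. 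If $\Psi\in W_V$ is translation invariant, write $\Psi=\sum_i a_i\Psi_i$; then $E_j(\Psi)=a_j\vol$, so $a_j=0$ for $j>m$ and $\Psi\in V$. The reverse inclusion is trivial.

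For the second identity, let $W$ be closed, translation invariant and a submodule, and set $V=W\cap\P_d\LV(\R^n)^{tr}$. The inclusion $W_V\subset W$ is clear. For the converse, \autoref{proposition:densitySmoothVectors} says that $W\cap\P_d\LV^\infty(\R^n)$ is dense in $W$, applied to each homogeneous component; the components lie in $W$ because $W$ is closed and invariant under the dilations $f\mapsto tf$, which I would verify, or else avoid by applying the density statement to $W$ directly. It therefore suffices to show that a smooth $\Psi\in W$ lies in $W_V$. By \autoref{maintheorem:IntegralRepSmooth}, $\Psi=\sum_j\phi_j\bullet\Psi_j$ with $\phi_j\in C^\infty(\R^n)$.

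The main obstacle is to show that $\phi_j\bullet\Psi_j\in W$ for each $j$, or more precisely that $\sum_{j>m}\phi_j\bullet\Psi_j$ vanishes, with $\Psi_j$ for $j\le m$ spanning $V$. My approach is to produce translation invariant elements of $W$ from $\Psi$. Multiply by $\psi\in C_c^\infty(\R^n)$ localized near a point $x_0$ and average over translations, $\int\rho(x)\pi(x)(\psi\bullet\Psi)\,dx$ as in \autoref{remark:MollifierSmoothVector}. These averages lie in $W$, since $W$ is closed and translation invariant and weak integrals stay in closed subspaces. Taking $\rho$ to be a rescaled indicator of large balls, normalized, should converge to $\sum_j c_j\Psi_j$ with $c_j=\phi_j(x_0)\int\psi$; this needs care with the convergence.

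A cleaner alternative, which I would try first, uses $\pi(x)\Psi-\Psi$ together with Eq.~\eqref{eq:moduleEquivariant}. These give $\sum_j(\phi_j(\cdot-x)-\phi_j)\bullet\Psi_j\in W$, and differentiating in $x$ shows $\sum_j\partial^\alpha\phi_j\bullet\Psi_j\in W$ for all $\alpha$. Combining these with multiplication by polynomials gives derivatives of the coefficients with respect to Taylor expansion. Evaluating at a point then yields constants $\sum_j\phi_j(x_0)\Psi_j$, which are translation invariant, via a limiting procedure that uses localization and closedness.

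Either way, once $\sum_j\phi_j(x_0)\Psi_j\in V$ for all $x_0$, each coefficient vector lies in $V$ pointwise. Hence $\phi_j\equiv 0$ for $j>m$ in the chosen basis, and $\Psi\in C^\infty(\R^n)\bullet V\subset W_V$.
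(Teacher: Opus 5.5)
Your overall architecture is the paper's. You use the inverse map $V\mapsto W_V$ and get the identity $W_V\cap\P_d\LV(\R^n)^{tr}=V$ from the maps $E_j$ of \autoref{corollary:evaluationMapsDualBasis}, which commute with $\bullet$ and are continuous. You then reduce injectivity to a smooth $\Psi=\sum_j\phi_j\bullet\Psi_j\in W$; after subtracting the part lying in $C^\infty(\R^n)\bullet V\subset W$ you may assume $\phi_j=0$ for $j\le m$.

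The gap is exactly the step you call the main obstacle: neither sketch produces a translation invariant element of $W$ that detects the $\phi_j$. In the first, you average $\pi(x)(\psi\bullet\Psi)$ against the \emph{normalized} indicator $\rho=\vol(B_R)^{-1}\mathbf{1}_{B_R}$. This gives coefficient functions $\rho*(\psi\phi_j)$ with $\|\rho*(\psi\phi_j)\|_\infty\le\vol(B_R)^{-1}\|\psi\phi_j\|_{L^1}\to0$, so the limit is $0$. In the second, differentiating the orbit map and multiplying by functions are local operations. If $\Psi$ has compact local support $K$ (as after your localization), every expression built from them, and every limit of such expressions, has local support in $K$. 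A nonzero translation invariant functional, however, has local support $\R^n$. Some genuinely non-local use of the translation invariance of $W$ is needed, and your ``limiting procedure that uses localization and closedness'' leaves it unspecified.

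The paper supplies this with the \emph{unnormalized} version of your first idea, realized by finite Riemann sums so that membership in $W$ is automatic. After multiplying by a suitable $\psi\in C^\infty_c(\R^n)$, assume the $\phi_j$ are compactly supported. Let $x^\epsilon_k$ run over points of the cubes of mesh $\epsilon$ covering $[-R,R]^n$. Then $\epsilon^n\sum_k\pi(x^\epsilon_k)\Psi=\sum_j\bigl(\epsilon^n\sum_k\phi_j(\cdot-x^\epsilon_k)\bigr)\bullet\Psi_j\in W$. As $\epsilon\to0$ and $R\to\infty$, the coefficients converge locally uniformly to the constants $c_j=\int\phi_j$. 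By \autoref{proposition:continuityModuleStructure} the sums converge in the compact-to-bounded topology, hence in the coarser ones, to $\sum_jc_j\Psi_j$, which therefore lies in $W\cap\P_d\LV(\R^n)^{tr}=V$. Applying this with $\psi\phi_j$ in place of $\phi_j$ for arbitrary $\psi$ gives $\int\psi\phi_j=0$ for $j>m$, hence $\phi_j\equiv0$. The limit coefficients are $\int\psi\phi_j$, not $\phi_j(x_0)\int\psi$, and no pointwise evaluation is needed.

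One further correction: drop the option of passing to homogeneous components via dilation invariance of $W$, which fails in general. For $V=\mathrm{span}(\vol+\MA)$, your own first identity shows that $W_V$ does not contain the component $\vol$. The density of smooth elements has to come from mollifying inside $W$ itself, which needs no homogeneity.
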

		\begin{proof}
			The argument is the same independent of the chosen topology, the only difference is that the closure of the space of smooth local functionals with respect to the compact-to-bounded topology is the space $\P_d\LV^0(\R^n)$, compare \cite{KnoerrPolynomiallocalfunctionals2025}*{Corollary~3.22}, which is the reason for the additional restriction in (2). Let us therefore fix one of these three topologies.\\			
			
			We claim that the inverse map is given by associating to a subspace $V\subset \P_d\LV(\R^n)^{tr}$ the subspace
			\begin{align*}
				\tilde{V}=\overline{\mathrm{span}\{\phi\bullet\Psi:\phi\in C(\R^n),\Psi\in V\}},
			\end{align*}
			where we take the closure with respect to the chosen topology. Since the group of translations acts by continuous maps on $\P_d\LV(\R^n)$ with respect to the compact-to-weak* and compact-to-compact topology by \autoref{proposition:AffActionSeparatelyContinuous} and on $\P_d\LV^0(\R^n)$ with respect to the compact-to-bounded topology by \autoref{proposition:PropertiesStronglyContVectors}, $\tilde{V}$ is translation invariant. Since the module structure is separately continuous with respect to the three topologies by \autoref{proposition:continuityModuleStructure}, $\tilde{V}$ is also a $C(\R^n)$-submodule.\\
			
			Choose a basis $\Psi_j$, $1\le j\le N(n,d)$, of $\P_d\LV(\R^n)^{tr}$ such that $\Psi_1,\dots,\Psi_N$ is a basis for $V$, and let $E_j:\P_d\LV(\R^n)\rightarrow\M(\R^n)$ be the associated maps from \autoref{corollary:evaluationMapsDualBasis}. Then $E_j(\Psi)=0$ for $j>N$ for all $\Psi\in \tilde{V}$ since $E_j$ is continuous and vanishes an a dense subset of $\tilde{V}$. In particular, $\tilde{V}\cap \P_d\LV(\R^n)^{tr}=V$.\\
			
			This implies that the maps in (1) and (2) are surjective. In order to see that they are injective, let $W$ be a closed and translation invariant $C(\R^n)$-submodule of $\P_d\LV(\R^n)$ (or $\P_d\LV^0(\R^n)$ in the second case) and set $V:=W\cap \P_d\LV(\R^n)^{tr}$. We will show that the set of smooth local functionals in $W$ coincides with the $C^\infty(\R^n)$-module generated by $V$. Since smooth local functionals are dense in $W$ by \autoref{proposition:densitySmoothVectors}, this implies $W=\tilde{V}$, which shows that $W$ is uniquely determined by $W\cap \P_d\LV(\R^n)^{tr}$ under the correspondence above.\\
			
			Let us again choose a basis $\Psi_j$, $1\le j\le N(n,d)$, of $\P_d\LV(\R^n)$ such that $\Psi_1,\dots,\Psi_N$ is a basis of $V$. Assume that there exists a smooth local functional $0\ne\Psi\in W$ that does not belong to the $C^\infty(\R^n)$-submodule generated by $V$. Since this submodule is contained in $W$ by definition, we may use \autoref{maintheorem:IntegralRepSmooth} to subtract suitable multiples of $\Psi_1,\dots,\Psi_N$ from $\Psi$ and assume without loss of generality that $\Psi$ is given by
			\begin{align*}
				\Psi=\sum_{j={N+1}}^{N(n,d)}\phi_j\bullet \Psi_j
			\end{align*}
			for $\phi_j\in C^\infty(\R^n)$, $N+1\le j\le N(n,d)$, where at least one of these functions is non-trivial. Since $W$ is a $C(\R^n)$-module, we may multiply this equation with a function $\phi\in C^\infty_c(\R^n)$. Without loss of generality we can therefore assume that the functions $\phi_j$ are compactly supported and that at least one of the constants
			\begin{align*}
				c_j:=\int_{\R^n}\phi_j(x) dx, \quad N+1\le j\le N(n,d),
			\end{align*}
			is not equal to $0$. Note that since $\Psi_j$ is translation invariant, \begin{align*}
				\pi(x)\Psi=\sum_{j=N+1}^{N(n,d)}\phi_j(\cdot-x)\bullet \Psi_j.
			\end{align*}
			For each $\epsilon>0$, cover $\R^n$ with the cubes $Z^\epsilon_{k}=\{x\in\R^n:\epsilon k_j\le x_j\le \epsilon(k_j+1)\}$, $k\in\mathbb{Z}^n$, and fix $x^\epsilon_k\in Z^\epsilon_k$ for each $k\in\mathbb{Z}^n$. Then for each $m\in\mathbb{N}$, we have
			\begin{align*}
				W\ni \Psi^\epsilon_m:=\epsilon^n\sum_{k\in\mathbb{Z}^n,|k|\le m}\pi(x^\epsilon_k)\Psi=\sum_{j=N+1}^{N(n,d)}\left(\epsilon^n\sum_{k\in\mathbb{Z}^n,|k|\le m}\phi_j(\cdot-x^\epsilon_k)\right)\bullet \Psi_j.
			\end{align*}
			The functions
			\begin{align*}
				\phi_{j,m}^\epsilon:=\epsilon^n\sum_{k\in\mathbb{Z}^n,|k|\le m}\phi_j(\cdot-x^\epsilon_k)\in C_c(\R^n)
			\end{align*}
			are given by Riemann sums, and it is easy to see that locally uniformly on $\R^n$, we have
			\begin{align*}
				\lim_{m\rightarrow\infty}\phi_{j,m^2}^{\frac{1}{m}}=\int_{\R^n}\phi_j(\cdot-x)dx=\int_{\R^n}\phi_j(x)dx=c_j.
			\end{align*}
			Since the module action of $C(\R^n)$ is continuous with respect to the compact-to-bounded topology by \autoref{proposition:continuityModuleStructure}, we see that
			\begin{align*}
				\lim_{m\rightarrow\infty}\Psi^{\frac{1}{m}}_{m^2}=\sum_{j=N+1}^{N(n,d)}c_j\Psi_j
			\end{align*}
			in the compact-to-bounded topology. In particular, the sequence converges with respect to the compact-to-weak* and compact-to-compact topology, so the limit belongs to $W$. Since we assume that one of the constants $c_j$ is non-trivial, $\sum_{j=N+1}^{N(n,d)}c_j\Psi_j\notin V$ by our choice of basis, which is a contradiction. Thus every smooth local functional in $W$ belongs to the $C^\infty(\R^n)$-submodule generated by $V$.
		\end{proof}
		
		In order to obtain \autoref{maintheorem:affineInvariantSubmoduleDense} from this description, we need the following result for $\P_0\LV_k(\R^n)^{tr}=\MAVal_k(\R^n)$.
		\begin{theorem}[\cite{KnoerrMongeAmpereoperators2024} Theorem 1.3]
			\label{theorem:MAVal_Irreducible}
			$\MAVal_k(\R^n)$ is an irreducible representation of $\GL(n,\R)$.
		\end{theorem}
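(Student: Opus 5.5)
The plan is to transport the problem, via the Fourier--Laplace description of translation invariant functionals, to a question about polynomials on $\Mat_{n,k}(\C)$, and then apply classical $\GL_n\times\GL_k$ duality. By \autoref{lemma:QBijectiveMA}, the map $Q:\MAVal_k(\R^n)\rightarrow\M^2_k$ is a linear bijection. Both steps below concern this map.

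\textbf{Step 1 (equivariance of $Q$).} I would show that $Q$ intertwines $\pi$ with the natural action of $\GL(n,\R)$ on $\M^2_k$, possibly up to a character. That natural action is $[g\cdot P](w_1,\dots,w_k)=P(g^{T}w_1,\dots,g^{T}w_k)$ (or with $g^{-1}$, depending on conventions).
\begin{itemize}
\item Since $\pi(g)$ commutes with translations, $\pi(g)$ preserves $\MAVal_k(\R^n)$. The Goodey--Weil distribution of $(\pi(g)\Psi)[\phi]$ is the pullback of $\GW(\Psi[\phi\circ g])$ under $g^{\times k}$.
\item Taking Fourier--Laplace transforms turns this pullback into a substitution $w_j\mapsto g^{-T}w_j$, together with a factor $|\det g|^{-k}$. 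The transform of $\phi\circ g$ produces a further factor $|\det g|^{-1}$ and the substitution $z\mapsto g^{-T}z$ in $\mathcal{F}(\phi)$.
\item Inserting this into the formula of \autoref{theorem:FourierMA} and using uniqueness of $Q(\pi(g)\Psi)$, I expect
\begin{align*}
Q(\pi(g)\Psi)[w]=\chi(g)\,Q(\Psi)[g^{-T}w_1,\dots,g^{-T}w_k]
\end{align*}
for some character $\chi$ (a power of $|\det g|$).
\end{itemize}
Twisting by a one-dimensional character does not affect irreducibility. So it suffices to prove that $\M^2_k$ is irreducible under $P\mapsto P(g^{-T}\cdot)$.

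\textbf{Step 2 (irreducibility of $\M^2_k$).} Since $\GL(n,\R)$ is Zariski dense in $\GL(n,\C)$ and the action is algebraic, any $\GL(n,\R)$-invariant subspace is $\GL(n,\C)$-invariant. It is therefore enough to prove irreducibility of $\M^2_k$ under $\GL(n,\C)$.
\begin{itemize}
\item $\GL(k,\C)$ acts on $\Mat_{n,k}(\C)$ by right multiplication, and this commutes with the left $\GL(n,\C)$-action. Every $k$-minor transforms under $h\in\GL(k,\C)$ by $\det h$, so every element of $\M^2_k$ transforms by $\det^2$.
\item By $(\GL_n,\GL_k)$-duality (compare \cite{GoodmanWallachSymmetryrepresentationsinvariants2009}), $\Poly(\Mat_{n,k}(\C))=\bigoplus_\lambda S_\lambda(\C^n)\otimes S_\lambda(\C^k)$. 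The $\det^2$-isotypic component for $\GL_k$ is the summand with $\lambda=(2,\dots,2)$ ($k$ entries), and as a $\GL_n$-module it is the irreducible $S_{(2^k)}(\C^n)$, up to dualisation.
\item Hence $\M^2_k$ is a nonzero $\GL(n,\C)$-invariant subspace of an irreducible module. It is nonzero because $\Delta_k^2\in\M^2_k$. So $\M^2_k$ equals that module and is irreducible.
\end{itemize}
Alternatively, I could argue directly, as in the proof of \autoref{proposition:CharacterizationModuleSquareMinors}, that $\Delta_k^2$ is the unique highest weight vector in $\M^2_k$ up to scalars.

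\textbf{Main obstacle.} The bookkeeping in Step 1 is the part I expect to be hardest: tracking exactly how $\pi(g)$, and in particular the transformation $\phi\mapsto\phi\circ g$ in the measure argument, acts on $\widehat{\GW}$ and on the Fourier--Laplace transform. The goal is to confirm that the discrepancy really is only a scalar character and the linear substitution. Once that is verified, Step 2 is standard representation theory.
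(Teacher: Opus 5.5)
Your argument is correct. Note that the paper gives no proof of this statement; it imports it from \cite{KnoerrMongeAmpereoperators2024}. What you supply is therefore a derivation from the two other quoted facts, \autoref{theorem:FourierMA} and \autoref{lemma:QBijectiveMA}, together with classical $(\GL_n,\GL_k)$-duality. This is a legitimate self-contained route, and it also identifies the module as the irreducible $\GL(n,\C)$-module of highest weight $(2,\dots,2,0,\dots,0)$.

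\textbf{Corrections to Step 1.} The transport step works, but a few details need fixing.
\begin{enumerate}
\item With the paper's conventions, $(\pi(g)\Psi)[\phi](f)=\Psi[\phi\circ g](f\circ g)$. By uniqueness in \autoref{theorem:GW},
\[
\GW((\pi(g)\Psi)[\phi])[f_1\otimes\dots\otimes f_k]=\GW(\Psi[\phi\circ g])[(f_1\circ g)\otimes\dots\otimes(f_k\circ g)].
\]
So the Goodey--Weil part contributes no Jacobian factor, only the substitution $w_j\mapsto g^Tw_j$. The only character comes from $\mathcal{F}(\phi\circ g)[z]=|\det g|^{-1}\mathcal{F}(\phi)[g^{-T}z]$. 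Altogether
\[
Q(\pi(g)\Psi)[w_1,\dots,w_k]=|\det g|^{-1}\,Q(\Psi)[g^Tw_1,\dots,g^Tw_k],
\]
which has the form you predicted, and $P\mapsto P(g^T\cdot)$ is indeed a representation.
\item $\pi(g)$ does not commute with translations; it normalizes them, since $\pi(g)\pi(\tau_x)\pi(g)^{-1}=\pi(\tau_{gx})$ for linear $g$. This is all you need for $\pi(g)$ to preserve $\MAVal_k(\R^n)$.
\item In Step 2 you implicitly use that $\M^2_k$ is $\GL(n,\C)$-invariant. This follows from Cauchy--Binet, or from the equivariance above, bijectivity of $Q$, and Zariski density.
\end{enumerate}
Step 2 is then standard and correct. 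Every element of $\M^2_k$ is a $\det^2$-semi-invariant for the right $\GL(k,\C)$-action. Since $k\le n$, the $\det^2$-isotypic component is a single irreducible $\GL(n,\C)$-module. This is consistent with $\Delta_k^2$ appearing as the highest weight vector in the proof of \autoref{proposition:CharacterizationModuleSquareMinors}.

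\textbf{An alternative route.} Within this paper you could instead use \autoref{theorem:ClassificationPolyTranslationInv} with $d=0$. There one checks that $P_{\pi(g)\Psi}(A)=|\det g|^{-1}P_\Psi(g^TAg)$. This identifies $\MAVal_k(\R^n)$ with the span of $k$-minors of symmetric matrices under congruence. Irreducibility of that module needs its own argument, for example via the decomposition of $\Poly(\Sym^2(\C^n))$. Your duality argument pins down the relevant isotypic component more directly.
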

		We will establish a slightly stronger version of \autoref{maintheorem:affineInvariantSubmoduleDense}, which requires the following additional regularity property of strongly continuous local functionals with respect to the the action of the affine group.
		\begin{proposition}\label{proposition:stronglyContAffCont}
			The map
			\begin{align}
				\label{eq:affContVectors}
				\begin{split}
					\Aff(n,\R)\times \P_d\LV^0(\R^n)&\rightarrow\P_d\LV^0(\R^n)\\
					(g,\Psi)&\mapsto \pi(g)\Psi
				\end{split}
			\end{align}
			is continuous with respect to the compact-to-bounded topology.
		\end{proposition}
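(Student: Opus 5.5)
We prove joint continuity of $(g,\Psi)\mapsto\pi(g)\Psi$ on $\Aff(n,\R)\times\P_d\LV^0(\R^n)$ by splitting $\Aff(n,\R)=\R^n\rtimes\GL(n,\R)$. Joint continuity for the translation part is \autoref{proposition:PropertiesStronglyContVectors}. So two things remain. First, the linear part acts equicontinuously near the identity in the compact-to-bounded topology. Second, the linear part preserves $\P_d\LV^0(\R^n)$ and commutes suitably with translations. Joint continuity then follows by writing $g=x\circ h$ with $x\in\R^n$, $h\in\GL(n,\R)$, and using $\pi(g)=\pi(x)\pi(h)$ together with $\pi(h)\pi(x)=\pi(hx)\pi(h)$.

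\textbf{Step 1: equicontinuity in $h$.} We use the semi-norms $\|\cdot\|_{A,\delta}$ of \autoref{proposition:semiNormslocalFunctional}. For $h$ in a compact neighborhood $U$ of the identity in $\GL(n,\R)$, pick a compact convex $A'$ containing $h(A)$ for all $h\in U$, and $\delta'>0$ with $h(A+\delta B_1(0))\supset$ hmm—more precisely, with $h^{-1}(A'+\delta' B_1(0))\subset$ a fixed compact set. If $\supp\phi\subset A$, $\|\phi\|_\infty\le1$ and $|f|\le 1$ on a neighborhood of $A$, then $\phi\circ h^{-1}$ and $f\circ h^{-1}$ are controlled on $A'$ and $A'+\delta'B_1(0)$ respectively. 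This gives
\[
\|\pi(h)\Psi\|_{A,\delta}\le \|\Psi\|_{A',\delta'},\qquad h\in U,
\]
possibly after adjusting $\delta$ via \eqref{eq:relationLocalSeminorms} (componentwise on the homogeneous decomposition). Hence $\{\pi(h):h\in U\}$ is an equicontinuous family of operators on $\P_d\LV(\R^n)$ with the compact-to-bounded topology.

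\textbf{Step 2: $\P_d\LV^0(\R^n)$ is preserved.} For $\Psi\in\P_d\LV^0(\R^n)$ and $h\in\GL(n,\R)$ we have $\pi(x)\pi(h)\Psi=\pi(h)\pi(h^{-1}x)\Psi$ (up to the convention for composition). The map $x\mapsto\pi(h^{-1}x)\Psi$ is continuous in the compact-to-bounded topology, and $\pi(h)$ is continuous by Step 1. Therefore $\pi(h)\Psi$ is strongly continuous.

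\textbf{Step 3: continuity of $h\mapsto\pi(h)\Psi$ in the compact-to-bounded topology.} This is the main obstacle, since continuity in the compact-to-bounded topology fails for general $\Psi$. We first prove it for smooth $\Psi$. By \autoref{maintheorem:IntegralRepSmooth}, $\Psi=\sum_j\phi_j\bullet\Psi_j$ with $\phi_j\in C^\infty(\R^n)$ and $\Psi_j$ translation invariant. By \eqref{eq:moduleEquivariant} and \autoref{corollary:smoothIsAffineSmoothPreliminary}, $g\mapsto\pi(g)\Psi$ is then even smooth in the compact-to-bounded topology. For general $\Psi\in\P_d\LV^0(\R^n)$, we approximate by smooth $\Psi_m\to\Psi$ in the compact-to-bounded topology using \autoref{proposition:densitySmoothVectors}(2). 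The estimate
\[
\|\pi(h)\Psi-\pi(h_0)\Psi\|\le \|\pi(h)(\Psi-\Psi_m)\|+\|\pi(h)\Psi_m-\pi(h_0)\Psi_m\|+\|\pi(h_0)(\Psi_m-\Psi)\|
\]
together with the equicontinuity of Step 1 shows that the first and third terms are uniformly small for $h$ near $h_0$. The middle term is small by the smooth case.

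\textbf{Step 4: joint continuity.} Combining Steps 1 and 3 gives joint continuity of the $\GL(n,\R)$-action, by the standard argument that an equicontinuous, separately continuous action is jointly continuous. Composing with the jointly continuous translation action of \autoref{proposition:PropertiesStronglyContVectors} yields continuity of \eqref{eq:affContVectors}.
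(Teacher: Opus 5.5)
Your proof is correct and follows essentially the paper's route. The paper takes $W\subset\P_d\LV^0(\R^n)$ to be the set of $\Psi$ whose orbit map $g\mapsto\pi(g)\Psi$ is continuous, and quotes Proposition~3.19 and Corollary~3.20 of the earlier paper for two facts: $W$ is closed, and the action restricted to $W$ is jointly continuous. This is exactly what your equicontinuity-plus-approximation argument in Steps 1, 3 and 4 provides. The paper then concludes via \autoref{maintheorem:IntegralRepSmooth}, \autoref{corollary:smoothIsAffineSmoothPreliminary} and \autoref{proposition:densitySmoothVectors}, whereas you prove the equicontinuity by hand and handle translations separately through \autoref{proposition:PropertiesStronglyContVectors}.

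Your Step 1 needs one correction. With the paper's convention $[\pi(h)\Psi](f;\phi)=\Psi(f\circ h;\phi\circ h)$, the actual requirements are $h^{-1}(A)\subset A'$ and $h(A'+\delta'B_1(0))\subset A+\delta B_1(0)$, not that some set lies in "a fixed compact set". These hold for $A'=A+\frac{\delta}{3}B_1(0)$, $\delta'=\frac{\delta}{3}$, and $h$ in a neighborhood of the identity that depends on $(A,\delta)$, giving $\|\pi(h)\Psi\|_{A,\delta}\le\|\Psi\|_{A',\delta'}$. This seminorm-dependent local equicontinuity is all Steps 3 and 4 use. For the continuity of a single $\pi(h)$ in Step 2, use the bound $\|\pi(h)\Psi\|_{A,\delta}\le\|\Psi\|_{h^{-1}(A),\delta/\|h\|}$.
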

		\begin{proof}
			Let $W\subset \P_d\LV^0(\R^n)$ denote the subspace of all $\Psi$ such that the map $g\mapsto \pi(g)\Psi$ is continuous. It follows from \cite{KnoerrPolynomiallocalfunctionals2025}*{Proposition~3.19} that $W\subset \P_d\LV^0(\R^n)$ is closed. Moreover \cite{KnoerrPolynomiallocalfunctionals2025}*{Corollary 3.20} shows that the restriction of the map in Eq.~\eqref{eq:affContVectors} to $W$ is continuous. Combining \autoref{corollary:smoothIsAffineSmoothPreliminary} with \autoref{maintheorem:IntegralRepSmooth}, we see that $W$ contains the space of smooth local functionals. Since these are dense in $\P_d\LV^0(\R^n)$ by \autoref{proposition:densitySmoothVectors}, this implies $W=\P_d\LV^0(\R^n)$, which shows the claim. 
		\end{proof}

		\autoref{maintheorem:affineInvariantSubmoduleDense} follows from the following result.
		\begin{theorem}
			Let $0\ne W\subset \P_0\LV_k(\R^n)$ be an affine invariant $C(\R^n)$-submodule. Then the following holds:
			\begin{enumerate}
				\item $W$ is dense in $\P_0\LV_k(\R^n)$ with respect to the compact-to-weak* and compact-to-compact topology.
				\item If $W\subset \P_0\LV_k^0(\R^n)$, then $W$ is sequentially dense in $\P_0\LV_k^0(\R^n)$ with respect to the compact-to-bounded topology.
			\end{enumerate}
		\end{theorem}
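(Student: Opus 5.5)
The plan is to pass to the closure and apply \autoref{theorem:CorrespondenceSubmodulesSubspaces}. Fix one of the three topologies and let $\overline{W}$ be the closure of $W$ in $\P_0\LV_k(\R^n)$; in case (2), take the sequential closure inside $\P_0\LV^0_k(\R^n)$.

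First I check that $\overline{W}$ is again an affine invariant $C(\R^n)$-submodule.
\begin{itemize}
\item The module action is separately continuous by \autoref{proposition:continuityModuleStructure}, so $\overline{W}$ is a $C(\R^n)$-submodule.
\item In the compact-to-weak* and compact-to-compact topologies, each $\pi(g)$ is continuous by \autoref{proposition:AffActionSeparatelyContinuous}, so $\overline{W}$ is affine invariant.
\item In the compact-to-bounded topology on $\P_0\LV_k^0(\R^n)$, I use \autoref{proposition:stronglyContAffCont}. By that result, each $\pi(g)$ is continuous, so it maps convergent sequences to convergent sequences. Hence the sequential closure is affine invariant.
\item Since $\P_0\LV^0_k(\R^n)$ is Fréchet and metrizable (it is closed by \autoref{proposition:PropertiesStronglyContVectors}), sequential closure and closure agree there. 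In particular, $\overline{W}$ is closed.
\end{itemize}

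Next I show that $V:=\overline{W}\cap\MAVal_k(\R^n)$ is nonzero. This is the step I expect to need the most care. Pick $0\ne\Psi\in W$ and mollify: by \autoref{proposition:densitySmoothVectors}, or directly via the Gelfand--Pettis integrals of \autoref{remark:MollifierSmoothVector}, which lie in $\overline{W}$ because $\overline{W}$ is closed and translation invariant, there is a nonzero smooth element of $\overline{W}$. By \autoref{maintheorem:IntegralRepSmooth} (with $d=0$, homogeneous of degree $k$), it can be written as $\sum_j\phi_j\bullet\Psi_j$ with $\phi_j\in C^\infty(\R^n)$ and $\Psi_j$ a basis of $\MAVal_k(\R^n)$.

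I then repeat the Riemann-sum averaging from the proof of \autoref{theorem:CorrespondenceSubmodulesSubspaces}:
\begin{itemize}
\item Multiply by a cutoff $\phi\in C_c^\infty(\R^n)$ chosen so that some $\int\phi\phi_j\,dx\neq0$; this is possible since some $\phi_j\ne0$.
\item Average the translates of the resulting element over a grid.
\item Pass to the limit. This produces the nonzero translation invariant element $\sum_j c_j\Psi_j$ in $\overline{W}$, since $\overline{W}$ is closed, translation invariant and a module.
\end{itemize}
Hence $V\neq0$.

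Because $\overline{W}$ is $\GL(n,\R)$-invariant and the action of $\GL(n,\R)\subset\Aff(n,\R)$ preserves $\MAVal_k(\R^n)$, $V$ is a nonzero $\GL(n,\R)$-invariant subspace. By \autoref{theorem:MAVal_Irreducible}, $V=\MAVal_k(\R^n)$.

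Finally, $\overline{W}$ and $\P_0\LV_k(\R^n)$ (respectively $\P_0\LV_k^0(\R^n)$) are both closed translation invariant submodules. Both meet $\P_0\LV(\R^n)^{tr}$ in $\MAVal_k(\R^n)$ (the $k$-homogeneous part), so injectivity in \autoref{theorem:CorrespondenceSubmodulesSubspaces} gives $\overline{W}=\P_0\LV_k(\R^n)$, respectively $\P_0\LV^0_k(\R^n)$.

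One technicality: the correspondence is stated for submodules of all of $\P_d\LV(\R^n)$, not of the $k$-homogeneous component. I handle this either by noting that the injectivity argument there works verbatim inside the closed, invariant homogeneous component, or by adding the other homogeneous components to $\overline{W}$ as a direct summand and comparing intersections.
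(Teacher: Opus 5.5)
Your proposal is correct and follows the paper's proof: close up $W$, use irreducibility of $\MAVal_k(\R^n)$ to get $\overline{W}\cap\MAVal_k(\R^n)=\MAVal_k(\R^n)$, conclude via the injectivity in \autoref{theorem:CorrespondenceSubmodulesSubspaces}, and get sequential density from metrizability. Your mollify-and-average argument for $V\neq 0$ spells out what the paper leaves implicit (injectivity of the correspondence comparing $\overline{W}$ with $\{0\}$), and the homogeneity ``technicality'' is harmless, since $\overline{W}$ and $\P_0\LV_k(\R^n)$ are themselves closed translation invariant submodules of $\P_0\LV(\R^n)$.
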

		\begin{proof}
			For the first claim, note that the closure $\overline{W}\subset \P_0\LV_k(\R^n)$ with respect to either of these topologies has the same properties due to \autoref{proposition:AffActionSeparatelyContinuous}, \autoref{proposition:stronglyContAffCont}, and \autoref{proposition:continuityModuleStructure}. In the second case, $\overline{W}\subset \P_0\LV^0_k(\R^n)$ due to \autoref{proposition:PropertiesStronglyContVectors}. In all cases, $\overline{W}\cap \P_0\LV_k(\R^n)^{tr}$ is a non-trivial $\GL(n,\R)$-invariant subspace and thus coincides with $\P_0\LV_k(\R^n)^{tr}$ by \autoref{theorem:MAVal_Irreducible}.
			Thus \autoref{theorem:CorrespondenceSubmodulesSubspaces} shows that $\overline{W}=\P_0\LV_k(\R^n)$ in the first case, and $\overline{W}=\P_0\LV_k^0(\R^n)$ in the second case. In particular, $W\subset \P_d\LV_k(\R^n)$ is dense with respect to the compact-to-weak* and compact-to-compact topology. In the second case, $W\subset \P_0\LV_k^0(\R^n)$ is dense with respect to the compact-to-bounded topology, however, since this topology is Fr\'echet by \cite{KnoerrPolynomiallocalfunctionals2025}*{Theorem~F}, $W$ is already sequentially dense.
		\end{proof}
		
		\begin{proof}[Proof of \autoref{maincorollary:ExampleDenseSubmodule}]
			This result follows directly from \autoref{maintheorem:affineInvariantSubmoduleDense} since the $C(\R^n)$-module generated by the local functionals 
			\begin{align*}
				\MA(\cdot[k],f_1,\dots,f_{n-k}),\quad f_1,\dots,f_{n-k}\in\mathcal{F}
			\end{align*}
			is affine invariant.
		\end{proof}
		Note that \autoref{maincorollary:ExampleDenseSubmodule} applies in particular to the following sets of convex functions:
			\begin{itemize}
			\item $\{h_{\Delta_j}(\cdot-y):\Delta_j~j\text{-dimensional simplex},~y\in\R^n\}$, $1\le j\le n$,
			\item $\{h_\mathcal{E}(\cdot-y):\mathcal{E}~j\text{-dimensional ellipsoid},~y\in\R^n\}$, $1\le j\le n$,
			\item $\{h_\mathcal{P}(\cdot-y):\mathcal{P}~j\text{-dimensional parallelotope},~y\in\R^n\}$, $1\le j\le n$,
			\item $\{\exp(\langle y,\cdot\rangle):~y\in\R^n\}$,
			\item $\{q: q~ \text{convex polynomial of degree less or equal}~j\}$, $j\ge 2$.
		\end{itemize}

\bibliographystyle{abbrv}
\bibliography{../../library/library.bib}

\Addresses
\end{document}